\newcommand{\dcup}{\mathop{{\declareslashed{}{\boldsymbol\cdot}{0}{0.1}{\cup}\slashed{\cup}}}}
\DeclareMathSymbol{\eth} {\mathord}{AMSb}{"67}
\renewcommand{\phi}{\varphi}
\renewcommand{\rho}{\varrho}
\renewcommand{\theta}{\vartheta}
\renewcommand{\d}{\partial}
\newcommand{\vol}{\ensuremath{\mathrm{vol}}}
\newcommand{\open}{\ensuremath{\mathrm{open}}}
\renewcommand{\Re}{\ensuremath{\mathfrak{Re}}}
\newcommand{\eps}{\ensuremath{\varepsilon}}
\newcommand{\ex}{\exists}
\newcommand{\fa}{\forall}
\newcommand{\lnorm}{\left\lVert}
\newcommand{\rnorm}{\right\lVert}
\newcommand{\lbetr}{\left\lvert}
\newcommand{\rbetr}{\right\lvert}
\newcommand{\norm}[1]{\lnorm {#1}\rnorm}
\newcommand{\betr}[1]{\lbetr {#1}\rbetr}
\renewcommand{\l}{\ensuremath{\left}}
\renewcommand{\r}{\ensuremath{\right}}
\newcommand{\sse}{\ensuremath{\subseteq}}
\newcommand{\spt}{\opn{spt}}
\newcommand{\loc}{\opn{loc}}
\renewcommand{\div}{\opn{div}}
\newcommand{\tr}{\ensuremath{\opn{tr}}}
\newcommand{\grad}{\ensuremath{\opn{grad}}}
\newcommand{\connected}{\ensuremath{\mathrm{connected}}}
\newcommand{\ilc}{\ensuremath{\mathrm{ilc}}}
\newcommand{\oilc}{\ensuremath{\mathrm{oilc}}}
\newcommand{\diag}{{\ensuremath{\opn{diag}}}}
\newcommand{\res}{\ensuremath{\opn{res}}}
\newcommand{\gf}{\ensuremath{\mathfrak{g}}}
\renewcommand{\det}{\ensuremath{\opn{det}}}
\newcommand{\fp}{\ensuremath{\mathfrak{fp}}}
\newcommand{\dR}{{\ensuremath{\mathrm{dR}}}}
\newcommand{\ubr}{\underbrace}
\newcommand{\opn}{\operatorname}
\newcommand{\sgn}{\operatorname{sgn}}
\renewcommand{\iff}{\ensuremath{\Leftrightarrow}}
\newcommand{\then}{\ensuremath{\Rightarrow}}
\newcommand{\neht}{\ensuremath{\Leftarrow}}
\newcommand{\id}{\opn{id}}
\newcommand{\nn}[1][{}]{\ensuremath{\mathbb{N}^{#1}}}
\newcommand{\rn}[1][{}]{\ensuremath{\mathbb{R}^{#1}}}
\newcommand{\Hn}[1][{}]{\ensuremath{\mathbb{H}^{#1}}}
\newcommand{\cn}[1][{}]{\ensuremath{\mathbb{C}^{#1}}}
\newcommand{\zn}[1][{}]{\ensuremath{\mathbb{Z}^{#1}}}
\newcommand{\Tn}[1][{}]{\ensuremath{\mathbb{T}^{#1}}}
\newcommand{\Kn}[1][{}]{\ensuremath{\mathbb{K}^{#1}}}
\newcommand{\Zn}[1][{}]{\ensuremath{\mathbb{Z}^{#1}}}
\newcommand{\Mp}[1][{}]{\ensuremath{\mathcal{M}^{#1}}}
\newcommand{\Ap}[1][{}]{\ensuremath{\mathcal{A}^{#1}}}
\newcommand{\Dp}[1][{}]{\ensuremath{\mathcal{D}^{#1}}}
\newcommand{\Fp}[1][{}]{\ensuremath{\mathcal{F}^{#1}}}
\newcommand{\Lp}[1][{}]{\ensuremath{\mathcal{L}^{#1}}}
\newcommand{\Sp}[1][{}]{\ensuremath{\mathcal{S}^{#1}}}
\newtheorem{theorem}{Theorem}[section]
\newtheorem{lemma}[theorem]{Lemma}
\newtheorem{prop}[theorem]{Proposition}
\newtheorem{obs}[theorem]{Observation}
\newtheorem{corollary}[theorem]{Corollary}
\newtheorem{definition}[theorem]{Definition}
\newenvironment{theorem*}[1][Theorem]{\begin{trivlist}
\item[\hskip \labelsep {\bfseries #1}]\itshape}{\normalfont\end{trivlist}}
\newenvironment{corollary*}[1][Corollary]{\begin{trivlist}
\item[\hskip \labelsep {\bfseries #1}]\itshape}{\normalfont\end{trivlist}}
\newenvironment{example*}[1][Example]{\begin{trivlist}
\item[\hskip \labelsep {\bfseries #1}]}{\nopagebreak\flushright$\filledmedsquare$\end{trivlist}}
\newenvironment{remark*}[1][Remark]{\begin{trivlist}
\item[\hskip \labelsep {\bfseries #1}]}{\nopagebreak\flushright$\filledmedsquare$\end{trivlist}}
\renewcommand{\cite}[1]{{[\cites{#1}}}
\begin{document}

\title[A generalized KV trace for FIOs]{A generalized Kontsevich-Vishik trace for Fourier Integral Operators and the Laurent expansion of $\zeta$-functions}

\author{Tobias Hartung}
\address{Department of Mathematics, King's College London, Strand, London WC2R 2LS, United Kingdom}
\email{tobias.hartung@kcl.ac.uk}

\author{Simon Scott}
\address{Department of Mathematics, King's College London, Strand, London WC2R 2LS, United Kingdom}
\email{simon.scott@kcl.ac.uk}

\keywords{Kontsevich-Vishik trace, Fourier Integral Operators, $\zeta$-functions, Laurent expansion, gauged poly-$\log$-homogeneous distributions}
\subjclass[2010]{35S30, 58J40, 46F10}

\date{\today}

\begin{abstract}
  Based on Guillemin's work on gauged Lagrangian distributions, we will introduce the notion of a poly-$\log$-homogeneous distribution as an approach to $\zeta$-functions for a class of Fourier Integral Operators which includes cases of amplitudes with asymptotic expansion $\sum_{k\in\nn}a_{m_k}$ where each $a_{m_k}$ is $\log$-homogeneous with degree of homogeneity $m_k$ but violating $\Re(m_k)\to-\infty$. We will compute the Laurent expansion for the $\zeta$-function and give formulae for the coefficients in terms of the phase function and amplitude as well as investigate generalizations to the Kontsevich-Vishik quasi-trace. Using stationary phase approximation, series representations for the Laurent coefficients and values of $\zeta$-functions will be stated explicitly. Additionally, we will introduce an approximation method (mollification) for $\zeta$-functions of Fourier Integral Operators whose symbols have singularities at zero by $\zeta$-functions of Fourier Integral Operators with regular symbols.
\end{abstract}

\maketitle

\tableofcontents

\section{Introduction}
In \cite{guillemin-lagrangian}, Guillemin showed the existence of $\zeta$-functions of gauged Lagrangian distributions, investigated their residues, and used the residues to study the commutator structure of certain algebras of Fourier Integral Operators. Guillemin also extended the residue trace (cf. \cite{wodzicki}) to Fourier Integral Operators which allowed for many special cases to be studied; e.g. the class of Toeplitz operators (cf. \cite{boutet-de-monvel-guillemin}), wave traces (cf. e.g. \cite{chazarain,guillemin-wave,guillope-zworski,zelditch}), and operators with $\log$-terms (cf. e.g. \cite{lesch}). However, many questions about $\zeta$-functions are still to be answered. For instance, whether there is a natural extension of the Kontsevich-Vishik trace (cf. \cite{kontsevich-vishik}). In particular, wave traces are a prime example of a Kontzevich-Vishik trace for Fourier Integral Operators and thus the primary motivation to study extensions of the Kontsevich-Vishik trace. Other questions may revolve around $\zeta$-determinants or other traces induced by the $\zeta$-function. 

For such questions, knowing the Laurent expansion would be very helpful. Furthermore, it would be interesting to know in itself how the Laurent expansion of $\zeta$-functions of Fourier Integral Operators relates to the special case of pseudo-differential operators (cf. \cite{paycha-scott}). In order to compute the Laurent coefficients, taking derivatives, i.e. being able to handle $\log$-terms, will be crucial. We will, therefore, assume a generalized approach and define the notion of a gauged poly-$\log$-homogeneous distribution which is based on Guillemin's approach in \cite{guillemin-lagrangian}. It is interesting to note that all the cases above are covered and some other cases (including some relaxations which might be advantageous in explicit calculations) can be considered, as well.

Operator $\zeta$-functions were introduced by Ray and Singer \cite{ray,ray-singer} using Seeley's work on complex powers of elliptic pseudo-differential operators \cite{seeley}. In order to regularize the trace of an operator $A$, Ray and Singer considered the map $\cn\ni z\mapsto\tr A^{-z}\in\cn$. Its meromorphic extension $\zeta_\sigma(A)$ is called the spectral $\zeta$-function of $A$. Since raising an operator to a complex power is not always possible (even if the algebra has the holomorphic functional calculus), one often considers (generalized) $\zeta$-functions $\zeta(A)$ which are meromorphic extensions of $\cn\ni z\mapsto\tr A(z)\in\cn$ for a suitable operator family $A$. In particular, $A(z)=A_0Q^z$ with a suitably chosen $Q$ are well-studied. 

If each $A(z)$ has a polyhomogeneous amplitude $a\sim\sum_{j\in\nn_0}a_{m-j+z}$, then $\zeta(A)$ exists as a meromorphic function on $\cn$ and has only isolated simple poles. The residue at zero is called the Wodzicki residue of $A_0$ \cite{wodzicki} and, in general, it is the (projectively) unique continuous trace on an algebra of Fourier Integral Operators \cite{guillemin-lagrangian,guillemin-residue-traces}. Furthermore, the evaluation $\zeta(A)(0)$ defines the regularized trace of $A(0)$ (provided $\zeta(A)$ does not have a pole in zero). This (unbounded) trace was studied by Kontsevich and Vishik \cite{kontsevich-vishik} and is, thus, called the Kontsevich-Vishik trace. The Kontsevich-Vishik trace is particularly interesting since it is the only trace on the algebra of classical pseudo-differential operators which restricts to the canonical trace in the space of bounded operators $L(L_2(X))$ on $L_2$-functions on a closed manifold $X$ \cite{maniccia-schrohe-seiler}.

More precisely, let $A_0$ be a pseudo-differential operator with amplitude $a\sim\sum_{j\in\nn_0}a_{d-j}$ such that each $a_{d-j}$ is homogeneous of degree $d-j$ for a given $d\in\cn\setminus\zn_{\ge-\dim X}$ and $N\in\nn_{0,>\Re(d)+1}$. The Kontsevich-Vishik trace is then given by
\begin{align*}
  \tr_{KV}A_0=\int_X\int_{\rn[\dim X]}a(x,x,\xi)-\sum_{j=0}^Na_{d-j}(x,x,\xi)\ d\xi\ d\vol_X(x)
\end{align*}
and independent of $N$. 

While Guillemin \cite{guillemin-lagrangian} showed that $\zeta$-functions for Fourier Integral Operators with classical amplitudes exist as meromorphic functions with isolated simple poles and, then, studied the generalized residue trace, the focus of the present paper will be the constant Laurent coefficient and the generalization of the Kontsevich-Vishik trace. Examples of the generalized Kontsevich-Vishik trace have been studied in form of wave traces $t\mapsto\tr\exp\l(it\sqrt{\betr{\Delta_M}}\r)$ where $\Delta_M$ is the Laplacian on a manifold $M$ (cf. e.g. \cite{chazarain,guillemin-wave,guillope-zworski,zelditch}) or, more generally, $t\mapsto\tr\exp\l(-it\sqrt[m]P\r)$ for a positive elliptic self-adjoint pseudo-differential operator $P$ of order $m>0$ \cite{duistermaat-guillemin}. A particularly interesting result \cite{duistermaat-guillemin}*{equation (0.2)} is the residue formula if all periodic solution curves of the Hamiltonian vector field are isolated and non-degenerate:
\begin{align*}
  \lim_{t\to T}(t-T)\tr\exp\l(-it\sqrt[m]P\r) = \sum_\gamma\frac{T_{0\gamma}}{2\pi}i^{\sigma_\gamma}\betr{1-P_\gamma}^{-\frac{1}{2}}
\end{align*}
where the sum is taken over all integral curves $\gamma$ of period $T$, $T_{0\gamma}$ is the smallest positive period of $\gamma$, $\sigma_\gamma$ is a Maslov factor, and $P_\gamma$ the Poincar\'e map around $\gamma$. Furthermore, Guillop\'e and Zworski \cite{guillope-zworski} studied wave traces on Riemann surfaces and proved a Selberg trace formula for the wave group by introducing the $0$-integral which regularizes in geometric terms and is based on the $0$-calculus of Mazzeo and Melrose \cite{mazzeo-melrose}.

In this paper, we aim to study $\zeta$-regularization of Fourier Integral Operator traces in general. In particular, we will compute the Laurent expansion of Fourier Integral Operator $\zeta$-functions and study the generalized Kontsevich-Vishik trace since it is essentially the constant Lauren coefficient. 

We will consider the (at first quite restrictive looking) notion of gauged poly-$\log$-homogeneous distributions which only contain holomorphic families $A$ such that the degrees of homogeneity $d$ in the expansion are of the form 
\begin{align*}
  \fa z\in\cn:\ d(z)=d(0)+z.
\end{align*}
As it turns out, this will be sufficient as the most general families we can consider (these are holomorphic families $A$ in an open, connected subset of $\cn$ where the degrees of homogeneity are non-constant holomorphic functions) are germ equivalent to this special form and, hence, all local properties are shared, that is, in particular, the Laurent expansion.

In sections \ref{gauged-poly-log-homogeneous-distributions}-\ref{lagrangian} we will compute the Laurent expansion and apply it to Fourier Integral Operators whose amplitudes have no singularities. This will also yield the generalized Kontsevich-Vishik density and trace by removing the critical degree of homogeneity terms. In fact, this is the unique extension of the Kontsevich-Vishik density that is globally defined (if there are no critical degrees of homogeneity) and it is the only extension for which the trace coincides with the value of the $\zeta$-function (if there are no critical degrees of homogeneity). However, in the generalized case of Fourier Integral Operators, splitting off $\sum_{j=0}^Na_{d-j}$ is not possible anymore since these terms do not regularize to zero (as is the case for pseudo-differential operators). Instead, the generalized Kontsevich-Vishik trace will contain all terms that do not contribute to poles of the $\zeta$-function.

Using the Laurent expansion, we can reproduce many well-known facts about $\zeta$-functions of pseudo-differential operators and Fourier Integral Operators like \cite{kontsevich-vishik}*{equation (2.21)}, \cite{paycha}*{equation (9)}, \cite{paycha-scott}*{equations (0.12), (0.14), (0.17), (0.18), and (2.20)}. 

In section \ref{sec:mollification}, we will introduce an approximation method, which we call mollification, to extend the results to Fourier Integral Operators with asymptotic expansions which have singularities at zero and allow classical amplitudes. 

Furthermore, we will have a closer look at the coefficients in section \ref{sec:structural singularities}. For polyhomogeneous amplitudes, we will obtain the residue trace (as Guillemin has shown to exist). For poly-$\log$-homogeneous amplitudes we will find a generalization of the Kontsevich-Vishik trace and we can generalize Lesch's main statements about the residue trace and the Kontsevich-Vishik trace for pseudo-differential operators in \cite{lesch} to Fourier Integral Operators. We will show that both (the residue trace and the generalized Kontsevich-Vishik trace) induce globally well-defined densities on the underlying manifold (provided that we started with globally defined kernels). We will see that the Laurent coefficients vanish if and only if the corresponding term $e^{i\theta}a$ in the Schwartz kernel is a divergence on $X\times\d B_{\rn[N]}$

Finally, in section \ref{sec:stationary-phase}, we will use stationary phase approximation to treat the integrals 
\begin{align*}
  I(x,y,r)=\int_{\d B_{\rn[N]}}e^{ir\theta(x,y,\eta)}a(x,y,\eta)d\vol_{\d B_{\rn[N]}}(\eta)
\end{align*}
which appear as coefficients in the Laurent expansion for $r=1$. The stationary phase approximation also allows us to compute the kernel singularity structure of certain Fourier Integral Operators by integrating $I(x,y,r)$ over $r\in\rn_{>0}$. This yields many ``exotic'' algebras of Fourier Integral Operators which happen to be subsets of the Hilbert-Schmidt operators and $\zeta$-functions in such algebras have no poles (independent of the Hörmander class of the amplitude). Although it is a peculiar property of certain classes of Fourier Integral Operators that cannot occur with pseudo-differential operators, these algebras are still very natural; e.g. they appear as terms after pushing down a pseudo-differential operator onto a quotient manifold. 

The kernel singularity structure also allows us to produce analogues of Boutet de Monvel's result that the residue trace is the trace of the logarithmic coefficient for a certain class of Fourier Integral Operators \cite{boutet-de-monvel}*{equations (3) and (4)}. 

In addition to Boutet de Monvel's result, we can also compute the Kontsevich-Vishik trace. In the case of \cite{boutet-de-monvel} (one dimensional Fourier integrals on the half-line bundle with phase function satisfying $\theta(x,x,r)=0$), we will see that the generalized Kontsevich-Vishik trace reduces to the pseudo-differential form. More precisely, let $A$ have the amplitude $a\sim\sum_{j\in\nn_0}a_{d-j}$, each $a_{d-j}$ homogeneous of degree $d-j$, $d\in\cn\setminus\zn_{\ge-1}$, and $N\in\nn_{0,>\Re(d)+1}$. Then,
\begin{align*}
  \tr_{KV}A=\int_X\int_{\rn_{>0}}a(x,x,r)-\sum_{j=0}^Na_{d-j}(x,x,r)\ dr\ d\vol_X(x)
\end{align*}
independent of $N$. This is still true for Fourier Integral Operators whose phase function $\theta$ satisfies $\fa x\in X\ \fa\xi\in\rn[N]:\ \theta(x,x,\xi)=0$. 

However, reduction to the pseudo-differential form is highly non-trivial and false in general. Consider, for instance,
\begin{align*}
  \int_X\int_{\rn}e^{i\Theta(x,x)r}r^{-n}dr d\vol_X(x)=&\int_X\frac{-i\pi(-2\pi i\Theta(x,x))^{n-1}\sgn(\Theta(x,x))}{(n-1)!}d\vol_X(x).
\end{align*}
If $\Theta(x,x)=1$ and $n=4$, then this term reduces to $\frac{4\pi^4\vol(X)}{3}$. In other words, such a term would violate independence of $N$.

In short, section \ref{gauged-poly-log-homogeneous-distributions} computes the Laurent expansion in terms of the abstract notion of gauged poly-$\log$-homogeneous distributions. The Laurent expansion will be applied to Fourier Integral Operator traces in section \ref{lagrangian} and the method of mollification in section \ref{sec:mollification} extends the Laurent expansion to amplitudes that are homogeneous in $\rn[N]\setminus\{0\}$, i.e. allowing for classical amplitudes. In section \ref{sec:structural singularities}, we will identify the generalized Kontsevich-Vishik trace and study general characteristics of vanishing Laurent coefficients. Section \ref{sec:stationary-phase} will focus on the stationary phase approximation which allows us to compute the Laurent coefficients and the Kontsevich-Vishik trace. Thus, most interesting examples, like the Kontsevich-Vishik trace of Fourier Integral Operators considered by Boutet de Monvel in \cite{boutet-de-monvel} and a class of Fourier Integral Operators that contains (rather surprisingly) only Hilbert Schmidt operators, are found at the end of section \ref{sec:stationary-phase}. Appendix \ref{examples} contains some examples, as well; however these are intended to be direct applications of the Laurent expansion which can be computed and independently/easily checked by hand. 

Lastly, we would like to note that this is a reduced version of the work laid out in this article. Many proofs that use standard techniques, i.e. proofs that are mutatis mutandis compared to the pseudo-differential equivalent, have been omitted in the interest of brevity. Similarly, a number of additional results that are well-known from the pseudo-differential theory and not essential for the present paper have been excluded. For these results, we would like to refer to the longer version of this article \cite{hartung-scott} or TH's Ph.D. thesis \cite{hartung-phd}.

\section{Gauged poly-$\log$-homogeneous distributions}\label{gauged-poly-log-homogeneous-distributions}
In this section, we will introduce the notion of gauged poly-$\log$-homogeneous distributions and their $\zeta$-functions. These distributions and $\zeta$-functions generalize the Fourier Integral Operator $\zeta$-functions while still maintaining most of the analytical structure. Furthermore, $\zeta$-functions of gauged poly-$\log$-homogeneous distributions are more accessible using  methods developed for $\zeta$-functions of pseudo-differential operators. In fact, Guillemin already used this in his work on the existence of Fourier Integral Operator $\zeta$-functions and their residue trace \cite{guillemin-lagrangian}. In order to tap more of their potential, we will formally introduce and study gauged poly-$\log$-homogeneous distributions and their $\zeta$-functions first. These general considerations will be applied to gauged Schwartz kernels and gauged Lagrangian distributions, as Guillemin has studied in \cite{guillemin-lagrangian}, in section \ref{lagrangian}.

Consider integrals of the form
\begin{align*}
  \int_{\rn_{\ge1}\times M} \alpha(z)(\xi)d\vol_{\rn_{\ge1}\times M}(\xi)
\end{align*}
where $M$ is an orientable,\footnote{Replacing $\alpha(z)(r,\xi) d\vol_{\rn_{\ge1}\times M}(r,\xi)$ by some family $d\omega(z)(r,\xi)$ allows us to also treat non-orientable manifolds but we will not need this in the following and choose orientability for the sake of simplicity.} compact, finite dimensional manifold without boundary and $\alpha$ is a holomorphic family given by an expansion\footnote{This is not meant to be an asymptotic expansion but an actual identity. However, for a classical symbol $a$ with asymptotic expansion $\sum_{j\in\nn}a_j$ where $a_j$ is homogeneous of degree $m-j$ for some $m\in\cn$, it is possible to choose a finite set $I=\{0,1,\ldots,J\}$ and $\alpha_0$ will correspond to $a-\sum_{j=0}^Ja_{m-j}$. 

This is completely analogous to the Kontsevich-Vishik trace, i.e. splitting off finitely many terms with large degrees of homogeneity while the rest is integrable. The only difference is that those terms (that have been split off) might not regularize to zero anymore.}
\begin{align*}
  \alpha=\alpha_0+\sum_{\iota\in I}\alpha_\iota
\end{align*}
where $I\sse\nn$, $\alpha_0(z)\in L_1(\rn_{\ge1}\times M)$ in an open neighborhood of $\l\{z\in\cn;\ \Re(z)\le 0\r\}$ and each of the $\alpha_\iota(z)$ is $\log$-homogeneous with degree of homogeneity $d_\iota+z\in\cn$ and logarithmic order $l_\iota\in\nn_0$, that is,
\begin{align*}
  \ex\tilde\alpha_\iota\in\cn[M]\ \fa r\in\rn_{\ge1}\ \fa\nu\in M:\ \alpha_\iota(z)(r,\nu)=r^{d_\iota+z}(\ln r)^{l_\iota}\tilde\alpha_\iota(z)(\nu).
\end{align*}
We will furthermore assume the following.

\begin{itemize}
\item The family $(\Re(d_\iota))_{\iota\in I}$ is bounded from above. (Note, we do not require $\Re(d_\iota)\to-\infty$. $\fa\iota\in I:\ \Re(d_\iota)=42$ is entirely possible.)
\item The map $I\ni\iota\mapsto(d_\iota,l_\iota)$ is injective.
\item There are only finitely many $\iota$ satisfying $d_\iota=d$ for any given $d\in\cn$. 
\item The family $((d_\iota-\delta)^{-1})_{\iota\in I}$ is in $\ell_2(I)$ for any $\delta\in\cn\setminus\{d_\iota;\ \iota\in I\}$. 
\item Each $\sum_{\iota\in I}\tilde\alpha_{\iota}(z)$ converges unconditionally in $L_1(M)$. 
\end{itemize}
Any such family $\alpha$ will be called a gauged poly-$\log$-homogeneous distribution. Note that the generic case (that is, applications to Fourier Integral Operators with amplitudes of the form $a\sim\sum_{j\in\nn_0}a_{m-j}$) implies that $I$ is a finite set and all these conditions are, therefore, satisfied.

\begin{example*}
  Let $A(z)$ be a pseudo-differential operator on an $N$-dimensional manifold $X$ whose amplitude has an asymptotic expansion $a(z)\sim\sum_{j\in\nn}a_j(z)$ where each $a_j(z)$ is homogeneous of degree $m-j+z$. Then, we may want to evaluate the meromorphic extension of
  \begin{align*}
    \tr A(z)=&\int_X\int_{\rn[N]}a(z)(x,x,\xi)d\xi d\vol_X(x)\\
    =&\int_X\int_{\rn_{\ge1}\times\d B_{\rn[N]}}a(z)(x,x,\xi)d\xi d\vol_X(x)\\
    &+\int_X\int_{B_{\rn[N]}(0,1)}a(z)(x,x,\xi)d\xi d\vol_X(x)
  \end{align*}
  at zero. The poly-$\log$-homogeneous distribution here is
  \begin{align*}\tag{$*$}
    \int_X\int_{\rn_{\ge1}\times\d B_{\rn[N]}}a(z)(x,x,\xi)d\xi d\vol_X(x).
  \end{align*}
  At this point, we have many possibilities to write ($*$) in the form 
  \begin{align*}
    \int_{\rn_{\ge1}\times M} \alpha(z)(\xi)d\vol_{\rn_{\ge1}\times M}(\xi).
  \end{align*}
  The easiest choice is $M:=\d B_{\rn[N]}$ and $I:=\{j\in\nn;\ \Re(m)-j\ge-N\}$. This ensures that 
  \begin{align*}
    \int_Xa(z)(x,x,\xi)-\sum_{j\in I}a_j(z)(x,x,\xi)d\vol_X(x)
  \end{align*}
  is integrable in $\rn_{\ge1}\times\d B_{\rn[N]}$. Furthermore, having a finite $I$ ensures that all of the conditions above are satisfied and $\alpha$ can be defined by
  \begin{align*}
    \alpha_0(z)(r,\nu):=&\int_Xa(z)(x,x,r\nu)-\sum_{j\in I}a_j(z)(x,x,r\nu) d\vol_X(x)
  \end{align*}
  and 
  \begin{align*}
    \alpha_j(z)(r,\nu):=&\int_Xa_j(z)(x,x,r\nu) d\vol_X(x)
    =r^{m-j+z}\ubr{\int_Xa_j(z)(x,x,\nu) d\vol_X(x)}_{=:\tilde\alpha_j(z)(\nu)}
  \end{align*}
  for $j\in I$.

\end{example*}

\begin{remark*}
  Note that these distributions are strongly connected to traces of Fourier Integral Operators, as well. In fact, Guillemin's argument in \cite{guillemin-lagrangian} relies heavily on the fact that the dual pairing $\langle u(z),f\rangle$ at question are integrals of the form
  \begin{align*}
    \int_{\rn_{\ge1}\times \d B_{\rn[N]}} \alpha(z)(\xi)d\vol_{\rn_{\ge1}\times \d B_{\rn[N]}}(\xi)
  \end{align*}
  where $\alpha$ is a gauged polyhomogeneous distribution; cf. \cite{guillemin-lagrangian}*{equation (2.15)}.

\end{remark*}

If the conditions above are satisfied, we obtain formally
\begin{align*}
  \int_{\rn_{\ge1}\times M} \alpha(z)d\vol_{\rn_{\ge1}\times M}=&\ubr{\int_{\rn_{\ge1}\times M} \alpha_0(z)d\vol_{\rn_{\ge1}\times M}}_{=:\tau_0(z)\in\cn}+\sum_{\iota\in I}\int_{\rn_{\ge1}\times M} \alpha_\iota(z)d\vol_{\rn_{\ge1}\times M}\\
  =&\tau_0(z)+\sum_{\iota\in I}\int_{\rn_{\ge1}}\int_M \alpha_\iota(z)(\rho,\nu)\rho^{\dim M}d\vol_M(\nu)d\rho\\
  =&\tau_0(z)+\sum_{\iota\in I}\ubr{\int_{\rn_{\ge1}}\rho^{\dim M+d_\iota+z}\l(\ln\rho\r)^{l_\iota}d\rho}_{=:c_\iota(z)}\ubr{\int_M\tilde\alpha_\iota(z)d\vol_M}_{=:\res\alpha_\iota(z)\in\cn}\\
  =&\tau_0(z)+\sum_{\iota\in I}c_\iota(z)\res\alpha_\iota(z)
\end{align*}
which now needs to be justified.
\begin{lemma}\label{c-iota}
  $c_\iota(z)=(-1)^{l_\iota+1}l_\iota!\l(\dim M+d_\iota+z+1\r)^{-(l_\iota+1)}$
\end{lemma}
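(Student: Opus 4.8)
The plan is to compute the integral $c_\iota(z)=\int_1^\infty \rho^{\dim M+d_\iota+z}(\ln\rho)^{l_\iota}\,d\rho$ directly by a change of variables followed by the standard gamma-integral identity. Write $s:=\dim M+d_\iota+z+1$, so that the claimed value is $(-1)^{l_\iota+1}l_\iota!\,s^{-(l_\iota+1)}$. Substituting $\rho=e^t$, $d\rho=e^t\,dt$, the integral becomes $\int_0^\infty e^{(s-1)t}t^{l_\iota}e^t\,dt=\int_0^\infty t^{l_\iota}e^{st}\,dt$. On the half-plane where $\Re(s)<0$ (equivalently $\Re(d_\iota)+\Re(z)<-\dim M-1$) this converges absolutely, and a further substitution $u=-st$ (legitimate for such $s$, rotating the contour back to the positive reals by holomorphy in $s$, or simply by splitting into real and imaginary parts) gives $\int_0^\infty t^{l_\iota}e^{st}\,dt=(-s)^{-(l_\iota+1)}\int_0^\infty u^{l_\iota}e^{-u}\,du=(-s)^{-(l_\iota+1)}\Gamma(l_\iota+1)=(-1)^{l_\iota+1}l_\iota!\,s^{-(l_\iota+1)}$, which is exactly the claim.

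Alternatively, and perhaps more cleanly for the write-up, I would avoid the change of variables in $s$ and instead argue by differentiation under the integral sign: for $\Re(s)<0$ one has $\int_1^\infty \rho^{s-1}\,d\rho=-\frac1s$ (elementary), and since $(\ln\rho)^{l_\iota}\rho^{s-1}=\partial_s^{l_\iota}(\rho^{s-1})$, differentiating $l_\iota$ times under the integral (justified by local uniform convergence of the differentiated integrals on compact subsets of $\{\Re(s)<0\}$, using the crude bound $|\rho^{s-1}(\ln\rho)^{l_\iota}|\le \rho^{\Re(s)-1}(\ln\rho)^{l_\iota}$ which is $L_1(\rn_{\ge1})$ and locally uniformly so) yields $c_\iota(z)=\partial_s^{l_\iota}\!\left(-\frac1s\right)=-(-1)^{l_\iota}l_\iota!\,s^{-(l_\iota+1)}=(-1)^{l_\iota+1}l_\iota!\,s^{-(l_\iota+1)}$.

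Finally I would note that although $c_\iota(z)$ is introduced in the formal computation only for $z$ in a region where the integral converges, both sides of the asserted identity are meromorphic functions of $z$ on all of $\cn$ (the right-hand side being a rational function with a single pole of order $l_\iota+1$ at $z=-\dim M-d_\iota-1$), so by the identity theorem the formula $c_\iota(z)=(-1)^{l_\iota+1}l_\iota!(\dim M+d_\iota+z+1)^{-(l_\iota+1)}$ extends to define $c_\iota$ as a meromorphic function; this is the meaning of the equation that will be used in the subsequent sections. The only mild subtlety — and the closest thing to an obstacle — is making the differentiation under the integral sign (or the contour rotation in the $u$-substitution) rigorous for complex $s$; this is routine and handled by the $L_1$-domination just indicated, so there is no real difficulty.
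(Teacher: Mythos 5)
Your proof is correct and, at the level of overall structure, does what the paper does: evaluate $c_\iota(z)=\int_{\rn_{\ge1}}\rho^{\dim M+d_\iota+z}(\ln\rho)^{l_\iota}\,d\rho$ on the half-plane where it converges and then pass to the meromorphic extension via the identity theorem. The only difference is in how the convergent integral is evaluated: the paper exhibits an explicit antiderivative of $x^{d}(\ln x)^{l}$ in terms of the upper incomplete $\Gamma$-function $\Gamma_{ui}$, computes the integral for real $d<-1$, and then extends; you instead either substitute $\rho=e^{t}$ and reduce to the Gamma integral $\int_0^\infty u^{l_\iota}e^{-u}\,du=l_\iota!$, or (more cleanly, as you say) differentiate the elementary identity $\int_1^\infty\rho^{s-1}\,d\rho=-\tfrac1s$ under the integral sign $l_\iota$ times with $s=\dim M+d_\iota+z+1$. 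Both of your routes are valid and, if anything, the differentiation-under-the-integral version is more self-contained than the paper's incomplete-Gamma antiderivative, at the cost of having to justify the interchange; your $L_1$-domination does that, with the small remark that for the iterated derivatives you should dominate each intermediate integrand $\rho^{\Re(s)-1}(\ln\rho)^{j}$, $0\le j\le l_\iota$, locally uniformly in $\{\Re(s)\le-\eps\}$ --- the same bound works, so this is cosmetic rather than a gap. Your closing observation that the identity is then the definition of $c_\iota$ as a meromorphic function on $\cn\setminus\{-\dim M-d_\iota-1\}$ matches the paper's final extension step exactly.
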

\begin{proof}
  Let $\Gamma_{ui}$ be the upper incomplete $\Gamma$-function given by the meromorphic extension of
  \begin{align*}
    \Gamma_{ui}(s,x):=\int_x^\infty t^{s-1}e^{-t}dt\qquad(\Re(s)>0,\ x\in\rn_{\ge0}).
  \end{align*}
  $\Gamma_{ui}$ satisfies $\Gamma_{ui}(s,0)=\Gamma(s)$ where $\Gamma$ denotes the (usual) $\Gamma$-function, $\Gamma(s,\infty)=0$, and $\d_2\Gamma_{ui}(s,x)=-x^{s-1}e^{-x}$. Then, we obtain
  \begin{align*}
    \l(\rn_{>0}\ni y\mapsto\frac{-\Gamma_{ui}(l+1,-(d+1)\ln y)}{(-(d+1))^{l+1}}\r)'(x)=&x^d(\ln x)^l.
  \end{align*}
  Hence, for $d<-1$,
  \begin{align*}
    \int_{\rn_{\ge1}}x^d(\ln x)^ldx=&\frac{(-1)^{l+1}l!}{(d+1)^{l+1}}
  \end{align*}
  which yields
  \begin{align*}
    c_\iota(z)=&\int_{\rn_{\ge1}}\rho^{\dim M+d_\iota+z}\l(\ln\rho\r)^{l_\iota}d\rho=\frac{(-1)^{l_\iota+1}l_\iota!}{\l(\dim M+d_\iota+z+1\r)^{l_\iota+1}}
  \end{align*}
  in a neighborhood of $\rn_{<-\dim M-d_\iota-1}$ (because any real analytic function can be extended locally to a holomorphic function) and, thence, by meromorphic extension everywhere in $\cn\setminus\l\{-\dim M-d_\iota-z-1\r\}$.

\end{proof}
Since the $\res\alpha_\iota$ are holomorphic functions, we now know that $\sum_{\iota\in I}c_\iota\res\alpha_\iota$ is a meromorphic function with isolated poles only (if it converges), because $((d_\iota+\delta)^{-1})_{\iota\in I}\in\ell_2(I)$ implies that there may be at most finitely many $d_\iota$ in any compact subset of $\cn$.
\begin{lemma}
  For every $z\in\cn\setminus\{-\dim M-d_\iota-1;\ \iota\in I\}$, $\sum_{\iota\in I}c_\iota(z)\res\alpha_\iota(z)$ converges absolutely.
\end{lemma}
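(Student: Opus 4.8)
The plan is to bound $\sum_{\iota\in I}\betr{c_\iota(z)}\,\betr{\res\alpha_\iota(z)}$ by controlling the two factors separately --- the residues via the unconditional convergence hypothesis, and the coefficients via the closed form from Lemma \ref{c-iota} together with the $\ell_2$-hypothesis on the shifted degrees --- and then recombining.

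First I would handle the residues. Since $M$ is compact, the constant function $1$ lies in $L_\infty(M)\cong L_1(M)^{*}$, and $\res\alpha_\iota(z)=\int_M\tilde\alpha_\iota(z)\,d\vol_M$ is precisely the value of this functional at $\tilde\alpha_\iota(z)$. Applying a continuous linear functional to an unconditionally convergent series produces an unconditionally convergent scalar series, which in $\cn$ is absolutely convergent; hence unconditional convergence of $\sum_{\iota\in I}\tilde\alpha_\iota(z)$ in $L_1(M)$ forces $\sum_{\iota\in I}\betr{\res\alpha_\iota(z)}<\infty$, i.e.\ $(\res\alpha_\iota(z))_{\iota\in I}\in\ell_1(I)$.

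Next I would control the coefficients. By Lemma \ref{c-iota}, $\betr{c_\iota(z)}=l_\iota!\,\betr{w_\iota}^{-(l_\iota+1)}$ with $w_\iota:=\dim M+d_\iota+z+1$. The hypothesis on $z$ says exactly that $\delta:=-(\dim M+z+1)$ is not one of the $d_\iota$, so $((d_\iota-\delta)^{-1})_{\iota\in I}=(w_\iota^{-1})_{\iota\in I}\in\ell_2(I)$; in particular $\betr{w_\iota}\to\infty$ and $\inf_\iota\betr{w_\iota}>0$. To merge the two inputs I would partition $I$ into the fibres $G_k:=\{\iota\in I;\ d_\iota=\delta_k\}$ of $\iota\mapsto d_\iota$ over the distinct degrees $\delta_k$; by hypothesis each $G_k$ is finite, on $G_k$ one has $w_\iota=w_k$ constant, and the log-orders $\{l_\iota;\ \iota\in G_k\}\subseteq\{0,\dots,n_k\}$ are distinct with $n_k:=\max_{\iota\in G_k}l_\iota$. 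Then
\begin{align*}
  \sum_{\iota\in I}\betr{c_\iota(z)}\,\betr{\res\alpha_\iota(z)}
  \ \le\ \sum_k\frac{1}{\betr{w_k}}\Bigl(\max_{0\le l\le n_k}\frac{l!}{\betr{w_k}^{l}}\Bigr)\sum_{\iota\in G_k}\betr{\res\alpha_\iota(z)}.
\end{align*}
The map $l\mapsto l!\,\betr{w_k}^{-l}$ on $\{0,\dots,n_k\}$ is first non-increasing then non-decreasing, so its maximum is attained at an endpoint and equals $\max\{1,\ n_k!\,\betr{w_k}^{-n_k}\}$ whenever $\betr{w_k}\ge1$; a Stirling estimate gives $n_k!\,\betr{w_k}^{-n_k}\le1$ as soon as $\betr{w_k}\ge n_k$, so, using $\betr{w_k}\to\infty$, this bracket equals $1$ for all but finitely many $k$. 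For the finitely many remaining indices (including the finitely many $k$ with $\betr{w_k}<1$) the contribution is a finite constant since each $G_k$ is finite. Thus, up to a finite correction, the sum is at most $\bigl(\sup_k\betr{w_k}^{-1}\bigr)\sum_{\iota\in I}\betr{\res\alpha_\iota(z)}<\infty$; alternatively one may finish by Cauchy--Schwarz, pairing $(w_\iota^{-1})_{\iota}\in\ell_2(I)$ against $(\res\alpha_\iota(z))_{\iota}\in\ell_1(I)\subseteq\ell_2(I)$ once the grouping step has absorbed the factorials.

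The step I expect to be the main obstacle is precisely the factorial $l_\iota!$ in $c_\iota(z)$: it is the only quantity in the estimate that can grow, and it must be shown to be dominated by the decay $\betr{w_\iota}^{-(l_\iota+1)}$. What makes this work is that the $\ell_2$-hypothesis forces $\betr{d_\iota}\to\infty$, so that after grouping by degree --- which neutralises the finitely many competing log-orders sharing a fixed degree --- $\betr{w_\iota}$ genuinely outgrows $l_\iota$ eventually, and a Stirling-type inequality then closes the estimate; everything else is bookkeeping.
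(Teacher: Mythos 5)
Your treatment of the residues is correct and is genuinely more elementary than the paper's: you apply the bounded functional $f\mapsto\int_M f\, d\vol_M$ (the element $1\in L_\infty(M)\cong L_1(M)^*$) to the unconditionally convergent series $\sum_{\iota\in I}\tilde\alpha_\iota(z)$ and use that unconditional convergence in $\cn$ is absolute, obtaining $(\res\alpha_\iota(z))_{\iota\in I}\in\ell_1(I)$. The paper instead invokes the Kadets Theorem 4.2.1 (an Orlicz-type, cotype-$2$ statement for $L_1$) to get $\l(\norm{\tilde\alpha_\iota(z)}_{L_1(M)}\r)_{\iota\in I}\in\ell_2(I)$ and finishes with Cauchy--Schwarz; your route gives a stronger conclusion for the residues with less machinery, and it suffices once the coefficients are bounded.

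The gap is in the coefficient half, at exactly the step you single out as the main obstacle. From $((d_\iota-\delta)^{-1})_{\iota\in I}\in\ell_2(I)$ with $\delta=-(\dim M+z+1)$ you only get that $\betr{w_\iota}^{-1}$ is bounded and that only finitely many $\betr{w_\iota}$ lie below any given bound; you do \emph{not} get $\betr{w_k}\ge n_k$ for all but finitely many fibres, because nothing in the listed hypotheses relates the logarithmic orders $l_\iota$ to the degrees $d_\iota$. Injectivity of $\iota\mapsto(d_\iota,l_\iota)$ and finiteness of each fibre are compatible with, say, one index per degree with $d_\iota=\iota$ and $l_\iota=\iota^2$, in which case $\betr{c_\iota(z)}=l_\iota!\,\betr{w_\iota}^{-(l_\iota+1)}$ grows superexponentially; no grouping/Stirling argument from those bullets alone can bound the coefficients, and for suitable admissible $\tilde\alpha_\iota$ the sum in the lemma would even diverge. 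The paper does not attempt this derivation: its proof simply reads the square-summability hypothesis as the statement $(c_\iota(z))_{\iota\in I}\in\ell_2(I)$ itself (``by assumption''), factorials included, and then concludes via Kadets plus Cauchy--Schwarz. If you adopt that same reading, your argument closes immediately and more simply --- $\ell_2(I)\subseteq\ell_\infty(I)$ gives $\sup_{\iota\in I}\betr{c_\iota(z)}<\infty$, which pairs against your $\ell_1$ residues --- and the fibre-wise analysis becomes unnecessary; as written, however, the assertion that the bracket equals $1$ for all but finitely many $k$ is unsupported by the stated hypotheses.
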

\begin{proof}
  By assumption, $(c_\iota(z))_{\iota\in I}\in\ell_2(I)$ and $\sum_{\iota\in I}\tilde\alpha_{\iota}(z)$ converges unconditionally in $L_1(M)$. This allows us to utilize the following theorem.
  \begin{theorem*}[{{Theorem 4.2.1} in \cite{kadets}}]
    Let $p\in\rn_{\ge1}$, $q=
    \begin{cases}
      2&,\ p\in[1,2]\\
      p&,\ p\in\rn_{>2}
    \end{cases}$, and $\sum_{j\in\nn}x_j$ converges unconditionally in $L_p$. Then, $\sum_{j\in\nn}\norm{x_j}_{L_p}^q$ converges.
  \end{theorem*}
  Hence,
  \begin{align*}
    \sum_{\iota\in I}\betr{c_\iota(z)\res\alpha_\iota(z)}\le&\sum_{\iota\in I}\betr{c_\iota(z)}\norm{\tilde\alpha_\iota(z)}_{L_1(M)}\\
    =&\norm{\l(\betr{c_\iota(z)}\norm{\tilde\alpha_\iota(z)}_{L_1(M)}\r)_{\iota\in I}}_{\ell_1(I)}\\
    \le&\norm{\l(\betr{c_\iota(z)}\r)_{\iota\in I}}_{\ell_2(I)}\norm{\l(\norm{\tilde\alpha_\iota(z)}_{L_1(M)}\r)_{\iota\in I}}_{\ell_2(I)}\\
    =&\norm{\l(c_\iota(z)\r)_{\iota\in I}}_{\ell_2(I)}\sqrt{\sum_{\iota\in I}\norm{\tilde\alpha_\iota(z)}_{L_1(M)}^2}<\infty.
  \end{align*}

\end{proof}
\begin{definition}
  Let $\alpha$ be a gauged poly-$\log$-homogeneous distribution. Then, we define the $\zeta$-function of $\alpha$ to be the meromorphic extension of
  \begin{align*}
    \zeta(\alpha)(z):=\int_{\rn_{\ge1}\times M} \alpha(z)d\vol_{\rn_{\ge1}\times M},
  \end{align*}
  i.e.
  \begin{align*}
    \zeta(\alpha)(z)=\tau_0(z)+\sum_{\iota\in I}\frac{(-1)^{l_\iota+1}l_\iota!\res\alpha_\iota(z)}{\l(\dim M+d_\iota+z+1\r)^{l_\iota+1}}.
  \end{align*}
\end{definition}
Now that we know $\zeta(\alpha)$ exists as a meromorphic function, we will compute its Laurent expansion.
\begin{definition}
  Let $f(z)=\sum_{n\in\zn}a_n(z-z_0)^n$ be a meromorphic function defined by its Laurent expansion at $z_0\in\cn$ without essential singularity at $z_0$, that is, $\ex N\in\zn\ \fa n\in\zn_{\le N}:\ a_n=0$. Then, we define the order of the initial Laurent coefficient $\oilc_{z_0}(f)$ of $f$ at $z_0$ to be
  \begin{align*}
    \oilc_{z_0}(f):=\min\{n\in\zn;\ a_n\ne0\}
  \end{align*}
  and the initial Laurent coefficient $\ilc_{z_0}(f)$ of $f$ at $z_0$
  \begin{align*}
    \ilc_{z_0}(f):=a_{\oilc_{z_0}(f)}.
  \end{align*}
\end{definition}

\begin{lemma}\label{gauge-independence-structural}
  Let $\alpha=\alpha_0+\sum_{\iota\in I}\alpha_\iota$ and $\beta=\beta_0+\sum_{\iota\in I'}\beta_\iota$ be two gauged poly-$\log$-homogeneous distributions with $\alpha(0)=\beta(0)$ and $\res\alpha_j(0)\ne0$ if $l_j$ is the maximal logarithmic order with $d_j=-\dim M-1$. Then, $\oilc_0(\zeta(\alpha))=\oilc_0(\zeta(\beta))$ and $\ilc_0(\zeta(\alpha))=\ilc_0(\zeta(\beta))$.

  In other words, $\oilc_0(\zeta(\alpha))$ and $\ilc_0(\zeta(\alpha))$ depend on $\alpha(0)$ only and are, thus, independent of the gauge.
\end{lemma}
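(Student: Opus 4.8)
The plan is to read $\oilc_0(\zeta(\alpha))$ and $\ilc_0(\zeta(\alpha))$ directly off the explicit representation
\[
  \zeta(\alpha)(z)=\tau_0(z)+\sum_{\iota\in I}c_\iota(z)\,\res\alpha_\iota(z),\qquad c_\iota(z)=\frac{(-1)^{l_\iota+1}l_\iota!}{(\dim M+d_\iota+z+1)^{l_\iota+1}},
\]
and then to show that the answer involves $\alpha$ only through $\alpha(0)$. Since $\tau_0$ is holomorphic near $0$ and each $\res\alpha_\iota$ is entire, the only terms that can be singular at $z=0$ are those with $d_\iota=-\dim M-1$; by the finiteness assumption there are only finitely many of them, and by injectivity of $\iota\mapsto(d_\iota,l_\iota)$ they carry pairwise distinct logarithmic orders, so there is a unique such index $j$ of maximal logarithmic order $L:=l_j$. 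The term $c_j(z)\res\alpha_j(z)=(-1)^{L+1}L!\,z^{-(L+1)}\res\alpha_j(z)$ then has a pole at $0$ of order exactly $L+1$ — here the hypothesis $\res\alpha_j(0)\ne0$ enters — while every other term has at worst a pole of order $\le L$. Hence $\oilc_0(\zeta(\alpha))=-(L+1)$ and $\ilc_0(\zeta(\alpha))=(-1)^{L+1}L!\,\res\alpha_j(0)$, and the statement reduces to showing that $L$ and $\res\alpha_j(0)$ — equivalently the homogeneous component of $\alpha(0)$ of degree $-\dim M-1$ — are determined by $\alpha(0)$ alone. (If no $d_\iota$ equals $-\dim M-1$, then $\zeta(\alpha)$ is holomorphic at $0$ and one argues the same way with the regularised value $\zeta(\alpha)(0)$, the regularised integral of $\alpha(0)$ over $\rn_{\ge1}\times M$, in place of the leading Laurent coefficient.)

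To compare, set $\gamma:=\alpha-\beta$ and realise it as a single gauged poly-$\log$-homogeneous distribution $\gamma=\gamma_0+\sum_{\kappa\in K}\gamma_\kappa$ over the index set obtained from $I\sqcup I'$ by merging any two labels carrying the same pair $(d,l)$ (this merging leaves every structural assumption intact); then $\gamma(0)=0$. The claim to establish is that $\gamma(0)=0$ forces $\res\gamma_\kappa(0)=0$ — indeed $\tilde\gamma_\kappa(0)=0$ — for every $\kappa$ with $\Re(d_\kappa)\ge-\dim M-1$. Granting this, the homogeneous components of $\alpha(0)$ and $\beta(0)$ of every degree $d$ with $\Re(d)\ge-\dim M-1$ coincide; since a genuine homogeneous term of such a degree can never lie in $L_1(\rn_{\ge1}\times M)$, these components are exactly the $\alpha_\iota(0)$, resp. $\beta_\iota(0)$, with $\Re(d_\iota)\ge-\dim M-1$, under the tacit normalisation that a decomposition contains no term vanishing identically at $z=0$. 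The hypothesis $\res\alpha_j(0)\ne0$ then transfers to $\beta$: it has a degree $-\dim M-1$ term of logarithmic order $L$ whose $\res$ at $0$ equals $\res\alpha_j(0)\ne0$, and none of higher logarithmic order, so the first paragraph applied to $\beta$ yields $\oilc_0(\zeta(\beta))=-(L+1)=\oilc_0(\zeta(\alpha))$ and $\ilc_0(\zeta(\beta))=(-1)^{L+1}L!\,\res\alpha_j(0)=\ilc_0(\zeta(\alpha))$; the degenerate case follows the same line via $\zeta(\cdot)(0)$.

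The heart of the proof — and the step I expect to be the main obstacle — is the displayed claim about $\gamma$, and I would prove it with the Mellin transform. Fix $\psi\in L_\infty(M)$ and put $g(r):=\langle\gamma_0(0)(r,\cdot),\psi\rangle$; then $g(r)\,r^{\dim M}\in L_1(\rn_{\ge1},dr)$, so $s\mapsto\int_1^\infty g(r)\,r^{\dim M+s-1}\,dr$ converges and is holomorphic on $\{\Re(s)<1\}$ and, by dominated convergence against $|g(r)|r^{\dim M}$, continuous up to $\{\Re(s)\le1\}$. On the other hand $\gamma(0)=0$ gives $g(r)=-\sum_{\kappa}r^{d_\kappa}(\ln r)^{l_\kappa}\langle\tilde\gamma_\kappa(0),\psi\rangle$ on $\rn_{\ge1}$, and — by the computation behind Lemma \ref{c-iota}, now with the exponent shifted by $s$ — the Mellin transform of the right-hand side is the meromorphic continuation of $-\sum_\kappa\langle\tilde\gamma_\kappa(0),\psi\rangle\,(-1)^{l_\kappa+1}l_\kappa!\,(d_\kappa+\dim M+s)^{-(l_\kappa+1)}$, whose poles sit at $s=-(d_\kappa+\dim M)$, i.e.\ with real part $-\Re(d_\kappa)-\dim M\le1$, and do not cancel (distinct locations, or distinct orders at a shared location). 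A pole with real part $<1$ contradicts holomorphy of the left-hand side there, and a pole on the line $\Re(s)=1$ (then $d_\kappa=-\dim M-1$, finitely many, distinct orders) contradicts its continuity; hence $\langle\tilde\gamma_\kappa(0),\psi\rangle=0$ whenever $\Re(d_\kappa)\ge-\dim M-1$, and letting $\psi$ range over $L_\infty(M)$ gives $\tilde\gamma_\kappa(0)=0$. The delicate points are the term-by-term Mellin transformation and the meromorphic continuation of the infinite series; this is precisely where the $\ell_2$-summability of $((d_\kappa-\delta)^{-1})_\kappa$, the boundedness of $(\Re d_\kappa)_\kappa$ from above, and the unconditional $L_1(M)$-convergence of $\sum\tilde\gamma_\kappa$ are used — the same ingredients that made $\zeta(\alpha)$ meromorphic in the first place.
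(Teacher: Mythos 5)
Your argument is correct in substance, but it takes a genuinely different --- and much heavier --- route than the paper's. The paper's proof is a short trick: since $\alpha(0)=\beta(0)$, the quotient $\gamma(z)=\frac{\alpha(z)-\beta(z)}{z}$ is again a gauged poly-$\log$-homogeneous distribution whose expansion involves only pairs $(d_\iota,l_\iota)$ already present in $\alpha$ or $\beta$, so $\zeta(\gamma)$ has pole order at most $l_j+1$ at zero; since $\zeta(\alpha)-\zeta(\beta)=z\,\zeta(\gamma)$ (first for $\Re(z)$ very negative, then by meromorphic continuation), the difference has pole order at most $l_j$, and hence the order-$(l_j+1)$ pole parts of $\zeta(\alpha)$ and $\zeta(\beta)$ coincide --- which is exactly the claim. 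You instead prove a strictly stronger statement, namely that $\alpha(0)=\beta(0)$ forces the individual $\log$-homogeneous components with $\Re(d)\ge-\dim M-1$ to coincide ($\tilde\alpha_{(d,l)}(0)=\tilde\beta_{(d,l)}(0)$ for all such pairs), via a Mellin-transform/analytic-continuation argument on $\rn_{\ge1}$, and then read the leading Laurent data off the explicit series for $\zeta$. Your route buys more (equality of all residues of supercritical degree, not just of the leading pole datum), but at the price of the term-by-term Mellin transformation and the continuation of the infinite series, which you only sketch; these steps do go through, but they require exactly the summability machinery ($\ell_2$ of the $(d_\kappa-\delta)^{-1}$, the unconditional $L_1(M)$ convergence via Kadets, local finiteness of the $d_\kappa$) that the paper already deploys to show $\sum_\iota c_\iota(z)\res\alpha_\iota(z)$ converges and is meromorphic, whereas the divide-by-$z$ device sidesteps all of it.

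One soft spot to make explicit: your exclusion of $\beta$-terms of critical degree and logarithmic order greater than $L$ rests on the ``tacit normalisation'' that no term of a decomposition vanishes identically at $z=0$. A gauge $\beta$ containing a term such as $z\,r^{-\dim M-1+z}(\ln r)^{L+5}\tilde\delta(\nu)$ with $\int_M\tilde\delta\,d\vol_M\ne0$ satisfies $\beta(0)=\alpha(0)$ but gives $\zeta(\beta)$ a pole of order $L+5$ at zero, so such gauges must genuinely be excluded; equivalently, the hypothesis ``$l_j$ is the maximal logarithmic order with $d_j=-\dim M-1$'' has to be read as ranging over both expansions. The paper's proof makes the same implicit restriction when it asserts $\min\{\oilc_0(\zeta(\alpha)),\oilc_0(\zeta(\beta))\}=-l_j-1$, so you are no worse off, but it is cleaner to state the restriction as a hypothesis on $\beta$ rather than as a normalisation of its decomposition.
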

\begin{proof}
  Since $\alpha(0)=\beta(0)$, we obtain that $z\mapsto\gamma(z):=\frac{\alpha(z)-\beta(z)}{z}$ is a gauged poly-$\log$-homogeneous distribution again. Furthermore, 
  \begin{align*}
    \oilc_0(\zeta(\gamma))\ge\min\{\oilc_0(\zeta(\alpha)),\oilc_0(\zeta(\beta))\}=:-l=-l_j-1
  \end{align*}
  holds because each pair $(d_\iota,l_\iota)$ in the expansion of $\gamma$ appears in at least one of the expansions of $\alpha$ or $\beta$. This implies that $z\mapsto z^l\zeta(\gamma)(z)=z^{l-1}\l(\zeta(\alpha)(z)-\zeta(\beta)(z)\r)$ is holomorphic at zero (equality holds for $\Re(z)$ sufficiently small and, thence, in general by meromorphic extension). Hence, the highest order poles of $\zeta(\alpha)$ and $\zeta(\beta)$ at zero must cancel out which directly implies $\oilc_0(\zeta(\alpha))=\oilc_0(\zeta(\beta))$ and $\ilc_0(\zeta(\alpha))=\ilc_0(\zeta(\beta))$.

\end{proof}

\begin{lemma}\label{gauge-independence-regular}
  Let $\alpha=\alpha_0+\sum_{\iota\in I}\alpha_\iota$ and $\beta=\beta_0+\sum_{\iota\in I'}\beta_\iota$ be two gauged poly-$\log$-homogeneous distributions with $\alpha(0)=\beta(0)$ and $\fa\iota\in I\cup I':\ d_\iota\ne-\dim M-1$. Then, $\zeta(\alpha)(0)=\zeta(\beta)(0)$.
\end{lemma}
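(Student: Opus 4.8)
The plan is to follow the proof of Lemma~\ref{gauge-independence-structural} almost verbatim, tracking the value at $0$ instead of the initial Laurent coefficient. First I would note that the hypothesis removes the relevant poles: by Lemma~\ref{c-iota} the factor $c_\iota$ is holomorphic except at $z=-\dim M-d_\iota-1$, and since no $d_\iota$ with $\iota\in I\cup I'$ equals $-\dim M-1$ this point is never $0$; as $\tau_0$ and every $\res\alpha_\iota$, $\res\beta_\iota$ are holomorphic near $\{\Re(z)\le0\}$ and the defining series converge absolutely away from the pole set, both $\zeta(\alpha)$ and $\zeta(\beta)$ are holomorphic at $0$. So $\zeta(\alpha)(0)$ and $\zeta(\beta)(0)$ are genuine values, and it suffices to show that $\zeta(\alpha)-\zeta(\beta)$ vanishes at $0$.

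Since $\alpha(0)=\beta(0)$, the family $\gamma(z):=\frac{\alpha(z)-\beta(z)}{z}$ is --- as I will argue below --- again a gauged poly-$\log$-homogeneous distribution, whose degrees of homogeneity are contained in $\{d_\iota;\ \iota\in I\cup I'\}$ and hence still avoid $-\dim M-1$; consequently the first paragraph applies to $\gamma$ as well, and $\zeta(\gamma)$ is holomorphic at $0$. For $\Re(z)$ sufficiently negative all the integrals involved converge absolutely, so there $\zeta(\gamma)(z)=\frac{\zeta(\alpha)(z)-\zeta(\beta)(z)}{z}$, and by meromorphic continuation this identity holds on all of $\cn$. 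Therefore $z\mapsto\zeta(\alpha)(z)-\zeta(\beta)(z)=z\,\zeta(\gamma)(z)$ has a zero at $0$, i.e.\ $\zeta(\alpha)(0)=\zeta(\beta)(0)$. One can also shortcut the last two steps by applying Lemma~\ref{gauge-independence-structural} directly: its non-degeneracy hypothesis on $\res\alpha_j(0)$ is vacuous here because no index has critical degree $-\dim M-1$, so $\oilc_0(\zeta(\alpha))=\oilc_0(\zeta(\beta))$ and $\ilc_0(\zeta(\alpha))=\ilc_0(\zeta(\beta))$; since both $\zeta$-functions are holomorphic at $0$ their common $\oilc_0$ is $\ge0$, and then either it is $0$ and the two values coincide with the equal initial coefficients, or it is positive and both values are $0$.

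The step that needs genuine care --- and the one I expect to be the main obstacle --- is the claim that $\gamma$ really satisfies the axioms of a gauged poly-$\log$-homogeneous distribution, in particular that dividing by $z$ does not destroy holomorphy near $\{\Re(z)\le0\}$. Here I would first re-group $\alpha-\beta$ as $\eta_0+\sum_{\iota\in I''}\eta_\iota$, collecting into $\eta_0$ the genuinely $L_1$ part together with every $\log$-homogeneous summand whose exponent is integrable on $\rn_{\ge1}\times M$ for $\Re(z)\le0$, so that $I''$ indexes only the pairs $(d_\iota,l_\iota)$ with $\Re(d_\iota)+\dim M\ge-1$. Evaluating at $z=0$ and using $\alpha(0)=\beta(0)$ gives $\eta_0(0)+\sum_{\iota\in I''}r^{d_\iota}(\ln r)^{l_\iota}\tilde\eta_\iota(0)=0$; pairing with an arbitrary $\phi\in L_\infty(M)$ and applying Fubini shows that $r\mapsto\sum_{\iota\in I''}r^{d_\iota}(\ln r)^{l_\iota}\langle\tilde\eta_\iota(0),\phi\rangle$ lies in $L_1(\rn_{\ge1},r^{\dim M}dr)$. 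Since the monomials $r\mapsto r^{d}(\ln r)^{l}$ are linearly independent for distinct $(d,l)$ and none of those with $\Re(d)+\dim M\ge-1$ is integrable there --- finitely many dominant terms being enough to see this, as only finitely many $d_\iota$ lie in any bounded region --- every $\langle\tilde\eta_\iota(0),\phi\rangle$ vanishes, hence $\tilde\eta_\iota(0)=0$ in $L_1(M)$ and $\eta_0(0)=0$. This makes $\frac{\eta_0(z)}{z}$ still $L_1$ near $\{\Re(z)\le0\}$ and each $\frac{\tilde\eta_\iota(z)}{z}$ holomorphic, while the $\ell_2$- and unconditional-convergence conditions for $\gamma$ are inherited from those of $\alpha$ and $\beta$; this completes the verification and hence the proof.
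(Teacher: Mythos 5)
Your proof is correct and takes essentially the same route as the paper: set $\gamma(z):=\frac{\alpha(z)-\beta(z)}{z}$, note it is again a gauged poly-$\log$-homogeneous distribution whose degrees still avoid $-\dim M-1$ so that $\zeta(\gamma)$ is regular at $0$, identify $\zeta(\gamma)(z)=\frac{\zeta(\alpha)(z)-\zeta(\beta)(z)}{z}$ by meromorphic continuation, and conclude that the difference vanishes at $0$ (the paper phrases this last step as $\zeta(\alpha)(0)-\zeta(\beta)(0)=\res_0\zeta(\gamma)=0$). Your closing paragraph merely fills in the verification, left implicit in the paper, that dividing by $z$ keeps $\alpha-\beta$ inside the class of gauged poly-$\log$-homogeneous distributions.
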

\begin{proof}
  Again, since $\alpha(0)=\beta(0)$, we obtain that $z\mapsto\gamma(z):=\frac{\alpha(z)-\beta(z)}{z}$ is a gauged poly-$\log$-homogeneous distribution and $\oilc_0(\zeta(\gamma))\ge0$. Hence
  \begin{align*}
    \zeta(\alpha)(0)-\zeta(\beta)(0)=&\res_0\l(z\mapsto\frac{\zeta(\alpha)(z)-\zeta(\beta)(z)}{z}\r)=\res_0\zeta(\gamma)=0
  \end{align*}
  where $\res_0$ denotes the residue of a meromorphic function at zero.

\end{proof}

\begin{definition}
  Let $\alpha=\alpha_0+\sum_{\iota\in I}\alpha_\iota$ be a gauged poly-$\log$-homogeneous distribution and $I_{z_0}:=\{\iota\in I;\ d_\iota=-\dim M-1-z_0\}$. Then, we define
  \begin{align*}
    \fp_{z_0}(\alpha):=\alpha-\sum_{\iota\in I_{z_0}}\alpha_\iota=\alpha_0+\sum_{\iota\in I\setminus I_{z_0}}\alpha_\iota.
  \end{align*}
\end{definition}

\begin{corollary}\label{KV-gauge-independence}
  $\zeta(\fp_0\alpha)(0)$ is independent of the chosen gauge.
\end{corollary}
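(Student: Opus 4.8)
The plan is to deduce the statement from Lemma~\ref{gauge-independence-regular}. Let $\alpha=\alpha_0+\sum_{\iota\in I}\alpha_\iota$ and $\beta=\beta_0+\sum_{\iota\in I'}\beta_\iota$ be two gauged poly-$\log$-homogeneous distributions with $\alpha(0)=\beta(0)$; we have to show $\zeta(\fp_0\alpha)(0)=\zeta(\fp_0\beta)(0)$. With $I_0=\{\iota\in I;\ d_\iota=-\dim M-1\}$ (finite, by injectivity of $\iota\mapsto(d_\iota,l_\iota)$ and the finiteness hypothesis) and $I_0'$ defined analogously, the expansions of $\fp_0\alpha$ and $\fp_0\beta$ are indexed by $I\setminus I_0$ and $I'\setminus I_0'$ and hence contain no term of degree of homogeneity $-\dim M-1$ at $z=0$. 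Thus Lemma~\ref{gauge-independence-regular} applies to the pair $\fp_0\alpha,\fp_0\beta$ provided $(\fp_0\alpha)(0)=(\fp_0\beta)(0)$. Since $(\fp_0\alpha)(0)=\alpha(0)-\sum_{\iota\in I_0}\alpha_\iota(0)$ and likewise for $\beta$, and $\alpha(0)=\beta(0)$, everything reduces to the assertion that the critical part $\sum_{\iota\in I_0}\alpha_\iota(0)$ of a gauged poly-$\log$-homogeneous distribution is already determined by $\alpha(0)$, i.e. is independent of the chosen gauge.

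To prove this I would isolate the key observation that a function $w\in L_1(\rn_{\ge1}\times M)$ admitting a representation $w=\sum_\iota\rho^{d_\iota}(\ln\rho)^{l_\iota}\tilde w_\iota$ with each $\tilde w_\iota\in L_1(M)$ and with the family of exponents $(d_\iota,l_\iota)_\iota$ satisfying the same structural hypotheses as in the definition of a gauged poly-$\log$-homogeneous distribution must have $\tilde w_\iota=0$ for every $\iota$ with $d_\iota=-\dim M-1$ (the point being that $\rho\mapsto\rho^{-\dim M-1}(\ln\rho)^{l}$ is never integrable against the cone measure $\rho^{\dim M}\,d\rho\,d\vol_M$, so $w$ cannot carry such a component). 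Granting this, apply it to
\begin{align*}
  w:=\alpha_0(0)-\beta_0(0)=\sum_{\iota\in I'}\beta_\iota(0)-\sum_{\iota\in I}\alpha_\iota(0)\in L_1(\rn_{\ge1}\times M);
\end{align*}
after grouping the (by injectivity, at most one of each) term of equal exponent --- which keeps the structural hypotheses intact, e.g. $\norm{\tilde\alpha_\iota(0)-\tilde\beta_{\iota'}(0)}_{L_1(M)}^2\le2\norm{\tilde\alpha_\iota(0)}_{L_1(M)}^2+2\norm{\tilde\beta_{\iota'}(0)}_{L_1(M)}^2$ --- the observation forces every exponent $(-\dim M-1,l)$ in $w$ to have vanishing coefficient, that is, $\sum_{\iota\in I_0}\alpha_\iota(0)=\sum_{\iota\in I_0'}\beta_\iota(0)$. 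This is precisely $(\fp_0\alpha)(0)=(\fp_0\beta)(0)$, and Lemma~\ref{gauge-independence-regular} then yields $\zeta(\fp_0\alpha)(0)=\zeta(\fp_0\beta)(0)$.

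The remaining task, proving the observation, I would carry out by a Mellin-transform argument parallel to the meromorphicity discussion for $\zeta(\alpha)$ above. Fix $\psi\in L_\infty(M)$; then $\psi w$ is again in $L_1(\rn_{\ge1}\times M)$ and of the same poly-$\log$-homogeneous shape, so
\begin{align*}
  H_\psi(s):=\int_{\rn_{\ge1}\times M}\rho^{-s}\psi(\nu)w(\rho,\nu)\,d\vol_{\rn_{\ge1}\times M}(\rho,\nu)
\end{align*}
is, by dominated convergence, holomorphic on $\l\{\Re(s)>0\r\}$ and continuous on $\l\{\Re(s)\ge0\r\}$, hence has no pole at $s=0$. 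Integrating term by term for $\Re(s)$ large --- legitimate by the boundedness from above of $(\Re(d_\iota))_\iota$, the $\ell_2$-condition, and the unconditional convergence of $\sum_\iota\tilde w_\iota$ in $L_1(M)$, exactly as in the proof of absolute convergence of $\sum_{\iota\in I}c_\iota(z)\res\alpha_\iota(z)$ above --- and invoking Lemma~\ref{c-iota}, one obtains on the other hand
\begin{align*}
  H_\psi(s)=\sum_\iota\frac{(-1)^{l_\iota+1}l_\iota!\int_M\psi\tilde w_\iota\,d\vol_M}{\l(\dim M+d_\iota+1-s\r)^{l_\iota+1}}.
\end{align*}
Only the exponents with $d_\iota=-\dim M-1$ contribute to a pole of this function at $s=0$, and by injectivity of $\iota\mapsto(d_\iota,l_\iota)$ these contributions are poles of pairwise distinct orders $l_\iota+1$, so no cancellation of their principal parts is possible. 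Matching this against the holomorphy of $H_\psi$ at $s=0$ and descending on the logarithmic order forces $\int_M\psi\tilde w_\iota\,d\vol_M=0$ for all such $\iota$ and all $\psi\in L_\infty(M)$, whence $\tilde w_\iota=0$ in $L_1(M)$, which is the observation.

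I expect the main obstacle to lie entirely in this last step --- making the term-by-term Mellin computation rigorous and, above all, justifying that the principal parts of the meromorphic expression at the single point $s=0$ genuinely do not cancel when several exponents share $d_\iota=-\dim M-1$; this is where injectivity of $\iota\mapsto(d_\iota,l_\iota)$ and the ``only finitely many $\iota$ per $d_\iota$'' hypothesis do the work. Once the observation is in hand, the reduction to Lemma~\ref{gauge-independence-regular} is immediate.
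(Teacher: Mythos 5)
Your proof is correct and follows the route the paper intends: the corollary is stated there without proof as an immediate consequence of Lemma \ref{gauge-independence-regular} applied to $\fp_0\alpha$ and $\fp_0\beta$, which is exactly your reduction. The Mellin-transform argument you add --- showing that the critical components $\sum_{\iota\in I_0}\alpha_\iota(0)$ are determined by $\alpha(0)$ alone, so that $(\fp_0\alpha)(0)=(\fp_0\beta)(0)$ and the lemma is applicable --- correctly fills in a step the paper leaves implicit (that two gauges of the same distribution must share the critical part at $z=0$), and your justification of the term-by-term integration and the non-cancellation of poles of distinct order is at the same level of rigor as the paper's own treatment of $\zeta(\alpha)$.
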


\begin{definition}
  Let $\alpha=\alpha_0+\sum_{\iota\in I}\alpha_\iota$ be a gauged poly-$\log$-homogeneous distribution and $\res\alpha_\iota\ne0$ for some $\iota\in I_0$. Then, we say $\zeta(\alpha)$ has a structural singularity at zero.
\end{definition}

\begin{remark*}
  Note that the pole structure of $\zeta(\alpha)$ does not only depend on the $\res\alpha_\iota$ but also on derivatives of $\alpha$. A structural singularity is a property of $\alpha(0)$ in the sense that it cannot be removed under change of gauge. More precisely, choosing $\beta$ such that $\alpha(0)=\beta(0)$ does not imply that the principal part of the Laurent expansion of $\zeta(\alpha)-\zeta(\beta)$ vanishes. However, if all $\res\alpha_\iota$ vanish ($\iota\in I_0$), then there exists a $\beta$ with $\alpha(0)=\beta(0)$ such that $\zeta(\beta)$ is holomorphic in a neighborhood of zero (e.g. $\beta$ being $\Mp$-gauged; see below). Having a non-vanishing $\res\alpha_\iota$ for some $\iota\in I_0$, on the other hand, implies that every $\zeta(\beta)$ with $\alpha(0)=\beta(0)$ has a pole at zero.

\end{remark*}

\begin{definition}
  Let $\alpha=\alpha_0+\sum_{\iota\in I}\alpha_\iota$ be a gauged poly-$\log$-homogeneous distribution. If all $\tilde\alpha_\iota$ are independent of the complex argument, i.e. $\alpha_\iota(z)(r,\nu)=r^{d_\iota+z}(\ln r)^{l_\iota}\tilde\alpha_\iota(0)(\nu)=r^z\alpha_\iota(0)(r,\nu)$, then we call this choice of gauge an $\Mp$-gauge (or Mellin-gauge).
\end{definition}

\begin{remark*}
  The $\Mp$-gauge for Fourier Integral Operators can always be chosen locally.

\end{remark*}

\begin{remark*}
  Suppose we have a gauged distribution $\alpha$ such that 
  \begin{align*}
    \fa z\in\cn\ \fa (r,\xi)\in\rn_{\ge1}\times M:\ \alpha(z)(r,\xi)=r^z\alpha(0)(r,\xi)
  \end{align*}
  is satisfied and we artificially continue $\alpha$ by zero to $\rn_{>0}\times M$. Then,
  \begin{align*}
    \int_{\rn_{>0}\times M}\alpha(z)(r,\xi)d\vol_{\rn_{>0}\times M}(r,\xi)=&\int_{\rn_{>0}}r^{\dim M+z}\ubr{\int_M\alpha(0)(r,\xi)d\vol_M(\xi)}_{=:A(r)}dr\\
    =&\Mp(A)(\dim M+z+1)
  \end{align*}
  holds where $\Mp$ denotes the Mellin transform
  \begin{align*}
    \Mp f(z)=\int_{\rn_{>0}}t^{z-1}f(t)dt
  \end{align*}
  for $f:\ \rn_{>0}\to\rn$ measurable whenever the integral exists. Hence, the name ``$\Mp$-gauge''.

\end{remark*}

\begin{corollary}
  Let $\alpha=\alpha_0+\sum_{\iota\in I}\alpha_\iota$ be a gauged poly-$\log$-homogeneous distribution.
  \begin{enumerate}
  \item[(i)] If $\alpha$ is $\Mp$-gauged, then all $\res\alpha_\iota$ are constants.
  \item[(ii)] If $\res\alpha_\iota(0)=0$ for $\iota\in I$, then the corresponding pole in $\zeta(\alpha)$ can be removed by re-gauging.
  \item[(iii)] If $\res\alpha_\iota(0)\ne0$ for $\iota\in I_0$, then the corresponding pole in $\zeta(\alpha)$ in independent from the gauge. In particular, $\res\alpha_\iota(0)$ does not depend on the gauge.
  \end{enumerate}
\end{corollary}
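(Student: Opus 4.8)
The plan is to reduce each of the three items to results already established in the excerpt, since the corollary is essentially a packaging of Lemmas \ref{c-iota}, \ref{gauge-independence-structural}, \ref{gauge-independence-regular} and Corollary \ref{KV-gauge-independence} together with the definition of an $\Mp$-gauge.

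For \emph{(i)}, I would simply unwind the definition of the $\Mp$-gauge: there $\tilde\alpha_\iota(z) = \tilde\alpha_\iota(0)$ for every $z$, so $\res\alpha_\iota(z) = \int_M \tilde\alpha_\iota(z)\,d\vol_M = \int_M \tilde\alpha_\iota(0)\,d\vol_M = \res\alpha_\iota(0)$ is a constant function of $z$. There is nothing to do beyond observing that the residue was defined as this integral and that the integrand no longer depends on $z$.

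For \emph{(ii)}, the idea is to produce an explicit re-gauge that kills the offending pole. Given the original $\alpha$, let $\beta$ be the $\Mp$-gauge associated to $\alpha(0)$, i.e. $\beta_0(z) := \beta_0(0)$ chosen appropriately and $\beta_\iota(z)(r,\nu) := r^{d_\iota+z}(\ln r)^{l_\iota}\tilde\alpha_\iota(0)(\nu)$, so that $\beta(0) = \alpha(0)$. By part (i), $\res\beta_\iota$ is the constant $\res\alpha_\iota(0)$; if this vanishes, then by Lemma \ref{c-iota} the corresponding summand $c_\iota(z)\res\beta_\iota(z)$ in $\zeta(\beta)$ is identically zero, so $\zeta(\beta)$ has no pole coming from the index $\iota$. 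Since $\zeta(\alpha)(z) - \zeta(\beta)(z) = z\,\zeta(\gamma)(z)$ with $\gamma(z) := (\alpha(z)-\beta(z))/z$ a gauged poly-$\log$-homogeneous distribution (as in the proof of Lemma \ref{gauge-independence-structural}), the local behaviour at zero is governed by $\zeta(\beta)$ up to the harmless factor $z$, and one reads off that re-gauging to $\beta$ removes the pole at zero associated with that $\iota$ precisely when $\res\alpha_\iota(0)=0$.

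For \emph{(iii)}, I would invoke Lemma \ref{gauge-independence-structural} (and the definition of structural singularity immediately before this corollary): if $\res\alpha_\iota(0) \ne 0$ for some $\iota \in I_0$, then $\zeta(\alpha)$ has a pole at zero whose order and leading coefficient depend only on $\alpha(0)$, hence are shared by every $\zeta(\beta)$ with $\beta(0) = \alpha(0)$; in particular no re-gauge can remove it. The statement that $\res\alpha_\iota(0)$ itself is gauge-independent then follows because for $\iota \in I_0$ one has $d_\iota = -\dim M - 1$, so $c_\iota(z) = (-1)^{l_\iota+1} l_\iota!\, z^{-(l_\iota+1)}$ has a pole of order $l_\iota+1$ at $z=0$ with explicitly known leading coefficient, and matching the principal part of $\zeta(\alpha)$ at the appropriate order — which Lemma \ref{gauge-independence-structural} guarantees is gauge-independent at least at the top order, and one extends downward by the same cancellation argument applied to $\gamma$ — forces $\res\alpha_\iota(0)$ to be determined by $\alpha(0)$.

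The main obstacle I anticipate is bookkeeping in part (iii): Lemma \ref{gauge-independence-structural} as stated only pins down the \emph{initial} Laurent coefficient (the top-order pole), whereas the gauge-independence of each individual $\res\alpha_\iota(0)$ for $\iota \in I_0$ with possibly different logarithmic orders $l_\iota$ requires controlling several coefficients of the principal part simultaneously. I would handle this by the same device used throughout: pass to $\gamma(z) = (\alpha(z)-\beta(z))/z$, note $\oilc_0(\zeta(\gamma)) \ge 0$ forces the \emph{entire} principal parts of $\zeta(\alpha)$ and $\zeta(\beta)$ at zero to agree (not just the leading term), and then peel off the contributions order by order in decreasing powers of $z$, using the explicit form of $c_\iota$ from Lemma \ref{c-iota} to isolate $\res\alpha_\iota(0)$ at each logarithmic level. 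This is routine once set up but is the only place where a little care is genuinely needed; parts (i) and (ii) are immediate.
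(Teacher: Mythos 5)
Parts (i) and (ii) of your proposal are fine and essentially coincide with the paper's argument: (i) is immediate from the definition of the $\Mp$-gauge, and for (ii) the paper likewise re-gauges the offending term to its $\Mp$-gauge so that the summand $c_\iota\,\res\alpha_\iota$ vanishes identically (your detour through $\gamma(z)=(\alpha(z)-\beta(z))/z$ and ``local behaviour at zero'' is superfluous, and note that for general $\iota\in I$ the pole in question sits at $z=-\dim M-d_\iota-1$, not at $0$; but the $\Mp$-gauge construction by itself settles the claim).

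The genuine gap is in (iii), exactly at the step you flagged as needing care. Your repair rests on the claim that $\oilc_0(\zeta(\gamma))\ge0$, so that the \emph{entire} principal parts of $\zeta(\alpha)$ and $\zeta(\beta)$ at zero agree. Both statements are false in general: one has $\res\gamma_\iota(0)=\d\res\l(\alpha_\iota-\beta_\iota\r)(0)$, which need not vanish, so $\zeta(\gamma)$ can itself have a pole at zero (the proof of Lemma \ref{gauge-independence-structural} only yields $\oilc_0(\zeta(\gamma))\ge\min\{\oilc_0(\zeta(\alpha)),\oilc_0(\zeta(\beta))\}$, a negative number), and Corollary \ref{corollary-difference-zeta} exhibits the discrepancy explicitly: $\zeta(\alpha)-\zeta(\beta)$ contains the principal-part terms $\sum_{n=1}^{l_\iota}\frac{(-1)^{l_\iota+1}l_\iota!\,\d^n\res(\alpha_\iota-\beta_\iota)(0)}{n!}z^{n-l_\iota-1}$, which are generically nonzero as soon as some $l_\iota\ge1$. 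Hence the sub-leading pole coefficients of the full $\zeta(\alpha)$ are genuinely gauge-dependent, and ``peeling off order by order'' cannot isolate $\res\alpha_\iota(0)$ for a critical term of non-maximal logarithmic order, since in the coefficient of $z^{-(l_\iota+1)}$ it is mixed with the gauge-dependent derivatives $\d^{\,l_{\iota'}-l_\iota}\res\alpha_{\iota'}(0)$ coming from critical terms with $l_{\iota'}>l_\iota$. (Your opening sentence of (iii) also tacitly uses the hypothesis of Lemma \ref{gauge-independence-structural} that the critical term of maximal logarithmic order has nonvanishing residue; without it the leading pole of the full $\zeta(\alpha)$ need not be governed by $\res\alpha_\iota(0)$ at all.) The paper avoids the mixing entirely by applying Lemma \ref{gauge-independence-structural} not to $\alpha$ but to the single summand $\alpha_\iota$: for that one-term distribution $\zeta(\alpha_\iota)(z)=c_\iota(z)\res\alpha_\iota(z)$, so if $\res\alpha_\iota(0)\ne0$ then $\oilc_0\zeta(\alpha_\iota)=-l_\iota-1$ and, by Lemma \ref{c-iota}, $\ilc_0\zeta(\alpha_\iota)=(-1)^{l_\iota+1}l_\iota!\,\res\alpha_\iota(0)$; gauge-independence of these two quantities for the single term then gives gauge-independence of $\res\alpha_\iota(0)$ directly. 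Replacing your peeling step by this term-by-term application repairs (iii).
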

\begin{proof}
  \begin{enumerate}
  \item[(i)] trivial.
  \item[(ii)] The corresponding pole contributes the term $\frac{(-1)^{l_\iota+1}l_\iota!\res\alpha_\iota(z)}{\l(\dim M+d_\iota+z+1\r)^{l_\iota+1}}$ to the expansion of $\zeta(\alpha)$. Choosing an $\Mp$-gauge yields 
    \begin{align*}
      \frac{(-1)^{l_\iota+1}l_\iota!\res\alpha_\iota(z)}{\l(\dim M+d_\iota+z+1\r)^{l_\iota+1}}=\frac{(-1)^{l_\iota+1}l_\iota!\res\alpha_\iota(0)}{\l(\dim M+d_\iota+z+1\r)^{l_\iota+1}}=0
    \end{align*}
    by holomorphic extension.
  \item[(iii)] Lemma \ref{gauge-independence-structural} shows that $\oilc_0\zeta(\alpha_\iota)$ and $\ilc_0(\zeta(\alpha_\iota))$ are independent of the gauge. Since, $\res\alpha_\iota(0)\ne0$, we obtain $\oilc_0\zeta(\alpha_\iota)=-l_\iota-1$ and 
    \begin{align*}
      \res\alpha_\iota(0)=\frac{\ilc_0\zeta(\alpha_\iota)}{(-1)^{l_\iota+1}l_\iota!}.
    \end{align*}
  \end{enumerate}

\end{proof}

\begin{prop}[Laurent expansion of $\zeta(\fp_0\alpha)$]\label{Laurent-finite-part}
  Let $\alpha=\alpha_0+\sum_{\iota\in I}\alpha_\iota$ be a gauged poly-$\log$-homogeneous distribution with $I_0=\emptyset$. Then,
  \begin{align*}
    \zeta(\alpha)(z)=\sum_{n\in\nn_0}\frac{\zeta(\d^n\alpha)(0)}{n!}z^n
  \end{align*}
  holds in a sufficiently small neighborhood of zero.

  Let $\beta=\beta_0+\sum_{\iota\in I'}\beta_\iota$ be a gauged poly-$\log$-homogeneous distribution without structural singularities at zero, i.e. $\fa\iota\in I'_0:\ \res\beta_\iota=0$. Then, there exists a gauge $\hat\beta$ such that
  \begin{align*}
    \zeta\l(\hat\beta\r)(z)=\sum_{n\in\nn_0}\frac{\zeta(\d^n\fp_0\beta)(0)}{n!}z^n
  \end{align*}
  holds in a sufficiently small neighborhood of zero.
\end{prop}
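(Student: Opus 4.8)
The plan is to prove the first assertion by showing that $\zeta(\alpha)$ is holomorphic at $0$ with $n$-th Taylor coefficient $\tfrac{1}{n!}\zeta(\d^n\alpha)(0)$, and then to obtain the second assertion by shifting the degree-$(-\dim M-1)$ terms of $\beta$ into an $\Mp$-gauge, where they contribute nothing to $\zeta$.

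For the first assertion I would start from the formula $\zeta(\alpha)(z)=\tau_0(z)+\sum_{\iota\in I}c_\iota(z)\res\alpha_\iota(z)$ of the Definition. The hypothesis $I_0=\emptyset$ says no $d_\iota$ equals $-\dim M-1$, so every $c_\iota$ from Lemma~\ref{c-iota} is holomorphic at $0$; since $\tau_0$ is holomorphic near $0$ (as $\alpha_0(z)\in L_1$ there), each $\res\alpha_\iota$ is holomorphic ($M$ compact, holomorphy of the family), and the series converges absolutely and locally uniformly by the two preceding lemmas, $\zeta(\alpha)$ is holomorphic at $0$ and equals its Taylor series $\sum_{n\in\nn_0}\tfrac{1}{n!}\zeta(\alpha)^{(n)}(0)z^n$. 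The crux is the identity $\d_z^n\zeta(\alpha)=\zeta(\d^n\alpha)$ of meromorphic functions; I would prove $\d_z\zeta(\alpha)=\zeta(\d\alpha)$ and iterate. First one checks that the termwise $z$-derivative $\d\alpha$ is again a gauged poly-$\log$-homogeneous distribution: by Leibniz, $\d\alpha$ is $\d_z\alpha_0$ together with the $\log$-homogeneous pieces $r^{d_\iota+z}(\ln r)^{l_\iota+1}\tilde\alpha_\iota(z)$ and $r^{d_\iota+z}(\ln r)^{l_\iota}\d_z\tilde\alpha_\iota(z)$ over $\iota\in I$ (after merging any pieces that share a degree–order pair). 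The degrees are unchanged, so $I_0$ stays empty and the $\ell_2$-condition survives up to a bounded multiplicity factor; only finitely many new logarithmic orders occur per degree; $\d_z\alpha_0\in L_1$ near $\{\Re(z)\le0\}$ by a Cauchy estimate on the holomorphic family; and unconditional convergence of $\sum_{\iota}\d_z\tilde\alpha_\iota(z)$ in $L_1(M)$ follows the same way from Cauchy's integral formula on a small circle about $z$. Then, in the half-plane $\Re(z)\ll0$ where $\int_{\rn_{\ge1}\times M}\alpha(z)\,d\vol_{\rn_{\ge1}\times M}$ converges absolutely (using that $(\Re(d_\iota))_{\iota\in I}$ is bounded above), differentiation under the integral is justified by dominated convergence and gives $\d_z\zeta(\alpha)(z)=\int_{\rn_{\ge1}\times M}\d_z\alpha(z)\,d\vol_{\rn_{\ge1}\times M}=\zeta(\d\alpha)(z)$ there, hence everywhere by uniqueness of meromorphic continuation (a direct check against the explicit formula, using that $c_\iota'(z)$ is the coefficient attached to degree $d_\iota$ and order $l_\iota+1$ and that $\res$ commutes with $\d_z$, confirms this). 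Iterating gives $\d_z^n\zeta(\alpha)=\zeta(\d^n\alpha)$, and since $\d^n\alpha$ still has empty $I_0$ the right side is holomorphic at $0$, so evaluating at $0$ identifies the Taylor coefficients.

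For the second assertion, $\fp_0\beta=\beta_0+\sum_{\iota\in I'\setminus I'_0}\beta_\iota$ has empty $I_0$ by construction, so the first part yields $\zeta(\fp_0\beta)(z)=\sum_{n\in\nn_0}\tfrac{1}{n!}\zeta(\d^n\fp_0\beta)(0)z^n$ near $0$; it therefore suffices to find a gauge $\hat\beta$ with $\zeta(\hat\beta)=\zeta(\fp_0\beta)$ near $0$. I would let $\hat\beta$ coincide with $\beta$ outside the finite set $I'_0$ and, for $\iota\in I'_0$, replace $\beta_\iota$ by its $\Mp$-gauge $\hat\beta_\iota(z)(r,\nu):=r^z\beta_\iota(0)(r,\nu)=r^{d_\iota+z}(\ln r)^{l_\iota}\tilde\beta_\iota(0)(\nu)$. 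This $\hat\beta$ is still a gauged poly-$\log$-homogeneous distribution (a finite, degree-preserving modification) with $\hat\beta(0)=\beta(0)$, hence a legitimate re-gauging; and for $\iota\in I'_0$ one has $\res\hat\beta_\iota(z)=\int_M\tilde\beta_\iota(0)\,d\vol_M=\res\beta_\iota(0)=0$ by the no-structural-singularity hypothesis, so the summand $c_\iota(z)\res\hat\beta_\iota(z)$ vanishes identically even though $c_\iota$ has a pole at $0$. Hence $\zeta(\hat\beta)(z)=\tau_0(z)+\sum_{\iota\in I'\setminus I'_0}c_\iota(z)\res\beta_\iota(z)=\zeta(\fp_0\beta)(z)$, which together with the previous display completes the proof.

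I expect the main obstacle to be verifying that termwise $z$-differentiation preserves the class of gauged poly-$\log$-homogeneous distributions — in particular re-establishing the unconditional-convergence and $\ell_2$ conditions for $\d^n\alpha$ — together with the attendant justification of differentiating the $\zeta$-series term by term. In the intended generic case $I$ is finite and all of this is immediate, so the content lies entirely in the general case and rests on standard holomorphic-family and Cauchy-estimate arguments.
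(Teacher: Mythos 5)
Your proposal is correct and follows essentially the same route as the paper: the first assertion via holomorphy of $\zeta(\alpha)$ at zero together with $\d^n\zeta(\alpha)=\zeta(\d^n\alpha)$ (differentiation under the integral), and the second via replacing the critical terms $\beta_\iota$, $\iota\in I'_0$, by their $\Mp$-gauge so that $\res\hat\beta_\iota\equiv0$ and hence $\zeta(\hat\beta)=\zeta(\fp_0\beta)$. The extra work you do — checking that $\d^n\alpha$ remains a gauged poly-$\log$-homogeneous distribution with $I_0=\emptyset$ and justifying the interchange of derivative and integral — merely fills in details the paper leaves implicit.
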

\begin{proof}
  The first assertion is a direct consequence of the facts that the $n$\textsuperscript{th} Laurent coefficient of a holomorphic function $f$ is given by $\frac{\d^nf(0)}{n!}$ and
  \begin{align*}
    \d^n\zeta(\alpha)=\d^n\int_{\rn_{\ge1}\times M} \alpha\ d\vol_{\rn_{\ge1}\times M}=\int_{\rn_{\ge1}\times M} \d^n\alpha\ d\vol_{\rn_{\ge1}\times M}=\zeta(\d^n\alpha).
  \end{align*}
  Now 
  \begin{align*}
    \zeta\l(\hat\beta\r)(z)=\sum_{n\in\nn_0}\frac{\zeta(\d^n\fp_0\beta)(0)}{n!}z^n
  \end{align*}
  follows from the fact that we may choose an $\Mp$-gauge for $\beta_\iota$ with $\iota\in I'_0$ which yields $\zeta\l(\hat\beta\r)=\zeta(\fp_0\beta)$.

\end{proof}

$\Mp$-gauging will, furthermore, yield the following theorem which can be very handy with respect to actual computations. In particular, the fact that we can remove the influence of higher order derivatives of $\alpha_\iota$ with critical degree of homogeneity will imply that the generalized Kontsevich-Vishik density (which we will define in section \ref{sec:structural singularities}) is globally defined, i.e. for $\Mp$-gauged families with polyhomogeneous amplitudes the residue trace density and the generalized Kontsevich-Vishik density both exist globally (provided the kernel patches together).

\begin{theorem}
  Let $\alpha=\alpha_0+\sum_{\iota\in I}\alpha_\iota$ be a gauged poly-$\log$-homogeneous distribution. Then, there exists a gauge $\hat\alpha$ such that
  \begin{align*}
    \zeta\l(\hat\alpha\r)(z)=\sum_{\iota\in I_0}\frac{(-1)^{l_\iota+1}l_\iota!\res\alpha_\iota(0)}{z^{l_\iota+1}}+\sum_{n\in\nn_0}\frac{\zeta(\d^n\fp_0\alpha)(0)}{n!}z^n
  \end{align*}
  holds in a sufficiently small neighborhood of zero.
\end{theorem}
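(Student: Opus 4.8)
The plan is to combine the two halves of Proposition \ref{Laurent-finite-part} with a suitable $\Mp$-gauge applied only to the critical terms. First I would split the index set as $I = I_0 \dcup (I\setminus I_0)$ and define a new gauge $\hat\alpha$ by keeping $\alpha_0$ and the $\alpha_\iota$ for $\iota\in I\setminus I_0$ exactly as they are, while re-gauging each $\alpha_\iota$ with $\iota\in I_0$ to its $\Mp$-gauged form, i.e.\ replacing $\alpha_\iota(z)(r,\nu)$ by $r^z\alpha_\iota(0)(r,\nu) = r^{d_\iota+z}(\ln r)^{l_\iota}\tilde\alpha_\iota(0)(\nu)$. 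One must check that $\hat\alpha$ is still a gauged poly-$\log$-homogeneous distribution with $\hat\alpha(0)=\alpha(0)$: the five bulleted conditions depend only on the families $(d_\iota)$, $(l_\iota)$ and the $L_1$-behaviour of the $\tilde\alpha_\iota$, all of which are unchanged (for $\iota\in I_0$ we have merely frozen the holomorphic family $\tilde\alpha_\iota$ at its value in $0$, which cannot destroy unconditional $L_1$-convergence or the $\ell_2$-summability). Also $\fp_0\hat\alpha = \fp_0\alpha$ since the two gauges agree outside $I_0$.

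Next I would read off the $\zeta$-function from the Definition: by construction
\begin{align*}
  \zeta(\hat\alpha)(z) = \zeta(\fp_0\alpha)(z) + \sum_{\iota\in I_0}\frac{(-1)^{l_\iota+1}l_\iota!\,\res\alpha_\iota(0)}{\l(\dim M+d_\iota+z+1\r)^{l_\iota+1}},
\end{align*}
using that $\res\hat\alpha_\iota(z) = \res\alpha_\iota(0)$ is constant for the re-gauged critical terms and that $\zeta(\fp_0\hat\alpha) = \zeta(\fp_0\alpha)$ collects $\tau_0$ together with the noncritical sum. For $\iota\in I_0$ the definition of $I_0$ gives $\dim M + d_\iota + 1 = 0$, so $\dim M + d_\iota + z + 1 = z$ and the critical sum is exactly $\sum_{\iota\in I_0}(-1)^{l_\iota+1}l_\iota!\,\res\alpha_\iota(0)\,z^{-(l_\iota+1)}$, which is the principal part claimed. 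For the holomorphic part I invoke the first statement of Proposition \ref{Laurent-finite-part} applied to the gauged poly-$\log$-homogeneous distribution $\fp_0\alpha$ (which satisfies $(\fp_0\alpha)_0 = \emptyset$ in the sense that its critical index set is empty), giving $\zeta(\fp_0\alpha)(z) = \sum_{n\in\nn_0}\frac{\zeta(\d^n\fp_0\alpha)(0)}{n!}z^n$ near zero. Adding the two pieces yields the stated expansion.

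The only genuine point requiring care — and the step I expect to be the main obstacle — is verifying that the first part of Proposition \ref{Laurent-finite-part} is applicable to $\fp_0\alpha$, i.e.\ that $\fp_0\alpha$ really has empty critical index set and that differentiation under the integral sign is legitimate uniformly near $z=0$. The former is immediate from the Definition of $\fp_0$ (we have removed precisely the terms with $d_\iota = -\dim M - 1$), so $\zeta(\fp_0\alpha)$ has no pole at zero. The latter is the same holomorphy/dominated-convergence argument already used in the proof of Proposition \ref{Laurent-finite-part}: $\tau_0$ is holomorphic in a neighbourhood of $\{\Re(z)\le 0\}$ by hypothesis on $\alpha_0$, and each noncritical summand $c_\iota(z)\res\alpha_\iota(z)$ is holomorphic at $0$ with the tail estimated via the Kadets theorem exactly as before, so the series may be differentiated term by term. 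Everything else is bookkeeping: confirming $\hat\alpha(0)=\alpha(0)$ (obvious termwise), that the re-gauged family still meets the five defining conditions, and that $c_\iota(z) = (-1)^{l_\iota+1}l_\iota!\,z^{-(l_\iota+1)}$ for $\iota\in I_0$ by Lemma \ref{c-iota}.
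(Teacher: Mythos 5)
Your proposal is correct and follows essentially the paper's own route: the paper proves this theorem precisely by invoking Proposition \ref{Laurent-finite-part} together with an $\Mp$-gauge on the critical terms $\alpha_\iota$, $\iota\in I_0$, which is exactly your construction of $\hat\alpha$ (the paper merely leaves the bookkeeping — $c_\iota(z)=(-1)^{l_\iota+1}l_\iota!\,z^{-(l_\iota+1)}$ for critical $\iota$, constancy of $\res\hat\alpha_\iota$, and $\fp_0\hat\alpha=\fp_0\alpha$ — implicit). Your additional checks that the re-gauged family still satisfies the defining conditions and that $I_0$ is finite are sound and only make the one-line argument explicit.
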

\begin{proof}
  This follows directly from Proposition \ref{Laurent-finite-part} using an $\Mp$-gauge for $\alpha_\iota$ with $\iota\in I_0$.

\end{proof}

\begin{remark*}
  In general, there will be correction terms arising from the Laurent expansion of $\res\alpha_\iota$. Incorporating these yields
  \begin{align*}
    \zeta(\alpha)(z)=&\sum_{\iota\in I_0}\l(\frac{(-1)^{l_\iota+1}l_\iota!\res\alpha_\iota(0)}{z^{l_\iota+1}}+\sum_{n=1}^{l_\iota}\frac{(-1)^{l_\iota+1}l_\iota!\d^n\res\alpha_\iota(0)}{n!}z^{n-l_\iota-1}\r)\\
    &+\sum_{n\in\nn_0}\l(\frac{\zeta(\d^n\fp_0\alpha)(0)}{n!}+\sum_{\iota\in I_0}\frac{(-1)^{l_\iota+1}l_\iota!\d^{n+l_\iota+1}\res\alpha_\iota(0)}{(n+l_\iota+1)!}\r)z^n.
  \end{align*}

\end{remark*}

\begin{corollary}\label{corollary-difference-zeta}
  Let $\alpha=\alpha_0+\sum_{\iota\in I}\alpha_\iota$ and $\beta=\beta_0+\sum_{\iota\in I}\beta_\iota$ be two gauged poly-$\log$-homogeneous distributions with $\alpha(0)=\beta(0)$ and such that the degrees of homogeneity and logarithmic orders of $\alpha_\iota$ and $\beta_\iota$ coincide. Then,
  \begin{align*}
    \zeta(\alpha)(z)-\zeta(\beta)(z)=&\sum_{\iota\in I_0}\sum_{n=1}^{l_\iota}\frac{(-1)^{l_\iota+1}l_\iota!\d^n\res\l(\alpha_\iota-\beta_\iota\r)(0)}{n!}z^{n-l_\iota-1}\\
    &+\sum_{n\in\nn_0}\frac{\zeta(\d^n\fp_0\l(\alpha-\beta\r))(0)}{n!}z^n\\
    &+\sum_{n\in\nn_0}\sum_{\iota\in I_0}\frac{(-1)^{l_\iota+1}l_\iota!\d^{n+l_\iota+1}\res\l(\alpha_\iota-\beta_\iota\r)(0)}{(n+l_\iota+1)!}z^n
  \end{align*}
  holds in a sufficiently small neighborhood of zero.
\end{corollary}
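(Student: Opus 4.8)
The plan is to read off the identity from the full Laurent expansion of a single $\zeta$-function, reducing to $\gamma:=\alpha-\beta$ by linearity. First I would verify that $\gamma$ is again a gauged poly-$\log$-homogeneous distribution with the same index set $I$, hence the same critical set $I_0$: setting $\gamma_0:=\alpha_0-\beta_0$ and $\gamma_\iota:=\alpha_\iota-\beta_\iota$, the hypothesis that $\alpha_\iota$ and $\beta_\iota$ share the degree of homogeneity $d_\iota+z$ and the logarithmic order $l_\iota$ makes $\gamma_\iota$ $\log$-homogeneous of the same degree and order with $\tilde\gamma_\iota=\tilde\alpha_\iota-\tilde\beta_\iota$, so that $\res\gamma_\iota=\res\alpha_\iota-\res\beta_\iota$. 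The five defining conditions carry over (boundedness of $(\Re(d_\iota))_{\iota\in I}$, injectivity of $\iota\mapsto(d_\iota,l_\iota)$, local finiteness of the $d_\iota$, the $\ell_2$-condition, and unconditional $L_1(M)$-convergence of $\sum_{\iota\in I}\tilde\gamma_\iota(z)$, the last because a difference of two unconditionally convergent series converges unconditionally). Moreover $\zeta(\gamma)=\zeta(\alpha)-\zeta(\beta)$: on the left half-plane where $\int_{\rn_{\ge1}\times M}\alpha(z)\,d\vol_{\rn_{\ge1}\times M}$ and $\int_{\rn_{\ge1}\times M}\beta(z)\,d\vol_{\rn_{\ge1}\times M}$ converge absolutely this is linearity of the integral, and it persists after meromorphic continuation.

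Next I would check that $\gamma$ carries no structural singularity at zero. Since $\gamma(0)=\alpha(0)-\beta(0)=0$ and, for $\iota\in I_0$, the residue $\res\alpha_\iota(0)$ depends only on $\alpha(0)$ (this is the gauge-independence of the critical residues recorded above, which rests on the uniqueness of the $\log$-homogeneous expansion, the map $\iota\mapsto(d_\iota,l_\iota)$ being injective), one has $\res\gamma_\iota(0)=\res\alpha_\iota(0)-\res\beta_\iota(0)=0$ for every $\iota\in I_0$. Because $\alpha$ and $\beta$, hence $\fp_0\alpha$ and $\fp_0\beta$, share their index sets, $\fp_0\gamma=\fp_0\alpha-\fp_0\beta=\fp_0(\alpha-\beta)$, and this distribution has empty critical index set, so by Proposition \ref{Laurent-finite-part} each $\zeta(\d^n\fp_0\gamma)$ is holomorphic near zero and $\zeta(\d^n\fp_0(\alpha-\beta))(0)$ is well defined.

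Finally I would apply the full Laurent expansion of $\zeta$ at zero (the formula in the remark preceding the statement) to $\gamma$. Since $\res\gamma_\iota(0)=0$ for $\iota\in I_0$, the leading principal terms $\frac{(-1)^{l_\iota+1}l_\iota!\res\gamma_\iota(0)}{z^{l_\iota+1}}$ vanish, and what remains is
\begin{align*}
  \zeta(\gamma)(z)=&\sum_{\iota\in I_0}\sum_{n=1}^{l_\iota}\frac{(-1)^{l_\iota+1}l_\iota!\d^n\res\gamma_\iota(0)}{n!}z^{n-l_\iota-1}+\sum_{n\in\nn_0}\frac{\zeta(\d^n\fp_0\gamma)(0)}{n!}z^n\\
  &+\sum_{n\in\nn_0}\sum_{\iota\in I_0}\frac{(-1)^{l_\iota+1}l_\iota!\d^{n+l_\iota+1}\res\gamma_\iota(0)}{(n+l_\iota+1)!}z^n
\end{align*}
in a sufficiently small neighborhood of zero; substituting $\zeta(\gamma)=\zeta(\alpha)-\zeta(\beta)$, $\res\gamma_\iota=\res(\alpha_\iota-\beta_\iota)$ and $\fp_0\gamma=\fp_0(\alpha-\beta)$ gives the assertion. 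Given the results already in hand there is no real obstacle here; the one step that genuinely needs care is the one used in the second paragraph --- that $\alpha(0)=\beta(0)$ forces $\res\alpha_\iota(0)=\res\beta_\iota(0)$ for $\iota\in I_0$ --- where one invokes uniqueness of the $\log$-homogeneous expansion while keeping track of the possibly infinitely many summands with equal $\Re(d_\iota)$; the rest is linearity together with the re-indexing already carried out in the preceding remark.
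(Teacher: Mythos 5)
Your argument is correct and matches the paper's intended derivation: the corollary is read off from the Laurent expansion with correction terms given in the preceding remark, combined with linearity of $\zeta$, $\res$, and $\fp_0$, and with the gauge-independence of the critical residues so that $\res(\alpha_\iota-\beta_\iota)(0)=0$ for $\iota\in I_0$ and the $z^{-l_\iota-1}$ terms drop out. Whether one applies that expansion to the single distribution $\gamma=\alpha-\beta$, as you do, or subtracts the expansions for $\alpha$ and $\beta$ separately is immaterial.
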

In section \ref{lagrangian}, we will see that Corollary \ref{corollary-difference-zeta} applied to pseudo-differential operators implies many well-known formulae, e.g. \cite{kontsevich-vishik}*{equation (2.21)}, \cite{paycha}*{equation (9)}, and \cite{paycha-scott}*{equation (2.20)}.

\begin{example*}
  Let $\alpha=\alpha_0+\sum_{\iota\in I}\alpha_\iota$ and $\beta=\beta_0+\sum_{\iota\in I}\beta_\iota$ be two gauged poly-homogeneous distributions with $\alpha(0)=\beta(0)$ and such that the degrees of homogeneity of $\alpha_\iota$ and $\beta_\iota$ coincide. Then, $\#I_0\le1$ and (because) all $l_\iota$ are zero. Hence,
  \begin{align*}
    \zeta(\alpha)(z)=&\sum_{\iota\in I_0}\frac{-\res\alpha_\iota(0)}{z}+\sum_{n\in\nn_0}\l(\frac{\zeta(\d^n\fp_0\alpha)(0)}{n!}-\sum_{\iota\in I_0}\frac{\d^{n+1}\res\alpha_\iota(0)}{(n+1)!}\r)z^n
  \end{align*}
  and
  \begin{align*}
    \zeta(\alpha)(z)-\zeta(\beta)(z)=&\sum_{n\in\nn_0}\l(\frac{\zeta(\d^n\fp_0\l(\alpha-\beta\r))(0)}{n!}-\sum_{\iota\in I_0}\frac{\d^{n+1}\res\l(\alpha_\iota-\beta_\iota\r)(0)}{(n+1)!}\r)z^n
  \end{align*}
  holds in a sufficiently small neighborhood of zero. This shows that the residue trace $-\sum_{\iota\in I_0}\res\alpha_\iota(0)$ is well-defined and independent of the gauge for poly-homogeneous distributions. Higher orders of the Laurent expansion depend on the gauge. 

  Furthermore, $\zeta(\alpha)-\zeta(\beta)$ is holomorphic in a neighborhood of zero and
  \begin{align*}
    \l(\zeta(\alpha)-\zeta(\beta)\r)(0)=&\zeta(\fp_0\l(\alpha-\beta\r))(0)-\sum_{\iota\in I_0}\d\res\l(\alpha_\iota-\beta_\iota\r)(0)\\
    =&\ubr{\zeta(\fp_0\alpha)(0)-\zeta(\fp_0\beta)(0)}_{=0}-\sum_{\iota\in I_0}\d\res\l(\alpha_\iota-\beta_\iota\r)(0)\\
    =&-\sum_{\iota\in I_0}\d\res\l(\alpha_\iota-\beta_\iota\r)(0).
  \end{align*}
  Defining $\gamma_\iota(z):=\frac{\alpha_\iota(z)-\beta_\iota(z)}{z}$ and $\gamma(z):=\frac{\alpha(z)-\beta(z)}{z}$ we, thus, obtain
  \begin{align*}
    \l(\zeta(\alpha)-\zeta(\beta)\r)(0)=&-\sum_{\iota\in I_0}\d\res\l(\alpha_\iota-\beta_\iota\r)(0)
    =-\sum_{\iota\in I_0}\res\gamma_\iota(0)
    =\res_0\zeta(\gamma).
  \end{align*}
  Since $\res\gamma_\iota(0)\ne0$ implies that it is independent of gauge, we obtain that $\res_0\zeta(\gamma)$ is independent of gauge which directly yields
  \begin{align*}
    \l(\zeta(\alpha)-\zeta(\beta)\r)(0)=\res_0\zeta(\gamma)=\res_0\zeta\l(\d(\alpha-\beta)\r).
  \end{align*}
  In other words, $\l(\zeta(\alpha)-\zeta(\beta)\r)(0)$ is a trace residue.

\end{example*}

\begin{theorem}[Laurent expansion of $\zeta(\alpha)$] \label{Laurent-distrib}
  Let $\alpha=\alpha_0+\sum_{\iota\in I}\alpha_\iota$ be a gauged poly-$\log$-homogeneous distribution. Then,
  \begin{align*}
    \zeta(\alpha)(z)=&\sum_{\iota\in I_0}\sum_{n=0}^{l_\iota}\frac{(-1)^{l_\iota+1}l_\iota!\int_M\d^n\tilde\alpha_\iota(0)d\vol_M}{n!\ z^{l_\iota+1-n}}
    +\sum_{n\in\nn_0}\frac{\int_{\rn_{\ge1}\times M}\d^n\alpha_0(0)d\vol_{\rn_{\ge1}\times M}}{n!}z^n\\
    &+\sum_{n\in\nn_0}\sum_{\iota\in I\setminus I_0}\sum_{j=0}^n\frac{(-1)^{l_\iota+j+1}(l_\iota+j)!\int_M\d^{n-j}\tilde\alpha_\iota(0)d\vol_M}{n!(\dim M+d_\iota+1)^{l_\iota+j+1}}z^n\\
    &+\sum_{n\in\nn_0}\sum_{\iota\in I_0}\frac{(-1)^{l_\iota+1}l_\iota!\int_M\d^{n+l_\iota+1}\tilde\alpha_\iota(0)d\vol_M}{(n+l_\iota+1)!}z^n
  \end{align*}
  holds in a sufficiently small neighborhood of zero.

  In particular, if $\alpha$ is poly-homogeneous, we obtain
  \begin{align*}
    \zeta(\alpha)(z)=&\sum_{\iota\in I_0}\frac{-\int_M\alpha_\iota(0)d\vol_M}{z}
    +\sum_{n\in\nn_0}\frac{\int_{\rn_{\ge1}\times M}\d^n\alpha_0(0)d\vol_{\rn_{\ge1}\times M}}{n!}z^n\\
    &+\sum_{n\in\nn_0}\sum_{\iota\in I\setminus I_0}\sum_{j=0}^n\frac{(-1)^{j+1}j!\int_M\d^{n-j}\alpha_\iota(0)d\vol_M}{n!(\dim M+d_\iota+1)^{j+1}}z^n\\
    &+\sum_{n\in\nn_0}\sum_{\iota\in I_0}\frac{-\int_M\d^{n+1}\alpha_\iota(0)d\vol_M}{(n+1)!}z^n
  \end{align*}
  in a sufficiently small neighborhood of zero.
\end{theorem}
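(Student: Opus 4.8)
The plan is to read the Laurent expansion directly off the closed form of $\zeta(\alpha)$ supplied by its definition together with Lemma \ref{c-iota}, namely
\begin{align*}
  \zeta(\alpha)(z)=\tau_0(z)+\sum_{\iota\in I}c_\iota(z)\res\alpha_\iota(z),\qquad c_\iota(z)=\frac{(-1)^{l_\iota+1}l_\iota!}{\l(\dim M+d_\iota+z+1\r)^{l_\iota+1}},
\end{align*}
where $\tau_0(z)=\int_{\rn_{\ge1}\times M}\alpha_0(z)\,d\vol_{\rn_{\ge1}\times M}$ and $\res\alpha_\iota(z)=\int_M\tilde\alpha_\iota(z)\,d\vol_M$, and then to expand each summand at $z=0$. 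For $\iota\in I\setminus I_0$ the factor $c_\iota$ is holomorphic at $0$, whereas for $\iota\in I_0$ one has $\dim M+d_\iota+1=0$, so $c_\iota(z)=(-1)^{l_\iota+1}l_\iota!\,z^{-(l_\iota+1)}$ has a pole of order $l_\iota+1$ there. By the finite-multiplicity hypothesis the set $I_0$ is finite, and since only finitely many of the $d_\iota$ lie in any bounded region (the $\ell_2$-consequence recorded just after Lemma \ref{c-iota}), the number $\delta_0:=\inf_{\iota\in I\setminus I_0}\betr{\dim M+d_\iota+1}$ is strictly positive; the whole expansion will be carried out on the punctured disc $0<\betr{z}<\delta_0/2$.

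Next I would Taylor-expand the three ingredients on that disc. Since $\alpha_0$ is an $L_1$-valued holomorphic family near $\l\{\Re(z)\le0\r\}$, differentiation under the integral sign gives $\tau_0(z)=\sum_{n\in\nn_0}\frac{z^n}{n!}\int_{\rn_{\ge1}\times M}\d^n\alpha_0(0)\,d\vol_{\rn_{\ge1}\times M}$ — this is the second group of terms in the assertion — and likewise $\res\alpha_\iota(z)=\sum_{m\in\nn_0}\frac{z^m}{m!}\int_M\d^m\tilde\alpha_\iota(0)\,d\vol_M$ for every $\iota\in I$. For $\iota\in I\setminus I_0$ the binomial series yields on $\betr{z}<\delta_0$
\begin{align*}
  c_\iota(z)=\sum_{j\in\nn_0}\frac{(-1)^{l_\iota+j+1}(l_\iota+j)!}{j!\,\l(\dim M+d_\iota+1\r)^{l_\iota+j+1}}\,z^j,
\end{align*}
so the Cauchy product of this with the series for $\res\alpha_\iota$, read off in the coefficient of $z^n$, produces the third group of terms. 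For $\iota\in I_0$ one multiplies the single monomial $(-1)^{l_\iota+1}l_\iota!\,z^{-(l_\iota+1)}$ by the Taylor series of $\res\alpha_\iota$: the summands $m=0,\dots,l_\iota$ give the negative powers $z^{m-l_\iota-1}$ (the first group) and the summands $m=n+l_\iota+1$, $n\in\nn_0$, give the non-negative powers $z^n$ (the fourth group). Summing the four groups yields the stated expansion; the poly-homogeneous statement is then the specialisation $l_\iota=0$, in which case injectivity of $\iota\mapsto(d_\iota,l_\iota)$ forces $\#I_0\le1$ and the inner sum $\sum_{n=0}^{l_\iota}$ reduces to its $n=0$ term. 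Alternatively, one may obtain the non-negative-power part from Proposition \ref{Laurent-finite-part} applied to $\fp_0\alpha$, add the explicit principal parts of the $c_\iota$ for $\iota\in I_0$, and expand $\zeta(\d^n\fp_0\alpha)(0)$ by the same Cauchy-product bookkeeping.

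The only step that is not purely formal, and the one I expect to be the main obstacle, is justifying that the (possibly infinite) sum $\sum_{\iota\in I}$ may be interchanged with the reading-off of $z^n$-coefficients, that is, that $\sum_{\iota\in I\setminus I_0}c_\iota(z)\res\alpha_\iota(z)$ is a genuinely convergent power series on $0<\betr{z}<\delta_0/2$ which can be rearranged and differentiated termwise. Here I would run the estimate of the absolute-convergence lemma above with all bounds made locally uniform in $z$: on the disc the $c_\iota$ with $\iota\in I\setminus I_0$ are uniformly bounded and have $j$-th Taylor coefficients dominated by $C\,(\delta_0/2)^{-j}$ uniformly in $\iota$ (because of the lower bound $\delta_0$ on the distance to their poles), while $(c_\iota(z))_{\iota\in I}\in\ell_2(I)$ and $\l(\norm{\tilde\alpha_\iota(z)}_{L_1(M)}\r)_{\iota\in I}\in\ell_2(I)$ hold locally uniformly in $z$ by a Cauchy estimate over a slightly larger disc together with the quoted theorem of Kadets. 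These facts give normal convergence of $\sum_{\iota\in I\setminus I_0}c_\iota\res\alpha_\iota$ on the disc, which legitimises the rearrangement; the finitely many $\iota\in I_0$ contribute their explicit principal and Taylor parts directly, and $\tau_0$ is holomorphic by hypothesis.
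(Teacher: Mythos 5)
Your argument follows essentially the same route as the paper's: both read the expansion off the closed form $\zeta(\alpha)(z)=\tau_0(z)+\sum_{\iota\in I}c_\iota(z)\res\alpha_\iota(z)$ from Lemma \ref{c-iota}, Taylor-expanding $\tau_0$ and $\res\alpha_\iota$ at zero, with the finitely many critical indices giving the explicit principal part plus correction terms and the non-critical indices giving products of power series; the paper merely packages the non-negative powers as $\zeta(\d^n\fp_0\alpha)(0)/n!$ via Proposition \ref{Laurent-finite-part}, which is the same Leibniz/Cauchy bookkeeping, and your justification of the rearrangement of the sum over $\iota$ is if anything more explicit than the paper's.

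One claim you make is not literally true and should be flagged. The Cauchy product of $c_\iota(z)=\sum_{j\in\nn_0}\frac{(-1)^{l_\iota+j+1}(l_\iota+j)!}{j!\,(\dim M+d_\iota+1)^{l_\iota+j+1}}z^j$ with $\res\alpha_\iota(z)=\sum_{m\in\nn_0}\frac{\d^m\res\alpha_\iota(0)}{m!}z^m$ gives, as coefficient of $z^n$,
\begin{align*}
  \sum_{j=0}^n\frac{(-1)^{l_\iota+j+1}(l_\iota+j)!\int_M\d^{n-j}\tilde\alpha_\iota(0)\,d\vol_M}{j!\,(n-j)!\,(\dim M+d_\iota+1)^{l_\iota+j+1}},
\end{align*}
i.e. the printed third group with each summand multiplied by ${n\choose j}$, so your computation does not reproduce the display as stated. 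The discrepancy originates in the paper's own proof: the Leibniz formula $\d^n\beta(z)(r,\xi)=\sum_{j=0}^n{n\choose j}r^{d+z}(\ln r)^{l+j}\d^{n-j}\tilde\beta(z)(\xi)$ carries the binomial factor, but it is dropped in the subsequent formula for $\zeta(\d^n\fp_0\alpha)(z)$ and hence from the theorem. The difference is invisible for $n\le1$ but real for $n\ge2$: for a single non-critical homogeneous term with $A:=\dim M+d_\iota+1$ and $R:=\res\alpha_\iota$, the coefficient of $z^2$ in $c_\iota R$ is $-\frac{R''(0)}{2A}+\frac{R'(0)}{A^2}-\frac{R(0)}{A^3}$, whereas the printed formula gives $\frac{R'(0)}{2A^2}$ for the middle term. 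Your version is the correct one; in the write-up you should either record the coefficient you actually obtain (with $j!(n-j)!$ in the denominator, equivalently insert ${n\choose j}$ into the printed display) or note the missing binomial factor explicitly, rather than asserting agreement with the statement as printed. Everything else — the principal part and the correction terms for $\iota\in I_0$, the $\tau_0$ series, the positivity of your $\delta_0$, and the poly-homogeneous specialisation with $\#I_0\le1$ — checks out.
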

\begin{proof}
  Note that having a gauged $\log$-homogeneous distribution 
  \begin{align*}
    \beta(z)(r,\xi)=r^{d+z}(\ln r)^l\tilde\beta(z)(\xi)
  \end{align*}
  the residue $\res\beta=\int_M\tilde\beta\ d\vol_M$ does not depend on the logarithmic order. Hence, we may assume without loss of generality that $l=0$ and we had a gauged homogeneous distribution in the first place, i.e. replace $\beta$ by
  \begin{align*}
    \hat\beta(z)(r,\xi)=r^{d+z}\tilde\beta(z)(\xi)
  \end{align*}
  Then, we observe
  \begin{align*}
    \d^n\beta(z)(r,\xi)=&\sum_{j=0}^n{n\choose j}r^{d+z}(\ln r)^{l+j}\d^{n-j}\tilde\beta(z)(\xi)
  \end{align*}
  and
  \begin{align*}
    \d^n\tilde\beta(z)(\xi)=&\d^n\l(x\mapsto r^{-d-x}\hat\beta(x)(\xi)\r)(z)=\sum_{j=0}^n{n\choose j}r^{-d-z}(-\ln r)^j\d^{n-j}\hat\beta(z)(r,\xi)
  \end{align*}
  for every $n\in\nn_0$, $r\in\rn_{\ge1}$, and $\xi\in M$. In particular, for $r=1$, we deduce
  \begin{align*}
    \d^n\tilde\beta(z)=&\d^{n}\hat\beta(z)|_M,
  \end{align*}
  i.e.
  \begin{align*}
    \d^n\res\beta=\d^n\int_M\tilde\beta\ d\vol_M=\int_M\d^n\tilde\beta\ d\vol_M=\int_M\d^n\hat\beta\ d\vol_M.
  \end{align*}
  Especially, for $\beta$ homogeneous, we have $\hat\beta=\beta$ and, therefore,
  \begin{align*}
    \d^n\res\beta=\int_M\d^n\tilde\beta\ d\vol_M=\int_M\d^n\hat\beta\ d\vol_M=\int_M\d^n\beta\ d\vol_M.
  \end{align*}
  Hence,
  \begin{align*}
    \begin{aligned}
      &\zeta(\d^n\fp_0\alpha)(z)\\
      =&\int_{\rn_{\ge1}\times M}\d^n\alpha_0(z)d\vol_{\rn_{\ge1}\times M}
      +\sum_{\iota\in I\setminus I_0}\sum_{j=0}^n\frac{(-1)^{l_\iota+j+1}(l_\iota+j)!\int_M\d^{n-j}\tilde\alpha_\iota(z)d\vol_M}{(\dim M+d_\iota+z+1)^{l_\iota+j+1}}.
    \end{aligned}
  \end{align*}
  This directly yields
  \begin{align*}
    \zeta(\alpha)(z)=&\sum_{\iota\in I_0}\l(\frac{(-1)^{l_\iota+1}l_\iota!\int_M\tilde\alpha_\iota(0)d\vol_M}{z^{l_\iota+1}}+\sum_{n=1}^{l_\iota}\frac{(-1)^{l_\iota+1}l_\iota!\int_M\d^n\tilde\alpha_\iota(0)d\vol_M}{n!\ z^{l_\iota+1-n}}\r)\\
    &+\sum_{n\in\nn_0}\l(\frac{\zeta(\d^n\fp_0\alpha)(0)}{n!}+\sum_{\iota\in I_0}\frac{(-1)^{l_\iota+1}l_\iota!\int_M\d^{n+l_\iota+1}\tilde\alpha_\iota(0)d\vol_M}{(n+l_\iota+1)!}\r)z^n\\
    =&\sum_{\iota\in I_0}\sum_{n=0}^{l_\iota}\frac{(-1)^{l_\iota+1}l_\iota!\int_M\d^n\tilde\alpha_\iota(0)d\vol_M}{n!\ z^{l_\iota+1-n}}
    +\sum_{n\in\nn_0}\frac{\int_{\rn_{\ge1}\times M}\d^n\alpha_0(0)d\vol_{\rn_{\ge1}\times M}}{n!}z^n\\
    &+\sum_{n\in\nn_0}\sum_{\iota\in I\setminus I_0}\sum_{j=0}^n\frac{(-1)^{l_\iota+j+1}(l_\iota+j)!\int_M\d^{n-j}\tilde\alpha_\iota(0)d\vol_M}{n!(\dim M+d_\iota+1)^{l_\iota+j+1}}z^n\\
    &+\sum_{n\in\nn_0}\sum_{\iota\in I_0}\frac{(-1)^{l_\iota+1}l_\iota!\int_M\d^{n+l_\iota+1}\tilde\alpha_\iota(0)d\vol_M}{(n+l_\iota+1)!}z^n\\
    =&\sum_{\iota\in I_0}\sum_{n=0}^{l_\iota}\frac{(-1)^{l_\iota+1}l_\iota!\int_M\d^n\tilde\alpha_\iota(0)d\vol_M}{n!\ z^{l_\iota+1-n}}
    +\sum_{n\in\nn_0}\frac{\int_{\rn_{\ge1}\times M}\d^n\alpha_0(0)d\vol_{\rn_{\ge1}\times M}}{n!}z^n\\
    &+\sum_{n\in\nn_0}\sum_{\iota\in I\setminus I_0}\sum_{j=0}^n\frac{(-1)^{l_\iota+j+1}(l_\iota+j)!\int_M\d^{n-j}\tilde\alpha_\iota(0)d\vol_M}{n!(\dim M+d_\iota+1)^{l_\iota+j+1}}z^n\\
    &+\sum_{n\in\nn_0}\sum_{\iota\in I_0}\frac{(-1)^{l_\iota+1}l_\iota!\int_M\d^{n+l_\iota+1}\tilde\alpha_\iota(0)d\vol_M}{(n+l_\iota+1)!}z^n.
  \end{align*}
\end{proof}

\begin{remark*}
  Closely related to the notion of $\zeta$-regularized traces are $\zeta$-determinants. Let $\alpha=\alpha_0+\sum_{\iota\in I}\alpha_\iota$ be a gauged poly-$\log$-homogeneous distribution such that $\zeta(\alpha)$ is holomorphic in a neighborhood of zero. Then, we define the generalized $\zeta$-determinant 
  \begin{align*}
    \det_\zeta(\alpha):=\exp\l(\zeta(\alpha)'(0)\r).
  \end{align*}
  This generalized $\zeta$-determinant reduces to the $\zeta$-determinants as studied by Kontsevich and Vishik in \cite{kontsevich-vishik,kontsevich-vishik-geometry}. In other words, we do not expect it to be multiplicative if $\alpha$ corresponds to a general Fourier Integral Operator. Though an interesting question, we will not study classes of families of Fourier Integral Operators satisfying the multiplicative property, here.

\end{remark*}

\subsection{Remark on more general gauged poly-$\log$-homogeneous distributions}\label{more general distributions}
In many applications, considering gauges $d_\iota(z)$ which are not of the form $d_\iota(0)+z$ is important. However, it can be shown that such gauges and the corresponding $\zeta$-functions are germ equivalent to the case $d_\iota(0)+z$ provided that $d_\iota(z)$ is not germ equivalent to $-\dim M-1$ (in that case, the $\zeta$-function won't exist at all). Thus, we assume without loss of generality that we are working with gauged poly-$\log$-homogeneous distributions of the form above. For more detail, please refer to \cite{hartung-phd}*{chapter 3} or \cite{hartung-scott}*{chapter 2}.

\section{Application to gauged Lagrangian distributions}\label{lagrangian}
The objective for this section is to apply the Laurent expansion of gauged poly-$\log$-homogeneous distributions from section \ref{gauged-poly-log-homogeneous-distributions} to gauged Lagrangian distributions and Fourier Integral Operator traces; thus, extending Guillemin's work \cite{guillemin-lagrangian,guillemin-residue-traces} on the residue trace of Fourier Integral Operators. 

If we consider a dual pair $\langle u(z),f\rangle$ where $u:\ \cn\to I(X\times X,\Lambda)$ is a gauged Lagrangian distribution and $f\in I(X\times X,\hat\Lambda)$ (cf. \cite{guillemin-lagrangian} and \cite{hoermander-books}*{chapter 25}) such that $\Lambda$ and $\hat\Lambda$ intersect cleanly at $\gamma$, then \cite{hoermander-books}*{Theorem 21.2.10} yields homogeneous symplectic coordinates $(x,\xi)$ near $\gamma$ such that $\gamma=(1,0,\ldots,0)$, $\Lambda=\{(0,\xi)\}$, and $\hat\Lambda=\{(0,\hat x,\check\xi,0)\}$ where $x=(\check x,\hat x)$, $\check x=(x_1,\ldots,x_k)$, $\hat x=(x_{k+1},\ldots,x_{\dim X})$, $\xi=(\check \xi,\hat \xi)$, $\check \xi=(\xi_1,\ldots,\xi_k)$, $\hat \xi=(\xi_{k+1},\ldots,\xi_{\dim X})$, and $k=\dim \Lambda\cap\hat\Lambda$.

Since $f$ can be written as $f=P^t\delta_0$ for some pseudo-differential operator $P$, we obtain $\langle u(z),f\rangle=\langle Pu(z),\delta_0\rangle$ and, using the coordinates above, $Pu(z)$ is an oscillatory integral of the form
\begin{align}\label{eq:FIO-trace-diagonal}
  \int_{\rn[k]}e^{i\sum_{j=1}^k x_j\xi_j}a(z)\l(x_{k+1},\ldots,x_{\dim X},\xi_1,\ldots,\xi_k\r)d(\xi_1,\ldots,\xi_k),
\end{align}
i.e.
\begin{align*}
  \langle u(z),f\rangle=\int_{\rn[k]}a(z)\l(0,\xi\r)d\xi.
\end{align*}
As pointed out by Guillemin in the proof of \cite{guillemin-lagrangian}*{Theorem 2.1}, this is a gauged poly-$\log$-homogeneous distribution, i.e. the formalism developed above is applicable.

In order to treat
\begin{align*}
  \langle u(z),\delta_{\diag}\rangle=\int_X\int_{\rn[N]}e^{i\theta(x,\xi)}a(z)(x,\xi)\ d\xi\ d\vol_X=\int_{\rn[k]}\alpha(z)(\xi)d\xi,
\end{align*}
we will split off the integral
\begin{align*}
  \tilde\tau_0(z):=\int_{B_{\rn[k]}(0,1)}\alpha(z)(\xi)\ d\xi
\end{align*}
which defines a holomorphic function and we are left with
\begin{align*}
  \int_{\rn_{\ge1}\times\d B_{\rn[k]}}\alpha(z)(\xi)d\vol_{\rn_{\ge1}\times\d B_{\rn[k]}}(\xi)
\end{align*}
which is a distribution as considered in section \ref{gauged-poly-log-homogeneous-distributions}. In other words, if $A$ is a gauged Fourier Integral Operator with phase function $\theta$ and amplitude $a$ on $X$, then
\begin{align*}
  \zeta(A)(z)=&\ubr{\int_X\int_{B_{\rn[N]}(0,1)}e^{i\theta(x,x,\xi)}a(z)(x,x,\xi)\ d\xi\ d\vol_X(x)}_{=:\tau_0(A)(z)}\\
  &+\int_X\int_{\rn_{\ge1}\times\d B_{\rn[N]}}e^{i\theta(x,x,\xi)}a(z)(x,x,\xi)\ d\vol_{\rn_{\ge1}\times\d B_{\rn[N]}}(\xi)\ d\vol_X(x)
\end{align*}
exists and inherits all properties described in section \ref{gauged-poly-log-homogeneous-distributions}. 
\begin{theorem}\label{Laurent-zeta-FIO}
  If $a$ is poly-$\log$-homogeneous and $A_\iota$ the gauged Fourier Integral Operator with phase $\theta$ and amplitude $a_\iota$ then
  \begin{align*}
    \res A_\iota(z)=\int_{\d B_{\rn[N]}}\int_Xe^{i\theta(x,x,\xi)}\tilde a_\iota(z)(x,x,\xi)\ d\vol_X(x)\ d\vol_{\d B_{\rn[N]}}(\xi)
  \end{align*}
  and 
  \begin{align*}
    \begin{aligned}
      &\zeta(A)(z)\\
      =&\sum_{n\in\nn_0}\frac{\int_X\int_{B_{\rn[N]}(0,1)}e^{i\theta(x,x,\xi)}\d^na(0)(x,x,\xi)\ d\xi\ d\vol_X(x)}{n!}z^n\\
      &+\sum_{\iota\in I_0}\sum_{n=0}^{l_\iota}\frac{(-1)^{l_\iota+1}l_\iota!\int_{\Delta(X)\times\d B_{\rn[N]}}e^{i\theta}\d^n\tilde a_\iota(0)\ d\vol_{\Delta(X)\times\d B_{\rn[N]}}}{n!}z^{n-l_\iota-1}\\
      &+\sum_{n\in\nn_0}\frac{\int_{\rn_{\ge1}\times\d B_{\rn[N]}}\int_Xe^{i\theta(x,x,\xi)}\d^na_0(0)(x,x,\xi)\ d\vol_X(x)\ d\vol_{\rn_{\ge1}\times\d B_{\rn[N]}}(\xi)}{n!}z^n\\
      &+\sum_{n\in\nn_0}\sum_{\iota\in I\setminus I_0}\sum_{j=0}^n\frac{(-1)^{l_\iota+j+1}(l_\iota+j)!\int_{\Delta(X)\times\d B_{\rn[N]}}e^{i\theta}\d^{n-j}\tilde a_\iota(0)\ d\vol_{\Delta(X)\times\d B_{\rn[N]}}}{n!(N+d_\iota)^{l_\iota+j+1}}z^n\\
      &+\sum_{n\in\nn_0}\sum_{\iota\in I_0}\frac{(-1)^{l_\iota+1}l_\iota!\int_{\Delta(X)\times\d B_{\rn[N]}}e^{i\theta}\d^{n+l_\iota+1}\tilde a_\iota(0)\ d\vol_{\Delta(X)\times\d B_{\rn[N]}}}{(n+l_\iota+1)!}z^n
    \end{aligned}
  \end{align*}
  in a neighborhood of zero where $\Delta(X):=\{(x,y)\in X^2;\ x=y\}$. 
\end{theorem}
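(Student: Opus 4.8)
The plan is to apply Theorem~\ref{Laurent-distrib} to the gauged poly-$\log$-homogeneous distribution obtained from $\langle u(z),\delta_{\diag}\rangle$ after splitting off the unit-ball integral, and then add in the holomorphic contribution of that unit-ball integral. Concretely, write
\begin{align*}
  \zeta(A)(z)=\tau_0(A)(z)+\int_X\int_{\rn_{\ge1}\times\d B_{\rn[N]}}e^{i\theta(x,x,\xi)}a(z)(x,x,\xi)\ d\vol_{\rn_{\ge1}\times\d B_{\rn[N]}}(\xi)\ d\vol_X(x),
\end{align*}
and apply section~\ref{gauged-poly-log-homogeneous-distributions} to the second summand with $M=\d B_{\rn[N]}$, so that $\dim M=N-1$ and hence $\dim M+d_\iota+1=N+d_\iota$, which accounts for the denominators appearing in the claimed expansion. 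Under the decomposition $a=a_0+\sum_{\iota\in I}a_\iota$ of the amplitude into the $L_1$-remainder $a_0$ and the $\log$-homogeneous pieces $a_\iota$ of degree of homogeneity $d_\iota+z$ and logarithmic order $l_\iota$, the data of $\alpha$ are $\alpha_0(z)(r,\nu)=r^{N-1}\int_Xe^{i\theta(x,x,r\nu)}a_0(z)(x,x,r\nu)\ d\vol_X(x)$ and $\tilde\alpha_\iota(z)(\nu)=\int_Xe^{i\theta(x,x,\nu)}\tilde a_\iota(z)(x,x,\nu)\ d\vol_X(x)$ for $\nu\in\d B_{\rn[N]}$; that this is a gauged poly-$\log$-homogeneous distribution is exactly the observation of Guillemin recalled above. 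In particular, by Fubini,
\begin{align*}
  \res A_\iota(z)=\res\alpha_\iota(z)=\int_{\d B_{\rn[N]}}\tilde\alpha_\iota(z)\ d\vol_{\d B_{\rn[N]}}=\int_{\d B_{\rn[N]}}\int_Xe^{i\theta(x,x,\xi)}\tilde a_\iota(z)(x,x,\xi)\ d\vol_X(x)\ d\vol_{\d B_{\rn[N]}}(\xi),
\end{align*}
which is the first asserted formula.

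Next I would substitute this data into the formula of Theorem~\ref{Laurent-distrib}. Since the derivative $\d$ is taken with respect to the complex parameter $z$ and the phase $e^{i\theta}$ does not depend on $z$, one has $\d^n\bigl(e^{i\theta}\tilde a_\iota(0)\bigr)=e^{i\theta}\d^n\tilde a_\iota(0)$ and $\d^n\bigl(e^{i\theta}a_0(0)\bigr)=e^{i\theta}\d^na_0(0)$, so that, identifying $\Delta(X)\cong X$ and using Fubini again,
\begin{align*}
  \int_{\d B_{\rn[N]}}\d^n\tilde\alpha_\iota(0)\ d\vol_{\d B_{\rn[N]}}=\int_{\Delta(X)\times\d B_{\rn[N]}}e^{i\theta}\d^n\tilde a_\iota(0)\ d\vol_{\Delta(X)\times\d B_{\rn[N]}}
\end{align*}
and $\int_{\rn_{\ge1}\times\d B_{\rn[N]}}\d^n\alpha_0(0)\ d\vol=\int_{\rn_{\ge1}\times\d B_{\rn[N]}}\int_Xe^{i\theta}\d^na_0(0)\ d\vol_X\ d\vol$. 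Inserting these identities into the four sums of Theorem~\ref{Laurent-distrib}, and replacing $\dim M+d_\iota+1$ by $N+d_\iota$, reproduces verbatim the last four sums of the statement (the principal part from the $I_0$-sum in $z^{n-l_\iota-1}$, the $\alpha_0$-term from the second sum, the $I\setminus I_0$-term from the third, and the regularizing $I_0$-term from the fourth).

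It remains to deal with $\tau_0(A)(z)=\int_X\int_{B_{\rn[N]}(0,1)}e^{i\theta(x,x,\xi)}a(z)(x,x,\xi)\ d\xi\ d\vol_X(x)$. Since $X\times\overline{B_{\rn[N]}(0,1)}$ is compact, $e^{i\theta}$ is smooth, and $z\mapsto a(z)$ is a holomorphic family, differentiation under the integral shows that $\tau_0(A)$ is holomorphic near zero with $\d^n\tau_0(A)(0)=\int_X\int_{B_{\rn[N]}(0,1)}e^{i\theta}\d^na(0)\ d\xi\ d\vol_X$; its Taylor series is exactly the first sum in the claimed formula. Summing this with the expansion of the second summand and collecting powers of $z$ gives the asserted Laurent expansion in a neighborhood of zero.

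The step I expect to require the most care is the bookkeeping in the substitution: matching the four sums of Theorem~\ref{Laurent-distrib} term by term against the five sums of the statement --- one of which is the separately-handled $\tau_0(A)$ contribution --- keeping the signs, factorials, and exponents $l_\iota+j+1$ versus $l_\iota+1$ straight, and using that $\d$, being $\d_z$, commutes past the $z$-independent phase $e^{i\theta}$ so that it is precisely $e^{i\theta}\d^n\tilde a_\iota(0)$ that the substitution produces. Beyond this, there is no new analytic content: the existence of $\zeta(A)$ as a meromorphic function, the convergence of the series, and the differentiation-under-the-integral that underlies the Laurent coefficients are all already supplied by section~\ref{gauged-poly-log-homogeneous-distributions} and the holomorphy of $\tau_0(A)$.
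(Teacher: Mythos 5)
This is exactly the paper's own derivation: Theorem \ref{Laurent-zeta-FIO} is obtained there by splitting $\zeta(A)=\tau_0(A)+\int_X\int_{\rn_{\ge1}\times\d B_{\rn[N]}}e^{i\theta(x,x,\xi)}a(z)(x,x,\xi)\,d\vol_{\rn_{\ge1}\times\d B_{\rn[N]}}(\xi)\,d\vol_X(x)$, applying Theorem \ref{Laurent-distrib} with $M=\d B_{\rn[N]}$ (so $\dim M+d_\iota+1=N+d_\iota$), reading off $\res A_\iota$ by Fubini, commuting $\d_z$ past the $z$-independent phase, and expanding the holomorphic $\tau_0(A)$ into its Taylor series --- precisely your steps. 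The one ingredient you cite rather than prove, namely that the trace pairing with the phase attached is indeed a gauged poly-$\log$-homogeneous distribution in the sense of section \ref{gauged-poly-log-homogeneous-distributions} (Guillemin's observation), is likewise taken as given in the paper, so your proposal matches its proof in both structure and level of detail.
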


\begin{remark*}
  Appendix \ref{examples} contains examples applying Theorem \ref{Laurent-zeta-FIO} to the heat trace on a flat torus, as well as, $\zeta$-functions of fractional Laplacians and shifted fractional Laplacians on $\rn/_{2\pi\Zn}$.
\end{remark*}

For a poly-homogeneous $a$ this reduces to
\begin{align*}
  \zeta(A)(z)
  =&\sum_{n\in\nn_0}\frac{\int_X\int_{B_{\rn[N]}(0,1)}e^{i\theta(x,x,\xi)}\d^na(0)(x,x,\xi)\ d\xi\ d\vol_X(x)}{n!}z^n\\
  &-\sum_{\iota\in I_0}\int_{\Delta(X)\times\d B_{\rn[N]}}e^{i\theta}a_\iota(0)\ d\vol_{\Delta(X)\times\d B_{\rn[N]}}z^{-1}\\
  &+\sum_{n\in\nn_0}\frac{\int_{\Delta(X)\times(\rn_{\ge1}\times\d B_{\rn[N]})}e^{i\theta}\d^na_0(0)\ d\vol_{\Delta(X)\times(\rn_{\ge1}\times\d B_{\rn[N]})}}{n!}z^n\\
  &+\sum_{n\in\nn_0}\sum_{\iota\in I\setminus I_0}\sum_{j=0}^n\frac{(-1)^{j+1}j!\int_{\Delta(X)\times\d B_{\rn[N]}}e^{i\theta}\d^{n-j}a_\iota(0)\ d\vol_{\Delta(X)\times\d B_{\rn[N]}}}{n!(N+d_\iota)^{j+1}}z^n\\
  &+\sum_{n\in\nn_0}\sum_{\iota\in I_0}\frac{-\int_{\Delta(X)\times\d B_{\rn[N]}}e^{i\theta}\d^{n+1}a_\iota(0)\ d\vol_{\Delta(X)\times\d B_{\rn[N]}}}{(n+1)!}z^n,
\end{align*}
i.e.
\begin{align*}
  \zeta(A)(z)
  =&-\sum_{\iota\in I_0}\res A_\iota(0)z^{-1}-\sum_{n\in\nn_0}\sum_{\iota\in I_0}\frac{\res\d^{n+1}A_\iota(0)}{(n+1)!}z^n\\
  &+\sum_{n\in\nn_0}\frac{\int_X\int_{B_{\rn[N]}(0,1)}e^{i\theta(x,x,\xi)}\d^na(0)(x,x,\xi)\ d\xi\ d\vol_X(x)}{n!}z^n\\
  &+\sum_{n\in\nn_0}\frac{\int_{\Delta(X)\times(\rn_{\ge1}\times\d B_{\rn[N]})}e^{i\theta}\d^na_0(0)\ d\vol_{\Delta(X)\times(\rn_{\ge1}\times\d B_{\rn[N]})}}{n!}z^n\\
  &+\sum_{n\in\nn_0}\sum_{\iota\in I\setminus I_0}\sum_{j=0}^n\frac{(-1)^{j+1}j!\res\d^{n-j}A_\iota(0)}{n!(N+d_\iota)^{j+1}}z^n
\end{align*}
where $\d^n A_\iota$ is the gauged Fourier Integral Operator with phase $\theta$ and amplitude $\d^na_\iota$.

From this last formula and the knowledge that $\res A_\iota(0)$ is independent of the gauge we obtain the following well-known result (cf. \cite{guillemin-lagrangian}).

\begin{theorem}
  Let $A$ and $B$ be poly-homogeneous Fourier Integral Operators. Let $G_1$ and $G_2$ be gauged Fourier Integral Operators with $G_1(0)=AB$ and $G_2(0)=BA$. Then, 
  \begin{align*}
    \res_0\zeta(G_1)=\res_0\zeta(G_2),
  \end{align*}
  i.e. the residue of the $\zeta$-function is tracial and $A\mapsto\res_0\zeta\l(\hat A\r)$ is a well-defined trace where $\hat A$ is any choice of gauge for $A$.
\end{theorem}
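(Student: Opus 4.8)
The plan is to reduce the statement to the gauge-independence of the residue $\res_0\zeta$, which has essentially been established already in the preceding development. First I would recall that for a poly-homogeneous gauged Fourier Integral Operator $G$, the final displayed formula before the theorem expresses $\res_0\zeta(G) = -\sum_{\iota\in I_0}\res G_\iota(0)$, where $I_0$ indexes the terms of critical degree of homogeneity $d_\iota = -N$, and that Corollary \ref{KV-gauge-independence} together with the preceding corollary (part (iii)) shows each $\res G_\iota(0)$ with $\iota\in I_0$ is independent of the gauge. Hence $\res_0\zeta(G)$ depends only on $G(0)$, the underlying (ungauged) Fourier Integral Operator; so it suffices to prove the identity for \emph{one} convenient choice of gauges $G_1, G_2$ with $G_1(0) = AB$ and $G_2(0) = BA$.

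Next I would pass from Fourier Integral Operators to the abstract poly-log-homogeneous distributions of section \ref{gauged-poly-log-homogeneous-distributions} via the reduction recalled around equation \eqref{eq:FIO-trace-diagonal}: writing $\langle u(z), \delta_{\diag}\rangle$ as an integral $\int_{\rn_{\ge1}\times\d B}\alpha(z)\,d\vol$, the quantity $\res_0\zeta(G)$ becomes (up to the universal constant) $\int_{\d B_{\rn[N]}}\res\alpha_\iota$ summed over the critical $\iota$, i.e. the Wodzicki-type residue density integrated over the cosphere bundle. The tracial identity $\res_0\zeta(G_1) = \res_0\zeta(G_2)$ is then the statement that this residue density is a trace on the algebra of poly-homogeneous Fourier Integral Operators. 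I would invoke Guillemin's argument in \cite{guillemin-lagrangian} here: the residue is computed by a local symbolic formula (the integral of the critical-degree component of the amplitude over $\Delta(X)\times\d B_{\rn[N]}$), and the symbol of a product $AB$ is given by the usual composition/oscillatory-integral formula, under which the critical-degree term of the symbol of $AB$ and that of $BA$ differ by terms that are total derivatives (divergences) in the fibre variable, hence integrate to zero over the sphere $\d B_{\rn[N]}$. Concretely, one shows $\res\alpha_{AB} - \res\alpha_{BA} = \int_{\d B}(\text{divergence})$, so the two residues coincide.

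The main obstacle, and the step requiring the most care, is the composition/divergence computation: one must justify that the critical-homogeneity part of the amplitude of $AB$ is, modulo fibrewise divergences, symmetric in $A$ and $B$ — this is where the clean-intersection hypothesis on the Lagrangians and Hörmander's normal form \cite{hoermander-books}*{Theorem 21.2.10} enter, and where one genuinely uses that we are in the poly-\emph{homogeneous} (not merely poly-log-homogeneous) setting, since only then is $I_0$ a singleton with $l_\iota = 0$ and the residue a bona fide trace rather than a quasi-trace. Since the problem statement flags this as ``well-known (cf. \cite{guillemin-lagrangian})'', I would keep this step brief, citing Guillemin's residue-trace theorem for the algebra of Fourier Integral Operators associated to a fixed canonical relation, and emphasize only the two new ingredients supplied by section \ref{gauged-poly-log-homogeneous-distributions}: (i) that $\res_0\zeta(G)$ is gauge-independent, so the tracial property is a property of the ungauged operators, and (ii) that $A\mapsto\res_0\zeta(\hat A)$ is therefore well-defined independently of the choice of gauge $\hat A$, which is exactly the final assertion.
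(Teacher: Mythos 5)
Your first step---reducing everything to gauge-independence of $\res_0\zeta$, so that it suffices to verify the identity for one convenient pair of gauges---is exactly the paper's fact (i) and is fine. The gap is in how you then establish $\res_0\zeta(G_1)=\res_0\zeta(G_2)$ itself. You propose to do it symbolically: compose the amplitudes and show that the critical-degree term of the symbol of $AB$ differs from that of $BA$ by fibrewise divergences that integrate to zero over $\d B_{\rn[N]}$. That is the Wodzicki/pseudo-differential mechanism, and it does not transfer to general Fourier Integral Operators: $AB$ and $BA$ are associated with the composed canonical relations $\Gamma_1\circ\Gamma_2$ and $\Gamma_2\circ\Gamma_1$, which are in general different, and there is no asymptotic composition formula expressing the amplitude of the product as an expansion whose commutator defect is a sum of fibre divergences. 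So, as written, the crucial step is either unproven or collapses into ``cite Guillemin's theorem that the residue is tracial,'' which is precisely the statement to be proved (the paper's point in re-deriving it is that it now follows from the gauged-distribution formalism).

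The paper's actual argument for this step is much more elementary and avoids all symbol calculus: choose the particular gauges $G_1(z)=\hat A(z)B$ and $G_2(z)=B\hat A(z)$, where $\hat A$ is any gauge of $A$ alone. For $\Re(z)$ sufficiently small both operators are of trace class, and cyclicity of the genuine trace gives $\tr\l(\hat A(z)B\r)=\tr\l(B\hat A(z)\r)$ there; by uniqueness of meromorphic continuation $\zeta\l(\hat AB\r)=\zeta\l(B\hat A\r)$ as meromorphic functions, so in particular their residues at $0$ agree. Combined with your (correct) gauge-independence step, this yields $\res_0\zeta(G_1)=\res_0\zeta(G_2)$ for arbitrary gauges $G_1,G_2$ with $G_1(0)=AB$, $G_2(0)=BA$, and well-definedness of $A\mapsto\res_0\zeta\l(\hat A\r)$. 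I would replace your composition/divergence sketch (and the appeal to clean intersections and H\"ormander's normal form, which are not needed here) by this analytic-continuation argument.
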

\begin{proof}
  This is a direct consequence of the following two facts.
  \begin{enumerate}
  \item[(i)] $\res_0\zeta(G_1)=-\sum_{\iota\in I_0}\res (G_1)_\iota(0)$ is independent of the gauge.
  \item[(ii)] $\zeta\l(\hat AB\r)=\zeta\l(B\hat A\r)$ for any gauge $\hat A$ of $A$ because it is true for $\Re(z)$ sufficiently small.
  \end{enumerate}

\end{proof}
Similarly, for $I_0(AB)=\emptyset$, we obtain that $\zeta(AB)(0)=\zeta(BA)(0)$ where we used that $\zeta(\fp_0\alpha)(0)$ is independent of gauge. In other words, we may generalize the Kontsevich-Vishik trace to $\zeta(\fp_0A)(0)$ where $\fp_0A$ is the gauged Fourier Integral Operator with phase $\theta$ and amplitude $a-\sum_{\iota\in I_0}a_\iota$. In particular, we may also consider the regularized generalized determinant
\begin{align*}
  \det_{\fp}(A):=\exp{\zeta(\fp_0A)'(0)}
\end{align*}
where
\begin{align*}
  &\zeta(\fp_0A)(z)\\
  =&\sum_{n\in\nn_0}\frac{\int_X\int_{B_{\rn[N]}(0,1)}e^{i\theta(x,x,\xi)}\d^na(0)(x,x,\xi)\ d\xi\ d\vol_X(x)}{n!}z^n\\
  &+\sum_{n\in\nn_0}\frac{\int_{\Delta(X)\times\l(\rn_{\ge1}\times\d B_{\rn[N]}\r)}e^{i\theta}\d^na_0(0)\ d\vol_{\Delta(X)\times\l(\rn_{\ge1}\times\d B_{\rn[N]}\r)}}{n!}z^n\\
  &+\sum_{n\in\nn_0}\sum_{\iota\in I\setminus I_0}\sum_{j=0}^n\frac{(-1)^{l_\iota+j+1}(l_\iota+j)!\int_{\Delta(X)\times\d B_{\rn[N]}}e^{i\theta}\d^{n-j}\tilde a_\iota(0)\ d\vol_{\Delta(X)\times\d B_{\rn[N]}}}{n!(N+d_\iota)^{l_\iota+j+1}}z^n
\end{align*}
though we will not study this determinant, here. 

An important class of gauges are multiplicative gauges.

\begin{definition}
  Let $A$ be a Fourier Integral Operator and $G$ a gauged Fourier Integral Operator with $G(0)=1$ such that each $G(z)$ and all derivatives are composable with $A$. Then, we call $AG(\cdot)$ a multiplicative gauge of $A$.
\end{definition}

\begin{remark*}
  If we consider a canonical relation $\Gamma$ and the corresponding algebra of Fourier Integral Operators $\Ap_\Gamma$, then we may be inclined to search for multiplicative gauges in $\Ap_\Gamma$. Unfortunately, the identity will not be an element of $\Ap_\Gamma$, in general (otherwise, $\Gamma$ would need to contain the (graph of the) identity on $T^*X\setminus0$ which would imply that all pseudo-differential operators are in $\Ap_\Gamma$, as well). An appropriate candidate of an algebra to consider if looking for a multiplicative gauge should, therefore, be the unitalization $\Ap_\Gamma\oplus\cn$ of $\Ap_\Gamma$. Since derivatives should exists within the algebra and we might be interested in using a functional calculus, it may be necessary to also include an $L(L_2(X))$ closure of $\Ap_\Gamma\oplus\cn$.

We may, however, gauge with properly supported pseudo-differential operators $G(z)$ (cf. \cite{shubin}*{section 18.4}). 
\end{remark*}
Let $P$ be a gauged pseudo-differential operator. Then, we may also consider
\begin{align*}
  \langle P(z)u,f\rangle
\end{align*}
as a gauge. This is due to \cite{hoermander-books}*{Theorems 18.2.7 and 18.2.8}. In particular, if $f$ is a Lagrangian distribution, then it can be represented in the form $\int e^{i\langle x,\xi\rangle}a_f(x,\xi)d\xi$ which is nothing other than $P_f\delta_0$ where $P_f$ is the pseudo-differential operator with amplitude $a_f$. Hence, 
\begin{align*}
  \langle P(z)u,f\rangle=\langle P_f'P(z)u,\delta_0\rangle.
\end{align*}
For traces, though, a multiplicative gauge yields
\begin{align*}
  \zeta(A)(z)=\langle \gf(z)\circ k_A,\delta_{\diag}\rangle
\end{align*}
where $\gf(z)\circ k_A$ is the kernel of $G(z)A$ and $\fa\phi\in C(X):\ \delta_{\diag}(\phi)=\int_X\phi(x,x)dx$ (i.e. $\delta_{\diag}$ is the kernel of the identity).
\begin{example*}
  Suppose $u$ is an $\Mp$-gauged $\log$-homogeneous distribution. We, thus, obtain
  \begin{align*}
    u(0)(x)=&\tau_0(u(0))(x)+\int_{\rn[N]\setminus B_{\rn[N]}}e^{i\langle x,\xi\rangle}v(0)(\xi)=\tilde\tau_0(u(0))(x)+(P_u\delta_0)(x)
  \end{align*}
  where $P_u$ is a pseudo-differential operator with amplitude $p_u(x,\xi)=v(\xi)$ for $\xi\in\rn[n]\setminus B_{\rn[N]}$. Furthermore, the complex power $H^{z}$ with $H:=\sqrt{\betr\Delta}$ has the amplitude $p_z(x,\xi)=(2\pi)^{-N}\norm\xi_{\ell_2(N)}^z$ where $\betr\Delta$ is the (non-negative) Dirichlet Laplacian because $\betr\Delta^{-1}=\Fp[-1]\norm m_{\ell_2(N)}^{-2}\Fp$ where $m$ is the maximal multiplication operator with the argument on $L_2(\rn[N])$
  \begin{align*}
    D(m):&=\l\{f\in L_2(\rn[N]);\ \l(\rn[n]\ni\xi\mapsto \xi f(\xi)\in\cn[N]\r)\in L_2(\rn[N];\cn[N])\r\},\\
    m:&\ D(m)\sse L_2(\rn[N])\to L_2(\rn[N];\cn[N]);\ f\mapsto\l(\xi\mapsto \xi f(\xi)\r).
  \end{align*}
  $(-\Delta)^{-1}$ is well-known to be a compact operator. 

  Hence, let $r-1$ be its spectral radius. Then, the holomorphic functional calculus yields
  \begin{align*}
    H^z=&\l(\betr\Delta^{-1}\r)^{-\frac z2}=\frac{1}{2\pi i}\int_{r\d B_{\cn}}\lambda^{-\frac z2}\l(\lambda-(-\Delta)^{-1}\r)^{-1}d\lambda=\Fp[-1]\norm m_{\ell_2(N)}^z\Fp.
  \end{align*}
  The composition formula for pseudo-differential operators implies that $(2\pi)^{N}H^zP_u$ has the amplitude
  \begin{align*}
    \sum_{\alpha\in\nn[n]_0}\frac{1}{\alpha!}\d_2^\alpha \l((2\pi)^{N}p_z\r)(x,\xi)\ubr{(-i\d_1)^\alpha p_u(x,\xi)}_{=0\ \neht\ \alpha\ne0}=\norm\xi_{\ell_2(N)}^zv(0)(\xi)=v(z)(\xi).
  \end{align*}
  In other words, $u(z)\equiv (2\pi)^{N}H^{z}u(0)$ modulo whatever happens on $B_{\rn[N]}$.
 
\end{example*}
\begin{example*}
  Let $A$ be a poly-$\log$-homogeneous Fourier Integral Operator and $u$ a poly-$\log$-homogeneous distribution with $I_0(A)=I_0(u)=\emptyset$. Suppose $G$ and $P$ are exponential multiplicative gauges, that is,
  \begin{align*}
    G'(z)=G(z)G_0\qquad\text{and}\qquad P'(z)=P(z)P_0,
  \end{align*}
  for $A$ and $u$, respectively. Then
  \begin{align*}
    \zeta(GA)(z)=&\sum_{n\in\nn_0}\frac{\d^n\zeta(GA)(0)}{n!}z^n=\sum_{n\in\nn_0}\frac{\zeta(\d^nGA)(0)}{n!}z^n=\sum_{n\in\nn_0}\frac{\zeta(GG_0^nA)(0)}{n!}z^n
  \end{align*}
  and
  \begin{align*}
    \zeta(Pu)(z)=&\sum_{n\in\nn_0}\frac{\d^n\zeta(Pu)(0)}{n!}z^n=\sum_{n\in\nn_0}\frac{\zeta(\d^nPu)(0)}{n!}z^n=\sum_{n\in\nn_0}\frac{\zeta(PP_0^nu)(0)}{n!}z^n
  \end{align*}
  hold in sufficiently small neighborhoods of zero. 
\end{example*}

\section{Mollification of singular amplitudes}\label{sec:mollification}
In this section, we will address the fact that many applications consider amplitudes which are homogeneous on $\rn[N]\setminus\{0\}$ (our results up to now assume continuity of the amplitude in zero). In particular for pseudo-differential operators, this does not add too many problems because we can use a cut-off function near zero and extend the symbol as a distribution to $\rn[N]$ (which is uniquely possible up to certain critical degrees of homogeneity which are related to the residues). Then, we are left with a Fourier transform of a compactly supported distribution, i.e. the corresponding kernel is continuous and we can take the trace. In the general Fourier Integral Operator case, the situation is more complicated. Hence, in this section, we will show that the Laurent expansion holds for such amplitudes, as well. We will prove this result by showing that we can always find a sequence of ``nice'' families of operators such that their $\zeta$-functions converge compactly.

In appendix \ref{examples}, our calculations of $\zeta\l(s\mapsto H^s H^\alpha\r)$ with $H:=\sqrt{\betr{\Delta}}$, where $\Delta$ is the Laplacian on $\rn/_{2\pi\zn}$, are currently pushing the boundaries of Theorem \ref{Laurent-zeta-FIO} in the sense that the Laurent expansion of Fourier Integral Operators assumes integrability of all amplitudes $a(z)$ on $B_{\rn[N]}$. This is obviously not true for $a(z)(x,y,\xi)=\betr\xi^{z+\alpha}$ (at least not for all $z\in\cn$). Hence, we would have to consider the Laurent expansion in a more general version where we also allowed
\begin{align*}
  z\mapsto \int_X\int_{B_{\rn[N]}}e^{i\theta(x,x,\xi)}a(z)(x,x,\xi)\ d\xi\ d\vol_X(x)
\end{align*}
to have a non-vanishing principal part. 

However, we may use $\zeta\l(s\mapsto G^sG^\alpha\r)$ with $G:=h+H$ for $h\in(0,1)$ to justify the calculations as they are by taking the limit $h\searrow0$ in $\zeta\l(s\mapsto G^sG^\alpha\r)$. In fact, it is possible to show
\begin{align*}
  \lim_{h\searrow0}\zeta\l(s\mapsto G^sG^\alpha\r)=&\zeta\l(s\mapsto H^sH^\alpha\r)\qquad\text{compactly}.
\end{align*}

Here, we regularized the kernel $a(z)(x,y,\xi)=\betr\xi^z$ by adding an $h\in(0,1)$ yielding a perturbed amplitude $a_h(z)(x,y,\xi)=(h+\betr\xi)^z$ which has no singularities. Showing that the limit $h\searrow0$ exists, then, justifies our calculations. Using Vitali's theorem (cf. e.g. \cite{jentzsch}*{chapter 1}) we can largely generalize this approach.
\begin{theorem}[Vitali]
    Let $\Omega\sse_{\open,\connected}\cn$, $f\in C^\infty(\Omega)^{\nn}$ locally bounded, and let 
    \begin{align*}
      \{z\in\Omega;\ (f_n(z))_{n\in\nn}\text{ converges}\}
    \end{align*}
    have an accumulation point in $\Omega$. Then, $f$ is compactly convergent.
\end{theorem}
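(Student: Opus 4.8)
The plan is to derive this from the normality of locally bounded families of holomorphic functions (Montel's theorem) together with the identity theorem. Write $S:=\{z\in\Omega;\ (f_n(z))_{n\in\nn}\text{ converges}\}$; by hypothesis $S$ has an accumulation point in $\Omega$. Since the $f_n$ are holomorphic (on an open subset of $\cn$ we read $C^\infty$ in the complex-analytic sense) and the family $\{f_n;\ n\in\nn\}$ is locally bounded, Montel's theorem applies: every subsequence of $(f_n)$ admits a further subsequence converging compactly on $\Omega$ to a holomorphic limit. The only inputs here are the Cauchy integral formula (turning a local sup-bound into a local Lipschitz bound, hence equicontinuity), Arzel\`a--Ascoli, and a diagonal argument over an exhaustion of $\Omega$ by compacta, so this step is entirely standard and may simply be quoted.

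Next I would argue by contradiction: suppose $(f_n)$ does \emph{not} converge compactly on $\Omega$. Then it fails the uniform Cauchy criterion on some compact $K\sse\Omega$, so there are $\eps>0$ and indices $n_1<m_1<n_2<m_2<\ldots$ with $\sup_K\betr{f_{n_k}-f_{m_k}}\ge\eps$ for all $k$. Using normality twice, first extract a subsequence along which $(f_{n_k})$ converges compactly to a holomorphic function $g$, and then a further (common) subsequence along which $(f_{m_k})$ converges compactly to a holomorphic function $h$. Passing to the limit in $\sup_K\betr{f_{n_k}-f_{m_k}}\ge\eps$ yields $\sup_K\betr{g-h}\ge\eps$, so $g\not\equiv h$ on $\Omega$.

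On the other hand, for each $z\in S$ the full sequence $(f_n(z))$ converges, hence so does every subsequence, to the same value; thus $g(z)=h(z)$ for all $z\in S$. Since $S$ has an accumulation point in the open connected set $\Omega$ and $g,h$ are holomorphic, the identity theorem forces $g\equiv h$ on $\Omega$, contradicting $g\not\equiv h$. Therefore $(f_n)$ converges compactly on $\Omega$; its limit is automatically holomorphic and agrees on $S$ with the given pointwise limit.

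There is no genuinely hard step here; the one point that needs a little care is extracting the two ``bad'' subsequences $(f_{n_k})$ and $(f_{m_k})$ along a \emph{common} index set so that the inequality on $K$ survives passage to the limit. If one prefers not to cite Montel, the same proof runs after inserting the short Cauchy-estimate argument for equicontinuity of $(f_n)$ on compacta; that is the only place holomorphy (rather than mere smoothness) enters, and it is essential, since Vitali's conclusion fails for real-$C^\infty$ families.
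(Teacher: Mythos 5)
Your proof is correct: it is the standard Montel-plus-identity-theorem argument (normality from local boundedness, two limit functions $g,h$ extracted along a common subsequence from a putative failure of uniform Cauchyness on a compact set, equality of $g$ and $h$ on the set of convergence, then the identity theorem on the connected domain $\Omega$). The paper itself gives no proof of this statement — it quotes Vitali's theorem as classical (with a reference to Jentzsch) — and your argument is precisely the canonical proof that reference relies on, including the correct observation that holomorphy, not mere smoothness, is what makes the conclusion true.
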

In general, we will use the following terminology.

\begin{definition}
  Let $\alpha$ be a gauged poly-$\log$-homogeneous distribution on $\rn_{>0}\times M$. We say $\zeta(\alpha)$ can be mollified if and only if there exists a sequence $\l(\alpha_n\r)_{n\in\nn}$ of holomorphic families $\alpha_n$ in $L_{1,\loc}(\rn_{>0}\times M)$ such that each $\alpha_n$ restricts to a gauged poly-$\log$-homogeneous distribution on $\rn_{>1}\times M$ and $\l(\zeta(\alpha_n)\r)_{n\in\nn}$ converges compactly. 
\end{definition}

Let $(A_n)_{n\in\nn}$ be a sequence of gauged Fourier Integral Operators with $C^\infty$-amplitudes and $A$ a gauged Fourier Integral Operator whose amplitudes may contain singularities. Furthermore, let $A_n(z)\to A(z)$ for every $z$ in the gap topology (cf. \cite{kato}*{Chapter IV}). Let $d\in\rn$ such that $\fa z\in\cn:\ (\Re(z)<d\ \then\ A(z)$ is of trace-class$)$ and $\Omega:=\cn_{\Re(\cdot)<d-1}$. Then, for every $z\in\Omega$, $(A_n(z))_{n\in\nn}$ is eventually a sequence of bounded operators and $A_n|_\Omega\to A|_\Omega$ converges pointwise in norm. Furthermore, let $\l(\lambda_k(z)\r)_{k\in\nn}$ be the sequence of eigenvalues of $A(z)$ counting multiplicities and $\l(\lambda_k(z)+h_k^n(z)\r)_{k\in\nn}$ be the sequence of eigenvalues of $A_n(z)$ counting multiplicities. Suppose that $h^n(z):=\sum_{k\in\nn}\betr{h_k^n(z)}$ exists and converges to zero for $z\in\Omega$. Then,
\begin{align*}
  \betr{\zeta(A_n)(z)-\zeta(A)(z)}=&\betr{\sum_{k\in\nn}\l(\lambda_k(z)+h_k^n(z)\r)-\sum_{k\in\nn}\lambda_k(z)}=\betr{\sum_{k\in\nn}h_k^n(z)}\le h^n(z)\to0
\end{align*}
for $z\in\Omega$ shows
\begin{align*}
  \l\{z\in\Omega;\ \l(\zeta(A_n)(z)\r)_{n\in\nn}\text{ converges}\r\}=\Omega.
\end{align*}
Let $\tilde\Omega\sse\cn$ be open and connected with $\Omega\sse\tilde\Omega$ such that all $\zeta(A_n)|_{\tilde\Omega}$ are holomorphic and $\l\{\zeta(A_n)|_{\tilde\Omega};\ n\in\nn\r\}$ is locally bounded. Then, 
\begin{align*}
  \lim_{n\to\infty}\zeta(A_n)|_{\tilde\Omega}=\zeta(A)|_{\tilde\Omega}.
\end{align*}
In particular, if $h^n$ admits an analytic continuation to $\tilde\Omega$, then $\lim_{n\to\infty}\zeta(A_n)|_{\tilde\Omega}=\zeta(A)|_{\tilde\Omega}$.
\begin{remark*}
  Note that $A_n(z)\to A(z)$ in the gap topology implies that the $h_k^n(z)$ exist and for every $k$ and $z$ we have $\lim_{n\to\infty}h_k^n(z)\to0$. However, in general, we will not have any uniform bound on them, let alone find an $h^n(z)$; cf. \cite{kato}*{Section IV.3.5}.

\end{remark*}

\begin{definition}
  Let $A$ be an operator with purely discrete spectrum. For every $\lambda\in\sigma(A)$ let $\mu_\lambda$ be the multiplicity of $\lambda$. Then, we define the spectral $\zeta$-function $\zeta_\sigma(A)$ to be the meromorphic extension of
  \begin{align*}
    \zeta_\sigma(A)(s):=&\sum_{\lambda\in\sigma(A)\setminus\{0\}}\mu_\lambda\lambda^{-s}
  \end{align*}
  and the spectral $\Theta$-function $\Theta_\sigma(A)$ 
  \begin{align*}
    \fa t\in\rn_{>0}:\ \Theta_\sigma(A)(t):=&\sum_{\lambda\in\sigma(A)}\mu_\lambda\exp\l(-t\lambda\r)
  \end{align*}
  if they exist.
\end{definition}

\begin{definition}
  Let $T\in\rn_{>0}$ and $\phi\in C(\rn_{>0})$. We define the upper Mellin transform as
  \begin{align*}
    \Mp[T](\phi)(s):=\int_{(0,T)}\phi(t)t^{s-1}dt
  \end{align*}
  and the lower Mellin transform
  \begin{align*}
    \Mp_T(\phi)(s):=\int_{\rn_{\ge T}}\phi(t)t^{s-1}dt
  \end{align*}
  (if the integrals exist). If both integrals exist and with non-empty intersection $\Omega$ of domains of holomorphy (that is, the maximal connected and open subset admitting an analytic continuation of the function), then we define the generalized Mellin transform of $\phi$ to be the meromorphic extension of
  \begin{align*}
    \Mp(\phi):=\Mp[T](\phi)|_\Omega+\Mp_T(\phi)|_\Omega.
  \end{align*}
\end{definition}

\begin{example*}
  Let $\phi(t):=t^\alpha$ for some $\alpha\in\cn$. Then
  \begin{align*}
    \Mp[T](\phi)(s)=&\int_{(0,T)}t^{s+\alpha-1}dt=\frac{T^{s+\alpha}}{s+\alpha}
  \end{align*}
  for $\Re(s)>\alpha$ extending to $\cn\setminus\{-\alpha\}$ and
  \begin{align*}
    \Mp_T(\phi)(s)=&\int_{\rn_{\ge T}}t^{s+\alpha-1}dt=-\frac{T^{s+\alpha}}{s+\alpha}
  \end{align*}
  for $\Re(s)<\alpha$ extending to $\cn\setminus\{-\alpha\}$. Hence, $\Mp(\phi)(s)=\frac{T^{s+\alpha}}{s+\alpha}-\frac{T^{s+\alpha}}{s+\alpha}=0$ exists on $\cn\setminus\{-\alpha\}$, i.e. $\Mp(\phi)=0$.

\end{example*}

\begin{example*}
  Let $\lambda\in\rn_{>0}$ and $s\in\cn$ with $\Re(s)>0$. Then
  \begin{align*}
    \int_{\rn_{>0}}e^{-\lambda t}t^{s-1}dt=\int_{\rn_{>0}}e^{-\tau}\tau^{s-1}\lambda^{-s}dt=\lambda^{-s}\Gamma(s)
  \end{align*}
  shows that $\lambda\mapsto\int_{\rn_{>0}}e^{-\lambda t}t^{s-1}dt$ extends analytically to $\cn\setminus\rn_{\le0}$.

\end{example*}

\begin{example*}
  Let $A$ be an operator with purely discrete spectrum. For every $\lambda\in\sigma(A)$ let $\mu_\lambda$ be the multiplicity of $\lambda$ and $\Re(\lambda)\ge0$. $\Mp(1)=0$, then, implies
  \begin{align*}
    \Mp\l(\Theta_\sigma(A)\r)(s)=&\sum_{\lambda\in\sigma(A)}\mu_\lambda\Mp\l(t\mapsto\exp(-t\lambda)\r)(s)\\
    =&\sum_{\lambda\in\sigma(A)\setminus\{0\}}\mu_\lambda\Mp\l(t\mapsto\exp(-t\lambda)\r)(s)\\
    =&\sum_{\lambda\in\sigma(A)\setminus\{0\}}\mu_\lambda\lambda^{-s}\Gamma(s)\\
    =&\zeta_\sigma(A)(s)\Gamma(s).
  \end{align*}

\end{example*}

\begin{lemma}\label{compact-convergence-mellin}
  $\lim_{h\searrow0}\Mp\l(t\mapsto\exp(-th)\r)=\Mp(1)=0$ compactly.
\end{lemma}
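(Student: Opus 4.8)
The plan is to evaluate $\Mp(t\mapsto e^{-th})$ in closed form and then estimate it by hand. For fixed $h\in(0,1)$ and $\Re(s)>0$ both $\Mp[T](t\mapsto e^{-th})(s)=\int_{(0,T)}e^{-th}t^{s-1}dt$ and $\Mp_T(t\mapsto e^{-th})(s)=\int_{\rn_{\ge T}}e^{-th}t^{s-1}dt$ converge, and their sum is $\int_{\rn_{>0}}e^{-th}t^{s-1}dt=h^{-s}\Gamma(s)$ by the substitution $u=th$ (this is the computation recorded in the example above). Since the generalized Mellin transform is, by definition, the meromorphic continuation of $\Mp[T](\cdot)|_\Omega+\Mp_T(\cdot)|_\Omega$, uniqueness of meromorphic continuation gives $\Mp(t\mapsto e^{-th})(s)=h^{-s}\Gamma(s)$ as a meromorphic function on $\cn$ whose only poles lie in $\zn_{\le0}$. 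On the other hand, the example above records $\Mp(1)=0$. Hence the lemma reduces to the claim that $h\mapsto\l(s\mapsto h^{-s}\Gamma(s)\r)$ tends to $0$ compactly as $h\searrow0$, and the natural domain for this is the open left half-plane: for $\Re(s)=0$ one has $\betr{h^{-s}\Gamma(s)}=\betr{\Gamma(s)}$ independently of $h$, and for $\Re(s)>0$ the quantity $\betr{h^{-s}\Gamma(s)}=h^{-\Re(s)}\betr{\Gamma(s)}$ diverges.

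Next I would fix a compact set $K\sse\cn_{\Re(\cdot)<0}\setminus\zn_{<0}$ and choose $\delta>0$ with $\Re(z)\le-\delta$ for all $z\in K$. As $\Gamma$ is holomorphic on a neighbourhood of $K$ it is bounded there, say $\betr{\Gamma(z)}\le C_K$ on $K$; then for every $h\in(0,1)$
\begin{align*}
  \sup_{z\in K}\betr{h^{-z}\Gamma(z)-0}=\sup_{z\in K}h^{-\Re(z)}\betr{\Gamma(z)}\le C_K\,h^{\delta}\xrightarrow{h\searrow0}0,
\end{align*}
which is precisely uniform convergence of $\Mp(t\mapsto e^{-th})$ to $\Mp(1)=0$ on $K$. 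Since $K$ was an arbitrary compact subset of the left half-plane with the poles of $\Gamma$ removed, this is the asserted compact convergence.

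I do not expect a genuine obstacle here; the only point requiring care is pinning down the domain on which ``compactly'' is to be understood, i.e.\ recognising that the convergence genuinely breaks down on $\{\Re(s)\ge0\}$ and cannot be pushed there — in particular Vitali's theorem is unavailable on any neighbourhood of a point with $\Re(s)>0$, because the family $\{s\mapsto h^{-s}\Gamma(s)\}_{h\in(0,1)}$ fails to be locally bounded there. If one prefers to avoid the closed form $h^{-s}\Gamma(s)$, the same conclusion follows by dominated convergence applied to $\Mp[T](t\mapsto e^{-th})\to\Mp[T](1)$ on $\{\Re(s)>0\}$ and to $\Mp_T(t\mapsto e^{-th})\to\Mp_T(1)$ on $\{\Re(s)<0\}$, each upgraded to compact convergence on the respective domain of holomorphy via Vitali's theorem (using that $\{\int_{(0,T)}e^{-th}t^{s-1}dt\}_h$ is locally bounded on $\cn\setminus\zn_{\le0}$ and $\{\int_{\rn_{\ge T}}e^{-th}t^{s-1}dt\}_h$ is locally bounded on $\{\Re(s)<0\}$), and then adding the two pieces on $\cn_{\Re(\cdot)<0}\setminus\zn_{<0}$, where they sum to $\Mp[T](1)+\Mp_T(1)=\Mp(1)=0$.
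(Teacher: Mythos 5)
Your proof is correct, and it takes a different (and in one respect more careful) route than the paper. The paper also starts from the closed form $\Mp\l(t\mapsto e^{-th}\r)(s)=\Gamma(s)h^{-s}$, but then rewrites $h^{-s}=\zeta_H(s;h)-\zeta_H(s;1+h)$ in terms of Hurwitz $\zeta$-functions, asserts that the family is locally bounded on $\cn\setminus\zn_{\le1}$ with pointwise limit $0$, and concludes by Vitali's theorem; you instead dispense with Vitali altogether and prove uniform convergence on compacta directly from the elementary bound $\sup_{z\in K}h^{-\Re(z)}\betr{\Gamma(z)}\le C_Kh^\delta$. Your insistence on restricting to $\cn_{\Re(\cdot)<0}\setminus\zn_{<0}$ is exactly the right reading: since the limit function equals $\Gamma(s)h^{-s}$, local boundedness and convergence genuinely fail for $\Re(s)\ge0$, so the paper's claim, taken literally on all of $\cn\setminus\zn_{\le1}$, overstates the domain — and the region you identify is precisely the one in which the lemma is invoked (the terms $\tilde h_j^{-s}\to0$ in Corollary \ref{corollary-spectral-mollification}). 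What the paper's detour buys is that the Hurwitz-$\zeta$ expression is the form that reappears in the boundedness condition $Z_\iota$ of Theorem \ref{mollification}, so it keeps the bookkeeping uniform with the later mollification argument; what your argument buys is brevity and a completely explicit domain of compact convergence (your secondary route via dominated convergence on the two Mellin pieces plus Vitali is also sound, and closer in spirit to the paper's use of Vitali). One small point of care in either formulation: for each fixed $h$ the function $\Gamma(\cdot)h^{-\cdot}$ still has poles at the nonpositive integers, so "compact convergence" must, as you do, exclude these points (or be phrased in the spherical metric); the residues there are $\tfrac{(-1)^n}{n!}h^{n}\to0$, consistent with the limit being the zero function.
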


\begin{proof}
  For $\Re(s)>1$, we obtain
  \begin{align*}
    \frac{1}{\Gamma(s)}\Mp\l(t\mapsto\exp(-th)\r)(s)=&\frac{1}{\Gamma(s)}\int_{\rn_{>0}}e^{-th}t^{s-1}dt\\
    =&h^{-s}\\
    =&\sum_{k\in\nn_0}(k+h)^{-s}-\sum_{k\in\nn_0}(k+1+h)^{-s}\\
    =&\zeta_H(s;h)-\zeta_H(s;1+h).
  \end{align*}
  Hence,
  \begin{align*}
    \Mp\l(t\mapsto\exp(-th)\r)(s)=&\Gamma(s)\zeta_H(s;h)-\Gamma(s)\zeta_H(s;1+h)
  \end{align*}
  holds on $\cn\setminus\zn_{\le1}$. Furthermore, $\Gamma(s)\zeta_H(s;h)-\Gamma(s)\zeta_H(s;1+h)$ is locally bounded on $\cn\setminus\zn_{\le1}$ for $h\searrow0$ which implies
  \begin{align*}
    \lim_{h\searrow0}\Mp\l(t\mapsto\exp(-th)\r)(s)=&\lim_{h\searrow0}\l(\Gamma(s)\zeta_H(s;h)-\Gamma(s)\zeta_H(s;1+h)\r)=0,
  \end{align*}
  i.e. $\lim_{h\searrow0}\Mp\l(t\mapsto\exp(-th)\r)$ exists and vanishes on $\cn\setminus\zn_{\le1}$. Vitali's theorem, thence, proves the assertion.

\end{proof}

\begin{corollary}\label{corollary-spectral-mollification}
  Let $A$ and $A_h$ be operators with spectral $\zeta$-functions. Let $\zeta_\sigma(A)$ be the meromorphic extension of $\sum_{k\in N}\lambda_k^{-s}$ for some $N\sse\nn$ and $\zeta_\sigma(A_h)$ the meromorphic extension of $\sum_{j=1}^n\tilde h^{-s}_j+\sum_{k\in N}(\lambda_k+h_k)^{-s}$ where all $\tilde h_j\in\rn_{>0}$. Suppose $A_h$ converges to $A$ in the gap topology, the meromorphic extension $f_h$ of $\sum_{k\in N}(\lambda_k+h_k)^{-s}$ is locally bounded, and converges to $\zeta_\sigma(A)$ pointwise. 

  Then, $\zeta_\sigma(A_h)$ converges to $\zeta_\sigma(A)$ compactly.
\end{corollary}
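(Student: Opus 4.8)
The plan is to use the splitting of $\zeta_\sigma(A_h)$ into the finitely many ``spurious'' eigenvalue contributions and a remainder, to dispose of the remainder by a single application of Vitali's theorem, and to annihilate the spurious part via Lemma~\ref{compact-convergence-mellin}. By definition $\zeta_\sigma(A_h)(s)=\sum_{j=1}^n\tilde h_j^{-s}+f_h(s)$ as meromorphic functions, where $f_h$ is the meromorphic extension of $\sum_{k\in N}(\lambda_k+h_k)^{-s}$. For the remainder I would fix an open connected $\tilde\Omega\sse\cn$ on which all the $f_h$, and $\zeta_\sigma(A)$, are holomorphic; there the family $(f_h)$ is locally bounded by hypothesis and converges to $\zeta_\sigma(A)$ pointwise, so Vitali's theorem gives $f_h\to\zeta_\sigma(A)$ compactly on $\tilde\Omega$.

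It then remains to show $\sum_{j=1}^n\tilde h_j^{-s}\to0$ compactly. The role of the hypothesis ``$A_h\to A$ in the gap topology'' is exactly to force each $\tilde h_j=\tilde h_j(h)\in\rn_{>0}$ to tend to $0$: the $\tilde h_j$ are precisely the nonzero eigenvalues of $A_h$ that are not matched with a nonzero eigenvalue of $A$ (those being carried by the $\lambda_k+h_k$), and the spectral continuity built into the gap topology (cf.\ \cite{kato}*{Chapter IV}) forbids eigenvalues of $A_h$ from accumulating at a point outside $\sigma(A)$, so the $\tilde h_j$ must collapse onto the one spectral value discarded by $\zeta_\sigma$, namely $0$. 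Granting $\tilde h_j\to0$, Lemma~\ref{compact-convergence-mellin} yields $\Mp\l(t\mapsto\exp(-t\tilde h_j)\r)\to\Mp(1)=0$ compactly; since $\Mp\l(t\mapsto\exp(-t\tilde h_j)\r)(s)=\Gamma(s)\tilde h_j^{-s}$, one has $\tilde h_j^{-s}=\Gamma(s)^{-1}\Mp\l(t\mapsto\exp(-t\tilde h_j)\r)(s)$, and multiplying a compactly convergent family by the fixed entire function $\Gamma^{-1}$ preserves compact convergence; hence $\tilde h_j^{-s}\to0$ compactly and, summing over the finitely many $j$, $\sum_{j=1}^n\tilde h_j^{-s}\to0$ compactly. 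Adding the two pieces gives $\zeta_\sigma(A_h)\to\zeta_\sigma(A)$ compactly, as claimed.

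I expect the step involving the gap topology to be the main obstacle: turning the purely topological input ``$A_h\to A$ in the gap topology'' into the quantitative statement $\tilde h_j\to0$ requires the perturbation theory underlying that topology — continuity of the eigenvalues and the impossibility of spectral mass escaping to a point off $\sigma(A)$ — together with a careful matching of the two eigenvalue enumerations, so that the $n$ spurious values are identified with those eigenvalues of $A_h$ relaxing onto the zero eigenvalue of $A$; keeping track of the multiplicity of $0\in\sigma(A)$ is the delicate point here. The remaining, routine issue is to fix the domain $\tilde\Omega$ and to verify the uniform-in-$h$ local boundedness of $(f_h)$ that Vitali needs, which is exactly where the trace-class control of $A(z)$ on a half-plane gets used.
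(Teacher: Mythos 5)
Your proposal follows essentially the same route as the paper: the paper's proof is a one-line citation of exactly the two ingredients you use, namely $\sum_{j=1}^n\tilde h_j^{-s}\to0$ compactly via Lemma \ref{compact-convergence-mellin} and $f_h\to\zeta_\sigma(A)$ compactly via Vitali's theorem, applied to the same decomposition $\zeta_\sigma(A_h)=\sum_{j=1}^n\tilde h_j^{-s}+f_h$. Your extra discussion of how the gap-topology hypothesis forces $\tilde h_j\to0$, and the explicit division by $\Gamma$ to pass from the Mellin transform to $\tilde h_j^{-s}$, only spell out steps the paper leaves implicit.
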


\begin{proof}
  The assertion is a direct consequence of $\sum_{j=1}^n\tilde h^{-s}_j\to0$ compactly (Lemma \ref{compact-convergence-mellin}) and $f_h\to\zeta_\sigma(A)$ compactly (Vitali's theorem).

\end{proof}

The ideas leading Corollary \ref{corollary-spectral-mollification} and $\zeta\l(s\mapsto(h+H)^{s+\alpha}\r)\to\zeta\l(s\mapsto H^{s+\alpha}\r)$ can, then, be used to prove the following theorem.

\begin{theorem}\label{mollification}
  Let $\alpha=\alpha_0+\sum_{\iota\in I}\alpha_\iota$ be a gauged poly-$\log$-homogeneous distribution on $\rn_{>0}\times M$ with $I\sse\nn$, $\alpha_0$ regular on $(0,1)\times M$, 
  \begin{align*}
    \alpha_\iota(z)(r,\xi)=&r^{d_\iota+z}(\ln r)^{l_\iota}\tilde\alpha_\iota(z)(\xi),
  \end{align*}
  where $\l(\Re(d_\iota)\r)_{\iota\in I}$ is bounded from above, each $\l(\frac{1}{\dim M+d_\iota+z+1}\r)_{\iota\in I}\in\ell_2(I)$, and each of the $\sum_{\iota\in I}\tilde\alpha_\iota(z)$ converges unconditionally in $L_1(M)$. Then, $\zeta(\alpha)$ can be mollified.

  In particular, 
  \begin{align*}
    \zeta(\alpha)(z)=&\int_{\rn_{>0}\times M}\alpha_0(z)d\vol_{\rn_{>0}\times M}+\sum_{\iota\in I}\int_{\rn_{\ge1}\times M}\alpha_\iota(z)d\vol_{\rn_{\ge1}\times M}\\
    &+\sum_{\iota\in I}\int_{(0,1)}r^{\dim M+d_\iota+z}(\ln r)^{l_\iota}dr\res\alpha_\iota(z)
  \end{align*}
  is the compact limit of 
  \begin{align*}
    \zeta(\alpha_h)(z)=&\int_{\rn_{>0}\times M}\alpha_0(z)d\vol_{\rn_{>0}\times M}+\sum_{\iota\in I}\int_{\rn_{\ge1}\times M}\alpha_\iota(z)d\vol_{\rn_{\ge1}\times M}\\
    &+\sum_{\iota\in I}\int_{(0,1)}(h_\iota+r)^{\dim M+d_\iota+z}(\ln(h_\iota+r))^{l_\iota}dr\res\alpha_\iota(z)
  \end{align*}
  for $h:=\l(h_\iota\r)_{\iota\in I}\in\ell_\infty(I;\rn_{>0})$ and $h\searrow0$ in $\ell_\infty(I)$ such that 
  \begin{align*}
    Z_\iota(z):=l_\iota\sum_{j=0}^{l_\iota}\betr{\zeta_H(l_\iota-j-d_\iota-z;h_\iota)-\zeta_H(l_\iota-j-d_\iota-z;1+h_\iota)}
  \end{align*}
  is bounded on an exhausting family of compacta as $h\searrow0$.
\end{theorem}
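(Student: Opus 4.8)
The plan is to mollify, one $\log$-homogeneous summand at a time, the singular radial factor near $r=0$. For $h=(h_\iota)_{\iota\in I}\in\ell_\infty(I;\rn_{>0})$ I would let $\alpha_h$ coincide with $\alpha$ on $\rn_{\ge1}\times M$ and set, on $(0,1)\times M$,
\begin{align*}
  \alpha_h(z)(r,\nu):=\alpha_0(z)(r,\nu)+\sum_{\iota\in I}(h_\iota+r)^{d_\iota+z}(\ln(h_\iota+r))^{l_\iota}\tilde\alpha_\iota(z)(\nu).
\end{align*}
First I would check admissibility: for $h_\iota>0$ each factor $r\mapsto(h_\iota+r)^{d_\iota+z}(\ln(h_\iota+r))^{l_\iota}$ is continuous and bounded on $[0,1]$, so the $\iota$-series converges unconditionally in $L_1((0,1)\times M)$ by exactly the $\ell_2$-and-unconditional-convergence argument used right after Lemma~\ref{c-iota} (Cauchy--Schwarz together with Theorem~4.2.1 of \cite{kadets}); hence $\alpha_h$ is a holomorphic family lying in $L_1((0,1)\times M)$, and its restriction to $\rn_{>1}\times M$ equals $\alpha$, thus is a gauged poly-$\log$-homogeneous distribution. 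Consequently $\zeta(\alpha_h)$ exists, and splitting $\rn_{>0}=(0,1)\dcup\rn_{\ge1}$ and integrating the $(0,1)$-series termwise yields exactly the displayed formula for $\zeta(\alpha_h)$.

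Next I would subtract. Since $\alpha_h$ differs from $\alpha$ only through the $(0,1)$-radial factors, all $\alpha_0$- and $\rn_{\ge1}$-contributions cancel, leaving
\begin{align*}
  \zeta(\alpha_h)(z)-\zeta(\alpha)(z)=\sum_{\iota\in I}\l(\int_0^1(h_\iota+r)^{s_\iota(z)}(\ln(h_\iota+r))^{l_\iota}dr-\frac{(-1)^{l_\iota}l_\iota!}{(s_\iota(z)+1)^{l_\iota+1}}\r)\res\alpha_\iota(z),
\end{align*}
where $s_\iota(z):=\dim M+d_\iota+z$ and the second summand is the meromorphic continuation of $\int_0^1r^s(\ln r)^l\,dr=(-1)^ll!(s+1)^{-l-1}$; note in passing that in the formula for $\zeta(\alpha)$ the $\rn_{\ge1}$- and $(0,1)$-parts of each $\alpha_\iota$ carry opposite principal parts, so $\zeta(\alpha)=\int_{\rn_{>0}\times M}\alpha_0$ is holomorphic on the open connected set $\tilde\Omega:=\cn\setminus\{-\dim M-d_\iota-1;\ \iota\in I\}$. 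Then I would estimate $\iota$-wise: substituting $u=h_\iota+r$ and using the scaling identity $\int_0^a u^s(\ln u)^l\,du=a^{s+1}\sum_{j=0}^l\binom{l}{j}(\ln a)^{l-j}(-1)^jj!(s+1)^{-j-1}$, the $\iota$-th bracket becomes $\int_1^{1+h_\iota}u^{s_\iota(z)}(\ln u)^{l_\iota}du-h_\iota^{s_\iota(z)+1}\sum_{j=0}^{l_\iota}\binom{l_\iota}{j}(\ln h_\iota)^{l_\iota-j}(-1)^jj!(s_\iota(z)+1)^{-j-1}$. The first term is $O\l(h_\iota(\ln(1+h_\iota))^{l_\iota}\r)$ locally uniformly in $z$; for the second, the key observation is that for $0\le j\le l_\iota$ and $h_\iota\le1$ the surplus exponent $\dim M+1+(l_\iota-j)$ is strictly positive, whence $\betr{h_\iota^{s_\iota(z)+1}}\le\betr{h_\iota^{-(l_\iota-j-d_\iota-z)}}=\betr{\zeta_H(l_\iota-j-d_\iota-z;h_\iota)-\zeta_H(l_\iota-j-d_\iota-z;1+h_\iota)}$, and absorbing the leftover $(\ln h_\iota)^{l_\iota-j}$, binomial, factorial and $(s_\iota(z)+1)^{-j-1}$ factors into a constant $C_K$ on a compact $K\sse\tilde\Omega$ (only finitely many $d_\iota$ lie near $K$, the rest giving small $\betr{s_\iota(z)+1}^{-1}$ by the $\ell_2$-hypothesis) bounds the $\iota$-th bracket by $C_K(\varepsilon_\iota(h_\iota)+Z_\iota(z))$ with $\varepsilon_\iota(h_\iota)\to0$.

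Finally I would sum over $\iota$ with Cauchy--Schwarz,
\begin{align*}
  \betr{\zeta(\alpha_h)(z)-\zeta(\alpha)(z)}\le C_K\norm{(\varepsilon_\iota(h_\iota)+Z_\iota(z))_{\iota\in I}}_{\ell_2(I)}\norm{(\res\alpha_\iota(z))_{\iota\in I}}_{\ell_2(I)},
\end{align*}
the last factor being finite and locally bounded in $z$ by unconditional convergence of $\sum_\iota\tilde\alpha_\iota(z)$ in $L_1(M)$ and Theorem~4.2.1 of \cite{kadets}, exactly as in Section~\ref{gauged-poly-log-homogeneous-distributions}. Along a sequence $h^{(n)}\searrow0$ for which the $Z_\iota$ stay square-summably bounded on the chosen exhausting family of compacta --- automatic when $\alpha$ is polyhomogeneous, since then the $l_\iota$-prefactor makes $Z_\iota\equiv0$ --- the right-hand side converges to $0$ compactly on $\tilde\Omega$; in particular $\{\zeta(\alpha_{h^{(n)}})\}_{n}$ is locally bounded on $\tilde\Omega$ and converges pointwise to the holomorphic function $\zeta(\alpha)$ on a nonempty open subset, so Vitali's theorem gives $\zeta(\alpha_{h^{(n)}})\to\zeta(\alpha)$ compactly. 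Taking $\alpha_n:=\alpha_{h^{(n)}}$ then shows $\zeta(\alpha)$ can be mollified and establishes the asserted compact limit.

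The hard part will be precisely this uniformity across $\iota$: each individual mollification error vanishes for fixed $h_\iota$, but whenever $\Re(\dim M+d_\iota+z+1)<0$ the meromorphically continued quantity $h_\iota^{s_\iota(z)+1}$ blows up as $h_\iota\searrow0$, and reconciling that blow-up with the mere $\ell_2$-summability of $(\res\alpha_\iota(z))_{\iota\in I}$ is exactly what forces the bookkeeping through the $\zeta_H$-differences and the hypothesis that the $Z_\iota$ remain bounded on an exhausting family of compacta (trivially true, via the $l_\iota$-prefactor, in the polyhomogeneous case that motivates the theorem).
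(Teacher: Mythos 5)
Your reduction of $\zeta(\alpha_h)-\zeta(\alpha)$ to the brackets $\int_0^1(h_\iota+r)^{s_\iota(z)}(\ln(h_\iota+r))^{l_\iota}dr-(-1)^{l_\iota}l_\iota!\,(s_\iota(z)+1)^{-l_\iota-1}$ and the identity $h^{-s}=\zeta_H(s;h)-\zeta_H(s;1+h)$ are fine, but the central estimate --- bounding the $\iota$-th bracket by $C_K(\varepsilon_\iota(h_\iota)+Z_\iota(z))$ and concluding that the difference tends to $0$ compactly on $\tilde\Omega$ --- does not hold. The bracket contains $h_\iota^{s_\iota(z)+1}(s_\iota(z)+1)^{-j-1}(\ln h_\iota)^{l_\iota-j}$ for \emph{every} $0\le j\le l_\iota$, in particular for $j=l_\iota$ and in the case $l_\iota=0$; since $Z_\iota$ carries the prefactor $l_\iota$, it vanishes identically for polyhomogeneous terms, so nothing in your bound controls $h_\iota^{s_\iota(z)+1}$, which blows up as $h_\iota\searrow0$ at every $z$ with $\Re(\dim M+d_\iota+z+1)<0$. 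Concretely, take $I=\{1\}$, $\alpha_0=0$, $\alpha_1(z)(r,\nu)=c\,r^{d+z}$ with $c\ne0$: then $\zeta(\alpha)\equiv0$ (the two continued radial integrals cancel, as you note), while $\zeta(\alpha_h)(z)=c\,\vol(M)\,\bigl((1+h)^{s+1}-1-h^{s+1}\bigr)(s+1)^{-1}$ with $s+1=\dim M+d+z+1$, which diverges pointwise on the half-plane $\Re(s+1)<0$; the same phenomenon is visible in the paper's own $G=h+H$ example through the eigenvalue term $h^{z+\alpha}$. So your claim that the polyhomogeneous case is ``automatic'' is exactly backwards: the uncontrolled terms $h_\iota^{\dim M+1+d_\iota+z}$ are precisely what the $Z_\iota$-hypothesis is meant to tame (read without the vacuity you exploit), and compact convergence of the difference on all of $\tilde\Omega$ is simply not available because pointwise convergence genuinely fails there for nonvanishing residues.

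This is also why the paper's proof (deferred to \cite{hartung-phd}*{chapter 6} and \cite{hartung-scott}*{chapter 5}) does not estimate the difference on the whole punctured plane. It constructs, by dependent choice, sequences $h$ coupled to the exhausting family of compacta so that the boundedness of the $Z_\iota$ yields \emph{local boundedness} of the family $\bigl(\zeta(\alpha_{h^{(n)}})\bigr)_n$; pointwise convergence is only verified on the half-plane where all the $(0,1)$-integrals converge absolutely (dominated convergence), and Vitali's theorem then upgrades this to compact convergence. In your write-up the appeal to Vitali is vestigial, since you have already asserted compact convergence of the difference; moreover your final Cauchy--Schwarz step silently strengthens the hypothesis from ``each $Z_\iota$ bounded on an exhausting family of compacta'' to ``$(Z_\iota(z))_{\iota\in I}$ square-summable uniformly as $h\searrow0$'', and it is exactly this uniformity across $\iota$, together with the coupling between the rate of $h\searrow0$ and the growth of the compacta, that constitutes the lengthy estimation the paper refers to. You flag this difficulty in your closing paragraph, but the argument as written does not supply it, and the bound it rests on is false for the $l_\iota=0$ terms.
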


The proof of this theorem hinges on the ``construction'' (axiom of dependent choice) of such sequences $h$ and many pages of estimation aimed to prove that boundedness of $Z_\iota$ on an exhausting family of compacta as $h\searrow0$ implies local boundedness of the sequence of $\zeta$-functions (then, we can use Vitali's theorem). The detailed proof can be found in \cite{hartung-phd}*{chapter 6} or in \cite{hartung-scott}*{chapter 5}.

\section{On structural singularities, the residue trace of Fourier Integral Operators, and the generalized Kontsevich-Vishik trace}\label{sec:structural singularities}

In this section, we will discuss the integrals appearing in the Laurent coefficients. Most importantly, this will yield the generalized Kontsevich-Vishik density 
\begin{align}\label{KV-density}
  \begin{aligned}
    \rho_{\mathrm{KV}}:=&\int_{B_{\rn[N]}(0,1)}e^{i\theta(x,x,\xi)}a(0)(x,x,\xi)\ d\xi\ d\vol_X(x)\\
    &+\int_{\rn_{\ge1}\times\d B_{\rn[N]}}e^{i\theta(x,x,\xi)}a_0(0)(x,x,\xi)\ d\vol_{\rn_{\ge1}\times\d B_{\rn[N]}}(\xi)\ d\vol_X(x)\\
    &+\sum_{\iota\in I\setminus I_0}\frac{(-1)^{l_\iota+1}l_\iota!\int_{\d B_{\rn[N]}}e^{i\theta(x,x,\xi)}\tilde a_\iota(0)(x,x,\xi)\ d\vol_{\d B_{\rn[N]}}(\xi)}{(N+d_\iota)^{l_\iota+j+1}}\ d\vol_X(x)
  \end{aligned}
\end{align}
which we obtain by removing the terms with critical degree of homogeneity from the result in Theorem \ref{Laurent-zeta-FIO}, as well as the fact that this density is globally defined if $I_0=\emptyset$, that is in the absence of terms with critical degree of homogeneity. Furthermore, we will study abstract properties of the integrals 
\begin{align*}
  \int_X\int_{\d B_{\rn[N]}}e^{i\theta(x,x,\xi)}a(x,x,\xi)d\vol_{\d B_{\rn[N]}}(\xi) d\vol_X(x)
\end{align*}
in order to decide whether they vanish. A more in-depth analysis of these integrals will use stationary phase approximation and is subject of section \ref{sec:stationary-phase}. An interesting example, reproducing some of Boutet de Monvel's findings in \cite{boutet-de-monvel} and extending them through computation of the generalized Kontsevich-Vishik trace (which turns out to be form equivalent to the pseudo-differential Kontsevich-Vishik trace - mutatis mutandis), will follow Theorem \ref{kernel-stationary-phase}.

Considering classical pseudo-differential operators it is common to start with the Kontsevich-Vishik trace which is constructed by removing those terms from the asymptotic expansion which have degree of homogeneity with real part greater than or equal to $-\dim X$ where $X$ denotes the underlying manifold, i.e. if $k$ is the kernel of the pseudo-differential operator, then the regularized kernel is given by
\begin{align*}
  k_{KV}(x):=\l(k-\sum_{j=0}^Nk_{d-j}\r)(x,x)
\end{align*}
where $d-j\in\cn\setminus\zn_{\ge-\dim X}$ is the degree of homogeneity of the corresponding term in the expansion of the amplitude $a\sim\sum_{j\in\nn_0}a_{d-j}$ and $N$ sufficiently large. Then, $k_{KV}\in C(X)$, i.e. $\int_Xk_{KV}(x)d\vol_X(x)$ is well-defined. In other words, $k_{KV}$ and $\alpha_0$ play the same role and we would like to interpret $\zeta(\alpha_0)(0)$ as a generalized version of the Kontsevich-Vishik trace. The term $\sum_{j=0}^N\int_Xk_{d-j}(x,x)d\vol_X(x)$ would, hence, be analogous to spinning off $\sum_{\iota\in I}\zeta(\alpha_\iota)(0)$. Unfortunately, we have to issue a couple of caveats.
\begin{enumerate}
\item[(i)] The observation above is fine if we are in local coordinates. However, when patching things together, some of the terms in our Laurent expansion will not patch to global densities on $X$. This is no problem for Fourier Integral Operators, per se, as they are simply defined as a sum of local representations and the Laurent expansion holds in each of these representations. Generally, however, we will want to work with globally defined operators and require local terms to patch together defining densities globally. In this sense, all references to global definedness will assume that the kernels of the operators have been globally defined in the first place.
\item[(ii)] Since $\Fp(a_{d-j}(x,y,\cdot))(z)$ is homogeneous of degree $-\dim X-d+j$ (where $\Fp$ denotes the Fourier transform), we obtain $\Fp(a_{d-j}(x,y,\cdot))(0)=0$ for $d-j<-\dim X$, i.e. $k_{d-j}(x,x)=\lim_{y\to x}k_{d-j}(x,y)=\lim_{y\to x}\Fp(a_{d-j}(x,y,\cdot))(y-x)=\Fp(a_{d-j}(x,x,\cdot))(0)$. Thus, $k_{KV}(x,x)$ is independent of $N$. 

  However, this property does not extend to $\alpha_0$ as we can easily construct a counter-example. Let $a(x,y,\xi)$ be homogeneous of degree $d<-n$ in the third argument and the phase function $\theta(x,y,\xi)=-\langle\Theta(x,y),\xi\rangle_{\ell_2(n)}$ such that $\Theta(x,x)$ has no zeros. Then,
  \begin{align*}
    k(x,y)=&\int_{\rn[n]}e^{-i\langle\Theta(x,y),\xi\rangle_{\ell_2(n)}}a(x,y,\xi)d\xi=\Fp(a(x,y,\cdot))(\Theta(x,y))
  \end{align*}
  shows that $k(x,x)$ is well-defined and continuous. Furthermore, since $\Fp(a(x,y,\cdot))$ is homogeneous, $k(x,x)$ vanishing implies $\Fp(a(x,y,\cdot))=0$ on $\{r\Theta(x,x);\ r\in \rn_{>0}\}$.
\end{enumerate}
On the other hand, for pseudo-differential operators the terms $a_{d-j}$ with $d-j=-\dim X$ define a global density on the manifold giving rise to the residue trace. If this extends to poly-$\log$-homogeneous distributions, then we obtain the residue trace globally from $\sum_{\iota\in I_0}\alpha_\iota$. Furthermore, this would imply that 
\begin{align*}
  \fp_0\alpha=\alpha-\sum_{\iota\in I_0}\alpha_\iota
\end{align*}
induces a global density through meromorphic continuation, if $\alpha$ does and the contributions of the $\alpha_\iota$ for $\iota\in I_0$ to the constant term Laurent coefficient vanish (in particular in the non-critical case $I_0=\emptyset$), which allows us to interpret $\zeta(\fp_0\alpha)(0)$ as the generalization of the (non-critical) Kontsevich-Vishik trace. In the critical case, the derivatives of $\alpha_\iota$ terms with $\iota\in I_0$ have to be considered, as well (cf. section \ref{sec:crit-KV}).

This, of course, needs to be interpreted in a gauged sense, that is, after performing the regularization through meromorphic extension. $\zeta(\fp_0\alpha)(0)$ corresponds\footnote{Recall, for pseudo-differential operators with symbol $\sigma$, we can understand $\alpha$ to be given locally by $\alpha(\xi)=\int_X\sigma(x,x,\xi)dx$. Hence, $\alpha$ depends on the choice of local coordinates but the regularized result $\sum_{\mathrm{patches}}\zeta(\alpha)(0)$ is independent of the choice of local coordinates since summing over all local $\zeta(\alpha)(z)$ yields the trace of the corresponding operator for $\Re(z)$ sufficiently small.} to the kernel $k(x,y)-k_{d-j}(x,y)$ where $d-j=-\dim X$. Hence, all terms $k_{d-j}$ with $j\in\nn_{0,<d+\dim X}$ still appear in $\fp_0\alpha$ but not in $k_{KV}$. Since $\zeta(\fp_0\alpha)(0)$ is but constructed by gauging and already regularized, we should do the same for $k_{d-j}$, i.e. consider $k_{d-j+z}$ which is continuous for $\Re(z)$ sufficiently small and vanishes along the diagonal (for pseudo-differential operators). Therefore,
\begin{align*}
  \zeta(\fp_0\alpha)(0)=&\l.\int_X\l(k(z)-k_{z-\dim X}\r)(x,x)d\vol_X(x)\r|_{\mathrm{mer.},z=0}\\
  =&\int_X\l(k-\sum_{j=0}^Nk_{d-j}\r)(x,x)d\vol_X(x)
\end{align*}
holds in the regularized sense; in particular since Corollary \ref{KV-gauge-independence} guarantees that $\zeta(\fp_0\alpha)(0)$ is independent of the gauge. 

Returning to Fourier Integral Operators, $\zeta(\fp_0\alpha)(0)$ is, thus, the best candidate for a generalized Kontsevich-Vishik trace. Since $\rho_{\mathrm{KV}}$ in equation \eqref{KV-density} is the density induced by $\zeta(\fp_0\alpha)(0)$, existence of $\rho_{\mathrm{KV}}$ as a globally defined density is equivalent to the question whether the critical terms $\sum_{\iota\in I_0}\frac{(-1)^{l_\iota+1}\int_M\d^{l_\iota+1}\tilde\alpha_\iota(0)d\vol_M}{l_\iota+1}$, that we removed from the constant Laurent coefficient, and the constant Laurent coefficient induce globally defined densities. In other words (since all principal part Laurent coefficients contain only critical degrees of homogeneity), the objective is to show that 
\begin{align*}
  \sum_{\chi}\res\alpha^\chi(0)=&\sum_{\chi}\l\langle\int_{\d B_{\rn[N]}}\tilde\alpha^\chi(0)d\vol_{\d B_{\rn[N]}},\delta_{\diag}\r\rangle\\
  =&\sum_{\chi}\l\langle P\int_{\d B_{\rn[N]}}\tilde\alpha^\chi(0)d\vol_{\d B_{\rn[N]}},\delta_0\r\rangle\\
  =&\sum_{\chi}\l\langle\int_{\d B_{\rn[N]}}e^{i\theta(x,y,\xi)}\tilde a^\chi(0)(x,y,\xi)\ d\vol_{\d B_{\rn[N]}}(\xi),\delta_0\r\rangle
\end{align*}
is globally well-defined ($\sum_{\chi}$ denotes a partition of unity and $P$ is a suitable pseudo-differential operator) if the $a^\chi$ are $\log$-homogeneous with degree of homogeneity $-N$. Then, all Laurent coefficients in the principal part induce globally defined densities and we may remove the principal part from the $\zeta$-function to obtain the constant Laurent coefficient. 

\begin{remark*}
  A major theme we will discuss in this section is the question whether the generalized Kontsevich-Vishik density $\rho_{\mathrm{KV}}$ as defined in equation \eqref{KV-density}, i.e. the density induced by $\zeta(\fp_0\alpha)(0)$, is globally defined. This question is only relevant if we assume that our Fourier Integral Operators are given by globally defined densities rather than merely a finite sum of localizations $A=\sum_{j=1}^JA_j$ without any assumptions on ``patching properties'' of the kernel. Hence, any statements, that refer to something being globally defined, implicitly assume that the trace integrals, i.e. the densities induced by the $\zeta$-functions, are globally defined in the first place. 

  Locally, $\zeta(\fp_0\alpha)(0)$ is always defined and can be used as the definition of the generalized Kontsevich-Vishik trace. However, $\rho_{\mathrm{KV}}$ will not exist as a global density in that case.

\end{remark*}

Unfortunately, the critical terms $\sum_{\iota\in I_0}\frac{(-1)^{l_\iota+1}\int_M\d^{l_\iota+1}\tilde\alpha_\iota(0)d\vol_M}{l_\iota+1}$ do not have to induce a globally defined density, that is, they are an ``obstruction'' to the existence of the generalized Kontsevich-Vishik trace (a fact well-known from the pseudo-differential theory). We will have a closer look at this obstruction in section \ref{sec:crit-KV} and, in particular, equation~\eqref{eq:prop-coeffs} of Proposition \ref{prop-coeffs} where we compare the constant Laurent coefficients with respect to two different multiplicative gauges. 

\begin{lemma}\label{spherical-integrals}
  Let $a\in C\l(\rn[n]\setminus\{0\}\r)$ be homogeneous of degree $d$, $k\in\nn_0$, $z\in\cn$, and $T\in GL(\rn[n])$. Then
  \begin{align*}
    &\int_{\d B_{\rn[n]}}a(T\xi)\norm{T\xi}^z\l(\ln\norm{T\xi}\r)^kd\vol_{\d B_{\rn[n]}}(\xi)\\
    =&\frac{(-1)^k}{\betr{\det T}}\int_{\d B_{\rn[n]}}a(\xi)\norm{T^{-1}\xi}^{-n-d-z}\l(\ln\norm{T^{-1}\xi}\r)^kd\vol_{\d B_{\rn[n]}}(\xi).
  \end{align*}
\end{lemma}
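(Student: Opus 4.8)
The plan is to reduce everything to the substitution $\xi \mapsto T\xi$ in the surface integral over $\d B_{\rn[n]}$, exploiting homogeneity of $a$ to pull out the radial factors. First I would pass from the sphere to all of $\rn[n]$ by inserting a radial test integral: for a fixed smooth compactly supported $\psi$ on $\rn_{>0}$ with $\int_{\rn_{>0}}\psi(r)\,r^{n-1}\,dr = 1$ (or, more efficiently, by multiplying the integrand by a fixed radial profile and integrating in polar coordinates), the spherical integral of a $(d{+}z)$-homogeneous-up-to-logs integrand can be written as a constant multiple of a full integral over $\rn[n]$. Concretely, if $b$ is homogeneous of degree $d$ on $\rn[n]\setminus\{0\}$ then
\begin{align*}
  \int_{\rn[n]} b(\xi)\,\chi(\norm\xi)\,d\xi = \l(\int_{\rn_{>0}} r^{d+n-1}\chi(r)\,dr\r)\int_{\d B_{\rn[n]}} b(\xi)\,d\vol_{\d B_{\rn[n]}}(\xi)
\end{align*}
for any cutoff $\chi$ making the left side converge, and the bracketed radial constant is the same on both sides of the identity we want once we track how the degree changes. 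So the real content lives on $\rn[n]$.

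On $\rn[n]$ I would substitute $\xi = T^{-1}\eta$, so $d\xi = \betr{\det T}^{-1}\,d\eta$, turning $\int_{\rn[n]} a(T\xi)\norm{T\xi}^z(\ln\norm{T\xi})^k\,\chi_1(\norm{T\xi})\,d\xi$ into $\betr{\det T}^{-1}\int_{\rn[n]} a(\eta)\norm{\eta}^z(\ln\norm{\eta})^k\,\chi_1(\norm\eta)\,d\eta$ — but crucially the radial cutoff $\chi_1$ must be adapted so that after polar coordinates on the $\eta$-side we recover a spherical integral of $a(\eta)\norm{T^{-1}\eta}^{-n-d-z}(\ln\norm{T^{-1}\eta})^k$. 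The trick is to choose the radial weight on the $\xi$-side to be a function of $\norm{\xi}$ (not $\norm{T\xi}$): then in $\eta = T\xi$ coordinates it becomes a function of $\norm{T^{-1}\eta}$, and writing $\eta = s\omega$ with $s>0$, $\omega\in\d B_{\rn[n]}$, homogeneity gives $a(\eta) = s^d a(\omega)$, $\norm\eta^z = s^z$, $\norm{T^{-1}\eta} = s\norm{T^{-1}\omega}$, and $(\ln\norm{T^{-1}\eta})^k = (\ln s + \ln\norm{T^{-1}\omega})^k$. The $\ln s$ cross-terms are absorbed by choosing the radial weight and then integrating $\int_{\rn_{>0}} s^{d+z+n-1}(\text{radial weight})(\ln s)^j\,ds$; matching the bracketed radial constants on the two sides (which are literally the same integral, computed via Lemma \ref{c-iota}-type formulas, with the roles of $d+z$ and $-n-d-z$ swapped consistently) produces the claimed identity, and the sign $(-1)^k$ comes from the single degree-reversal $s \leftrightarrow s^{-1}$ in the radial direction, i.e. $\ln s \mapsto -\ln s$, applied $k$ times.

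The cleanest packaging avoids cutoffs entirely: introduce the Gaussian (or any fixed rapidly decaying radial) density, compute $\int_{\rn[n]} a(T\xi)\norm{T\xi}^z(\ln\norm{T\xi})^k e^{-\norm\xi^2}\,d\xi$ two ways — once by polar coordinates directly, once after $\xi\mapsto T^{-1}\xi$ followed by polar coordinates — and read off the spherical identity by dividing out the (explicit, $\Gamma$-function) radial constants, which are nonzero and meromorphic in $z$, so the identity extends to all $z\in\cn$ by analytic continuation. The main obstacle, and the only place needing care, is bookkeeping of the logarithmic powers: after $\xi\mapsto T^{-1}\eta$ one has $(\ln\norm{T\xi})^k$ becoming $(\ln\norm\eta)^k$ on one side but $(\ln\norm{T^{-1}\eta})^k$ is what must appear after re-expressing via $\norm{T^{-1}\omega}$, and one must verify that the binomial cross-terms $\binom{k}{j}(\ln s)^j(\ln\norm{T^{-1}\omega})^{k-j}$ reassemble correctly against the radial integrals on both sides rather than producing spurious lower-order log terms — this is a finite, purely combinatorial check using that $\int_{\rn_{>0}} s^{a}(\ln s)^j e^{-s^2}\,ds$ generates exactly the derivatives of $\Gamma$ that make the two sides agree term by term.
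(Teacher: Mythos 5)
Your overall strategy --- weight the integrand by a radial factor, integrate over $\rn[n]$, substitute $\eta=T\xi$, and pass to polar coordinates on both sides --- is the standard route (the paper itself only cites Lesch and Paycha--Scott for this lemma, and their arguments are of exactly this type), and it does prove the case $k=0$: with a cutoff $\chi\in C_c(\rn_{>0})$ both computations produce the same radial constant $c_0(z)=\int_{\rn_{>0}}t^{n-1+d+z}\chi(t)\,dt$, and dividing it out (for $z$ where it is nonzero, then extending by holomorphy of both spherical integrals) gives the identity. The genuine gap is at the only nontrivial point, namely the log powers for $k\ge1$. Expanding $(\ln r+\ln\norm{T\omega})^k$ on one side and, after the radial substitution $t=s\norm{T^{-1}\omega}$, $(\ln t-\ln\norm{T^{-1}\omega})^k$ on the other, the two computations of the weighted integral give the single scalar relation $\sum_{j=0}^k{k\choose j}c_j(z)\,D_{k-j}(z)=0$, where $c_j(z)=\int_{\rn_{>0}}t^{n-1+d+z}(\ln t)^j\chi(t)\,dt$ and $D_m(z)$ denotes the difference of the two sides of the lemma at log-order $m$. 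From this one relation you cannot ``read off'' $D_k=0$ ``term by term'': term-by-term agreement is precisely the family of statements to be proved, and no bookkeeping of Gaussian moments or derivatives of $\Gamma$ substitutes for it, since the lower-order defects $D_{k-j}$ enter the same equation. (Your heuristic for the sign, a ``degree reversal $s\leftrightarrow s^{-1}$'', is also not what happens; the minus sign enters through $\ln s=\ln t-\ln\norm{T^{-1}\omega}$.)

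The repair is short and should be stated explicitly: either (a) induct on $k$ --- by the inductive hypothesis $D_m=0$ for all $m<k$, so every $j\ge1$ term in the relation vanishes and $c_0(z)D_k(z)=0$ forces $D_k=0$; or, cleaner, (b) prove only the $k=0$ identity as above and then differentiate it $k$ times in $z$: both spherical integrals are holomorphic in $z$ (compact domain, integrands smooth in $z$), and $\d_z^k\norm{T\xi}^z=\norm{T\xi}^z\l(\ln\norm{T\xi}\r)^k$ while $\d_z^k\norm{T^{-1}\xi}^{-n-d-z}=(-1)^k\norm{T^{-1}\xi}^{-n-d-z}\l(\ln\norm{T^{-1}\xi}\r)^k$, which yields the stated identity together with the factor $(-1)^k$. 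Note also that with the Gaussian weight the full-space integral converges only for $\Re(n+d+z)>0$, so you must either work in that half-plane and continue analytically (as you indicate) or use a compactly supported radial cutoff away from the origin, which avoids the convergence issue altogether.
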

This lemma (cf. e.g. \cite{lesch}*{equation (2.13)} or \cite{paycha-scott}*{Lemma 2.20} with minimal changes to the proof; \cite{hartung-phd}*{Lemma 7.1}), equation \ref{eq:FIO-trace-diagonal} (chapter \ref{lagrangian}), and \cite{hoermander-FIO-I}*{Proposition 2.4.1} (warranting the existence of $\Theta(x)\in GL(N)$) yield (for a suitable $U\sse_\open\rn[N]$, a diffeomorphism $\chi:\ U\to\chi[U]$, and a $\phi\in C_c^\infty(\chi[U])$)
\begin{align*}
  &\int_U\int_{\d B_{\rn[N]}}\tilde a(x,\xi)\phi(\chi(x))d\vol_{\d B_{\rn[N]}}(\xi)dx\\
  =&\int_U\int_{\d B_{\rn[N]}}\tilde a^\chi(\chi(x),\Theta(x)^{-1}\xi)\betr{\det\Theta(x)^{-1}}\betr{\det\chi'(x)}\phi(\chi(x))d\vol_{\d B_{\rn[N]}}(\xi)dx\\
  =&\int_{U}\int_{\d B_{\rn[N]}}\tilde a^\chi(\chi(x),\Theta(x)^{-1}\xi)\betr{\det\Theta(x)^{-1}}\betr{\det\chi'(x)}\phi(\chi(x))d\vol_{\d B_{\rn[N]}}(\xi)dx\\
  =&\int_{U}\betr{\det\Theta(x)^{-1}}\int_{\d B_{\rn[N]}}\tilde a^\chi(\chi(x),\Theta(x)^{-1}\xi)d\vol_{\d B_{\rn[N]}}(\xi)\betr{\det\chi'(x)}\phi(\chi(x))dx\\
  =&\int_{U}\int_{\d B_{\rn[N]}}\tilde a^\chi(\chi(x),\xi)d\vol_{\d B_{\rn[N]}}(\xi)\betr{\det\chi'(x)}\phi(\chi(x))dx\\
  =&\int_{\chi[U]}\int_{\d B_{\rn[N]}}\tilde a^\chi(x,\xi)\phi(x)d\vol_{\d B_{\rn[N]}}(\xi)dx,
\end{align*}
i.e. the following theorem.

\begin{theorem}\label{res-form-invariant}
  The residue $\res\langle u,f\rangle=\res\alpha(0)=\int_{\d B_{\rn[N]}}\tilde\alpha(0)d\vol_{\d B_{\rn[N]}}$ is form-invariant under change of coordinates if $\alpha(0)$ has degree of homogeneity $-N$.

  In particular, $\sum_\chi\sum_{\iota\in I_0^\chi}\res\alpha_\iota^\chi(0)$ induces a global density and $\sum_\chi\zeta\l(\fp_0\alpha^\chi\r)(0)$ induces a globally defined density provided $\sum_\chi\sum_{\iota\in I_0^\chi}\d\res\alpha_\iota^\chi(0)$ vanishes.
\end{theorem}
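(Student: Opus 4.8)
The plan is to reduce both assertions to Lemma~\ref{spherical-integrals}, the first part being essentially the chain of equalities displayed immediately before the statement.

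For the form-invariance of $\res\alpha(0)$ I would argue in a single localization. After passing to the normal form \eqref{eq:FIO-trace-diagonal} — which is legitimate because $\Lambda$ and $\hat\Lambda$ intersect cleanly — the relevant contribution to $\langle u,f\rangle$ is an integral over $\d B_{\rn[N]}$ of an angular density $\tilde a(x,\xi)$, where $\tilde a(x,\cdot)$ is the restriction to the sphere of an amplitude that is homogeneous of degree $-N$ on $\rn[N]\setminus\{0\}$. A change of coordinates $\chi\colon U\to\chi[U]$ turns the phase into an equivalent one, and by \cite{hoermander-FIO-I}*{Proposition 2.4.1} there is a smooth family $\Theta(x)\in GL(N)$ linearizing the new fibre variable against the old, so that the amplitude transforms as
\begin{align*}
  \tilde a(x,\xi)=\tilde a^\chi\l(\chi(x),\Theta(x)^{-1}\xi\r)\,\betr{\det\Theta(x)^{-1}}\,\betr{\det\chi'(x)}.
\end{align*}
Inserting this into $\int_U\int_{\d B_{\rn[N]}}\tilde a(x,\xi)\,\phi(\chi(x))\,d\vol_{\d B_{\rn[N]}}(\xi)\,dx$ and applying Lemma~\ref{spherical-integrals} with $T=\Theta(x)^{-1}$, $z=0$, $k=0$, and $d=-N$ — so that the exponent $-n-d-z$ in the lemma vanishes and the prefactor is exactly $\betr{\det\Theta(x)}$ — cancels $\betr{\det\Theta(x)^{-1}}$ and turns $\tilde a^\chi(\chi(x),\Theta(x)^{-1}\xi)$ into $\tilde a^\chi(\chi(x),\xi)$; a final substitution $x\mapsto\chi(x)$ absorbs $\betr{\det\chi'(x)}$ and leaves $\int_{\chi[U]}\int_{\d B_{\rn[N]}}\tilde a^\chi(x,\xi)\,\phi(x)\,d\vol_{\d B_{\rn[N]}}(\xi)\,dx$. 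Since $\phi\in C_c^\infty(\chi[U])$ is arbitrary, the two local expressions for $\res\alpha(0)$ agree as $N$-densities. The degree $-N$ is the only hypothesis used: for any other degree a surviving power of $\norm{\Theta(x)^{-1}\xi}$ stays inside the spherical integral and does not collapse to a Jacobian.

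For the ``in particular'' part I would first note that for $\iota\in I_0$ the factor $\tilde\alpha_\iota(0)$ has degree of homogeneity $-\dim M-1=-N$, so the first part applies verbatim and the local expressions $\sum_{\iota\in I_0^\chi}\res\alpha_\iota^\chi(0)$ patch to a global density on $X$, the residue-trace density. The same holds for the gauge derivatives: $\d^{m}\alpha_\iota(0)$ is, for $\iota\in I_0$, a finite sum of $\log$-homogeneous pieces all of degree $-N$, and since $\res$ is insensitive to the logarithmic order (as established in the proof of Theorem~\ref{Laurent-distrib}) each $\int_M\d^{m}\tilde\alpha_\iota^\chi(0)\,d\vol_M=\d^{m}\res\alpha_\iota^\chi(0)$ is again form-invariant. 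Next I would invoke the Laurent expansion (Theorem~\ref{Laurent-distrib}, or Theorem~\ref{Laurent-zeta-FIO} in the Lagrangian setting): the constant term of $\zeta(\alpha^\chi)$ equals $\zeta(\fp_0\alpha^\chi)(0)$ plus the critical obstruction $\sum_{\iota\in I_0^\chi}\tfrac{(-1)^{l_\iota+1}l_\iota!}{(l_\iota+1)!}\int_M\d^{\,l_\iota+1}\tilde\alpha_\iota^\chi(0)\,d\vol_M$, which in the poly-homogeneous case is just $-\sum_{\iota\in I_0^\chi}\d\res\alpha_\iota^\chi(0)$. Summed over the partition of unity, $\sum_\chi\zeta(\alpha^\chi)(z)$ is the operator trace for $\Re(z)$ small, whose Laurent coefficients come from restrictions to the diagonal of intrinsically defined kernels and hence are globally defined densities; subtracting the obstruction term, which vanishes by the hypothesis $\sum_\chi\sum_{\iota\in I_0^\chi}\d\res\alpha_\iota^\chi(0)=0$ (and is in any case form-invariant by the previous remark), shows that $\sum_\chi\zeta(\fp_0\alpha^\chi)(0)$ induces a globally defined density.

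I expect the real work to sit in the first part, specifically in justifying the transformation law for $\tilde a$ with a smooth everywhere-invertible $\Theta(x)$ and in matching the two Jacobians — the base-change factor $\betr{\det\chi'(x)}$ and the fibre-change factor $\betr{\det\Theta(x)^{-1}}$ — exactly against the prefactor produced by Lemma~\ref{spherical-integrals}; once this is secured, the partition-of-unity patching and the extraction of the constant Laurent coefficient are formal. A secondary point that will need care is the poly-$\log$ case of the second assertion, where one must track which logarithmic orders appear in $\d^{m}\alpha_\iota$ and use independence of $\res$ from the logarithmic order to see that all the obstruction terms, not only the residues themselves, transform as densities.
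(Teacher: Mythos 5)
Your treatment of the first assertion is essentially the paper's own proof: the transformation law for $\tilde a$ via the smooth family $\Theta(x)\in GL(N)$ (H\"ormander's Proposition 2.4.1), Lemma \ref{spherical-integrals} applied with $T=\Theta(x)^{-1}$, $z=0$, $k=0$, $d=-N$ — so that the exponent $-N-d-z$ collapses and the prefactor $\betr{\det\Theta(x)}$ cancels $\betr{\det\Theta(x)^{-1}}$ — followed by the substitution $x\mapsto\chi(x)$ absorbing $\betr{\det\chi'(x)}$; this is exactly the chain of equalities displayed before the theorem. The skeleton of your second part (the critical residues patch by the first part; the constant Laurent coefficient of the globally defined $\zeta$ is global; by the Laurent expansion it differs from $\sum_\chi\zeta\l(\fp_0\alpha^\chi\r)(0)$ precisely by the critical derivative terms, which vanish by hypothesis) is also the argument the paper intends.

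However, one step in your second part is wrong: the claim that each $\int_M\d^{m}\tilde\alpha_\iota^\chi(0)\,d\vol_M=\d^{m}\res\alpha_\iota^\chi(0)$ for $\iota\in I_0$ is ``again form-invariant'' because $\res$ is insensitive to the logarithmic order. Insensitivity to the log order is not the point; form-invariance of $\res\alpha_\iota(0)$ at degree $-N$ comes from the exponent $-N-d_\iota-z$ in Lemma \ref{spherical-integrals} vanishing at $z=0$, and differentiating the gauged family in $z$ before setting $z=0$ destroys exactly this collapse: the $z$-dependence of that exponent produces an extra contribution weighted by $\ln\norm{\Theta(x)^{-1}\xi}_{\ell_2(N)}$ inside the spherical integral, so $\d\res\alpha_\iota^\chi(0)$ transforms with log-of-Jacobian corrections rather than as a density. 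A minimal example: for the $\Mp$-gauged $\alpha(z)(\xi)=\norm{\xi}_{\ell_2(N)}^{-N+z}$ one has $\d\res\alpha(0)=0$, while after the linear change $\xi\mapsto 2\xi$ the corresponding derivative residue is a nonzero multiple of $\ln 2\cdot\vol_{\d B_{\rn[N]}}\l(\d B_{\rn[N]}\r)$. Indeed the paper states explicitly that the critical terms involving $\int_M\d^{l_\iota+1}\tilde\alpha_\iota(0)\,d\vol_M$ ``do not have to induce a globally defined density'' — they are the obstruction, which is precisely why the theorem carries the hypothesis that their sum vanishes instead of asserting their invariance; only the highest log-order residue is coordinate-invariant (as in Lesch's setting). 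Your argument survives if you delete the invariance claim (and the concluding sentence that ``all the obstruction terms \dots transform as densities'') and rely solely on the vanishing hypothesis.
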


\begin{remark*}
  Note that this means that if $a$ is polyhomogeneous and $\iota_0$ is the index such that $a_{\iota_0}$ is homogeneous of degree $-N$, then
  \begin{align*}
    &\sum_{\iota\in I_0}\int_X\int_{\d B_{\rn[N]}}e^{i\theta(x,x,\xi)}a_\iota(x,x,\xi)d\vol_{\d B_{\rn[N]}}(\xi)d\vol_X(x)\\
    =&\int_X\int_{\d B_{\rn[N]}}e^{i\theta(x,x,\xi)}a_{\iota_0}(x,x,\xi)d\vol_{\d B_{\rn[N]}}(\xi)d\vol_X(x).
  \end{align*}
  This, of course, extends to higher order residues
  \begin{align*}
    \int_X\int_{\d B_{\rn[N]}}e^{i\theta(x,x,\xi)}a_{\iota}(x,x,\xi)d\vol_{\d B_{\rn[N]}}(\xi)d\vol_X(x).
  \end{align*}
  with $\iota\in I_0$ and $l_\iota>0$ generalizing \cite{lesch}*{Corollary 4.8} on the residue traces for $\log$-polyhomogeneous pseudo-differential operators.

\end{remark*}
\begin{theorem}\label{KV-trace}
  Let $A_0$ be a Fourier Integral Operator with amplitude $a=a_0+\sum_{\iota\in I}a_\iota$ such that $I_0=\emptyset$. Then, 
  \begin{align*}
    \rho_{\mathrm{KV}}=&\int_{B_{\rn[N]}(0,1)}e^{i\theta(x,x,\xi)}a(0)(x,x,\xi)\ d\xi\ d\vol_X(x)\\
    &+\int_{\rn_{\ge1}\times\d B_{\rn[N]}}e^{i\theta(x,x,\xi)}a_0(0)(x,x,\xi)\ d\vol_{\rn_{\ge1}\times\d B_{\rn[N]}}(\xi)\ d\vol_X(x)\\
    &+\sum_{\iota\in I\setminus I_0}\frac{(-1)^{l_\iota+1}l_\iota!\int_{\d B_{\rn[N]}}e^{i\theta(x,x,\xi)}\tilde a_\iota(0)(x,x,\xi)\ d\vol_{\d B_{\rn[N]}}(\xi)}{(N+d_\iota)^{l_\iota+j+1}}\ d\vol_X(x)
  \end{align*}
  is globally defined and
  \begin{align*}
    \zeta(A)(0)=\int_X\rho_{\mathrm{KV}}
  \end{align*}
  holds for every gauged Fourier Integral Operator $A$ with $A(0)=A_0$.
  
  Furthermore, if $A_0$ is a commutator, then 
  \begin{align*}
    \int_X\rho_{\mathrm{KV}}=0.
  \end{align*}
\end{theorem}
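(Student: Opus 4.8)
I would handle the three assertions by reducing each to results already established, leaving only one genuine step. \emph{Global definedness.} Since the critical degree of homogeneity $-N$ is preserved under linear changes of the fibre variable — this is exactly the computation preceding Theorem~\ref{res-form-invariant} — the hypothesis $I_0=\emptyset$ is inherited by each partition-of-unity piece $\alpha^\chi$, so $I_0^\chi=\emptyset$ for all $\chi$. Hence the obstruction term $\sum_\chi\sum_{\iota\in I_0^\chi}\d\res\alpha_\iota^\chi(0)$ of Theorem~\ref{res-form-invariant} is vacuously zero, and $\sum_\chi\zeta(\fp_0\alpha^\chi)(0)=\sum_\chi\zeta(\alpha^\chi)(0)$ induces a globally defined density; that density is precisely $\rho_{\mathrm{KV}}$ of \eqref{KV-density} with its (empty) $I_0$-sums deleted. \emph{The trace identity.} With $I_0=\emptyset$ the principal part in Theorem~\ref{Laurent-distrib}, hence in Theorem~\ref{Laurent-zeta-FIO}, is empty, so $\zeta(A)$ is holomorphic at $0$; by Lemma~\ref{gauge-independence-regular} (equivalently Corollary~\ref{KV-gauge-independence}) its value there depends only on $A_0$, and reading off the $z^{0}$-coefficient in Theorem~\ref{Laurent-zeta-FIO} with $I_0=\emptyset$ yields exactly $\int_X\rho_{\mathrm{KV}}$. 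Thus $\zeta(A)(0)=\int_X\rho_{\mathrm{KV}}$ for every gauge $A$ of $A_0$.

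For the commutator statement, by what has just been said it suffices to exhibit one gauge $A$ of $A_0=BC-CB$ with $\zeta(A)(0)=0$. I would gauge the two factors separately: pick a properly supported gauged pseudo-differential operator $G(z)$ with $G(0)=\id$ whose order tends to $-\infty$ as $\Re(z)\to-\infty$ (for instance $G(z)=H^{z}$ with $H$ positive elliptic of order $1$, as in the examples of section~\ref{lagrangian}) and set
\begin{align*}
  A(z):=\l(BG(z)\r)\l(CG(z)\r)-\l(CG(z)\r)\l(BG(z)\r),
\end{align*}
a holomorphic family of Fourier Integral Operators with $A(0)=BC-CB=A_0$, because gauging by a properly supported pseudo-differential operator leaves the canonical relations and their clean composability unchanged. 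For $\Re(z)$ sufficiently negative the operators $BG(z)$ and $CG(z)$ — and therefore all the products above — are trace class on $L_2(X)$, so in that half-plane
\begin{align*}
  \zeta(A)(z)=\tr\l(\l(BG(z)\r)\l(CG(z)\r)\r)-\tr\l(\l(CG(z)\r)\l(BG(z)\r)\r)=0
\end{align*}
by cyclicity of the operator trace. Since $\zeta(A)$ is the meromorphic continuation of this function, $\zeta(A)\equiv0$; in particular $\zeta(A)(0)=0$, and hence $\int_X\rho_{\mathrm{KV}}=0$.

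The only genuine obstacle is structural rather than analytic: one must check that such a gauge really lives in (or in an appropriate enlargement of) the relevant operator algebra, i.e.\ that $BG(z)$ and $CG(z)$ remain honest Fourier Integral Operators, compose cleanly in both orders, and become of arbitrarily negative order as $\Re(z)\to-\infty$ — which is exactly what the discussion of multiplicative gauges and of gauging by properly supported pseudo-differential operators in section~\ref{lagrangian} supplies. Two remarks are in order. The degrees of homogeneity of $A(z)$ above are of the form $d_\iota+2z$ rather than $d_\iota+z$, but by the germ-equivalence of section~\ref{more general distributions} this is immaterial for any local property at $z=0$, in particular for $\zeta(A)(0)$. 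And the hypothesis $I_0=\emptyset$ is indispensable: it is what makes $\zeta(A)(0)$ independent of the gauge, so that the value $0$ obtained from the special gauge is the value for every gauge; when terms of degree $-N$ are present, $\int_X\rho_{\mathrm{KV}}$ may fail to vanish on commutators and may even fail to define a global density, in accordance with the obstruction discussed in this section.
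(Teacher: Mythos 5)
Your proposal is correct and, for the first two assertions, coincides with the paper's argument: global definedness is deduced from Theorem \ref{res-form-invariant} (the obstruction term $\sum_\chi\sum_{\iota\in I_0^\chi}\d\res\alpha_\iota^\chi(0)$ being vacuous since $I_0=\emptyset$), and $\zeta(A)(0)=\int_X\rho_{\mathrm{KV}}$ is read off from the constant Laurent coefficient of Theorem \ref{Laurent-zeta-FIO} combined with gauge invariance (Lemma \ref{gauge-independence-regular}, Corollary \ref{KV-gauge-independence}). For the commutator statement the underlying idea is also the paper's — exhibit one gauge of $A_0$ whose $\zeta$-function vanishes identically because the trace of the gauged commutator is zero for $\Re(z)$ sufficiently small, then invoke gauge invariance — but the chosen gauges differ: the paper reads the hypothesis as $A_0=[B,C(0)]$ with $C$ a given gauged Fourier Integral Operator and simply takes $A(z)=[B,C(z)]$, whereas you gauge both factors multiplicatively by a pseudo-differential $G(z)=H^z$ and set $A(z)=[BG(z),CG(z)]$. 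Your variant is legitimate, but it obliges you to verify that $BG(z)$ and $CG(z)$ compose (in both orders) within an admissible class and to handle the $d_\iota+2z$ degree shift via the germ-equivalence of section \ref{more general distributions} — complications the paper's one-factor gauge avoids entirely; in exchange, your construction does not presuppose that a gauge of one of the factors is handed to you, so it addresses the literal hypothesis that $A_0$ is a commutator of Fourier Integral Operators slightly more directly.
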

\begin{proof}
  The fact, that $\rho_{\mathrm{KV}}$ is globally defined is a direct corollary of Theorem \ref{res-form-invariant} and $\zeta(A)(0)=\int_X\rho_{\mathrm{KV}}$ follows from the definition of $\rho_{\mathrm{KV}}$ as well as gauge invariance of $\zeta(A)(0)$.

  Let $A_0=[B,C(0)]$ where $B$ is a Fourier Integral Operator and $C$ a gauged Fourier Integral Operator. Then, $[B,C(z)]$ is of trace-class for $\Re(z)$ sufficiently small, i.e. $\tr[B,C(z)]=0$ implies $\zeta([B,C])=0$. In particular, gauge invariance (Corollary \ref{KV-gauge-independence}) implies
  \begin{align*}
    \int_X\rho_{\mathrm{KV}}=\zeta([B,C])(0)=0.
  \end{align*}

\end{proof}
In particular, we choose the following definition for the generalized Kontsevich-Vishik trace.
\begin{definition}
  Let $A_0$ be a Fourier Integral Operator whose amplitude has no critical degrees of homogeneity, i.e. $I_0=\emptyset$, and $A$ a gauged Fourier Integral Operator with $A(0)=A_0$. Then, we define the generalized Kontsevich-Vishik trace of $A$ as
  \begin{align*}
    \tr_{\mathrm{KV}}A_0:=\zeta(A)(0)=\int_X\rho_{\mathrm{KV}}.
  \end{align*}
\end{definition}
Uniqueness of the residue trace, then, directly implies the following proposition.
\begin{prop}\label{unique-patching}
  Let $a\sim \sum_{j\in\nn_0}a_{m-j}$ be the amplitude of a Fourier Integral Operator where $m\in\cn$ and $a_{m-j}$ is homogeneous of degree $m-j$. If the residue trace is the (projectively) unique non-trivial continuous trace, then none of the 
  \begin{align*}
    \int_{\d B_{\rn[N]}}e^{i\theta(x,\xi)}a_{m-j}(x,\xi)d\vol_{\d B_{\rn[N]}}(\xi)
  \end{align*}
  with $m-j\ne-N$ can define a global density, in general, unless they are trivial (i.e. vanish constantly). 

  In particular, removing non-trivial terms from $\zeta(\fp_0\alpha)$ will, in general, destroy global well-definedness of the induced density.
\end{prop}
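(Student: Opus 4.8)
The plan is to derive the statement from the uniqueness hypothesis by contradiction. Fix $j_0\in\nn_0$ with $m-j_0\ne-N$ and suppose, contrary to the assertion, that
\begin{align*}
  \Phi(A):=\int_X\int_{\d B_{\rn[N]}}e^{i\theta(x,\xi)}a_{m-j_0}(x,\xi)\ d\vol_{\d B_{\rn[N]}}(\xi)\ d\vol_X(x)
\end{align*}
patches to a globally well-defined density for every Fourier Integral Operator $A$ in the (composition-closed) algebra under consideration and that this density does not vanish identically. Since $\Phi$ sees only the single homogeneous component $a_{m-j_0}$, which is extracted continuously in the symbol topology, and since $X$ and $\d B_{\rn[N]}$ are compact, $\Phi$ is a continuous linear functional on that algebra.

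The crucial step is to show that such a $\Phi$ is necessarily tracial. Because $\Phi$ is assumed to be a global density it may be evaluated chart by chart with the transition contributions discarded; for a commutator $[B,C]$ the composition calculus then exhibits the degree-$(m-j_0)$ component of the amplitude of $[B,C]$ as a finite sum whose leading ($|\alpha|=0$) part cancels by pointwise commutativity of scalar symbols, every remaining summand carrying an $x$- or a $\xi$-derivative. The $x$-derivative terms integrate to zero over the closed manifold $X$, while the $\xi$-derivative terms are disposed of as in the proof that the Wodzicki residue is tracial, using Lemma~\ref{spherical-integrals} (equivalently Theorem~\ref{res-form-invariant}) together with the assumed form-invariance of $\Phi$. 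Granting this, $\Phi$ is a non-trivial continuous trace, so the uniqueness hypothesis yields $\Phi=c\,\res_0\zeta$ for some $c\in\cn\setminus\{0\}$.

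Now the contradiction comes from separating degrees of homogeneity. By Theorem~\ref{Laurent-zeta-FIO} the functional $\res_0\zeta$ depends only on the homogeneous component of degree $-N$, whereas $\Phi$ depends only on the component of degree $m-j_0\ne-N$, and these two components may be prescribed independently; choosing a Fourier Integral Operator $A_0$ with vanishing degree-$(-N)$ component but $\Phi(A_0)\ne0$ (possible since $\Phi\not\equiv0$) gives $\Phi(A_0)=c\,\res_0\zeta(A_0)=0$, a contradiction. Hence no non-trivial spherical integral $\int_{\d B_{\rn[N]}}e^{i\theta}a_{m-j}$ with $m-j\ne-N$ can, in general, define a global density. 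The ``in particular'' clause follows at once: removing a non-trivial term from $\zeta(\fp_0\alpha)$ subtracts from $\rho_{\mathrm{KV}}$ precisely a density of this form, so the remainder cannot be a multiple of the residue trace and therefore cannot be globally well-defined.

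The step I expect to be the genuine obstacle is establishing the trace property for a global $\Phi$ of non-critical degree. For $m-j_0\ne-N$ the plain spherical integral is \emph{not} tracial on, e.g., the pseudo-differential subalgebra: the $\d B_{\rn[N]}$-boundary terms in the residue-type computation are governed by the factor $\norm{T^{-1}\xi}^{-N-(m-j_0)}$ of Lemma~\ref{spherical-integrals}, which is non-constant exactly when $m-j_0\ne-N$. The point — and the only non-routine ingredient — is that global well-definedness of $\Phi$ is precisely the hypothesis that makes those obstruction terms disappear, so that the trace computation closes up; this is where the dichotomy between the critical degree $-N$ and every other degree is genuinely used.
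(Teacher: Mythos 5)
Your overall strategy -- assume the degree-$(m-j_0)$ spherical integral patches to a global density for all operators, upgrade it to a continuous trace, and contradict projective uniqueness because it is supported on a homogeneous component other than the critical one -- is exactly the implication the paper has in mind (the paper offers no more than the sentence ``Uniqueness of the residue trace, then, directly implies the following proposition''), and your final separation-of-components step and the ``in particular'' clause are handled correctly. The problem is the step you yourself flag as the crux: you never actually prove that global well-definedness forces $\Phi$ to be tracial, and the justification you offer conflates two different defects. The inter-chart defect is the weight $\norm{T^{-1}\xi}^{-N-d}$ of Lemma \ref{spherical-integrals}; the traciality defect is an intra-chart quantity coming from integration by parts on the sphere, where for $g$ homogeneous of degree $\mu$ one has $\int_{\d B_{\rn[N]}}\d_{\xi_k}g\,d\vol_{\d B_{\rn[N]}}=(\mu+N-1)\int_{\d B_{\rn[N]}}\xi_k\,g\,d\vol_{\d B_{\rn[N]}}$, which vanishes for all $g$ only in the critical case. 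Assuming that the local expressions agree across charts does not, by itself, make these chart-internal terms disappear: the value of $\Phi([B,C])$ computed in a fixed chart is what it is, and ``form-invariance'' changes nothing about that computation. If you want to extract vanishing of these obstruction integrals from the invariance hypothesis you must do so explicitly (e.g. by testing the hypothesis against specific linear changes of the fibre variable via Lemma \ref{spherical-integrals}), and that work is missing -- ``granting this'' is precisely the proposition's content in disguise.

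Two further points. First, your commutator analysis (``leading $|\alpha|=0$ part cancels by pointwise commutativity, the rest carries an $x$- or $\xi$-derivative'') is the pseudo-differential composition calculus; for the Fourier Integral Operator algebra in question the relevant commutators are of the form $[P,A]$ with $P$ pseudo-differential and $A\in\Ap_\Gamma$ (cf. Theorem \ref{commutator-structure}), whose degree-$(m-j_0)$ component involves transport-type terms that do not cancel pointwise, so even the structure of the computation needs to be redone in the FIO setting. Second, note that once you allow yourself to exploit invariance under linear changes of $\xi$ through Lemma \ref{spherical-integrals}, the non-constancy of $\norm{T^{-1}\xi}^{-N-d}$ for $d\ne-N$ already produces the desired contradiction directly, with no detour through traces -- but that is then a different (more elementary) proof which does not use the uniqueness hypothesis at all, whereas the paper's proposition is explicitly conditioned on, and deduced from, projective uniqueness of the residue trace. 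As it stands, your write-up neither completes the trace route nor commits to the direct route, so the argument has a genuine gap.
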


\subsection{Extending the generalized Kontsevich-Vishik trace to the critical case}\label{sec:crit-KV}
In the critical case $I_0\ne0$, derivatives of the $\alpha_\iota$ with $\iota\in I_0$ appear in the constant Laurent coefficient and, thus, are an obstruction to $\rho_{\mathrm{KV}}$ being globally defined. In this section, we will study this case for an important class of multiplicative gauges.

If we consider a multiplicatively gauged $A(z)=BQ^{z}$ where $Q$ may be non-invertible but is an element of an admissible algebra of Fourier Integral Operators with holomorphic functional calculus, e.g. a pseudo-differential operator of order $1$ (order $q>0$ can be obtained using the results of section \ref{more general distributions}) and spectral cut (the following is to be interpreted in this setting), then $Q^0=1-1_{\{0\}}(Q)$ where
\begin{align*}
  1_{\{0\}}(Q):=\frac{1}{2\pi i}\int_{\d B(0,\eps)}\l(\lambda-Q\r)^{-1}d\lambda
\end{align*}
with $\eps$ sufficiently small such that $B(0,\eps)\cap\sigma(Q)=\{0\}$. Thus, assuming $I_0=\emptyset$ (that is, the Kontsevich-Vishik trace is well-defined and coincides with $\zeta(A)(0)$), we obtain (abusing the notation $\tr$ because $\zeta$ is gauge invariant)
\begin{align*}
  \zeta(A)(0)=&\tr\l(BQ^0\r)=\tr\l(B\r)-\tr\l(B1_{\{0\}}(Q)\r)
\end{align*}
and
\begin{align*}
  \fa k\in\nn:\ \zeta(\d^kA)(0)=&\tr\l(B(\ln Q)^kQ^0\r)=\tr\l(B(\ln Q)^k\r)-\tr\l(B(\ln Q)^k1_{\{0\}}(Q)\r)
\end{align*}
where we note that there still is a dependence on the spectral cut used to define the operators $Q^z$ and $\ln Q$. These generalize the \cite{paycha-scott}*{equations (0.17) and (0.18)} (note that the factors $(-1)^k$ are due to sign convention $Q^z$ vs. $Q^{-z}$).

\begin{prop}\label{prop-coeffs}
  Let $A(z)=BQ^z$ be poly-homogeneous, $\fp\zeta$ the finite part of $\zeta$, and $\tr_\fp$ the finite part of the trace integral (cf. \cite{kontsevich-vishik}, \cite{kontsevich-vishik-geometry}, \cite{lesch}, and \cite{paycha-scott}). Furthermore, let $c_k$ be the coefficient of $\frac{z^k}{k!}$ in the Laurent coefficient with $k\in\nn_0$.

  Then, we obtain
  \begin{align*}
    c_k=&\zeta\l(\d^k\fp_0A\r)(0)+\sum_{\iota\in I_0}\int_X\int_{B_{\rn[N]}(0,1)}e^{i\theta(x,x,\xi)}\d^ka_\iota(0)(x,x,\xi)\ d\xi\ d\vol_X(x)\\
    &-\sum_{\iota\in I_0}\frac{1}{k+1}\res\l(\d^{k+1}A_\iota\r)(0)\\
    =&\fp\zeta\l(\d^kA\r)(0)-\frac{1}{k+1}\res\l(\d^{k+1}A\r)(0)\\
    =&\tr_\fp\l(B(\ln Q)^kQ^0\r)-\frac{1}{k+1}\res\l(B(\ln Q)^{k+1}Q^0\r).
  \end{align*}
  In particular,
  \begin{align*}
    c_0=\tr_\fp\l(B\r)-\res\l(B\ln Q\r)-\tr_\fp\l(B1_{\{0\}}(Q)\r)
  \end{align*}
  and
  \begin{align*}
    \fa k\in\nn:\ c_k=&\tr_\fp\l(B(\ln Q)^k\r)-\frac{1}{k+1}\res\l(B(\ln Q)^{k+1}\r)-\tr_\fp\l(B(\ln Q)^k1_{\{0\}}(Q)\r)
  \end{align*}
  generalize \cite{paycha-scott}*{equations (0.12) and (0.14)} (keeping in mind the factors $(-1)^k$ due to sign convention).

  If $Q$ is invertible, then $1_{\{0\}}(Q)=0$, and for another admissible and invertible operator $Q'$, we obtain
  \begin{align*}\tag{$*$}\label{eq:prop-coeffs}
    c_0(Q)-c_0(Q')=-\res\l(B\l(\ln Q-\ln Q'\r)\r)
  \end{align*}
  which is a generalization of \cite{kontsevich-vishik}*{equation (2.21)} and \cite{paycha}*{equation (9)}. Furthermore, we obtain for $A(z)=[B,CQ^z]$ with invertible $Q$, that $\zeta(A)=0$, i.e. $c_0=0$ and 
  \begin{align*}
    \l.\tr_\fp([B,CQ^z])\r|_{z=0}=\res\l(\l[B,C\ln Q\r]\r)
  \end{align*}
  a generalization of \cite{paycha-scott}*{equation (2.20)}.
\end{prop}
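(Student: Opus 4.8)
The plan is first to read the formula for $c_k$ off the explicit Laurent expansion and then to translate it into operator-theoretic language. Since $A(z)=BQ^z$ is poly-homogeneous, the displayed poly-homogeneous Laurent expansion following Theorem~\ref{Laurent-zeta-FIO} applies verbatim, and $c_k$ is $k!$ times the coefficient of $z^k$ there. Splitting the unit-ball integral as $\int_{B_{\rn[N]}(0,1)}e^{i\theta}\d^ka(0)=\int_{B_{\rn[N]}(0,1)}e^{i\theta}\d^k\fp_0a(0)+\sum_{\iota\in I_0}\int_{B_{\rn[N]}(0,1)}e^{i\theta}\d^ka_\iota(0)$, the first of these together with the $\rn_{\ge1}$-contributions of $\alpha_0$ and of the non-critical indices reassemble exactly into $\zeta(\d^k\fp_0A)(0)$ (this is the expression for $\zeta(\d^n\fp_0\alpha)(0)$ produced inside the proofs of Theorems~\ref{Laurent-distrib} and~\ref{Laurent-zeta-FIO}), while the $\rn_{\ge1}$-contribution of the critical indices supplies only $-\frac{1}{k+1}\sum_{\iota\in I_0}\res(\d^{k+1}A_\iota)(0)=-\frac{1}{k+1}\res(\d^{k+1}A)(0)$; this is the first identity. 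The second, $c_k=\fp\zeta(\d^kA)(0)-\frac{1}{k+1}\res(\d^{k+1}A)(0)$, is the same statement once we set $\fp\zeta(\d^kA)(0):=\zeta(\d^k\fp_0A)(0)+\sum_{\iota\in I_0}\int_X\int_{B_{\rn[N]}(0,1)}e^{i\theta}\d^ka_\iota(0)\,d\xi\,d\vol_X(x)$, which is precisely the finite part of $\zeta(\d^kA)$ at $0$: removing a term of critical degree of homogeneity just discards its (purely divergent) $\d B_{\rn[N]}$-integral, while the finite-part corrections of the over-critical terms are already the $(N+d_\iota)^{-(l_\iota+j+1)}$-factors appearing in the Laurent formula.

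For the third identity I would pass to operator language via the multiplicative gauge: $\d^kA(z)=B(\ln Q)^kQ^z$ by the holomorphic functional calculus (using $\d_zQ^z=Q^z\ln Q$ and that $Q^z$ and $\ln Q$ commute), so for $\Re(z)$ sufficiently small $\zeta(\d^kA)(z)=\tr(B(\ln Q)^kQ^z)$, and by uniqueness of meromorphic continuation the finite part at $0$ is $\tr_\fp(B(\ln Q)^kQ^0)$. Similarly $\res(\d^{k+1}A)(0)$ is the residue trace of $\d^{k+1}A(0)=B(\ln Q)^{k+1}Q^0$, via the identification of $\res A_\iota$ with the boundary-sphere integral in Theorem~\ref{Laurent-zeta-FIO} together with Theorem~\ref{res-form-invariant}. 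Inserting $Q^0=1-1_{\{0\}}(Q)$ (the remark preceding the proposition) and using that $1_{\{0\}}(Q)$ is a finite-rank, hence smoothing, operator --- so it contributes $0$ to every $\res$-term while it survives inside $\tr_\fp$ --- gives the stated expressions for $c_0$ and for $c_k$ with $k\in\nn$.

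If $Q$ is invertible, then $1_{\{0\}}(Q)=0$ and $Q^0=1$, so $c_0(Q)=\tr_\fp(B)-\res(B\ln Q)$. I would prove \eqref{eq:prop-coeffs} by applying Corollary~\ref{corollary-difference-zeta} (or the example that follows it) to the two gauges $BQ^z$ and $BQ'^z$ of $B$: they have the same degrees of homogeneity and agree at $z=0$, so the constant term of $\zeta(BQ^{\cdot})-\zeta(BQ'^{\cdot})$ equals $\zeta(\fp_0(\alpha-\beta))(0)-\sum_{\iota\in I_0}\d\res(A_\iota-A'_\iota)(0)$, which is $0-(\res(B\ln Q)-\res(B\ln Q'))=-\res(B(\ln Q-\ln Q'))$ by gauge-independence of $\zeta(\fp_0(\cdot))(0)$ (Corollary~\ref{KV-gauge-independence}) and $\sum_{\iota\in I_0}\d\res A_\iota(0)=\res(\d A(0))=\res(B\ln Q)$. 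For $A(z)=[B,CQ^z]$: this is again poly-homogeneous (a difference of the poly-homogeneous operators $(BC)Q^z$ and $CQ^zB$), and of trace class with $\tr[B,CQ^z]=0$ for $\Re(z)$ sufficiently small, so $\zeta([B,CQ^z])\equiv0$ (as in the proof of Theorem~\ref{KV-trace}) and in particular $c_0=0$. The $k=0$ instance of the second identity --- which holds for any poly-homogeneous gauged operator --- then reads $0=\fp\zeta([B,CQ^z])(0)-\res(\d_z[B,CQ^z]|_{z=0})$, and since $\d_z[B,CQ^z]|_{z=0}=[B,C\ln Q]$ for invertible $Q$, this gives $\tr_\fp([B,CQ^z])|_{z=0}=\res([B,C\ln Q])$.

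The main obstacle is bookkeeping rather than a single hard estimate: one has to keep two regularizations aligned --- the constant Laurent coefficient $c_k$ of $\zeta(A)$ on one side, and the Lesch-type finite part $\tr_\fp$ of the trace integral on the other --- which differ by the residue correction $-\frac{1}{k+1}\res(\d^{k+1}A)(0)$, and one must check that this correction is exactly the $z^0$-contribution of the critical summand of $\zeta(\d^kA)$ (equivalently, re-derive the coefficient $\frac{1}{k+1}$ from $\sum_{j=0}^k\binom{k}{j}\frac{(-1)^{j+1}}{j+1}=-\frac{1}{k+1}$ when computing $\zeta(\d^kA)=\d^k\zeta(A)$ directly), and that the $1_{\{0\}}(Q)$-corrections land only in the $\tr_\fp$-terms and never in the $\res$-terms. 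A secondary point is to verify that $[B,CQ^z]$ genuinely lies in the poly-homogeneous class, so that the general $c_0$-identity applies to it.
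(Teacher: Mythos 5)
Your proposal is correct in substance and assembles exactly the ingredients the paper itself points to: the paper omits a written proof of this proposition, but the poly-homogeneous Laurent expansion following Theorem \ref{Laurent-zeta-FIO}, the computation $\zeta(A)(0)=\tr(BQ^0)$ with $Q^0=1-1_{\{0\}}(Q)$ in the paragraph preceding the statement, the example following Corollary \ref{corollary-difference-zeta} together with Corollary \ref{KV-gauge-independence} for \eqref{eq:prop-coeffs}, and the trace-class commutator argument of Theorem \ref{KV-trace} are precisely what you use. Reading $c_k$ off the expansion, splitting the unit-ball integral, identifying $\d^kA(0)=B(\ln Q)^kQ^0$ and $\d^{k+1}A(0)=B(\ln Q)^{k+1}Q^0$ by functional calculus, and discarding $1_{\{0\}}(Q)$ from all residue terms while keeping it in the $\tr_\fp$-terms are all sound, as is your closing observation that the critical contribution to the constant term is governed by $\sum_{j=0}^k\binom{k}{j}\frac{(-1)^{j+1}}{j+1}=-\frac{1}{k+1}$.

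The one loose joint is your treatment of the second and third displayed identities. The quantity $\zeta(\d^k\fp_0A)(0)+\sum_{\iota\in I_0}\int_X\int_{B_{\rn[N]}(0,1)}e^{i\theta}\d^ka_\iota(0)$ is \emph{not} the constant Laurent coefficient of the meromorphic function $\zeta(\d^kA)=\d^k\zeta(A)$ at $0$: that coefficient is $c_k$ itself, because the critical summands $\d^k(-\res A_\iota(z)/z)$ are not purely divergent there but contribute exactly $-\frac{1}{k+1}\res(\d^{k+1}A_\iota)(0)$ to the constant term (this is your own binomial identity). So "$\fp\zeta(\d^kA)(0)$" must be read, as in Lesch and Paycha--Scott, as the cut-off (Hadamard) finite part of the trace integral of $\d^kA(0)$ --- the same object as $\tr_\fp(B(\ln Q)^kQ^0)$ --- which is what your "purely divergent $\d B_{\rn[N]}$-integral" justification and your final paragraph actually describe. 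Correspondingly, "by uniqueness of meromorphic continuation the finite part at $0$ is $\tr_\fp(B(\ln Q)^kQ^0)$" is not the right argument for the third line; what is needed (and what closes the argument) is the direct identification of the cut-off finite part of $\int_{|\xi|\le R}$ applied to the amplitude of $\d^kA(0)$: the critical radial integrals produce only powers of $\ln R$ and hence no constant term, while the non-critical ones produce the $(N+d_\iota)^{-(j+1)}$-constants, so this cut-off finite part coincides with the first-line expression. With that said explicitly, your proof is complete and matches the intended one.
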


\subsection{The residues}
Now we may ask when the residues vanish. As a first result we obtain the well-known fact that the residue trace vanishes for odd-class operators on odd-dimensional manifolds.
\begin{obs}
  Let $\alpha(-\xi)=-\alpha(\xi)$. Then, $\res\alpha=\int_{\d B_{\rn[N]}}\alpha(\xi)d\vol_{\d B_{\rn[N]}}(\xi)=0$.
\end{obs}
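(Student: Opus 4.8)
The plan is to exploit the antisymmetry of $\alpha$ directly against the symmetry of the sphere $\d B_{\rn[N]}$ under the antipodal map $\xi\mapsto-\xi$. First I would note that the antipodal map $\sigma:\ \d B_{\rn[N]}\to\d B_{\rn[N]}$, $\sigma(\xi)=-\xi$, is a diffeomorphism of the sphere which preserves the Riemannian volume form $d\vol_{\d B_{\rn[N]}}$ (it is an isometry of $\rn[N]$ restricting to the sphere, hence preserves the induced metric and therefore the induced density; this holds regardless of whether $\sigma$ is orientation-preserving, since we integrate against the density, not the oriented volume form). Thus the change-of-variables formula gives
\begin{align*}
  \int_{\d B_{\rn[N]}}\alpha(\xi)\,d\vol_{\d B_{\rn[N]}}(\xi)=\int_{\d B_{\rn[N]}}\alpha(-\xi)\,d\vol_{\d B_{\rn[N]}}(\xi).
\end{align*}

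Next I would substitute the hypothesis $\alpha(-\xi)=-\alpha(\xi)$ into the right-hand side, obtaining
\begin{align*}
  \res\alpha=\int_{\d B_{\rn[N]}}\alpha(\xi)\,d\vol_{\d B_{\rn[N]}}(\xi)=-\int_{\d B_{\rn[N]}}\alpha(\xi)\,d\vol_{\d B_{\rn[N]}}(\xi)=-\res\alpha,
\end{align*}
so $2\res\alpha=0$ and hence $\res\alpha=0$. That is the entire argument; there is essentially no obstacle, since the only nontrivial input — invariance of the spherical density under the antipodal map — is a standard fact and is anyway a degenerate instance of Lemma \ref{spherical-integrals} with $T=-\id$ (for which $\betr{\det T}=1$ and $\norm{T^{-1}\xi}=\norm\xi=1$ on the sphere).

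If one wants to phrase this without invoking a change of variables at all, an equivalent route is to pair $\alpha$ with its antipodal reflection inside the integrand: write $2\alpha(\xi)=\alpha(\xi)-\alpha(-\xi)+\bigl(\alpha(\xi)+\alpha(-\xi)\bigr)$ and observe that the even part $\alpha(\xi)+\alpha(-\xi)$ vanishes identically by hypothesis while the odd part integrates to zero by symmetry of the domain; this is really the same computation repackaged. The only point worth a sentence of care is that $\tilde\alpha$ (and hence $\alpha$) is only assumed to lie in $L_1(\d B_{\rn[N]})$, so one should note that all the integrals above are absolutely convergent — which they are, by the very definition of a gauged poly-$\log$-homogeneous distribution — before rearranging them. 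With that remark in place the proof is complete.
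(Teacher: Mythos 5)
Your argument is correct and is exactly the symmetry argument the paper implicitly relies on: the observation is stated without proof precisely because the antipodal change of variables $\xi\mapsto-\xi$ preserves $d\vol_{\d B_{\rn[N]}}$ and turns the integral into its own negative. Your remarks on absolute convergence and the degenerate case $T=-\id$ of Lemma \ref{spherical-integrals} are fine but not needed beyond that.
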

Note that the property $\alpha(-\xi)=-\alpha(\xi)$ is invariant under change of linear phase functions with the same ``$N$''. Choosing non-linear phase functions or changing $N$ might destroy this property.

On the other hand, if $N=1$, then 
\begin{align*}
  \int_{\d B_{\rn}}\alpha(\xi)d\vol_{\d B_{\rn}}(\xi)=\alpha(1)+\alpha(-1)
\end{align*}
shows that $\res\alpha$ vanishes if and only if $\alpha$ is odd. Equivalently, we obtain
\begin{align*}
  \int_{\d B_{\rn}}e^{i\theta(x,\xi)}a(x,\xi)d\vol_{\d B_{\rn}}(\xi)=e^{i\theta(x,1)}a(x,1)+e^{i\theta(x,-1)}a(x,-1).
\end{align*}
Note, this implies there are two residue traces for $N=1$; namely, $\alpha_{-1}(1)$ and $\alpha_{-1}(-1)$. 

For $N>1$, the de Rham co-homology of $\d B_{\rn[N]}$ is given by
\begin{align*}
  \fa k\in\nn_0:\ H^k_\dR\l(\d B_{\rn[N]}\r)\cong
  \begin{cases}
    \rn&,\ k\in\{0,N-1\}\\
    0&,\ k\in\nn\setminus\{N-1\}
  \end{cases}.
\end{align*}
In other words, there exists $\omega_0\in\Omega^{N-1}\l(\d B_{\rn[N]},\cn\r)$ such that $\int_{\d B_{\rn[N]}}\omega_0=1$ and
\begin{align*}
  \fa\omega\in\Omega^{N-1}\l(\d B_{\rn[N]},\cn\r)\ \ex c\in\cn\ \ex\tilde\omega\in\Omega^{N-2}\l(\d B_{\rn[N]},\cn\r):\ \omega=c\omega_0+d\tilde\omega.
\end{align*}
Thus, $\int_{\d B_{\rn[N]}}e^{i\theta(x,\xi)}a(x,\xi)d\vol_{\d B_{\rn[N]}}(\xi)=0$ if and only if $e^{i\theta(x,\cdot)}a(x,\cdot)d\vol_{\d B_{\rn[N]}}$ is an exact differential form.
\begin{remark*}
  Since we are integrating $\dim M$-forms over a manifold $M$, we assume that all manifolds are orientable as we can only integrate pseudo-$\dim M$-forms if $M$ is non-orientable. So far everything can be re-formulated for pseudo-forms and, thus, on non-orientable manifolds. From this point onwards, though, statements will need orientability; in particular with respect to uniqueness of residue traces and the commutator structure since
\begin{align*}
  H^{\dim M}_\dR(M)
  \cong
  \begin{cases}
    \rn&,\ M\text{ orientable, connected}\\
    0&,\ M\text{ non-orientable, connected}
  \end{cases}.
\end{align*}

\end{remark*}
\begin{definition}
  Let $A$ be a poly-homogeneous Fourier Integral Operator on a compact manifold $X$ and $\res_0\zeta(A)$ be locally given by
  \begin{align*}
    \int_X\int_{\d B_{\rn[N]}}e^{i\theta(x,\xi)}a(x,\xi)\ d\vol_{\d B_{\rn [N]}}(\xi)\ d\vol_X(x).
  \end{align*}
  Then, we call the $(N-1+\dim X)$-form $\rho(A)$ on $X\times\d B_{\rn [N]}$, locally defined as
  \begin{align*}
    \rho(A):=\exp\circ(i\theta)\cdot a\ d\vol_{X\times\d B_{\rn [N]}},
  \end{align*}
  the residue form of $A$ (in other words, $*\rho(A)=e^{i\theta}a$ where $*$ denotes the Hodge-$*$-operator).
\end{definition}
\begin{prop}\label{res-form-closed}
  Let $Y\sse X$ be a connected component. Then, $\int_{Y\times\d B_{\rn[N]}}\rho(A)=0$ if and only if $\rho(A)$ is exact on $Y\times\d B_{\rn[N]}$.

  More precisely, let $X=Y_1\dcup\ldots\dcup Y_k$ be composed of finitely many connected components ($\dcup$ denotes the disjoint union) and let $\rho(A)|_{Y_j\times\d B_{\rn[N]}}=c_j\omega_j+d\tilde\omega_j$ be the corresponding decompositions of $\rho(A)$ with $\omega_j=\vol_{Y_j\times\d B_{\rn[N]}}(Y_j\times\d B_{\rn[N]})^{-1}d\vol_{Y_j\times\d B_{\rn[N]}}$. Then,
  \begin{align*}
    \int_{X\times\d B_{\rn[N]}}\rho(A)=\sum_{j=1}^kc_j.
  \end{align*}
\end{prop}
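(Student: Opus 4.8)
The plan is to reduce everything to the classical fact of de Rham theory that on a compact, connected, oriented manifold $M$ without boundary the integration map $H^{\dim M}_{\dR}(M)\to\cn$ is an isomorphism; equivalently, a top-degree form on such an $M$ is exact if and only if it integrates to zero. First I would note that, by Theorem~\ref{res-form-invariant}, the residue form $\rho(A)=\exp\circ(i\theta)\cdot a\ d\vol_{X\times\d B_{\rn[N]}}$ is a globally well-defined $(\dim X+N-1)$-form on $X\times\d B_{\rn[N]}$ — this uses precisely that $a$ is the amplitude term of degree of homogeneity $-N$, which is the regime in which form-invariance holds. For $N\ge2$ the sphere $\d B_{\rn[N]}=S^{N-1}$ is compact, connected, oriented and boundaryless, and since $Y$ is a connected component of the compact oriented boundaryless manifold $X$, the product $Y\times\d B_{\rn[N]}$ is again compact, connected, oriented and boundaryless, with $\dim\l(Y\times\d B_{\rn[N]}\r)=\dim X+N-1$. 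Applying the quoted fact to $M=Y\times\d B_{\rn[N]}$ and the top-degree form $\rho(A)|_{Y\times\d B_{\rn[N]}}$ then yields the first assertion directly: $\int_{Y\times\d B_{\rn[N]}}\rho(A)=0$ exactly when $\rho(A)$ is exact on $Y\times\d B_{\rn[N]}$.

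For the quantitative refinement I would write $X=Y_1\dcup\ldots\dcup Y_k$ as its finitely many connected components (finiteness by compactness of $X$). Since $H^{\dim X+N-1}_{\dR}\l(Y_j\times\d B_{\rn[N]}\r)$ is one-dimensional, spanned by the class of the normalized volume form $\omega_j$, the de Rham class of $\rho(A)|_{Y_j\times\d B_{\rn[N]}}$ equals $c_j[\omega_j]$ for a unique $c_j\in\cn$, which is precisely the decomposition $\rho(A)|_{Y_j\times\d B_{\rn[N]}}=c_j\omega_j+d\tilde\omega_j$ in the statement (with $\tilde\omega_j$ some $(\dim X+N-2)$-form). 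Integrating over $Y_j\times\d B_{\rn[N]}$, Stokes' theorem kills $\int_{Y_j\times\d B_{\rn[N]}}d\tilde\omega_j$ (no boundary), while $\int_{Y_j\times\d B_{\rn[N]}}\omega_j=1$ by the normalization; hence $\int_{Y_j\times\d B_{\rn[N]}}\rho(A)=c_j$. Summing over $j$ and using additivity of the integral over the disjoint pieces gives
\begin{align*}
  \int_{X\times\d B_{\rn[N]}}\rho(A)=\sum_{j=1}^k\int_{Y_j\times\d B_{\rn[N]}}\rho(A)=\sum_{j=1}^kc_j.
\end{align*}

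I do not expect a genuine obstacle here: apart from Stokes' theorem and additivity of the integral, the only real input is the surjectivity-with-known-kernel of the integration map on top-degree de Rham cohomology of a compact connected oriented boundaryless manifold, which is classical. The two points that need a little care are: (i) that $\rho(A)$ is globally defined rather than a mere sum of local expressions — this is exactly Theorem~\ref{res-form-invariant}, and hinges on the relevant amplitude term being homogeneous of degree $-N$; and (ii) the case $N=1$, where $\d B_{\rn}=\{-1,+1\}$ is disconnected and $Y_j\times\d B_{\rn}$ breaks into two components, so one should let the index set of the decomposition run over the connected components of $X\times\d B_{\rn}$ (each contributing one coefficient), consistently with the earlier observation that for $N=1$ there are two residue traces. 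For $N\ge2$ the argument is as sketched.
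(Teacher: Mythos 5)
Your proof is correct and follows essentially the same route the paper intends: the proposition is exactly the classical statement that integration induces an isomorphism on top-degree de Rham cohomology of a compact, connected, oriented, boundaryless manifold, applied to $Y_j\times\d B_{\rn[N]}$, together with Stokes' theorem and additivity over components — which is precisely the content of the cohomological discussion preceding the proposition in the paper. Your remarks on global definedness via Theorem~\ref{res-form-invariant} and on the disconnectedness of $\d B_{\rn}$ for $N=1$ (where one must index over components of $X\times\d B_{\rn}$, matching the paper's earlier observation that there are two residue traces in that case) are accurate and consistent with the paper's setup.
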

Using the Hodge-$*$-operator $*$, the co-derivative $d^*:=\l(-1\r)^{N_X\l(N_X-1\r)+1}*d*$ with $N_X:=N+\dim X-1$, as well as
\begin{align*}
  \rho(A)=d\omega\ \iff\ e^{i\theta}a=&*d\omega=d^*\l((-1)^{N_X^2}*\omega\r),
\end{align*}
and the divergence $\div F=*d*F^\flat=(-1)^{N_X\l(N_X-1\r)+1}d^*F^\flat$ with the musical isomorphism
\begin{align*}
  \cdot^\flat:\ T\l(X\times\d B_{\rn[N]}\r)\to T^*\l(X\times\d B_{\rn[N]}\r);\ \sum_iF_i\d_i\mapsto \sum_iF_idx_i,
\end{align*}
we can re-formulate Proposition \ref{res-form-closed}.
\begin{theorem}\label{vanishing-spheric-integral}
  Let $X$ be connected. Then, the following are equivalent.
  \begin{enumerate}
  \item[(i)] $\int_X\int_{\d B_{\rn[N]}}e^{i\theta(x,\xi)}a(x,\xi)\ d\vol_{\d B_{\rn [N]}}(\xi)\ d\vol_X(x)=0$.
  \item[(ii)] There exists an $(N+\dim X-2)$-form $\omega$ on $X\times\d B_{\rn[N]}$ such that $d\omega=e^{i\theta}a\ d\vol_{X\times\d B_{\rn[N]}}$ locally.
  \item[(iii)] There exists a $1$-form $\omega$ on $X\times\d B_{\rn[N]}$ such that $d^*\omega=e^{i\theta}a$ locally.
  \item[(iv)] There exists a vector field $F$ on $X\times\d B_{\rn[N]}$ such that $\div F=e^{i\theta}a$ locally.
  \end{enumerate}
\end{theorem}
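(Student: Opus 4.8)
The plan is to reduce everything to Proposition~\ref{res-form-closed} and then translate its conclusion through the Hodge star and the musical isomorphism. Throughout, write $M:=X\times\d B_{\rn[N]}$; for $N\ge2$ this is a compact, connected, oriented Riemannian manifold of dimension $N_X=N+\dim X-1$ (for $N=1$ one has $\d B_{\rn}=\{\pm1\}$ and the statement degenerates to the elementary observations made just before the theorem), and $\rho(A)=e^{i\theta}a\,d\vol_M$ is its residue form. For the equivalence (i)~$\iff$~(ii) I would simply observe that (ii) says verbatim that $\rho(A)$ is exact --- a primitive being an $(N_X-1)$-form $\omega$ with $d\omega=\rho(A)$ --- and that, $M$ being compact, connected, and oriented, $H^{N_X}_\dR(M)\cong\cn$ with the class of a top form detected by its integral; hence exactness is equivalent to $\int_M\rho(A)=0$, which by Fubini is exactly (i). This is precisely the content of Proposition~\ref{res-form-closed} applied on $M$, so no new work is needed here; the qualifier ``locally'' in (ii)--(iv) reflects that, for a Fourier Integral Operator, $e^{i\theta}a$ is only a local representative of the residue density in the first place (cf.\ the role of Theorem~\ref{res-form-invariant}).

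Next I would establish (ii)~$\iff$~(iii)~$\iff$~(iv), which is purely the fibrewise linear algebra collected in the display preceding the statement. The starting point is $*(f\,d\vol_M)=f$ for a function $f$, so $*\rho(A)=e^{i\theta}a$, together with the fact that $*: \Omega^{N_X-1}(M)\to\Omega^1(M)$ and $\cdot^\sharp: \Omega^1(M)\to\X(M)$ are bundle isomorphisms. Using $d^*=(-1)^{N_X(N_X-1)+1}*d*$ and $**=\pm\id$ one checks, as recorded in the displayed identities just before the theorem, that $d^*\bigl((-1)^{N_X^2}*\omega\bigr)=*d\omega$ for every $(N_X-1)$-form $\omega$, and $\div F=(-1)^{N_X(N_X-1)+1}d^*(F^\flat)$ for every vector field $F$. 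Consequently $\omega$ is a primitive of $\rho(A)$ iff $*d\omega=e^{i\theta}a$ iff the $1$-form $\eta:=(-1)^{N_X^2}*\omega$ satisfies $d^*\eta=e^{i\theta}a$, which is (iii), and such an $\eta$ exists iff the vector field $F:=(-1)^{N_X(N_X-1)+1}\eta^\sharp$ satisfies $\div F=e^{i\theta}a$, which is (iv); running the isomorphisms $*$ and $\sharp/\flat$ backwards supplies the converse implications, and every identity invoked is local, as the statement requires.

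The only point requiring care is sign bookkeeping in the Hodge conventions --- the value of $**=(-1)^{k(N_X-k)}$ on $\Omega^k(M)$, the degree-dependent sign in $d^*=(-1)^{N_X(k-1)+1}*d*$, and their compatibility with the normalisation $\div F=*d*F^\flat$ --- so that the primitive in (iii) and the field in (iv) come out with right-hand side exactly $e^{i\theta}a$ and not some non-zero scalar multiple of it. Since only the vanishing of $e^{i\theta}a$ is at issue this is cosmetic, but it must be handled consistently. The genuine mathematical input --- that triviality of the top de~Rham class is equivalent to exactness on a compact connected oriented manifold, with $H^{N_X-1}$ playing no role --- is entirely absorbed into Proposition~\ref{res-form-closed}, so beyond the translation step nothing deep remains.
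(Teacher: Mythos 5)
Your proposal is correct and follows essentially the same route as the paper: the equivalence (i)$\iff$(ii) is exactly Proposition~\ref{res-form-closed} (top de~Rham class of the compact connected oriented manifold $X\times\d B_{\rn[N]}$ detected by the integral), and (ii)$\iff$(iii)$\iff$(iv) is the translation through the Hodge star, $d^*$, and the musical isomorphism recorded in the display preceding the theorem. Your remarks on the sign bookkeeping and the degenerate $N=1$ case are consistent with the paper's conventions, so nothing further is needed.
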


\begin{remark*}
  These results hold if we replace $\d B_{\rn[N]}$ by any other connected manifold $M$ and consider the residue terms $\res\alpha=\int_M\hat\alpha d\vol_M$ for poly-$\log$-homogeneous distributions. In particular, we obtain $\res\alpha=0$ if and only if there exists a vector field $F$ on $M$ such that $\hat\alpha=\div F$.

\end{remark*}

\begin{remark*}
  \emph{Condition (iv) can be extended to $X\times\l(\rn[N]\setminus\{0\}\r)$.} Let $M:=X\times\d B_{\rn[N]}$, $(g_i)_i$ the local frame in which $e^{i\theta}a$ is given by $\alpha$, and $(g^i)_i$ the dual frame. Let $\tilde M:=\rn_{>0}\times M\cong X\times\l(\rn[N]\setminus\{0\}\r)$ and the metric tensor is of the form
  \begin{align*}
    \tilde g(r,\xi)=
    \begin{pmatrix}
      1&0\\0&r^{2\dim M}g(\xi)
    \end{pmatrix},
  \end{align*}
  i.e. $d\vol_{\tilde M}(r,\xi)=\sqrt{\det\tilde g(r,\xi)}dr\wedge d\xi=r^{\dim M}\sqrt{\det g(\xi)}dr\wedge d\xi=r^{\dim M}dr\wedge d\vol_M(\xi)$. Let $F$ be a vector field on $M$ and $\tilde F$ be a vector field on $\tilde M$. Then,
  \begin{align*}
    \div F(\xi)=&\tr\grad F(\xi)=\sum_{j=1}^{\dim M}\sum_{i=1}^{\dim M}\d_jF_i(\xi)g^{ji}(\xi)
  \end{align*}
  and
  \begin{align*}
    \div\tilde F(r,\xi)=&\d_0\tilde F_0(r,\xi)+r^{2\dim M}\sum_{j=1}^{\dim M}\sum_{i=1}^{\dim M}\d_j\tilde F_i(r,\xi)g^{ji}(\xi).
  \end{align*}
  In other words, we obtain $\div\tilde F(1,\xi)=\div F(\xi)$ if $\d_0\tilde F_0(1,\xi)=0$ and $\d_j\tilde F_i(1,\xi)=\d_jF_i(\xi)$. On the other hand, we want $\div F(\xi)=\tilde \alpha(\xi)$ and $\div\tilde F(r,\xi)=f(r)\tilde\alpha(\xi)$ with $f(1)=1$. Choosing $\tilde F_0=0$ and $\tilde F_i(r,\xi)=f(r)F_i(\xi)$ implies $\div\tilde F(r,\xi)=f(r)\tilde\alpha(\xi)$ and $\div\tilde F(1,\xi)=\div F(\xi)$. 

  Thus, knowing (iv) we can construct a vector field $\tilde F$ such that $e^{i\theta}=\div\tilde F$ on $X\times\l(\rn[N]\setminus\{0\}\r)$ and $\tilde F$ satisfies the conditions above. Conversely, if $\tilde F$ has the described properties, then $\tilde F|_{X\times\d B_{\rn[N]}}$ satisfies (iv).

\end{remark*}
At this point, using the framework of gauged poly-$\log$-homogeneous distributions, we can follow the lines of \cite{guillemin-lagrangian}*{Theorem 1.1} (using gauged poly-$\log$-homogeneous distributions) to obtain the following theorem (\cite{guillemin-lagrangian}*{Theorem 1.2}, \cite{hartung-phd}*{Theorem 7.9}) which we state here for completeness.

\begin{theorem}\label{commutator-structure}
  Let $\Ap_\Gamma$ be an algebra of classical Fourier Integral Operators associated with the canonical relation $\Gamma$ such that the twisted relation $\Gamma'$ ($A\in\Ap_\Gamma\ \iff\ k_A\in I(X^2,\Gamma')$) has clean and connected intersection with the co-normal bundle of diagonal in $X^2$. Then, the residue-trace of $A\in\Ap_\Gamma$ vanishes if and only if $A$ is a smoothing operator plus a sum of commutators $[P_i,A_i]$ where the $P_i$ are pseudo-differential operators and the $A_i\in\Ap_\Gamma$.
\end{theorem}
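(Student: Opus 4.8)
The plan is to treat the two implications separately. The ``if'' direction is immediate: by the results of Section~\ref{lagrangian} the map $A\mapsto\res_0\zeta(A)$ is a trace on $\Ap_\Gamma$, hence it annihilates every commutator $[P_i,A_i]$, and it annihilates smoothing operators because the residue-trace is local and a smoothing operator has trivial symbol. So every operator of the stated form has vanishing residue-trace.

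For the converse, suppose $A\in\Ap_\Gamma$ satisfies $\res_0\zeta(A)=0$. The strategy is to strip the classical amplitude of $A$ one homogeneous term at a time: at each stage one subtracts a commutator $[P_j,A_j]$, with $P_j$ a properly supported pseudo-differential operator and $A_j\in\Ap_\Gamma$, whose leading homogeneous component cancels that of the operator obtained so far, thereby lowering the order; iterating and forming an asymptotic sum of the $[P_j,A_j]$ then leaves a smoothing remainder, which yields the desired representation of $A$. As a preliminary step I would pass to the homogeneous symplectic normal coordinates furnished by \cite{hoermander-books}*{Theorem 21.2.10} from the clean intersection of the twisted relation $\Gamma'$ with the conormal bundle of the diagonal, exactly as in \eqref{eq:FIO-trace-diagonal} and in Guillemin's treatment of the residue trace; in these coordinates the residue-relevant data of an element of $\Ap_\Gamma$ is, by Theorem~\ref{res-form-invariant}, a well-defined density on a connected manifold $M$ (the relevant sphere bundle over the clean intersection), and it is precisely here that the connectedness hypothesis on $\Gamma'$ is used, since it is what forces $H^{\dim M}_\dR(M)\cong\rn$. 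Since the amplitude is classical, only its homogeneous component of degree $-N$ (the critical degree $-\dim M-1$ in the present normalization) feeds into $\res_0\zeta$.

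The induction then runs on the order of the remaining amplitude. At a step whose leading homogeneous term has degree $d\neq-N$, this term can always be matched by the leading term of a suitable commutator: as in Wodzicki's and Guillemin's analysis of the symbol complex, every homogeneous symbol of non-critical degree is, modulo lower order, a sum of Hamiltonian derivatives, and the leading symbol of $P_jA_j-A_jP_j$ is, in the normal coordinates, governed by $-i\{p_j,a_j\}$. The single cohomological obstruction sits at degree $d=-N$: there matching is possible if and only if the associated density on $M$ integrates to zero, which by Theorem~\ref{vanishing-spheric-integral} (together with the remarks following it, which extend condition~(iv) to an arbitrary connected base and to $X\times(\rn[N]\setminus\{0\})$) amounts to its being locally a divergence. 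That integral is exactly the residue-trace of the operator reached so far; since only commutators have been subtracted and $\res_0\zeta$ is tracial, it still equals $\res_0\zeta(A)=0$, so the obstruction vanishes and the $d=-N$ term, too, is absorbed into a commutator. Subtracting $[P_j,A_j]$ drops the order, and the asymptotic summation then produces the smoothing remainder.

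The hard part is the realization step: turning ``the leading symbol is a sum of Hamiltonian derivatives (equivalently a divergence on $M$)'' into an actual commutator $[P,A']$ with $P$ pseudo-differential, $A'\in\Ap_\Gamma$, and the correct order and support properties. I would carry this out mutatis mutandis as in Guillemin's proof of \cite{guillemin-lagrangian}*{Theorem 1.2}, the one structural change being that the gauged poly-$\log$-homogeneous distribution formalism of Section~\ref{gauged-poly-log-homogeneous-distributions} replaces the gauged polyhomogeneous distributions used there (so that the meromorphic-continuation and differentiation arguments, which introduce $\log$-terms, are available), while Theorem~\ref{vanishing-spheric-integral} supplies the needed ``density is a divergence'' dichotomy. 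One should also keep in mind the orientability of $X$, hence of $M$ — as flagged in the remark preceding the definition of the residue form — so that indeed $H^{\dim M}_\dR(M)\cong\rn$; on non-orientable components the argument is rephrased in terms of pseudo-forms. For the full details we refer to \cite{guillemin-lagrangian} and \cite{hartung-phd}*{Theorem 7.9}.
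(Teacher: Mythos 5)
Correct, and essentially the paper's own route: the paper gives no independent argument for this theorem, stating it as obtained by following Guillemin's proof (Theorems 1.1--1.2 of \cite{guillemin-lagrangian}}, with details in \cite{hartung-phd}}*{Theorem 7.9}) with gauged poly-$\log$-homogeneous distributions in place of gauged polyhomogeneous ones. Your sketch --- traciality plus locality of the residue for the easy direction, and for the converse the order-by-order subtraction of commutators with the single cohomological obstruction at the critical degree identified with the residue (via the connectedness hypothesis and the divergence characterization of Theorem \ref{vanishing-spheric-integral}), deferring the commutator-realization step to Guillemin --- is exactly that approach.
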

Guillemin also proved the following (more general) version of Theorem \ref{commutator-structure} (cf. \cite{guillemin-residue-traces}*{Proposition 4.11}).
\begin{prop}
  Let $\Gamma$ be connected. Then, the commutator of $\Ap_\Gamma$ is of co-dimension one in $\Ap_\Gamma$ modulo smoothing operators.
\end{prop}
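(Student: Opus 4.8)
The plan is to exhibit a nontrivial trace that annihilates all commutators, which forces the codimension to be at least one, and then to show conversely that vanishing of this trace forces an operator to lie in the commutator ideal modulo smoothing operators.

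For the lower bound I would take $\tau:=\res_0\zeta$, the residue trace of Section~\ref{lagrangian}. It descends to a well-defined linear functional on $\Ap_\Gamma$ modulo smoothing operators and it is tracial (the computation showing $\res_0\zeta(G_1)=\res_0\zeta(G_2)$ whenever $G_1(0)=AB$ and $G_2(0)=BA$), hence it vanishes on $[\Ap_\Gamma,\Ap_\Gamma]$. It remains to see that $\tau\not\equiv0$: writing $M$ for the compact manifold carrying the residue form (locally $\Delta(X)\times\d B_{\rn[N]}$, as in Section~\ref{lagrangian} and the Remark after Theorem~\ref{vanishing-spheric-integral}), the residue symbol of $A\in\Ap_\Gamma$ is the density $e^{i\theta}\tilde a_{-N}\,d\vol_M$ on $M$, and since $\Gamma$ — hence $M$ — is connected we have $H^{\dim M}_{\dR}(M)\cong\rn$, so a volume form on $M$ is not exact; realizing such a form (via a partition of unity) as the residue symbol of some $A_0\in\Ap_\Gamma$ gives $\tau(A_0)\ne0$, say $\tau(A_0)=1$. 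Therefore $[\Ap_\Gamma,\Ap_\Gamma]$ together with the smoothing operators has codimension at least one in $\Ap_\Gamma$.

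For the upper bound, given $A\in\Ap_\Gamma$ I would replace it by $A-\tau(A)A_0$ and thereby assume $\tau(A)=0$; it then suffices to prove that $\tau(A)=0$ implies $A$ is a finite sum of commutators plus a smoothing operator. I would do this by induction on the order, following the scheme of Theorem~\ref{commutator-structure} and its model \cite{guillemin-lagrangian}*{Theorem 1.1}, under the standing cleanness assumptions on $\Gamma$. Using a partition of unity, put $A$ into local oscillatory-integral form with amplitudes of top order $m$. The residue symbol of a commutator $[P,B]$ with $P$ a classical pseudo-differential operator of order $0$ is the image of the residue symbol of $B$ under the Hamiltonian vector field $X_p$ associated with the principal symbol $p$ of $P$; as $p$ and $B$ vary, these images span exactly the sections of the residue line bundle on $M$ whose integral over each connected component of $M$ vanishes — this is precisely Theorem~\ref{vanishing-spheric-integral}, which identifies ``integral equal to zero'' with ``locally a divergence $\div F$'', and a divergence $\sum_i\d_iF_i$ matches, term by term, a finite sum of commutators with the (local) operators of multiplication by a coordinate and of differentiation. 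When $m=-N$ the obstruction to writing the top part of $A$ in this form is exactly $\tau(A)$, which is zero by assumption; when $m\ne-N$ the residue integral is not even a coordinate-invariant density (Lemma~\ref{spherical-integrals}), so after patching one again lands in the span of commutator symbols. In all cases one may subtract a finite sum of commutators from $A$ and lower the order by one.

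Iterating, I obtain for each $k$ a finite sum of commutators $B_k$ such that $A-\sum_{j\le k}B_j$ has order $m-k$; a Borel/asymptotic-summation argument carried out inside $\Ap_\Gamma$ (asymptotic completeness of the Fourier Integral Operator class) then produces a single $B\in[\Ap_\Gamma,\Ap_\Gamma]$ with $A-B$ of order $-\infty$, i.e.\ smoothing. Combined with the first part this shows the commutator ideal together with the smoothing operators has codimension exactly one in $\Ap_\Gamma$. I expect the main obstacle to be the inductive step at the off-critical orders $m\ne-N$: one must carry the Maslov/density line bundle over $\Gamma$ through the computation and verify, using the clean-intersection geometry, that the Hamiltonian vector fields $X_p$ together with the freedom in the choice of $B$ really do exhaust the relevant sections — this is where connectedness of $\Gamma$ enters, via $H^{\dim M}_{\dR}(M)\cong\rn$ — and to ensure that the Borel summation can be performed without leaving $\Ap_\Gamma$ for a larger Fourier Integral Operator class.
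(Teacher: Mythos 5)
First, a point of comparison: the paper does not prove this proposition at all --- it is quoted from Guillemin, \cite{guillemin-residue-traces}*{Proposition 4.11}, so there is no in-paper argument to match your sketch against; it has to be judged against the symbolic proof scheme of \cite{guillemin-lagrangian}*{Theorem 1.1} and Theorem \ref{commutator-structure}, whose general architecture (a trace annihilating commutators for the lower bound; an order-by-order symbolic induction plus asymptotic summation for the upper bound) your proposal does reproduce in outline.

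However, both halves of your sketch have genuine gaps. For the lower bound you take $\tau=\res_0\circ\zeta$ and argue non-triviality by realizing a volume form as a residue symbol on a manifold $M$ that is ``locally $\Delta(X)\times\d B_{\rn[N]}$'', claiming connectedness of $\Gamma$ gives connectedness of $M$. This conflates the canonical relation $\Gamma$ with the locus actually supporting the residue density (essentially the intersection of $\Gamma'$ with the co-normal bundle of the diagonal): $\Delta(X)\times\d B_{\rn[N]}$ is connected whenever $X$ is connected and $N\ge2$, independently of $\Gamma$, so connectedness of $\Gamma$ does no work where you invoke it, and you cannot freely prescribe the oscillatory density $e^{i\theta}\tilde a_{-N}$ since the phase is fixed and the amplitude is constrained to be homogeneous. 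Worse, the paper itself observes immediately after the proposition that $\res_0\circ\zeta$ may be \emph{identically zero} on $\Ap_\Gamma$ (cf.\ Theorem \ref{hilbert-schmidt-algebras}, where every $\zeta$-function in the algebra is pole-free), so a proof cannot rest on non-triviality of the $\zeta$-residue; Guillemin's spanning functional is the symbolic residue defined by integrating the leading symbol over a cross-section of $\Gamma$ itself, which is precisely where the connectedness hypothesis enters (one dimension per connected component of $\Gamma$). For the upper bound, Theorem \ref{vanishing-spheric-integral} only says that a density on $X\times\d B_{\rn[N]}$ with vanishing integral is locally a divergence; the step you actually need --- that such divergences of (Maslov-twisted half-density) symbols on $\Gamma$ are realized, to leading order, by symbols of commutators $[P_i,A_i]$ with $P_i$ pseudo-differential, and that every symbol of non-critical degree $m\ne-N$ is so realized --- is the substance of the cited theorems and is not supplied: your remark that for $m\ne-N$ the residue integral ``is not coordinate-invariant, so after patching one lands in the span of commutator symbols'' is not an argument (non-invariance of a functional does not yield surjectivity; at non-critical degrees one instead exhibits each homogeneous symbol directly as a Hamiltonian derivative, e.g.\ via a commutator with an elliptic first-order operator). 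As it stands, the proposal is a plausible outline of Guillemin's proof, but its lower bound is based on a claim that is in general false in the paper's setting, and its inductive core is asserted rather than proved.
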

Hence, $\res_0\circ\zeta$ is either zero or the unique trace on $\Ap_\Gamma$ up to a constant factor. Regarding the trace of smoothing operators, \cite{guillemin-residue-traces}*{Theorems A.1 and A.2} yield the commutator structure of smoothing operators (the following two definitions, the theorem, and the remark can all be found in the appendix of \cite{guillemin-residue-traces}).

\begin{definition}
  Let $H$ be a separable Hilbert space and $e:=(e_i)_{i\in\nn}$ an orthonormal basis of $H$. An operator $A\in L(H)$ is called smoothing with respect to $e$ if and only if
  \begin{align*}
    \fa n\in\nn\ \ex c\in\rn:\ \betr{\langle Ae_i,e_j\rangle_H}\le c(i+j)^{-n}.
  \end{align*}
\end{definition}

\begin{definition}
  Let $H$ be a separable Hilbert space, $e$ an orthonormal basis, $\Omega\sse_\open\Kn[n]$ with $\Kn\in\{\rn,\cn\}$ and $A\in L(H)^\Omega$ such that each $A(s)$ is smoothing with respect to $e$. Then, $A$ is said to be smooth/holomorphic if and only if all $s\mapsto\langle A(s)e_i,e_j\rangle_H$ are $C^\infty(\Omega)$.
\end{definition}

\begin{theorem}
  \begin{enumerate}
  \item[(i)] If $A$ is smoothing with respect to $e$ and $\tr A=0$, then $A$ can be written as a finite sum of commutators $[B_i,C_i]$ where the $B_i$ and $C_i$ are smoothing with respect to $e$.
  \item[(ii)] If a family $A\in L(H)^\Omega$ of smoothing operators is smooth/holomorphic, then $A$ can be written as a finite sum of commutators $s\mapsto[B_i(s),C_i]$ on every compact $K\sse\Omega$ where the $B_i(s)$ and $C_i$ are smoothing, and the $B_i$ are smooth/holomorphic.
  \end{enumerate}
\end{theorem}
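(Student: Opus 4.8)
The plan is to reduce both parts to an explicit, essentially combinatorial statement about rapidly decreasing matrices, carrying out the reduction carefully enough that every operator produced stays in the smoothing class. Fix the basis $e=(e_i)_{i\in\nn}$ and write $a_{ji}:=\langle Ae_i,e_j\rangle$; then ``$A$ smoothing with respect to $e$'' means precisely that the matrix $(a_{ji})_{i,j}$ is rapidly decreasing, i.e.\ $(i+j)^n|a_{ji}|$ is bounded for every $n\in\nn$. Such matrices define trace-class operators, the product of two smoothing operators is smoothing, and $\tr(BC)=\tr(CB)$ whenever $B$ is trace-class and $C$ bounded; hence every commutator of smoothing operators is itself smoothing with vanishing trace, so $\tr A=0$ is necessary, and the content of~(i) is its sufficiency.

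For (i) I would first split $A=A_0+A_<+A_>$ into its diagonal, strictly lower, and strictly upper parts. Each summand is again smoothing, and each has vanishing trace ($\tr A_0=\tr A=0$, while $\tr A_<=\tr A_>=0$ trivially). Since the adjoint of a smoothing operator is smoothing and $[B,C]^\ast=[C^\ast,B^\ast]$, it is enough to exhibit $A_0$ and $A_<$ as finite sums of commutators of smoothing operators. The diagonal part is the easy case: with $b_i:=\sum_{j\le i}a_{jj}$, which is rapidly decreasing because $\sum_j a_{jj}=0$, the nearest-neighbour weighted shifts $Pe_i:=\beta_ie_{i+1}$, $Qe_i:=\gamma_ie_{i-1}$ (with $Qe_1:=0$) satisfy $[P,Q]=A_0$ as soon as $\beta_{i-1}\gamma_i-\beta_i\gamma_{i+1}=a_{ii}$ for all $i$; this holds for $|\beta_i|=|\gamma_{i+1}|=\sqrt{|b_i|}$ with a suitable choice of unimodular factors, and $P,Q$ are then smoothing because the square root of a rapidly decreasing sequence is rapidly decreasing.

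The strictly lower part $A_<=\sum_{k\ge1}D_k$, with $D_k$ the $k$-th subdiagonal, is the substantial point. Each band is individually a commutator of smoothing operators: writing $D_ke_i=\mu^{(k)}_ie_{i+k}$ one has $D_k=[S^{(k)},N^{(k)}]$ with $S^{(k)}$ the weighted shift $e_i\mapsto c^{(k)}_ie_{i+k}$ and $N^{(k)}$ diagonal, provided the auxiliary weights are chosen rapidly decreasing but not too small compared with $A$, for instance $c^{(k)}_i\ge\sqrt{|\mu^{(k)}_i|}$, so that the quotients $\mu^{(k)}_i/c^{(k)}_i$ that enter $N^{(k)}$ remain rapidly decreasing. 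The core of the argument is then to collapse these countably many commutators into finitely many: one fixes a single rapidly decreasing sequence $c$ dominating all the $c^{(k)}$ simultaneously (available because $\sum_k\sqrt{|a_{i+k,i}|}$ is still rapidly decreasing in $i$), lets $C$ be the associated weighted shift, and builds smoothing operators that realise $A_<$ as one --- or at worst boundedly many --- commutators with $C$ by solving the resulting first-order difference equations along the subdiagonals, exploiting the freedom in the initial data together with standard block-matrix manipulations. \emph{Verifying that the operator obtained by solving these difference equations is again rapidly decreasing, i.e.\ that the multiplicative accumulation along the subdiagonals does not destroy the decay, is the main obstacle, and it is exactly here that the adapted choice of the weight sequence $c$ must be used.}

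For (ii) the decisive point is that every operator constructed above is produced from the entries $a_{ji}$ by partial summation, by taking (square roots of) absolute values, and by solving linear recursions. Consequently, if $s\mapsto\langle A(s)e_i,e_j\rangle$ is $C^\infty$ (respectively holomorphic) on $\Omega$, the entries of the constructed operators depend on $s$ in the same way, and on a compact $K\sse\Omega$ the rapid-decay bounds are uniform in $s$ because the constants in the smoothing estimate for $A(s)$ are locally bounded in $s$ (by continuity, respectively by Cauchy estimates). Taking the shift-type factors $C_i$ from sequences that dominate $\sup_{s\in K}|a_{ji}(s)|$ makes them independent of $s$ while keeping them smoothing, so one obtains $A(s)=\sum_i[B_i(s),C_i]$ on $K$ with the $C_i$ fixed smoothing operators and the $B_i$ a smooth, respectively holomorphic, family of smoothing operators, as asserted.
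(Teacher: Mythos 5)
You should first note that the paper does not prove this statement at all: it is quoted verbatim from the appendix of Guillemin's paper (Theorems A.1 and A.2 of the cited reference \cite{guillemin-residue-traces}), so there is no in-paper proof to compare with, and your proposal has to stand on its own. Within it, the diagonal part is handled correctly and completely (the telescoping partial sums realized by a pair of weighted shifts is fine), and the reduction of the strictly upper to the strictly lower triangle by adjoints is fine. The genuine gap is the strictly triangular part, which is the heart of the theorem. You produce one commutator per subdiagonal band, i.e.\ countably many, and the step that is supposed to merge these into finitely many --- a single weighted shift $C$ together with ``solving the resulting first-order difference equations along the subdiagonals, exploiting the freedom in the initial data together with standard block-matrix manipulations'' --- is exactly the point you yourself flag as the main obstacle and do not carry out. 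It is not a routine verification: writing $[B,C]_{ji}=c_ib_{j,i+1}-c_{j-1}b_{j-1,i}$ and solving band by band involves propagators built from ratios and products of the weights $c_i$, and since $C$ must itself be smoothing, the $c_i$ are rapidly decreasing, so one is repeatedly dividing by rapidly decreasing quantities; that the solution $B$ can be kept rapidly decreasing \emph{uniformly over all bands} is precisely what has to be proved, and the condition $c_i^{(k)}\ge\sqrt{\lvert\mu_i^{(k)}\rvert}$ does not by itself control the multiplicative accumulation you mention. As written, the proposal establishes the theorem only modulo its hardest step.

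For what it is worth, the gap can be closed by a cheaper device that avoids bands altogether: take $C$ diagonal with entries $c_i=\sum_{i'\ge i}\l(g_{i'}+2^{-i'}\r)$, where $g_{i'}:=\sup\l\{\lvert a_{ji'}\rvert^{1/2},\lvert a_{i'j}\rvert^{1/2};\ j\ne i'\r\}$. Then $c$ is rapidly decreasing and strictly decreasing, and for $j>i$ one has $c_i-c_j\ge g_i\ge\lvert a_{ji}\rvert^{1/2}$, so setting $b_{ji}:=a_{ji}/(c_i-c_j)$ below the diagonal and $0$ elsewhere gives $\lvert b_{ji}\rvert\le\lvert a_{ji}\rvert^{1/2}$, hence a smoothing $B$ with $[B,C]=A_<$ in a \emph{single} commutator (and similarly for $A_>$), which together with your diagonal argument gives (i) with three commutators. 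Two further points on (ii): your claim that the rapid-decay constants of $A(s)$ are uniform on compacta ``by continuity, respectively by Cauchy estimates'' is asserted rather than proved --- entrywise smoothness or holomorphy plus pointwise smoothing does not immediately yield seminorm bounds uniform on $K$ --- and, more seriously, your treatment of the diagonal part requires $\sum_i a_{ii}(s)=0$ (otherwise the partial sums $b_i(s)$ are not rapidly decreasing), whereas (ii) as quoted carries no trace hypothesis; since any finite sum of commutators of trace-class (in particular smoothing) operators has vanishing trace, the statement cannot hold as literally quoted with all $C_i$ smoothing, and your sketch neither notices this tension nor explains how the diagonal part is to be handled in the parametrized case.
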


\begin{remark*}
  \begin{enumerate}
  \item[(i)] Let $X$ be a compact Riemannian manifold, $H=L_2(X)$, and $e$ the family of eigenfunctions of the Laplacian on $X$. An operator $A\in L\l(L_2(X)\r)$ is smoothing with respect to $e$ if it is smoothing with respect to the Sobolev norms.
  \item[(ii)] Let $H=L_2(\rn[n])$ and $e$ the family of Hermite functions. An operator $A\in L(H)$ is $e$-smoothing if it is smoothing with respect to the Schwartz semi-norms.
  \end{enumerate}

\end{remark*}
These theorems yield the following table assuming that the (unique) residue trace $\res_0\circ\zeta$ is non-trivial and $\Ap_\Gamma=\langle\mathfrak{A}\rangle+\l\langle\l[\Ap_\Gamma,\Ap_\Gamma\r]\r\rangle+\{\text{smoothing operators}\}$ for some $\mathfrak{A}\in\Ap_\Gamma$ with $\res_0\zeta(\mathfrak{A})\ne0$.
\begin{center}
  \begin{tabular}{|c|c|c|c|}
    \hline
    \multicolumn{2}{|c|}{$I_0\ne\emptyset$}&\multicolumn{2}{c|}{$I_0=\emptyset$}\\\hline
    $\res_0\zeta(A)\ne0$&$\res_0\zeta(A)=0$&$\zeta(A)(0)\ne0$&$\zeta(A)(0)=0$\\\hline
    $
    \begin{array}{l}
      A=\alpha\mathfrak{A}+S+\sum_{i=1}^kC_i\\
      C_i\in\l[\Ap_\Gamma,\Ap_\Gamma\r]\\
      \alpha=\l(\res_0\zeta(\mathfrak{A})\r)^{-1}\res_0\zeta(A)\\
      S\text{ smoothing}
    \end{array}$&\multicolumn{2}{c|}{
    $
    \begin{array}{l}
      A=S+\sum_{i=1}^kC_i\\
      C_i\in\l[\Ap_\Gamma,\Ap_\Gamma\r]\\
      S\text{ smoothing}
    \end{array}$}&
    $
    \begin{array}{l}
      A=\sum_{i=1}^kC_i\\
      C_i\text{ commutators}
    \end{array}$\\\hline
  \end{tabular}
\end{center}

\begin{remark*}
  Note that the obstruction to the generalized Kontsevich-Vishik trace is given by the derivatives of the $a_\iota$ for $\iota\in I_0$. Using the example above Theorem \ref{Laurent-distrib}, we obtain that these are residue traces themselves if the operator is poly-homogeneous. These residues are explicitly computed for gauged families $A(z)=BQ^z$ in Proposition \ref{prop-coeffs}.

\end{remark*}

\section{Stationary phase approximation}\label{sec:stationary-phase}
In this section, we would like to get to know a little more about the singularity structure of
\begin{align*}
  k(x,y)=\int_{\rn[N]}e^{i\theta(x,y,\xi)}a(x,y,\xi)d\xi,
\end{align*}
primarily to compute the integrals
\begin{align*}
  \int_{\d B_{\rn[N]}}e^{i\theta(x,y,\xi)}a(x,y,\xi)d\vol_{\d B_{\rn[N]}}(\xi).
\end{align*}
We want to understand these integrals in more detail for two reasons. First, we need to know which of these integrals over $\rn[N]$ are regular in order to decide whether one such integral is an $\alpha_\iota$ or belongs to $\alpha_0$. Second, the integrals over the sphere $\d B_{\rn[N]}$ are essentially the contributions of the $\alpha_\iota$ to the Laurent coefficients. Hence, studying those integrals allows us to actually compute the residues and the generalized Kontsevich-Vishik trace.

We will skip many calculations in this chapter because they are very tedious and differ only slightly (if at all) from the calculations that can be found in any account on stationary phase approximation (e.g. \cite{hoermander-books}*{chapter 7.7}). 

For the remainder of the section, let $a$ be $\log$-homogeneous. Then, we obtain
\begin{align*}
  k(x,y)=&\int_{\rn_{>0}}r^{N+d-1}(\ln r)^l\ubr{\int_{\d B_{\rn[N]}}e^{ir\theta(x,y,\eta)}a(x,y,\eta)d\vol_{\d B_{\rn[N]}}(\eta)}_{=:I(x,y,r)}dr.
\end{align*}
Let $(x,y)$ be off the critical manifold, i.e. $\fa \xi\in\d B_{\rn[N]}:\ \d_3\theta(x,y,\xi)\ne0$. Then, we observe
\begin{align*}
  \fa n\in\nn:\ \betr{I(x,y,r)}=&\frac{1}{r}\betr{\int_{\d B_{\rn[N]}}e^{ir\theta(x,y,\xi)}\Dp a(x,y,\xi)d\vol_{\d B_{\rn[N]}}(\xi)}\\
  =&\frac{1}{r^n}\betr{\int_{\d B_{\rn[N]}}e^{ir\theta(x,y,\xi)}\Dp[n] a(x,y,\xi)d\vol_{\d B_{\rn[N]}}(\xi)}\\
  \le&\frac{1}{r^n}\norm{\Dp[n] a}_{L_\infty\l(X\times X\times\d B_{\rn[N]}\r)},
\end{align*}
where
\begin{align*}
  \Dp a(x,y,\xi):=\d_3^*\frac{a(x,y,\xi)\d_3\theta(x,y,\xi)}{\norm{\d_3\theta(x,y,\xi)}^2_{\ell_2(N)}},
\end{align*}
which proves the well-known fact that $k$ is $C^\infty$ away from the critical manifold. 

On the critical manifold, we will assume that 
\begin{align*}
  \d_3^2\l(\theta|_{X\times X\times\d B_{\rn[N]}}\r)(x,y,\xi)\in GL\l(\rn[N-1]\r)
\end{align*}
if $\d_3\theta(x,y,\xi)=0$ (note that this holds for pseudo-differential operators). Then, we are in a position to apply Morse' Lemma.

\begin{lemma}[Morse' Lemma]\label{morse-lemma}
  Let $(x_0,y_0,\xi_0)\in X\times X\times\d B_{\rn[N]}$ be stationary (in particular, $\d_{\d B}\theta(x_0,y_0,\xi_0)=0$) and $\d_{\d B}^2\theta(x_0,y_0,\xi_0)\in GL\l(\rn[N-1]\r)$ where $\d_{\d B}$ denotes the spherical derivative, i.e. the derivative in $\d B_{\rn[N]}$.

  Then, there are neighborhoods $U\sse_\open X\times X$ of $(x_0,y_0)$ and $V\sse_\open\d B_{\rn[N]}$ of $\xi_0$ and a function $\hat\xi\in C^\infty(U,V)$ such that 
  \begin{align*}
    \fa (x,y,\xi)\in U\times V:\ \d_{\d B}\theta(x,y,\xi)=0\ \iff\ \xi=\hat\xi(x,y).
  \end{align*}
  Furthermore, there is a function $\eta\in C^\infty\l(U\times V,\rn[N]\r)$ such that
  \begin{align*}
    \fa (x,y,\xi)\in U\times V:\ \eta(x,y,\xi)-\l(\xi-\hat\xi(x,y)\r)\in O\l(\norm{\xi-\hat\xi(x,y)}_{\ell_2(N)}^2\r)
  \end{align*}
  and 
  \begin{align*}
    \d_3\eta\l(x,y,\hat\xi(x,y)\r)=1.
  \end{align*}
\end{lemma}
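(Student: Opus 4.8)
The plan is to reduce the statement to the classical parametrized Morse lemma applied to the function $\theta$ restricted to $X \times X \times \d B_{\rn[N]}$, using local coordinates on the sphere. First I would fix the stationary point $(x_0,y_0,\xi_0)$ and choose a coordinate chart $\psi\colon W \to \rn[N-1]$ on a neighbourhood $W \sse \d B_{\rn[N]}$ of $\xi_0$ with $\psi(\xi_0)=0$; write $g(x,y,u) := \theta(x,y,\psi^{-1}(u))$ for $(x,y)$ near $(x_0,y_0)$ and $u$ near $0$. The hypothesis $\d_{\d B}\theta(x_0,y_0,\xi_0)=0$ says $\d_u g(x_0,y_0,0)=0$, and $\d_{\d B}^2\theta(x_0,y_0,\xi_0)\in GL(\rn[N-1])$ says the Hessian $\d_u^2 g(x_0,y_0,0)$ is invertible. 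The implicit function theorem applied to $(x,y,u)\mapsto \d_u g(x,y,u)$ then produces, after shrinking neighbourhoods, a unique $C^\infty$ map $u_*(x,y)$ with $\d_u g(x,y,u_*(x,y))=0$; setting $\hat\xi(x,y) := \psi^{-1}(u_*(x,y))$ gives the first assertion, namely that on $U\times V$ the spherical critical-point equation is equivalent to $\xi = \hat\xi(x,y)$.

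Next I would invoke the parametrized Morse lemma (see e.g. \cite{hoermander-books}*{chapter 7.7}): since the critical point $u_*(x,y)$ depends smoothly on the parameters $(x,y)$ and the Hessian stays invertible near $(x_0,y_0,0)$, there is a smooth family of local diffeomorphisms $u \mapsto v(x,y,u)$, fixing the critical point and with derivative the identity there, that brings $g$ into the normal form of a non-degenerate quadratic form in $v$ up to the value at the critical point. Translating this back through $\psi$ and composing appropriately yields the promised function $\eta \in C^\infty(U\times V, \rn[N])$: one sets $\eta(x,y,\xi)$ to be the image of $\xi$ under this straightening, arranged so that $\eta(x,y,\hat\xi(x,y))=0$, and the Taylor expansion of the straightening map about $\hat\xi(x,y)$ gives $\eta(x,y,\xi) - (\xi - \hat\xi(x,y)) \in O(\norm{\xi - \hat\xi(x,y)}_{\ell_2(N)}^2)$ while $\d_3\eta(x,y,\hat\xi(x,y)) = 1$ is exactly the statement that the straightening has identity differential at the critical point. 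Here one must be slightly careful that $\eta$ is valued in $\rn[N]$ rather than in the tangent space of the sphere; this is harmless since the estimate and the derivative condition are stated for the ambient-coordinate representation, and one simply reads the sphere chart through the ambient embedding.

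The main obstacle I anticipate is purely bookkeeping: making the identifications between the intrinsic spherical derivative $\d_{\d B}$, the chart coordinates $u$, and the ambient derivative $\d_3$ precise enough that the final three displayed conditions come out verbatim, particularly keeping track of how the normalization $\d_3\eta(x,y,\hat\xi(x,y))=1$ (identity as a linear map) survives the chart changes. There is no genuine analytic difficulty here — everything reduces to the classical Morse lemma with smooth dependence on parameters, which is standard — so the proof is essentially a matter of setting up coordinates and citing \cite{hoermander-books}, and indeed the authors themselves note that such calculations differ only slightly from any standard account of stationary phase.
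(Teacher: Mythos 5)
Your plan is correct and is exactly the standard argument the paper relies on: the implicit function theorem applied to $\d_{\d B}\theta$ in a sphere chart (using invertibility of the spherical Hessian) produces $\hat\xi$, and the parametrized Morse lemma with smooth dependence on $(x,y)$ (e.g. \cite{hoermander-books}*{chapter 7.7}) produces the straightening $\eta$ with identity differential at the critical point. The paper itself gives no proof of this lemma — it is quoted as a standard fact with details deferred to the stationary-phase literature and the long version/thesis — so your write-up supplies precisely what is omitted; the only remark worth adding is that the two displayed conditions on $\eta$ alone are literally satisfied by $\eta(x,y,\xi)=\xi-\hat\xi(x,y)$, the substantive (implicit) content being the quadratic normal form for $\theta$ in the $\eta$-coordinates, which your construction via the Morse chart correctly provides.
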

\begin{corollary}
  Let $\theta$ be as in Morse' Lemma (Lemma \ref{morse-lemma}). Then, stationary points of $\theta(x,y,\cdot)$ are isolated in $\d B_{\rn[N]}$. In particular, there are only finitely many.
\end{corollary}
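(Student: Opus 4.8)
The plan is to fix $(x,y)\in X\times X$ and analyze the set
$S_{x,y}:=\l\{\xi\in\d B_{\rn[N]};\ \d_{\d B}\theta(x,y,\xi)=0\r\}$
of stationary points of $\theta(x,y,\cdot)$. This set is closed, being the zero set of the continuous map $\xi\mapsto\d_{\d B}\theta(x,y,\xi)$ on $\d B_{\rn[N]}$, and $\d B_{\rn[N]}$ is compact. Hence, once we know $S_{x,y}$ is discrete, the ``in particular'' claim follows: cover $S_{x,y}$ by the isolating neighborhoods (one per point) together with the open complement $\d B_{\rn[N]}\setminus S_{x,y}$, and extract a finite subcover, which shows $S_{x,y}$ is finite.

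To prove discreteness, I would pick an arbitrary $\xi_0\in S_{x,y}$. The standing assumption made just before Morse' Lemma -- namely that $\d_3^2\l(\theta|_{X\times X\times\d B_{\rn[N]}}\r)(x,y,\xi)\in GL\l(\rn[N-1]\r)$ whenever $\d_3\theta(x,y,\xi)=0$ -- guarantees $\d_{\d B}^2\theta(x,y,\xi_0)\in GL\l(\rn[N-1]\r)$, so Lemma \ref{morse-lemma} applies at $(x,y,\xi_0)$. It yields open neighborhoods $U\sse_\open X\times X$ of $(x,y)$, $V\sse_\open\d B_{\rn[N]}$ of $\xi_0$, and $\hat\xi\in C^\infty(U,V)$ such that for all $(x',y',\xi)\in U\times V$ one has $\d_{\d B}\theta(x',y',\xi)=0\iff\xi=\hat\xi(x',y')$. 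Evaluating this equivalence at the fixed pair $(x,y)$ and using $\d_{\d B}\theta(x,y,\xi_0)=0$ forces $\hat\xi(x,y)=\xi_0$, hence $S_{x,y}\cap V=\{\xi_0\}$. So $\xi_0$ is isolated in $S_{x,y}$, and since $\xi_0\in S_{x,y}$ was arbitrary, $S_{x,y}$ is discrete.

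The argument is essentially bookkeeping layered on top of Lemma \ref{morse-lemma}; the one point that needs care is that the non-degeneracy hypothesis must hold at \emph{every} stationary point of $\theta(x,y,\cdot)$, not merely at a single chosen $\xi_0$, so that Morse' Lemma is genuinely applicable at each point of $S_{x,y}$ -- this is exactly what the standing assumption provides. (For $N=1$ the statement is vacuous, since $\d B_{\rn}=\{-1,1\}$ is already finite.)
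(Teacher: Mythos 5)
Your proposal is correct and follows essentially the same route as the paper: apply Morse' Lemma at each stationary point (using the standing non-degeneracy assumption) to get local uniqueness, then use compactness of $\d B_{\rn[N]}$ to conclude the stationary points are isolated and finitely many. Your write-up merely spells out the covering/closedness bookkeeping that the paper's proof leaves implicit.
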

\begin{proof}
  For given stationary $(x,y,\xi)$ we can find a neighborhood $V\sse_\open\d B_{\rn[N]}$ such that $\xi=\hat\xi(x,y)$; thus, stationary points are locally unique. By compactness of $\d B_{\rn[N]}$ they are isolated and at most finitely many. 

\end{proof}
Hence, we may assume that
\begin{align*}
  k(x,y)=\sum_{s=0}^S\int_{\rn[N]}e^{i\theta(x,y,\xi)}a^s(x,y,\xi)d\xi
\end{align*}
where $a^0$ has no stationary points in its support and each of the $a^s$ has exactly one branch $(x,y,\hat\xi^s(x,y))$ of stationary points in its support. As we have already treated the $a^0$ case, we will assume, without loss of generality, that $a$ is of the form of one of the $a^s$. 

Using a stereographic projection $\sigma:\ \rn[N-1]\to\d B_{\rn[N]}$ with pole $-\hat\xi^s(x,y)$ (which is assumed to be outside of $\spt a^s(x,y,\cdot)$), we are in a position to use the standard set of techniques employed in applications of the stationary phase approximation (the detailed computation can be found in \cite{hartung-phd}*{chapter 8} or (slightly compressed) in \cite{hartung-scott}*{chapter 7}). 

\begin{theorem}\label{kernel-stationary-phase}
  Let $k(x,y)=\int_{\rn[N]}e^{i\theta(x,y,\xi)}a(x,y,\xi)d\xi$ be the kernel of a Fourier Integral Operator with poly-$\log$-homogeneous amplitude $a=a_0+\sum_{\iota\in I}a_\iota$. Let $\tilde I:=I\cup\{0\}$ and choose a decomposition $a=a^0+\sum_{s=1}^S a^s$ such that there is no stationary point in the support of $a^0(x,y,\cdot)$ and exactly one stationary point $\hat\xi^s(x,y)\in\d B_{\rn[N]}$ of $\theta(x,y,\cdot)$ in the support of each $a^s(x,y,\cdot)$. 

  Let $\hat\theta^s(x,y)=\theta\l(x,y,\hat\xi^s(x,y)\r)$, $\Theta^s(x,y)=\d_{\d B}^2\theta\l(x,y,\hat\xi^s(x,y)\r)$, $\sgn\Theta^s(x,y)$ the number of positive eigenvalues minus the number of negative eigenvalues of $\Theta^s(x,y)$, and $\Delta_{\d B,\Theta^s(x,y)}=\l\langle\Theta^s(x,y)^{-1}\d_{\d B},\d_{\d B}\r\rangle=-\div_{\d B_{\rn[N]}}\Theta^s(x,y)^{-1}\grad_{\d B_{\rn[N]}}$. Furthermore, let
  \begin{align*}
    h_{j,\iota}^s(x,y):=\frac{(2\pi)^{\frac{N-1}{2}}\betr{\det\Theta^s(x,y)}^{-\frac12}e^{\frac{i\pi}{4}\sgn\Theta^s(x,y)}}{j!(2i)^j}\Delta_{\d B,\Theta^s}^ja_\iota^s\l(x,y,\hat\xi^s(x,y)\r)
  \end{align*}
  and
  \begin{align*}
    g_{j,\iota}^s(x,y):=
    \begin{cases}
      \d^{l_\iota}\l(z\mapsto\Gamma\l(q_\iota+1+z\r)i^{q_\iota+1+z}\l(\hat\theta^s(x,y)+i0\r)^{-q_\iota-1-z}\r)(0)&,\ q_\iota\notin-\nn_0\\
      \d^{l_\iota}\l(z\mapsto\frac{-\Gamma(z+1)}{2\pi i\ (-q_\iota)!}\int_{c+i\rn}\frac{\l(-\sigma\r)^{-q_\iota}\l(c_{\ln}+\ln\sigma\r)}{\l(-i\hat\theta^s(x,y)+0-\sigma\r)^{z+1}}d\sigma\r)(0)&,\ q_\iota\in -\nn_0
    \end{cases}
  \end{align*}
  with $q_\iota:=d_\iota+\frac{N+1}{2}-j$, $c\in\rn_{>0}$, and some constant $c_{\ln}\in\cn$.

  Then,
  \begin{align*}
    k(x,y)=&\int_{\rn[N]}e^{i\theta(x,y,\xi)}a^0(x,y,\xi)d\xi+\sum_{\iota\in \tilde I}\sum_{s=1}^S\sum_{j\in\nn_0}h_{j,\iota}^s(x,y)g_{j,\iota}^s(x,y)
  \end{align*}
  holds in a neighborhood of the diagonal in $X^2$.
\end{theorem}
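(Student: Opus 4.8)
The plan is to establish the decomposition $k = \int_{\rn[N]} e^{i\theta}a^0\, d\xi + \sum_s\int_{\rn[N]} e^{i\theta}a^s\, d\xi$ — which was already reduced to the one-branch-per-summand situation above — and then to analyze a single term $\int_{\rn[N]} e^{i\theta(x,y,\xi)}a^s(x,y,\xi)\, d\xi$ for an $a^s$ containing exactly one nondegenerate stationary branch $\hat\xi^s(x,y)\in\d B_{\rn[N]}$. Writing $a^s = a^s_0 + \sum_{\iota\in I}a^s_\iota$ and using the polar decomposition already recorded before Lemma \ref{morse-lemma}, each $\log$-homogeneous piece $a^s_\iota$ contributes
\begin{align*}
  \int_{\rn_{>0}} r^{N+d_\iota-1}(\ln r)^{l_\iota}\, I^s_\iota(x,y,r)\, dr,\qquad I^s_\iota(x,y,r):=\int_{\d B_{\rn[N]}}e^{ir\theta(x,y,\eta)}a^s_\iota(x,y,\eta)\, d\vol_{\d B_{\rn[N]}}(\eta),
\end{align*}
with $d_0 := $ the degree of $a^s_0$ (or simply absorb $a^s_0$ into the $\iota$-sum by setting $\tilde I := I\cup\{0\}$, as in the statement). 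So the task splits into (a) the asymptotic expansion of $I^s_\iota(x,y,r)$ as $r\to\infty$ by stationary phase, and (b) integration of that expansion, term by term, against $r^{N+d_\iota-1}(\ln r)^{l_\iota}\, dr$ over $\rn_{>0}$.

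For step (a) I would localize near $\hat\xi^s(x,y)$ using the stereographic projection $\sigma\colon\rn[N-1]\to\d B_{\rn[N]}$ with pole $-\hat\xi^s(x,y)$ (valid since that pole lies outside $\spt a^s(x,y,\cdot)$), pull the phase back, apply Morse' Lemma \ref{morse-lemma} to straighten it to the quadratic form $\hat\theta^s(x,y) + \tfrac12\langle\Theta^s(x,y)\,u,u\rangle + O(|u|^3)$, and run the standard stationary phase machinery (as in \cite{hoermander-books}*{chapter 7.7}) to get
\begin{align*}
  I^s_\iota(x,y,r) \sim e^{ir\hat\theta^s(x,y)}\sum_{j\in\nn_0} r^{-\frac{N-1}{2}-j}\, \frac{(2\pi)^{\frac{N-1}{2}}\betr{\det\Theta^s(x,y)}^{-\frac12}e^{\frac{i\pi}{4}\sgn\Theta^s(x,y)}}{j!\,(2i)^j}\, \Delta^j_{\d B,\Theta^s}a^s_\iota\bigl(x,y,\hat\xi^s(x,y)\bigr),
\end{align*}
i.e. the $j$-th coefficient is exactly $h^s_{j,\iota}(x,y)$. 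The outside-the-branch remainder is handled by the integration-by-parts bound for $\betr{I(x,y,r)}$ recorded just before the theorem, which shows the $a^0$-term (and all remainders) are $C^\infty$ near the diagonal.

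For step (b), substituting the expansion and collecting powers, the $(j,\iota)$-term produces $h^s_{j,\iota}(x,y)$ times $\int_{\rn_{>0}} r^{q_\iota}(\ln r)^{l_\iota}e^{ir\hat\theta^s(x,y)}\, dr$ with $q_\iota = d_\iota + \tfrac{N+1}{2} - j$ (shifting $N+d_\iota-1 - \tfrac{N-1}{2}-j$). This oscillatory $r$-integral is the Fourier transform of a homogeneous distribution with a logarithm: for $q_\iota\notin-\nn_0$ it evaluates (via the $\Gamma$-function identity $\int_{\rn_{>0}}r^{q}e^{ir\omega}\,dr = \Gamma(q+1)\,i^{q+1}(\omega+i0)^{-q-1}$, then differentiating in a spectral parameter to produce the $l_\iota$ logarithms) to $g^s_{j,\iota}(x,y)$; for $q_\iota\in-\nn_0$ one picks up the genuine $\delta$-type singularity, which I would resolve by a Mellin–Barnes contour representation, giving the second branch of the $g^s_{j,\iota}$ formula with the auxiliary constant $c_{\ln}$. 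Assembling, $k(x,y) = \int e^{i\theta}a^0\, d\xi + \sum_{\iota\in\tilde I}\sum_{s=1}^S\sum_{j\in\nn_0}h^s_{j,\iota}(x,y)\, g^s_{j,\iota}(x,y)$.

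The main obstacle is step (b) in the resonant case $q_\iota\in-\nn_0$, where the naive formula diverges and the Fourier transform of $r^q(\ln r)^{l_\iota}$ genuinely involves distributional terms on $\{\hat\theta^s = 0\}$; making the regularization explicit — fixing the contour $c+i\rn$, the branch of $(-\sigma)^{-q_\iota}$, and the constant $c_{\ln}$ so that the answer is consistent with the $q_\iota\notin-\nn_0$ case by analytic continuation in $d_\iota$ — is the delicate point. A secondary concern is convergence/summability of the triple series, which I would control using the poly-$\log$-homogeneity hypotheses ($\Re(d_\iota)$ bounded above, the $\ell_2$ condition on $(d_\iota-\delta)^{-1}$, unconditional $L_1$-convergence of $\sum\tilde\alpha_\iota$) exactly as in section \ref{gauged-poly-log-homogeneous-distributions}, together with the fact that only finitely many branches $s$ occur (the corollary to Lemma \ref{morse-lemma}); uniformity in $(x,y)$ near the diagonal follows since all data $\hat\xi^s,\hat\theta^s,\Theta^s$ are $C^\infty$ there.
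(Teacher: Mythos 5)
Your plan retraces exactly the route the paper itself takes: polar decomposition of $k$, smoothness of the non-stationary part by repeated integration by parts, localization at the finitely many nondegenerate stationary points via Morse's Lemma and the stereographic projection with pole $-\hat\xi^s(x,y)$, the standard stationary phase expansion of $I^s_\iota(x,y,r)$ producing the coefficients $h^s_{j,\iota}$, and then termwise $r$-integration via the $\Gamma$-function identity, with the resonant case $q_\iota\in-\nn_0$ handled by a regularized contour representation, producing the $g^s_{j,\iota}$. Since the paper records precisely this outline and defers the tedious computations to the longer version and the thesis, your proposal is essentially the paper's own argument (your parenthetical exponent $N+d_\iota-1-\tfrac{N-1}{2}-j$ is in fact consistent with the $N=1$ example following the theorem, so the bookkeeping there is sound).
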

\begin{example*}
  Note that in the $N=1$ case everything collapses as there are no spherical derivatives. We will simply obtain
  \begin{align*}
    k_d(x,y)=&\int_{\rn_{>0}}r^de^{ir\theta(x,y,1)}a_d(x,y,1)dr+\int_{\rn_{>0}}r^de^{ir\theta(x,y,-1)}a_d(x,y,-1)dr
  \end{align*}
  and
  \begin{align*}
    &\int_{\rn_{>0}}r^de^{ir\theta(x,y,\pm1)}a_d(x,y,\pm1)dr\\
    =&
    \begin{cases}
      c_da_d(x,y,\pm1)\l(\theta(x,y,\pm1)+i0\r)^{-d-1}&,\ d\notin-\nn\\
      a_d(x,y,\pm1)\frac{\l(i\theta(x,y,\pm1)-0\r)^{-d-1}}{(-d-1)!}\l(c_{d}+\ln\l(-i\theta(x,y,\pm1)+0\r)\r)&,\ d\in-\nn
    \end{cases}
  \end{align*}
  with some constants $c_d$. Hence, for
  \begin{align*}
    k(x,y)\sim\sum_{j\in\nn_0}\int_{\rn}e^{i\theta(x,y,\xi)}a_{d-j}(x,y,\xi)d\xi
  \end{align*}
  with $d\in\zn$ and $a_{d-j}$ homogeneous of degree $d-j$, the coefficient of the logarithmic terms are
  \begin{align*}
    \sum_{j\in\nn_{\ge d+1}}a_{d-j}(x,y,\pm1)\frac{\l(i\theta(x,y,\pm1)-0\r)^{j-d-1}}{(j-d-1)!}.
  \end{align*}
  In particular, in the critical case $\theta(x,x,\pm1)=0$ (as studied by Boutet de Monvel \cite{boutet-de-monvel}), we are reduced to the fact that the densities of the residue traces at $x$ (that is, $a_{-1}(x,x,\pm1)$) coincide with the coefficients of the logarithmic terms (that is, $\ln\l(-i\theta(x,x,\pm1)+0\r)$) in the singularity structure of $k$ (cf. \cite{boutet-de-monvel}*{equations (3) and (4)}).

  Furthermore, we can compute the generalized Kontsevich-Vishik trace for $a=a_0+\sum_{\iota\in I}a_\iota$ if $\fa\iota\in I:\ d_\iota\in\rn\setminus\{-1\}$ and $l_\iota=0$. Then, the kernel $k$ satisfies (note $\theta(x,x,r)=0$ by assumption)
  \begin{align*}
    k(x,x)=&\int_{\rn_{>0}}a_0(x,x,r)dr+\sum_{\iota\in I}\int_{\rn_{>0}}a_\iota(x,x,r)dr.
  \end{align*}
  Since $1_{\rn_{>0}}a_\iota(x,x,\cdot)$ is homogeneous of degree $d_\iota$, we obtain that $\int_{\rn_{>0}}a_\iota(x,x,r)dr$ vanishes for $d_\iota<-1$ since the Fourier transform $\Fp(1_{\rn_{>0}}a_\iota(x,x,\cdot))$ over $\rn$ is a homogeneous distribution of degree $-1-d_\iota$. For $d_\iota>-1$, we obtain
  \begin{align*}
    \int_{\rn_{>0}}e^{i\theta(x,y,r)}a_\iota(x,x,r)dr=c_\iota a_\iota(x,y,1)\l(\theta(x,y,1)+i0\r)^{-d_\iota-1}
  \end{align*}
  which is precisely the other singular contribution to the kernel singularity (that is the $f(x,y)(\varphi+0)^{-N}$ term in \cite{boutet-de-monvel}*{equation (3)}). In other words, the difference of $k(x,y)$ and its singular part $k^{\mathrm{sing}}(x,y)$ satisfies
  \begin{align*}
    \l(k-k^{\mathrm{sing}}\r)(x,x)=&\int_{\rn_{>0}}a_0(x,x,r)dr.
  \end{align*}
  In order to use Theorem \ref{Laurent-zeta-FIO}, we will have to show that the regularized singular terms vanish. This follows directly from the Laurent expansion with mollification. For $d_\iota>-1$, we have the two terms
  \begin{align*}
    &\sum_{n\in\nn_0}\frac{\int_X\int_0^1e^{i\theta(x,x,\xi)}\d^na_\iota(0)(x,x,\xi)d\xi d\vol_X(x)}{n!}z^n\\
    &+\sum_{n\in\nn_0}\sum_{j=0}^n\frac{(-1)^{j+1}j!\int_Xe^{i\theta(x,x,1)}\d^na_\iota(0)(x,x,1) d\vol_X(x)}{n!(1+d_\iota)^{j+1}}z^n
  \end{align*}
  to evaluate at $z=0$, i.e.
  \begin{align*}
    \lim_{h\searrow0}\int_X\int_0^1(h+r)^{d_\iota}a_\iota(x,x,1)drd\vol_X(x)
    =&\int_X\frac{a_\iota(x,x,1)}{d_\iota+1}d\vol_X(x)
  \end{align*}
  and
  \begin{align*}
    \frac{-\int_Xa_\iota(x,x,1)d\vol_X(x)}{1+d_\iota}.
  \end{align*}
  Hence, the generalized Kontsevich-Vishik trace reduces to the pseudo-differential form. Let $a\sim\sum_{j\in\nn_0}a_{d-j}$ and $N$ be sufficiently large, then
  \begin{align*}
    \tr_{KV}A=\int_X\int_{\rn_{>0}}a(x,x,r)-\sum_{j=0}^Na_{d-j}(x,x,r)\ dr\ d\vol_X(x)
  \end{align*}
  is independent of $N$.
\end{example*}

The example above, i.e. operators with phase functions as considered by Boutet de Monvel in \cite{boutet-de-monvel}, are an important class of Fourier Integral Operators satisfying $\theta=0$ on the diagonal. This is the case ``closest to pseudo-differential operators'' for which we obtain the following theorem (for the proof, see \cite{hartung-phd}*{Theorem 8.5} or \cite{hartung-scott}*{Theorem 7.5}).
\begin{theorem}\label{psi-DO-KV-trace}
  Let $A$ be a Fourier Integral Operator with kernel
  \begin{align*}
    k(x,y)=\int_{\rn[N]}e^{i\theta(x,y,\xi)}a(x,y,\xi)d\xi
  \end{align*}
  whose phase function $\theta$ satisfies $\fa x\in X\ \fa\xi\in\rn[N]:\ \theta(x,x,\xi)=0$ and whose amplitude has an asymptotic expansion $a\sim\sum_{\iota\in \nn}a_\iota$ where each $a_\iota$ is $\log$-homogeneous with degree of homogeneity $d_\iota$ and logarithmic order $l_\iota$, and $\Re(d_\iota)\to-\infty$. Let $N_0\in\nn$ such that $\fa\iota\in\nn_{> N_0}:\ \Re(d_\iota)<-N$ and let 
  \begin{align*}
    k^{\mathrm{sing}}(x,y)=\int_{\rn[N]}e^{i\theta(x,y,\xi)}\sum_{\iota=1}^{N_0}a_\iota(x,y,\xi)d\xi
  \end{align*}
  denote the singular part of the kernel. 

  Then, the regularized kernel $k-k^{\mathrm{sing}}$ is continuous along the diagonal and independent of the particular choice of $N_0$ (along the diagonal). Furthermore, the generalized Kontsevich-Vishik density\footnote{Mind that this density is only locally defined. It only patches together (modulo pathologies) if we assume the kernel patched together in the first place and the derivatives of terms of critical dimension $d_\iota=-N$ regularize to zero, i.e. if $\zeta(\fp_0A)(0)$ is tracial and independent of gauge.} is given by
  \begin{align*}
    \l(k-k^{\mathrm{sing}}\r)(x,x)d\vol_X(x)
    =&\int_{\rn[N]}a(x,x,\xi)-\sum_{\iota=1}^{N_0}a_\iota(x,x,\xi)d\xi d\vol_X(x).
  \end{align*}
\end{theorem}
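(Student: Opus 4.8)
The plan is to imitate the $N=1$ computation of the example above in arbitrary dimension. Since $\theta(x,x,\cdot)\equiv0$, on the diagonal $e^{i\theta}\equiv1$, so the integrand of $k(x,x)$ is literally the pseudo-differential one; the only obstruction is that $\int_{\rn[N]}a(x,x,\xi)\,d\xi$ diverges and must be regularised by splitting off finitely many homogeneous terms. The building block I would establish first is: if $a_\iota$ is $\log$-homogeneous of degree $d_\iota$ with $\Re(d_\iota)<-N$, then $\int_{\rn[N]}e^{i\theta(x,y,\xi)}a_\iota(x,y,\xi)\,d\xi$ is continuous near $\Delta(X)$ and vanishes on $\Delta(X)$. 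When $\theta$ is linear in $\xi$ this is immediate: the integral is $\Fp_\xi(a_\iota(x,y,\cdot))$ evaluated at a point that tends to $0$ as $y\to x$, and the Fourier transform of a homogeneous distribution on $\rn[N]$ of degree $\Re(d_\iota)<-N$ is $\log$-homogeneous of strictly positive real degree, hence continuous and zero at the origin. For general $\theta$ (homogeneous of degree $1$ in $\xi$ and vanishing on the diagonal) I would run the stationary phase expansion of Theorem~\ref{kernel-stationary-phase} slightly off the diagonal and check, with the $\Gamma$- and exponent-bookkeeping already used in Lemma~\ref{c-iota}, that every summand carries a factor $(\hat\theta^s(x,y)+i0)^{-q_\iota-1-z}$ (or its logarithmic variant) with $\hat\theta^s(x,y)=\theta(x,y,\hat\xi^s(x,y))\to\theta(x,x,\hat\xi^s(x,x))=0$ and an exponent whose real part is forced positive by $\Re(d_\iota)<-N$, so that the whole expression vanishes as $y\to x$. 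The same bookkeeping shows that the regularised integral $\int_{\rn[N]}a_\iota(x,x,\xi)\,d\xi$, written as $\int_{B_{\rn[N]}(0,1)}+\int_{\rn_{\ge1}\times\d B_{\rn[N]}}$ with the first summand defined by meromorphic continuation (Lemma~\ref{c-iota} with $\dim M=N-1$), equals $\bigl(-c_\iota(0)+c_\iota(0)\bigr)\res a_\iota=0$ for every $d_\iota\ne-N$.

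Granting the building block, continuity of $k-k^{\mathrm{sing}}$ along the diagonal and independence of $N_0$ are quick. Write $k-k^{\mathrm{sing}}=\int_{\rn[N]}e^{i\theta}\bigl(a-\sum_{\iota\le N_0}a_\iota\bigr)\,d\xi$ and split off finitely many further homogeneous terms, $R_M:=a-\sum_{\iota\le M}a_\iota$ of order $<-N$ for $M$ large. Then $\int e^{i\theta}R_M\,d\xi$ is continuous near $\Delta(X)$ by absolute convergence and, since $\theta(x,x,\cdot)\equiv0$, equals $\int_{\rn[N]}R_M(x,x,\xi)\,d\xi$ on $\Delta(X)$; the finitely many intermediate terms $\int e^{i\theta}a_\iota\,d\xi$ with $N_0<\iota\le M$ are continuous and vanish on $\Delta(X)$ by the building block (their degrees satisfy $\Re(d_\iota)<-N$). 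Hence $k-k^{\mathrm{sing}}$ is continuous along the diagonal, with $(k-k^{\mathrm{sing}})(x,x)=\int_{\rn[N]}R_M(x,x,\xi)\,d\xi$; and since $\sum_{N_0<\iota\le M}\int_{\rn[N]}a_\iota(x,x,\xi)\,d\xi=0$ in the regularised sense, this equals $\int_{\rn[N]}\bigl(a-\sum_{\iota\le N_0}a_\iota\bigr)(x,x,\xi)\,d\xi$. Replacing $N_0$ by any larger admissible index only subtracts further terms with $\Re(d_\iota)<-N$, which again vanish on the diagonal; this yields independence of $N_0$.

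To identify this with the generalised Kontsevich--Vishik density I would insert $e^{i\theta}\equiv1$ on $\Delta(X)\times\d B_{\rn[N]}$ and on $\Delta(X)\times B_{\rn[N]}(0,1)$ into formula~\eqref{KV-density}, use Lemma~\ref{c-iota} to rewrite the coefficient $\frac{(-1)^{l_\iota+1}l_\iota!}{(N+d_\iota)^{l_\iota+1}}\int_{\d B_{\rn[N]}}\tilde a_\iota(0)(x,x,\cdot)$ as $-\int_{B_{\rn[N]}(0,1)}a_\iota(x,x,\xi)\,d\xi$ (regularised), and then recombine the ball integral with the $\rn_{\ge1}\times\d B_{\rn[N]}$ integral into a single integral over $\rn[N]$; the cancellation from the building block collapses it to $\int_{\rn[N]}\bigl(a-\sum_{\iota\le N_0}a_\iota\bigr)(x,x,\xi)\,d\xi=(k-k^{\mathrm{sing}})(x,x)$. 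That these regularised integrals are the honest meromorphic continuations, and that the critical $\iota\in I_0$ ($d_\iota=-N$) contributions to the constant Laurent coefficient vanish under the hypothesis recorded in the footnote, is the mollification input, run exactly as in the example via $G=h+H$, $h\searrow0$, and Theorem~\ref{mollification}/Corollary~\ref{corollary-spectral-mollification}. Finally, gauge independence of $\zeta(\fp_0A)(0)=\int_X\rho_{\mathrm{KV}}$ (Corollary~\ref{KV-gauge-independence}, Theorem~\ref{KV-trace}) closes the argument; global well-definedness of the density is, as always, contingent on the kernel patching together and governed by Theorem~\ref{res-form-invariant}.

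The hard part is the building block in the non-linear case. Because $\theta(x,x,\cdot)\equiv0$ also kills $\d_{\d B}^2\theta$ on the diagonal, the generic non-degeneracy hypothesis of Theorem~\ref{kernel-stationary-phase} fails precisely where we need the value, so one cannot simply evaluate the expansion at $y=x$; instead the stationary points collide as $y\to x$, and one must control the limit of the stationary phase expansion, showing in effect that the off-diagonal oscillatory integral of a homogeneous amplitude of degree $d_\iota$ behaves like a homogeneous distribution of degree $-N-d_\iota$ in the displacement, uniformly down to $\xi=0$. Tracking this degeneration, together with the $d_\iota=-N$ terms whose derivatives need not regularise to zero (the genuine obstruction to a globally defined density), is where the bulk of the work lies; see \cite{hartung-phd}*{Theorem 8.5}.
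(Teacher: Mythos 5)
Your overall architecture is the one the paper intends (the paper itself defers the full proof to \cite{hartung-phd}*{Theorem 8.5}, but the example directly preceding the theorem displays the mechanism): restrict to the diagonal where $e^{i\theta}=1$, use the finite-part cancellation $\int_0^1 r^{N-1+d_\iota}(\ln r)^{l_\iota}dr+\int_1^\infty r^{N-1+d_\iota}(\ln r)^{l_\iota}dr=0$ from Lemma \ref{c-iota} for every non-critical degree, justify the regularized low-frequency integrals by mollification (Theorem \ref{mollification}), and recombine with \eqref{KV-density}. Your steps establishing the diagonal value, the independence of $N_0$, and the identification with the generalized Kontsevich--Vishik density are exactly this computation and are sound.

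The genuine gap is your ``building block'' for non-linear phases: you propose to obtain continuity near the diagonal and vanishing at the diagonal of each term with $\Re(d_\iota)<-N$ by running Theorem \ref{kernel-stationary-phase} off the diagonal and letting $y\to x$, and you yourself concede that the non-degeneracy hypothesis $\d_{\d B}^2\theta(x,y,\hat\xi^s)\in GL(\rn[N-1])$ collapses on the diagonal (where $\theta(x,x,\cdot)\equiv0$ kills all $\xi$-derivatives), so the colliding/degenerating stationary points are left uncontrolled and ``the bulk of the work'' is outsourced to a citation. As a self-contained argument this step is simply not there; moreover your exponent bookkeeping (``real part forced positive by $\Re(d_\iota)<-N$'') fails for small $j$ when $N\le 3$, since $\Re(-q_\iota-1)=-\Re(d_\iota)-\tfrac{N+3}{2}+j$ need not be positive at $j=0$. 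The way to close the proof is to notice that this off-diagonal statement is stronger than what the theorem asserts: all claims are \emph{along the diagonal}. Continuity of $x\mapsto(k-k^{\mathrm{sing}})(x,x)$ follows from the absolutely and locally uniformly convergent high-frequency integral of the remainder (a symbol of order $<-N$) together with continuity in $x$ of the finitely many finite-part terms on $B_{\rn[N]}(0,1)$, with no stationary phase input; independence of $N_0$ and the density formula then need only the diagonal regularized values, where $e^{i\theta}=1$ and the $c_\iota$-cancellation applies (this is precisely how the $N=1$ example argues, and how the cited proof proceeds). So either supply the degenerate-phase limit you invoke, or---better---drop that detour and phrase the argument purely on the diagonal; as written, the proposal both contains an unproved key step and misidentifies it as necessary.
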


Finally, we will consider an example of linear phase functions which will be generalized to find algebras of Fourier Integral Operators which are Hilbert-Schmidt with continuous kernels. Let $\theta(x,y,\xi):=\l\langle\Theta(x,y),\xi\r\rangle_{\rn[N]}$ and $\Theta(x_0,y_0)\ne0$. Then,
\begin{align*}
  k(x,y)=&\int_{\rn[N]}e^{i\l\langle\Theta(x,y),\xi\r\rangle_{\rn[N]}}a(x,y,\xi)d\xi
  =\Fp\l(a(x,y,\cdot)\r)\l(-\Theta(x,y)\r)
\end{align*}
is continuous in a sufficiently small neighborhood of $(x_0,y_0)$ for homogeneous $a$ because $\Fp\l(a(x,y,\cdot)\r)$ is homogeneous and $\Theta(x,y)$ non-zero. Hence, if $\Theta$ does not vanish on the diagonal, then $X\ni x\mapsto k(x,x)\in\cn$ is continuous and, by compactness of $X$, $\int_Xk(x,x)d\vol_X(x)$ well-defined. 

The stationary phase approximation above generalizes this observation ($\hat\xi(x,y)=\pm\frac{\Theta(x,y)}{\norm{\Theta(x,y)}_{\ell_2(N)}}$, i.e. $\hat\theta^s(x,y)=(-1)^s\norm{\Theta(x,y)}_{\ell_2(N)}$ with $s\in\{0,1\}$).

\begin{theorem}\label{hilbert-schmidt-algebras}
  Let $A$ be a Fourier Integral Operator with phase function $\theta$ satisfying $\d_3^2\l(\theta|_{X\times X\times\d B_{\rn[N]}}\r)(x,y,\xi)\in GL\l(\rn[N-1]\r)$ whenever $\d_3\theta(x,y,\xi)=0$ (in a neighborhood of the diagonal) and $\l\{\hat\xi^s;\ s\in\nn_{\le n}\r\}$ the set of stationary points. Furthermore, let
  \begin{align*}
    \fa x\in X\ \fa s\in\nn_{\le n}:\ \theta\l(x,x,\hat\xi^s(x,x)\r)\ne0.
  \end{align*}
  Then,
  \begin{align*}
    \l(\ X\ni x\mapsto k(x,x)\in\cn\ \r)\in C(X)
  \end{align*}
  and 
  \begin{align*}
    \tr A=\int_Xk(x,x)d\vol_X(x)
  \end{align*}
  is well-defined, i.e. $A$ is a Hilbert-Schmidt operator. Furthermore, $\zeta$-functions of such operators have no poles (since the trace integral always exists).
\end{theorem}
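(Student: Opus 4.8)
The plan is to reduce everything to the pointwise structure of the kernel near the diagonal supplied by Theorem \ref{kernel-stationary-phase}, and then to observe that under the hypothesis $\theta(x,x,\hat\xi^s(x,x))\ne0$ all the oscillatory contributions are continuous in $(x,y)$ and remain so after restriction to the diagonal. First I would localize: cover the diagonal of $X^2$ by finitely many charts and use a partition of unity subordinate to the decomposition $a=a^0+\sum_{s=1}^S a^s$ from Theorem \ref{kernel-stationary-phase}, so that it suffices to treat each summand separately. For the $a^0$-term, $\theta(x,y,\cdot)$ has no stationary point on $\d B_{\rn[N]}$, so repeated integration by parts with the operator $\Dp$ (exactly as in the non-stationary estimate preceding Lemma \ref{morse-lemma}) shows $\int_{\rn[N]}e^{i\theta(x,y,\xi)}a^0(x,y,\xi)\,d\xi$ is $C^\infty$ in $(x,y)$, hence continuous along the diagonal. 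For each stationary branch $a^s$, Theorem \ref{kernel-stationary-phase} writes its contribution as $\sum_{\iota\in\tilde I}\sum_{j\in\nn_0}h_{j,\iota}^s(x,y)g_{j,\iota}^s(x,y)$; the amplitude factors $h_{j,\iota}^s$ are manifestly smooth in $(x,y)$ (they are $\det\Theta^s$, $\sgn\Theta^s$, and spherical derivatives of $a_\iota^s$ evaluated at $\hat\xi^s(x,y)$), so the only question is continuity of $g_{j,\iota}^s$ along the diagonal.

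The key step is therefore: under the hypothesis $\hat\theta^s(x,x)=\theta(x,x,\hat\xi^s(x,x))\ne0$ for all $x$ and all branches $s$, each $g_{j,\iota}^s(x,y)$ extends continuously across the diagonal. Looking at the two cases in the definition of $g_{j,\iota}^s$: when $q_\iota\notin-\nn_0$ the factor is (a $z$-derivative of) $\Gamma(q_\iota+1+z)\,i^{q_\iota+1+z}(\hat\theta^s(x,y)+i0)^{-q_\iota-1-z}$, which is a genuine (non-distributional) real-analytic function of $\hat\theta^s$ as long as $\hat\theta^s$ stays away from $0$; by continuity of $\hat\theta^s$ and compactness of $X$, a neighborhood of the diagonal on which $\hat\theta^s\ne0$ exists, and there $g_{j,\iota}^s$ is continuous. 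When $q_\iota\in-\nn_0$ the factor is a contour integral $\frac{-\Gamma(z+1)}{2\pi i(-q_\iota)!}\int_{c+i\rn}\frac{(-\sigma)^{-q_\iota}(c_{\ln}+\ln\sigma)}{(-i\hat\theta^s(x,y)+0-\sigma)^{z+1}}\,d\sigma$; again the $+0$ prescription is irrelevant once $\hat\theta^s\ne0$ (the pole $\sigma=-i\hat\theta^s$ is off the imaginary axis) and the integral depends continuously—indeed holomorphically—on $\hat\theta^s$ in that regime. Hence all $g_{j,\iota}^s$ restrict continuously to the diagonal, the $j$-series converges (as in the standard stationary-phase asymptotic, the terms decay in $j$), and $x\mapsto k(x,x)$ is continuous on $X$.

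Given continuity of $x\mapsto k(x,x)$, compactness of $X$ makes $\int_X k(x,x)\,d\vol_X(x)$ finite, and by the standard argument (the kernel of a trace-class operator on a closed manifold integrates to the trace along the diagonal, and here the operator lies in the Hilbert–Schmidt class since its kernel is continuous on the compact $X\times X$, so $k\in L_2(X\times X)$) we get $\tr A=\int_X k(x,x)\,d\vol_X(x)$. Finally, for the $\zeta$-function claim: a gauged family $A(z)$ with $A(0)=A$ of this type has, for every $z$, a kernel of exactly the same stationary-phase form with $d_\iota$ replaced by $d_\iota+z$, so the same argument gives a continuous diagonal kernel for every $z$ in the half-plane where $\Re(d_\iota+z)$ stays bounded above; thus $z\mapsto\int_X k_z(x,x)\,d\vol_X(x)$ is entire (one checks holomorphy in $z$ by dominated convergence / Morera applied to the manifestly holomorphic $g_{j,\iota}^s$ and the convergent $j$-series), so $\zeta(A)$ has no poles.

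The main obstacle I expect is the second case $q_\iota\in-\nn_0$: making rigorous that the contour-integral expression for $g_{j,\iota}^s$ depends continuously on $(x,y)$ uniformly near the diagonal, and in particular that the $i0$-regularization plays no role once $\hat\theta^s\ne0$, requires care with the contour (shifting $c$, controlling the $\ln\sigma$ branch and the decay at $\pm i\infty$), and one must also confirm that summing over $j\in\nn_0$ still produces a continuous function—i.e. that the stationary-phase remainder estimates are locally uniform in $(x,y)$ on the compact diagonal. This is precisely the kind of tedious but standard estimation the paper defers to \cite{hartung-phd}*{chapter 8}.
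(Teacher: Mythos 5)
Your main line is exactly the paper's: the paper obtains this theorem by feeding the decomposition of Theorem \ref{kernel-stationary-phase} into the observation (made first for linear phases, where $k(x,y)=\Fp\l(a(x,y,\cdot)\r)\l(-\Theta(x,y)\r)$) that near the diagonal the only singular dependence of the kernel sits in the factors $g^s_{j,\iota}$, through $\hat\theta^s(x,y)$ alone; the hypothesis $\theta\l(x,x,\hat\xi^s(x,x)\r)\ne0$ together with compactness of $X$ then gives continuity of $x\mapsto k(x,x)$ and convergence of $\int_Xk(x,x)\,d\vol_X(x)$, and running the same argument for every $z$ of a gauged family gives a pole-free $\zeta$-function. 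Your handling of the $a^0$ term by non-stationary phase, of the smooth factors $h^s_{j,\iota}$, of both cases of $g^s_{j,\iota}$ (the $i0$-prescription being immaterial once $\hat\theta^s\ne0$), and of the locally uniform remainder estimates is precisely the part the paper sketches and defers to \cite{hartung-phd}.

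There is, however, one step of yours that does not go through: the claim that $k$ is continuous on all of $X\times X$, hence in $L_2(X\times X)$, hence $A$ is Hilbert--Schmidt. Theorem \ref{kernel-stationary-phase} gives the expansion only in a neighborhood of the diagonal, and the hypothesis excludes zeros of $\hat\theta^s$ only \emph{on} the diagonal; off the diagonal $\hat\theta^s(x,y)$ may vanish, and at such points the factors $\l(\hat\theta^s(x,y)+i0\r)^{-q_\iota-1}$ produce genuine kernel singularities (in the wave-trace example following the theorem this happens at pairs $(x,y)$ with $t\gf(x)=\norm{x-\gamma y}_{\ell_2(N)}$, even when the diagonal condition holds), so global continuity is simply not available from these hypotheses. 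What the argument actually delivers --- and all that the paper's own proof delivers --- is continuity of $k$ in a neighborhood of the diagonal, hence a well-defined trace integral and a pole-free $\zeta$-function; the ``Hilbert--Schmidt'' wording should be read in that sense rather than derived from a global $L_2$ kernel. Relatedly, $\tr A$ in the statement is by definition this kernel integral, so the appeal to ``the kernel of a trace-class operator integrates to the trace'' should be dropped: trace-classness (or even compactness) of $A$ is neither assumed nor established here.
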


An example for such operators occurs on quotient manifolds. Let $\Gamma$ be a co-compact discrete group on $M$ acting continuously\footnote{$\Gamma\times M/_\Gamma\ni (\gamma,x)\mapsto\gamma x\in M/_\Gamma$ is continuous} and freely\footnote{$\fa\gamma\in\Gamma:\ \l(\ex x\in M/_\Gamma:\gamma x=x\r)\ \then\ \gamma=\id$} on $M/_\Gamma$, $\tilde k$ a $\Gamma\times\Gamma$-invariant\footnote{$\fa \gamma\in\Gamma\ \fa x,y\in M:\ \tilde k(x,y)=\tilde k(\gamma x,\gamma y)$} Schwartz kernel on $M$, and $k$ its projection to $M/_\Gamma$. Then, $k(x,y)=\sum_{\gamma\in\Gamma}\tilde k(x,\gamma y)$. Suppose $\tilde k$ is pseudo-differential, i.e. 
\begin{align*}
  \tilde k(x,y)=\int_{\rn[N]}e^{i\langle x-y,\xi\rangle_{\rn[N]}}a(x,y,\xi)d\xi.
\end{align*}
Then,
\begin{align*}
  k(x,y)=\sum_{\gamma\in\Gamma}\int_{\rn[N]}e^{i\langle x-\gamma y,\xi\rangle_{\rn[N]}}a(x,\gamma y,\xi)d\xi.
\end{align*}
Hence, for $\gamma=\id$ we have a pseudo-differential part and for $\gamma\ne\id$ the phase function $\theta_\gamma(x,y,\xi)=\langle x-\gamma y,\xi\rangle_{\rn[N]}$ has stationary points $\pm\frac{x-\gamma y}{\norm{x-\gamma y}_{\ell_2(N)}}$, that is, $\theta_\gamma\l(x,y,\hat\xi^s(x,y)\r)=(-1)^s\norm{x-\gamma y}_{\ell_2(N)}$ does not vanish in a neighborhood of the diagonal.

\begin{example*}
  Let us consider manifolds $M$ with diagonal metric, that is, the metric tensor is given by
  \begin{align*}
    g^{ij}(x)=\gf(x)^2\delta^{ij}
  \end{align*}
  with some function $\gf$. An example of these are hyperbolic manifolds.\index{hyperbolic manifold} Let
  \begin{align*}
    \Hn[N]:=\{x\in\rn[N];\ x_N>0\}
  \end{align*}
  with the metric
  \begin{align*}
    g_{ij}(x)=\gf(x)^{-2}\delta_{ij}=x_N^{-2}\delta_{ij}.
  \end{align*}
  Then, $\sqrt{\betr{\det g(x)}}=\gf(x)^{-N}$. The Laplace-Beltrami operator on $M$ is given by
  \begin{align*}
    \Delta_{M}=\gf(x)^2\sum_{i=1}^n\d_i^2
  \end{align*}
  and the wave operator $\exp(it\sqrt{\betr{\Delta_{M}}})$ has the kernel
  \begin{align*}
    \kappa_{M}(x,y)=(2\pi)^{-N}\int_{\rn[N]}e^{i\langle x-y,\xi\rangle_{\rn[N]}}e^{it\gf(x)\norm\xi_{\ell_2(N)}}d\xi.
  \end{align*}
  Let $\Gamma$ be a co-compact, discrete, torsion-free sub-group of the isometries of $M$ such that $\Gamma$ is a lattice and $X:=M/_\Gamma$ can be identified with a fundamental domain in $M$ under action of $\Gamma$. If $M=\Hn[N]$, we call $X$ a hyperbolic manifold. Since $\Gamma$ is a subset of the isometries, the metric on $X$ is given by the metric on $M$ taking a representative of the orbit and the wave-operator $\exp(it\sqrt{\betr\Delta})$ factors through with the kernel
  \begin{align*}
    \kappa(x,y)=\sum_{\gamma\in\Gamma}(2\pi)^{-N}\int_{\rn[N]}e^{i\langle x-\gamma y,\xi\rangle_{\rn[N]}}e^{it\gf(x)\norm\xi_{\ell_2(N)}}d\xi.
  \end{align*}
  Let $A^t$ be a gauged Fourier Integral Operator with $A^t(0)=\exp(it\sqrt{\betr\Delta})$. Then, $A^t(0)=\sum_{\gamma\in\Gamma}A^t_\gamma(0)$ and each $A^t_\gamma(0)$ has the phase function
  \begin{align*}
    \theta_\gamma(x,y,\xi)=\langle x-\gamma y,\xi\rangle_{\rn[N]}+t\gf(x)\norm\xi_{\ell_2(N)}
  \end{align*}
  and amplitude $(x,y,\xi)\mapsto1$, i.e. each $\zeta(A^t_\gamma)$ is holomorphic in a neighborhood of zero. Thus, Lemma \ref{gauge-independence-regular} yields that $\zeta(A^t_\gamma)$ is independent of the gauge and we obtain
  \begin{align*}
    \zeta(A^t)(0)=&\sum_{\gamma\in\Gamma}\zeta(A^t_\gamma)(0)=\sum_{\gamma\in\Gamma}(2\pi)^{-N}\int_X\int_{\rn[N]}e^{i\langle x-\gamma x,\xi\rangle_{\rn[N]}}e^{it\gf(x)\norm\xi_{\ell_2(N)}}d\xi dx.
  \end{align*}

  For $\gamma=1$ (the identity) we will use the property
  \begin{align*}
    \fa q\in\cn_{\Re(\cdot)>-1}:\ \Lp\l(r\mapsto r^q\r)(s)=\int_{\rn_{>0}}r^qe^{-sr}dr=\Gamma(q+1) s^{-q-1}
  \end{align*}
  of the Laplace transform (where $\Gamma$ is the $\Gamma$-function) and obtain
  \begin{align*}
    \zeta(A_1^t)(0)=&\frac{(N-1)!\vol_{\d B_{\rn[N]}}\l(\d B_{\rn[N]}\r)\vol_X(X)}{(-2\pi it)^N}.
  \end{align*}

   For $\gamma\in\Gamma\setminus\{1\}$ we know $x-\gamma x\ne0$ and stationary points of $\theta_\gamma(x,x,\cdot)$ are $\xi_\pm^\gamma(x):=\pm\frac{x-\gamma x}{\norm{x-\gamma x}_{\ell_2(N)}}$ (since the term $t\gf(x)\norm\xi_{\ell_2(N)}$ vanishes taking derivatives with respect to $\xi\in\d B_{\rn[N]}$) with
  \begin{align*}
    \theta_\gamma\l(x,x,\xi_\pm^\gamma(x)\r)=&\l\langle x-\gamma x,\pm\frac{x-\gamma x}{\norm{x-\gamma x}_{\ell_2(N)}}\r\rangle_{\rn[N]}+t\gf(x)\norm{\pm\frac{x-\gamma x}{\norm{x-\gamma x}_{\ell_2(N)}}}_{\ell_2(N)}\\
    =&t\gf(x)\pm\norm{x-\gamma x}_{\ell_2(N)}.
  \end{align*}
  Since $\gf$ is a positive continuous function and $X$ compact, we obtain that $\gf$ is bounded away from zero and $x\mapsto\norm{x-\gamma x}_{\ell_2(N)}$ is bounded, i.e. $\theta_\gamma\l(x,x,\xi_\pm(x)\r)$ has no zeros for $t$ sufficiently large (similarly for $t$ sufficiently small). By Theorem \ref{hilbert-schmidt-algebras}, we obtain that each $\zeta(A^t_\gamma)(0)$ exists for sufficiently large $t$ (and sufficiently large $-t$, $\frac1t$, and $-\frac1t$, as well). 

  Hence, we want to evaluate
  \begin{align*}
    \zeta(A_\gamma^t)(0)=&(2\pi)^{-N}\int_X\int_{\rn[N]}e^{i\l\langle x-\gamma x,\xi\r\rangle_{\rn[N]}}e^{it\gf(x)\norm\xi_{\ell_2(N)}}d\xi dx\\
    =&(2\pi)^{-N}\int_X\int_{\rn_{>0}}r^{N-1}e^{it\gf(x)r}\int_{\d B_{\rn[N]}}e^{ir\l\langle x-\gamma x,\eta\r\rangle_{\rn[N]}}d\vol_{\d B_{\rn[N]}}(\eta)dr dx.
  \end{align*}
  $\int_{\d B_{\rn[N]}}e^{ir\l\langle x-\gamma x,\eta\r\rangle_{\rn[N]}}d\vol_{\d B_{\rn[N]}}(\eta)$ can be evaluated using stationary phase approximation. The stationary points are 
  \begin{align*}
    \eta_\pm(x):=\pm\frac{x-\gamma x}{\norm{x-\gamma x}_{\ell_2(N)}}
  \end{align*}
  and the corresponding phase function $\hat\theta(x,\eta):=r\l\langle x-\gamma x,\eta\r\rangle_{\rn[N]}$ satisfies
  \begin{align*}
    \hat\theta(x,\eta_\pm(x))=\pm r\norm{x-\gamma x}_{\ell_2(N)}.
  \end{align*}
  Since the amplitude is the constant function $1$, all higher order derivatives in the stationary phase approximation yield zero and we obtain
  \begin{align*}
    \int_{\d B_{\rn[N]}}e^{ir\l\langle x-\gamma x,\eta\r\rangle_{\rn[N]}}d\vol_{\d B_{\rn[N]}}(\eta)=&\norm{x-\gamma x}_{\ell_2(N)}^{-\frac{N-1}{2}}\l(\frac\pi2\r)^{\frac{N-1}{2}}e^{-\frac{i\pi}{4}(N-1)}e^{ir\norm{x-\gamma x}_{\ell_2(N)}}\\
    &+\norm{x-\gamma x}_{\ell_2(N)}^{-\frac{N-1}{2}}\l(\frac\pi2\r)^{\frac{N-1}{2}}e^{-\frac{i\pi}{4}(N-1)}e^{-ir\norm{x-\gamma x}_{\ell_2(N)}}
  \end{align*}
  which, in turn, yields
  \begin{align*}
    \zeta(A^t_\gamma)(0)
    =&\frac{\l(\frac\pi2\r)^{\frac{N-1}{2}}e^{-\frac{i\pi}{4}(N-1)}(N-1)!}{(-2\pi i)^N}\int_X\frac{\norm{x-\gamma x}_{\ell_2(N)}^{-\frac{N-1}{2}}}{\l(t\gf(x)+\norm{x-\gamma x}_{\ell_2(N)}\r)^N}dx\\
    &+\frac{\l(\frac\pi2\r)^{\frac{N-1}{2}}e^{-\frac{i\pi}{4}(N-1)}(N-1)!}{(-2\pi i)^N}\int_X\frac{\norm{x-\gamma x}_{\ell_2(N)}^{-\frac{N-1}{2}}}{\l(t\gf(x)-\norm{x-\gamma x}_{\ell_2(N)}\r)^N}dx.
  \end{align*}
  Let us consider the special case of a flat torus, that is, $\gf=1$ and $\gamma x=\gamma+x$. Then, the formula collapses to
  \begin{align*}
    \zeta(A^t_\gamma)(0)=&\sum_{\pm}\frac{\l(\frac\pi2\r)^{\frac{N-1}{2}}e^{-\frac{i\pi}{4}(N-1)}(N-1)!\vol_X(X)}{(-2\pi i)^N}\norm{\gamma}_{\ell_2(N)}^{-\frac{N-1}{2}}\l(t\pm\norm{\gamma}_{\ell_2(N)}\r)^{-N}.
  \end{align*}
  This shows the well-known result that poles of the $\zeta$-regularized wave trace can only occur if $t$ is equal to the length of a closed geodesic $\norm\gamma_{\ell_2(N)}$ \cite{duistermaat-guillemin} and for all other $t$, we obtain that $\zeta(A^t)(0)$ is given by
  \begin{align*}
      \frac{(N-1)!\vol_X(X)}{(-2\pi i)^N}
      \l(\frac{\vol_{\d B_{\rn[N]}}\l(\d B_{\rn[N]}\r)}{t^N}+\sum_{\gamma\in\Gamma}\sum_{\pm}\frac{\l(\frac\pi2\r)^{\frac{N-1}{2}}e^{-\frac{i\pi}{4}(N-1)}\norm{\gamma}_{\ell_2(N)}^{-\frac{N-1}{2}}}{\l(t\pm\norm{\gamma}_{\ell_2(N)}\r)^N}\r).
  \end{align*}

\end{example*}

\begin{example*}
  In light of the last example, we can even go a step further and consider manifolds where the Laplacian has the symbol $g^{ij}(x)\xi_i\xi_j=\langle\xi,G^{-1}(x)\xi\rangle_{\ell_2(N)}$, i.e.
  \begin{align*}
    \zeta(A^t)(0)=&\sum_{\gamma\in\Gamma}(2\pi)^{-N}\int_X\int_{\rn[N]}e^{it\norm{G^{-\frac12}(x)\xi}_{\ell_2(N)}}e^{i\langle x-\gamma x,\xi\rangle_{\rn[N]}}d\xi dx.
  \end{align*}
  Using Fubini's theorem
  \begin{theorem*}[Theorem (Fubini)]\index{Fubini's Theorem}
    Let $\Omega\sse\rn[n]$ be open, $\phi\in C_c(\Omega)$, $f\in C^1(\Omega,\rn)$, $\fa x\in\Omega:\ \grad f(x)\ne0$, and $M_r:=[\{r\}]f=\{x\in\Omega;\ f(x)=r\}$. Then, 
    \begin{align*}
      \int_\Omega\phi(x)dx=\int_{\rn}\int_{M_r}\phi(\xi)\norm{\grad f(\xi)}^{-1}_{\ell_2(n)}d\vol_{M_r}(\xi)dr.
    \end{align*}
  \end{theorem*}
  with $f(\xi)=\norm{G^{-\frac12}(x)\xi}_{\ell_2(N)}$ on $\rn[N]\setminus\{0\}$, i.e. $\grad f(\xi)=\frac{G^{-1}(x)\xi}{\norm{G^{-\frac12}(x)\xi}_{\ell_2(N)}}$, gives rise to the definition
  \begin{align*}
    \fa x\in X:\ M_x:=\l\{\frac{\xi}{\norm{G^{-\frac12}(x)\xi}_{\ell_2(N)}}\in\rn[N];\ \xi\in\d B_{\rn[N]}\r\}
  \end{align*}
  and, thus,
  \begin{align*}
    &(2\pi)^{-N}\int_X\int_{\rn[N]}e^{it\norm{G^{-\frac12}(x)\xi}_{\ell_2(N)}}e^{i\langle x-\gamma x,\xi\rangle_{\rn[N]}}d\xi dx\\
    =&\int_X\int_{\rn_{>0}}\int_{rM_x}\frac{e^{it\norm{G^{-\frac12}(x)\tilde\mu}_{\ell_2(N)}+i\langle x-\gamma x,\tilde\mu\rangle_{\rn[N]}}\norm{G^{-\frac{1}{2}}(x)\tilde\mu}_{\ell_2(N)}}{(2\pi)^{N}\norm{G^{-1}(x)\tilde\mu}_{\ell_2(N)}}d\vol_{rM_x}(\tilde\mu)dr dx\\
    =&(2\pi)^{-N}\int_X\int_{\rn_{>0}}\int_{M_x}e^{ir\l(t+\langle x-\gamma x,\mu\rangle_{\rn[N]}\r)}\frac{\norm{G^{-\frac{1}{2}}(x)\mu}_{\ell_2(N)}}{\norm{G^{-1}(x)\mu}_{\ell_2(N)}}r^{N-1}d\vol_{M_x}(\mu)dr dx\\
    =&(2\pi)^{-N}\int_X\int_{\rn_{>0}}e^{irt}r^{N-1}\int_{M_x}e^{ir\langle x-\gamma x,\mu\rangle_{\rn[N]}}\norm{G^{-1}(x)\mu}_{\ell_2(N)}^{-1}d\vol_{M_x}(\mu)dr dx.
  \end{align*}
  Note that integrals similar to $\int_{M_x}e^{ir\langle x-\gamma x,\mu\rangle_{\rn[N]}}\norm{G^{-1}(x)\mu}_{\ell_2(N)}^{-1}d\vol_{M_x}(\mu)$ also appear if we choose such a decomposition of $\rn[N]$ and want to compute the Laurent coefficients. Furthermore, note that we can re-write those integrals over $M_x$ into integrals over the sphere; namely,
  \begin{align*}
    \int_{M_x}fd\vol_{M_x}=&\int_{\d B_{\rn[N]}}f\circ\Psi_x\sqrt{\det\l(d\Psi_x^Td\Psi_x\r)}d\vol_{\d B_{\rn[N]}}
  \end{align*}
  with
  \begin{align*}
    \Psi_x(\xi):=\frac{\xi}{\norm{G^{-\frac12}(x)\xi}_{\ell_2(N)}}.
  \end{align*}

  For $\gamma=1$, these integrals simply reduce to
  \begin{align*}
    \frac{(N-1)!}{(-2\pi it)^N}\int_X\int_{\d B_{\rn[N]}}\frac{\norm{G^{-\frac12}(x)\xi}_{\ell_2(N)}}{\norm{G^{-1}(x)\xi}_{\ell_2(N)}}\eth\Psi_x(\xi)d\vol_{\d B_{\rn[N]}}(\xi)dx
  \end{align*}
  where $\eth\Psi_x(\xi):=\sqrt{\det\l(d\Psi_x(\xi)^Td\Psi_x(\xi)\r)}$.

  For $\gamma\ne1$, we want to evaluate
  \begin{align*}
    &(2\pi)^{-N}\int_{\rn_{>0}}e^{irt}r^{N-1}\int_X\int_{M_x}e^{ir\langle x-\gamma x,\mu\rangle_{\rn[N]}}\norm{G^{-1}(x)\mu}_{\ell_2(N)}^{-1}d\vol_{M_x}(\mu)dxdr.
  \end{align*}
  The stationary points are characterized by $x-\gamma x\perp T_\mu M_x$ and we can change coordinates in the $M_x$ integral to obtain
  \begin{align*}
    \int_X\int_{\d B_{\rn[N]}}e^{ir\l\langle x-\gamma x,\Psi_x(\xi)\r\rangle_{\rn[N]}}\norm{G^{-1}(x)\Psi_x(\xi)}_{\ell_2(N)}^{-1}\eth\Psi_x(\xi)d\vol_{\d B_{\rn[N]}}(\xi)dx.
  \end{align*}
  In particular, for the torus, we have $\gamma x=\gamma +x$ and
  \begin{align*}
    \int_X\int_{\d B_{\rn[N]}}e^{-ir\l\langle\gamma,\Psi_x(\xi)\r\rangle_{\rn[N]}}\norm{G^{-1}(x)\Psi_x(\xi)}_{\ell_2(N)}^{-1}\eth\Psi_x(\xi)d\vol_{\d B_{\rn[N]}}(\xi)dx
  \end{align*}
  can be evaluated applying the stationary phase approximation to
  \begin{align*}
    \int_{\d B_{\rn[N]}}e^{-ir\l\langle\gamma,\Psi_x(\xi)\r\rangle_{\rn[N]}}\norm{G^{-1}(x)\Psi_x(\xi)}_{\ell_2(N)}^{-1}\eth\Psi_x(\xi)d\vol_{\d B_{\rn[N]}}(\xi).
  \end{align*}

\end{example*}

\begin{remark*}
  Replacing $\d B_{\rn[N]}$ by $M_x$ becomes even more interesting if we want to compute residual integrals in the Laurent coefficients 
  \begin{align*}
    \int_{\Delta(X)\times\d B_{\rn[N]}}e^{i\theta(x,x,\xi)}\d^{n+l_\iota+1}\tilde a_\iota(0)(x,x,\xi)\ d\vol_{\Delta(X)\times\d B_{\rn[N]}}(x,\xi)
  \end{align*}
  which are now integrals 
  \begin{align*}
    \int_X\int_{M_x}e^{i\theta(x,x,\xi)}\d^{n+l_\iota+1}\tilde a_\iota(0)(x,x,\xi)\ d\vol_{M_x}(\xi)d\vol_X(x).
  \end{align*}
  In cases such as the example above, the integration over $M_x$ is without a phase function because $M_x\ni\xi\mapsto\theta(x,x,\xi)$ is a constant $\theta_x$, leaving us with integrals of the form
  \begin{align*}
    e^{i\theta_x}\int_{M_x}a_x(\xi)\ d\vol_{M_x}(\xi)
  \end{align*}
  where $a_x$ is homogeneous of some degree $d$. For $M_x=T_x\l[\d B_{\rn[n]}\r]$ with $T_x\in GL(\rn[n])$, this is equivalent to
  \begin{align*}
    e^{i\theta_x}\int_{M_x}a_x(\xi)\ d\vol_{M_x}(\xi)
    =&e^{i\theta_x}\int_{\d B_{\rn[n]}}a_x(\xi)\norm{T_x^{-1}\xi}^{-n-d}d\vol_{\d B_{\rn[n]}}(\xi).
  \end{align*}
  In particular, for the case of the residue trace, we have $d=-n$, i.e.
  \begin{align*}
    e^{i\theta_x}\int_{M_x}a_x(\xi)\ d\vol_{M_x}(\xi)=&e^{i\theta_x}\int_{\d B_{\rn[n]}}a_x(\xi)d\vol_{\d B_{\rn[n]}}(\xi),
  \end{align*}
  which shows that we have reduced the pointwise residue of the Fourier Integral Operator to the pointwise residue of a suitably chosen pseudo-differential operator and a rotation in the complex plane $\theta_x$. In fact, the symbol of that pseudo-differential operator can be chosen to be the amplitude of the Fourier Integral Operator itself.

\end{remark*}

\section{Conclusion}
Based on Guillemin’s work \cite{guillemin-lagrangian,guillemin-residue-traces} on the residue trace for Fourier Integral Operators, we have developed an extension of the theory of $\zeta$-functions for pseudo-differential operators to a large class of Fourier Integral Operators. By introducing the notion of gauged poly-log-homogeneous distributions explicitly (section \ref{gauged-poly-log-homogeneous-distributions}) and, thus, working in a generalized setting that shares the fundamental analytical structures of pseudo-differential operator $\zeta$-functions, we were able to study the Laurent expansion of Fourier Integral Operator $\zeta$-functions (Theorem \ref{Laurent-zeta-FIO}) and prove existence of a generalized Kontsevich-Vishik trace (Theorem \ref{KV-trace}). Most notably, many methods developed for pseudo-differential operator $\zeta$-functions are still applicable with only minor adjustments. 

In conjunction with stationary phase expansion results for the Laurent coefficients and the kernel singularity structure (Theorem \ref{kernel-stationary-phase}), we have extended many known formulae from the pseudo-differential operator case to various classes of Fourier Integral Operators (e.g. the trace defect formulae in Proposition \ref{prop-coeffs} and Fourier Integral Operators whose generalized Kontsevich-Vishik trace is form equivalent to the pseudo-differential Kontsevich-Vishik trace; Theorem \ref{psi-DO-KV-trace}). Furthermore, these considerations allowed us to identify non-trivial algebras of Fourier Integral Operators consisting purely of Hilbert-Schmidt operators with regular trace integrals (Theorem \ref{hilbert-schmidt-algebras}), as well as utilize our unified approach to independently verify many known results for special cases of Fourier Integral Operators (e.g. Theorem \ref{commutator-structure} or Examples following Theorems \ref{kernel-stationary-phase} (operators as considered by Boutet de Monvel \cite{boutet-de-monvel}) and \ref{hilbert-schmidt-algebras} (wave traces)).

\begin{appendix}
\setcounter{section}{0}
\renewcommand{\thesection}{\Alph{section}}
  
\section{The heat trace, fractional, and shifted fractional Laplacians on flat tori}\label{examples}
In this appendix, we will apply Theorem \ref{Laurent-zeta-FIO} to some examples which are well-known or can be easily checked through spectral considerations.

\begin{example*}[Example (the Heat Trace on the flat torus $\mathbb{R}^N/_\Gamma$)]
  Let $\Gamma\sse\rn[N]$ be a discrete group generated by a basis of $\rn[N]$, $\betr\Delta$ the Dirichlet Laplacian on $\rn[N]$, $\delta$ the Dirichlet Laplacian on $\rn[N]/_\Gamma$, and $T$ the semi-group generated by $-\delta$ on $\rn[N]/_\Gamma$. It is well-known that 
  \begin{align*}
    \tr T(t)=\frac{\vol_{\rn[N]/_\Gamma}\l(\rn[N]/_\Gamma\r)}{(4\pi t)^{\frac N2}}\sum_{\gamma\in\Gamma}\exp\l(-\frac{\norm\gamma_{\ell_2(N)}^2}{4t}\r)
  \end{align*}
  holds; cf. e.g. \cite{scott}*{equation 3.2.3.28}. Furthermore, the kernel $\kappa_\delta$ of $\delta$ is given by the kernel $\kappa_{\betr\Delta}$ via $\kappa_\delta(x,y)=\sum_{\gamma\in\Gamma}\kappa_{\betr\Delta}(x,y\gamma)$; cf. e.g. \cite{scott}*{section 3.2.2}. In other words,
  \begin{align*}
    \kappa_\delta(x,y)=\sum_{\gamma\in\Gamma}\int_{\rn[N]}e^{i\langle x-y-\gamma,\xi\rangle}(2\pi)^{-N}\norm\xi_{\ell_2(N)}^2d\xi.
  \end{align*}
  Hence, using functional calculus, we obtain
  \begin{align*}
    \kappa_{T(t)}(x,y)=\sum_{\gamma\in\Gamma}\int_{\rn[N]}e^{i\langle x-y-\gamma,\xi\rangle}(2\pi)^{-N}e^{-t\norm\xi_{\ell_2(N)}^2}d\xi.
  \end{align*}
  Considering some gauge of $T(t)$ we obtain from the Laurent expansion (Theorem \ref{Laurent-zeta-FIO})
  \begin{align*}
    &\zeta(T(t))(0)\\
    =&\int_{\rn[N]/_\Gamma\times B_{\rn[N]}}\sum_{\gamma\in\Gamma}e^{-i\langle\gamma,\xi\rangle}(2\pi)^{-N}e^{-t\norm\xi_{\ell_2(N)}^2}\ d\vol_{\rn[N]/_\Gamma\times B_{\rn[N]}}(x,\xi)\\
    &+\int_{\rn[N]/_\Gamma\times\l(\rn_{\ge1}\times\d B_{\rn[N]}\r)}\sum_{\gamma\in\Gamma}e^{-i\langle\gamma,\xi\rangle}(2\pi)^{-N}\l(e^{-t\norm\cdot_{\ell_2(N)}^2}\r)_0(\xi)\ d\vol_{\rn[N]/_\Gamma\times\l(\rn_{\ge1}\times\d B_{\rn[N]}\r)}(x,\xi)\\
    &+\sum_{\iota\in I}\frac{(-1)^{l_\iota+1}l_\iota!\res(T(t))_\iota}{(N+d_\iota)^{l_\iota+1}}.
  \end{align*}
  Since $\l(\xi\mapsto e^{-t\norm\xi_{\ell_2(N)}^2}\r)\in\Sp(\rn[N])$, we can choose $I=\emptyset$ and $\l(e^{-t\norm\cdot_{\ell_2(N)}^2}\r)_0=e^{-t\norm\cdot_{\ell_2(N)}^2}$ which yields
  \begin{align*}
    &\zeta(T(t))(0)\\
    =&\int_{\rn[N]/_\Gamma\times B_{\rn[N]}}\sum_{\gamma\in\Gamma}e^{-i\langle\gamma,\xi\rangle}(2\pi)^{-N}e^{-t\norm\xi_{\ell_2(N)}^2}\ d\vol_{\rn[N]/_\Gamma\times B_{\rn[N]}}(x,\xi)\\
    &+\int_{\rn[N]/_\Gamma\times\l(\rn_{\ge1}\times\d B_{\rn[N]}\r)}\sum_{\gamma\in\Gamma}e^{-i\langle\gamma,\xi\rangle}(2\pi)^{-N}e^{-t\norm\xi_{\ell_2(N)}^2}\ d\vol_{\rn[N]/_\Gamma\times\l(\rn_{\ge1}\times\d B_{\rn[N]}\r)}(x,\xi)\\
    =&\frac{\vol_{\rn[N]/_\Gamma}\l(\rn[N]/_\Gamma\r)}{(2\pi)^{N}}\int_{B_{\rn[N]}}\sum_{\gamma\in\Gamma}e^{-i\langle\gamma,\xi\rangle}e^{-t\norm\xi_{\ell_2(N)}^2}\ d\vol_{B_{\rn[N]}}(\xi)\\
    &+\frac{\vol_{\rn[N]/_\Gamma}\l(\rn[N]/_\Gamma\r)}{(2\pi)^{N}}\int_{\rn_{\ge1}\times\d B_{\rn[N]}}\sum_{\gamma\in\Gamma}e^{-i\langle\gamma,\xi\rangle}e^{-t\norm\xi_{\ell_2(N)}^2}\ d\vol_{\rn_{\ge1}\times\d B_{\rn[N]}}(\xi)\\
    =&\frac{\vol_{\rn[N]/_\Gamma}\l(\rn[N]/_\Gamma\r)}{(2\pi)^{N}}\sum_{\gamma\in\Gamma}\int_{\rn[N]}e^{-i\langle\gamma,\xi\rangle}e^{-t\norm\xi_{\ell_2(N)}^2}\ d\xi\\
    =&\frac{\vol_{\rn[N]/_\Gamma}\l(\rn[N]/_\Gamma\r)}{(4\pi^2)^{\frac N2}}\sum_{\gamma\in\Gamma}\pi^{\frac N2}t^{-\frac N2}e^{-\frac{\norm\gamma_{\ell_2(N)}^2}{4t}}\\
    =&\frac{\vol_{\rn[N]/_\Gamma}\l(\rn[N]/_\Gamma\r)}{(4\pi t)^{\frac N2}}\sum_{\gamma\in\Gamma}\exp\l(-\frac{\norm\gamma_{\ell_2(N)}^2}{4t}\r),
  \end{align*}
  i.e. precisely what we wanted to obtain.

\end{example*}

Please note that the following example exceeds the applicability of the $\zeta$-function Laurent expansion as stated in section \ref{lagrangian}. However, in section \ref{sec:mollification} we showed that the formula still holds.

\begin{example*}[Example (fractional Laplacians on $\rn/_{2\pi\Zn}$)]
  On $\Tn:=\rn/_{2\pi\Zn}$, let us consider the operator $H:=\sqrt{\betr{\Delta}}$ where $\betr\Delta$ denotes the (non-negative) Laplacian. It is well-known that the spectrum $\sigma(H)=\nn_0$ is discrete and each non-zero eigenvalue has multiplicity $2$. Furthermore, the symbol of $H^z$ has the kernel
  \begin{align*}
    \kappa_{H^z}(x,y)=\sum_{n\in\zn}\int_{\rn}e^{i(x-y-2\pi n)\xi}\frac{\betr\xi^z}{2\pi}d\xi.
  \end{align*}
  The singular part is given for $n=0$ and $\sum_{n\in\zn\setminus\{0\}}\int_{\rn}e^{i(x-y-2\pi n)\xi}\frac{\betr\xi^z}{2\pi}d\xi$ is regular.

  Let $\alpha\in(-1,0)$. Since $\zeta$ is the spectral $\zeta$-function, we obtain ($\mu_\lambda$ denoting the multiplicity of $\lambda$ and $\Re(z)<-1$)
  \begin{align*}
    \zeta\l(s\mapsto H^sH^\alpha\r)(z)=\sum_{\lambda\in\sigma\l(H\r)\setminus\{0\}}\mu_\lambda\lambda^{z+\alpha}=2\sum_{n\in\nn}n^{z+\alpha}=2\zeta_R(-z-\alpha)
  \end{align*}
  where $\zeta_R$ denotes Riemann's $\zeta$-function. In particular, 
  \begin{align*}
    \zeta\l(s\mapsto H^sH^\alpha\r)(0)=&2\zeta_R(-\alpha).
  \end{align*}

  On the other hand, we have the Laurent expansion (Theorem \ref{Laurent-zeta-FIO})
  \begin{align*}
    \zeta\l(s\mapsto H^sH^\alpha\r)(z)=&\sum_{k\in\nn_0}\frac{1}{k!}\l(\int_{\Delta(\Tn)\times B_{\rn}}e^{i\theta}\sigma\l(\l(\ln H\r)^kH^\alpha\r)\ d\vol_{\Delta(\Tn)\times\d B_{\rn}}\r.\\
    &\qquad+\int_{\Delta(\Tn)\times\l(\rn_{\ge1}\times\d B_{\rn}\r)}e^{i\theta}\sigma\l(\l(\ln H\r)^kH^\alpha\r)_0\ d\vol_{\Delta(\Tn)\times\l(\rn_{\ge1}\times\d B_{\rn}\r)}\\
    &\qquad+\l.\sum_{\iota\in I}\frac{(-1)^{l_\iota+1}l_\iota!\res\l(\l(\ln H\r)^kH^\alpha\r)_\iota}{(1+d_\iota)^{l_\iota+1}}\r)z^k,
  \end{align*}
  i.e.
  \begin{align*}
    \zeta\l(s\mapsto H^sH^\alpha\r)(0)=&\int_{\Delta(\Tn)\times B_{\rn}}e^{i\theta}\sigma\l(H^\alpha\r)\ d\vol_{\Delta(\Tn)\times\d B_{\rn}}\\
    &+\int_{\Delta(\Tn)\times\l(\rn_{\ge1}\times\d B_{\rn}\r)}e^{i\theta}\sigma\l(H^\alpha\r)_0\ d\vol_{\Delta(\Tn)\times\l(\rn_{\ge1}\times\d B_{\rn}\r)}\\
    &+\sum_{\iota\in I}\frac{(-1)^{l_\iota+1}l_\iota!\res\l(H^\alpha\r)_\iota}{(1+d_\iota)^{l_\iota+1}}.
  \end{align*}
  Plugging in our kernel yields
  \begin{align*}
    \zeta\l(s\mapsto H^sH^\alpha\r)(0)=&\sum_{n\in\zn}\int_0^{2\pi}\int_{-1}^1e^{-2\pi in\xi}\frac{\betr\xi^\alpha}{2\pi}\ d\xi\ dx\\
    &+\sum_{n\in\zn\setminus\{0\}}\int_0^{2\pi}\int_{\rn_{\le1}\cup\rn_{\ge1}}e^{-2\pi in\xi}\frac{\betr\xi^\alpha}{2\pi}\ d\xi\ dx\\
    &-\frac{1}{1+\alpha}\int_0^{2\pi}\int_{\d B_{\rn}}\frac{\betr\xi^\alpha}{2\pi}\ d\vol_{\d B_{\rn}}(\xi)\ dx\\
    =&\int_{-1}^1\betr\xi^\alpha d\xi+\sum_{n\in\zn\setminus\{0\}}\int_{\rn}e^{-2\pi in\xi}\betr\xi^\alpha d\xi\\
    &-\frac{1}{1+\alpha}\int_{\d B_{\rn}}\betr\xi^\alpha d\vol_{\d B_{\rn}}(\xi).
  \end{align*}
  Since $\alpha\in(-1,0)$ and $\vol_{\d B_{\rn}}$ is the sum of point measures $\delta_{-1}+\delta_1$, we obtain
  \begin{align*}
    \int_{-1}^1\betr\xi^\alpha d\xi=&2\int_0^1\xi^\alpha d\xi=\frac{2}{\alpha+1}=\frac{1}{1+\alpha}\int_{\d B_{\rn}}\betr\xi^\alpha d\vol_{\d B_{\rn}}(\xi),
  \end{align*}
  i.e.
  \begin{align*}
    \zeta\l(s\mapsto H^sH^\alpha\r)(0)=&\sum_{n\in\zn\setminus\{0\}}\int_{\rn}e^{-2\pi in\xi}\betr\xi^\alpha d\xi.
  \end{align*}
  Using that the Fourier transform of $\xi\mapsto\betr\xi^\alpha$ is 
  \begin{align*}
    \int_{\rn}e^{-2\pi ix\xi}\betr\xi^\alpha d\xi=\frac{2\sin\l(\frac{-\alpha\pi}{2}\r)\Gamma(\alpha+1)}{\betr{2\pi x}^{\alpha+1}}
  \end{align*}
  and Riemann's functional equation $\zeta_R(z)=2(2\pi)^{z-1}\sin\l(\frac{\pi z}{2}\r)\Gamma(1-z)\zeta_R(1-z)$, we obtain (in the sense of meromorphic extensions)
  \begin{align*}
    \zeta\l(s\mapsto H^sH^\alpha\r)(0)=&\sum_{n\in\zn\setminus\{0\}}\int_{\rn}e^{-2\pi in\xi}\betr\xi^\alpha d\xi
    =\sum_{n\in\zn\setminus\{0\}}\frac{2\sin\l(\frac{-\alpha\pi}{2}\r)\Gamma(\alpha+1)}{\betr{2\pi n}^{\alpha+1}}\\
    =&\frac{2\sin\l(\frac{-\alpha\pi}{2}\r)\Gamma(\alpha+1)}{(2\pi)^{\alpha+1}}\cdot 2\sum_{n\in\nn}\frac{1}{n^{\alpha+1}}\\
    =&2\ubr{2(2\pi)^{(-\alpha)-1}\sin\l(\frac{-\alpha\pi}{2}\r)\Gamma(1-(-\alpha))\zeta_R(1-(-\alpha))}_{=\zeta_R(-\alpha)}.
  \end{align*}

\end{example*}

\begin{remark*}
  Using identification via meromorphic extension of
  \begin{align*}
    \zeta_R(z)=\sum_{n\in\zn\setminus\{0\}}\frac{\sin\l(\frac{-z\pi}{2}\r)\Gamma(z+1)}{\betr{2\pi n}^{z+1}}
  \end{align*}
  and, therefore,
  \begin{align*}
    \fa z\in\cn\setminus\{-1\}:\ \sum_{n\in\zn\setminus\{0\}}\int_{\rn}e^{-2\pi in\xi}\betr\xi^z d\xi=2\zeta_R(-z)
  \end{align*}
  as well as 
  \begin{align*}
    \int_{-1}^1\betr\xi^z d\xi=&\frac{1}{1+z}\int_{\d B_{\rn}}\betr\xi^z d\vol_{\d B_{\rn}}(\xi),
  \end{align*}
  we can extend the example above to all $\alpha\in\cn\setminus\{-1\}$, i.e. 
  \begin{align*}
    \zeta_R=\l(\alpha\mapsto\frac12\zeta\l(s\mapsto H^sH^{-\alpha}\r)(0)\r).
  \end{align*}

\end{remark*}
\begin{example*}[Example ($\d^k\zeta\l(s\mapsto H^{s+\alpha}\r)(0)$ on $\rn/_{2\pi\Zn}$)]
  The spectral $\zeta$-function yields
  \begin{align*}
    \d^k\zeta\l(s\mapsto H^sH^\alpha\r)(0)=\d^k\l(z\mapsto 2\zeta_R(-z)\r)(\alpha)=(-1)^k\cdot2\d^k\zeta_R(-\alpha).
  \end{align*}
  From
  \begin{align*}
    \zeta\l(s\mapsto H^sH^\alpha\r)(z)=&\sum_{k\in\nn_0}\frac{1}{k!}\l(\int_{\Delta(\Tn)\times B_{\rn}}e^{i\theta}\sigma\l(\l(\ln H\r)^kH^\alpha\r)\ d\vol_{\Delta(\Tn)\times\d B_{\rn}}\r.\\
    &\qquad+\int_{\Delta(\Tn)\times\l(\rn_{\ge1}\times\d B_{\rn}\r)}e^{i\theta}\sigma\l(\l(\ln H\r)^kH^\alpha\r)_0\ d\vol_{\Delta(\Tn)\times\l(\rn_{\ge1}\times\d B_{\rn}\r)}\\
    &\qquad+\l.\sum_{\iota\in I}\frac{(-1)^{l_\iota+1}l_\iota!\res\l(\l(\ln H\r)^kH^\alpha\r)_\iota}{(1+d_\iota)^{l_\iota+1}}\r)z^k
  \end{align*}
  (Theorem \ref{Laurent-zeta-FIO}) we obtain
  \begin{align*}
    \d^k\zeta\l(s\mapsto H^sH^\alpha\r)(0)=&\int_0^{2\pi}\int_{-1}^1\sum_{n\in\zn}e^{-2\pi in\xi}\frac{\betr\xi^\alpha\l(\ln\betr\xi\r)^k}{2\pi}\ d\xi\ dx\\
    &+\int_0^{2\pi}\int_{\rn\setminus B_{\rn}}\sum_{n\in\zn\setminus\{0\}}e^{-2\pi in\xi}\frac{\betr\xi^\alpha\l(\ln\betr\xi\r)^k}{2\pi}\ d\xi\ dx\\
    &+\frac{(-1)^{k+1}k!}{(1+\alpha)^{k+1}}\int_0^{2\pi}\int_{\d B_{\rn}}\frac{\betr\xi^\alpha}{2\pi}\ d\vol_{\d B_{\rn}}(\xi)\ dx\\
    =&2\int_0^1\xi^\alpha\l(\ln\xi\r)^kd\xi+\sum_{n\in\zn\setminus\{0\}}\int_{\rn}e^{-2\pi in\xi}\betr\xi^\alpha\l(\ln\betr\xi\r)^kd\xi\\
    &-\frac{2\cdot(-1)^kk!}{(1+\alpha)^{k+1}}\\
    =&2\d^k\l(\beta\mapsto\int_0^1\xi^\beta d\xi\r)(\alpha)-\frac{2\cdot(-1)^kk!}{(1+\alpha)^{k+1}}\\
    &+\d^k\l(\beta\mapsto\sum_{n\in\zn\setminus\{0\}}\int_{\rn}e^{-2\pi in\xi}\betr\xi^\beta d\xi\r)(\alpha)\\
    =&2\ubr{\d^k\l(\beta\mapsto(1+\beta)^{-1}\r)(\alpha)}_{=(-1)^kk!(1+\alpha)^{-(k+1)}}-\frac{2\cdot(-1)^kk!}{(1+\alpha)^{k+1}}+\d^k\l(\beta\mapsto2\zeta_R(-\beta)\r)(\alpha)\\
    =&(-1)^k\cdot2\d^k\zeta_R(-\alpha).
  \end{align*}

\end{example*}
Finally, let us compute the residue of $\zeta\l(s\mapsto H^sH^{-1}\r)$.
\begin{example*}[Example ($\res_0\zeta\l(s\mapsto H^sH^{-1}\r)$ on $\rn/_{2\pi\Zn}$)]
  $\zeta\l(s\mapsto H^sH^{-1}\r)(z)=2\zeta_R(1-z)$ shows that $\res_0\zeta\l(s\mapsto H^sH^{-1}\r)=-2\res_1\zeta_R=-2.$ Also, using the Laurent expansion (Theorem \ref{Laurent-zeta-FIO}) of $\zeta(A)$ for $A=\l(s\mapsto H^sH^{-1}\r)$, we obtain
  \begin{align*}
    \res_0\zeta\l(s\mapsto H^sH^{-1}\r)=&-\int_0^{2\pi}\int_{\d B_{\rn}}\frac{\betr\xi^{-1}}{2\pi}\ d\vol_{\d B_{\rn}}\ dx=-2.
  \end{align*}

\end{example*}
Furthermore, we can consider shifted fractional Laplacians which do not have singular amplitudes, that is, these are actually covered by the theory we have developed so far. They will also lead to the crucial observation that helped incorporate the case of singular amplitudes through mollification and, thus, justify the example of fractional Laplacians.
\begin{example*}[Example (shifted fractional Laplacians on $\rn/_{2\pi\Zn}$)]
  Again, let $H:=\sqrt{\betr\Delta}$ on $\rn/_{2\pi\Zn}$, $h\in(0,1]$, and $G:=h+H$. Then,
  \begin{align*}
    \zeta\l(s\mapsto G^{s+\alpha}\r)(z)=&\sum_{n\in\zn}\l(h+\betr n\r)^{z+\alpha}=2\sum_{n\in\nn_0}{\l(h+n\r)^{z+\alpha}}-h^{z+\alpha}
    =2\zeta_H(-z-\alpha;h)-h^{z+\alpha}
  \end{align*}
  where $\zeta_H(z;h)$ denotes the Riemann-Hurwitz-$\zeta$-function. In order to use our formalism above (Theorem \ref{Laurent-zeta-FIO}), we will need to write $\xi\mapsto(h+\betr\xi)^\alpha$ as a series of poly-homogeneous functions. Using
  \begin{align*}
    (h+\betr\xi)^\alpha=\sum_{k\in\nn_0}{\alpha\choose k}\betr\xi^{\alpha-k}h^k
  \end{align*}
  for $\betr\xi\ge1$ yields that the kernel of $G^{z+\alpha}$
  \begin{align*}
    k_{G^{z+\alpha}}(x,y)=\sum_{n\in\zn}\int_{\rn}e^{i(x-y-2\pi n)\xi}\frac{1}{2\pi}(h+\betr\xi)^{z+\alpha} d\xi
  \end{align*}
  is, in fact, poly-$\log$-homogeneous. For $\alpha=-1$, the critical term in zero is given by the $k=0$ term of $\sum_{k\in\nn_0}{\alpha\choose k}\betr\xi^{\alpha-k}h^k$, i.e.
  \begin{align*}
    \res_0\zeta\l(s\mapsto G^{s-1}\r)=&-\int_{\d B_{\rn}}\betr\xi^{-1}d\vol_{\d B_{\rn}}(\xi)=-2.
  \end{align*}
  On the other hand,
  \begin{align*}
    \res_0\zeta\l(s\mapsto G^{s-1}\r)=&\res_0\l(z\mapsto 2\zeta_H(-z+1;h)-h^{z+\alpha}\r)
    =2\res_0\l(z\mapsto\zeta_H(-z+1;h)\r)\\
    =&-2\res_0\l(z\mapsto\zeta_H(z-1;h)\r)
    =-2\res_1\zeta_H(\cdot;h)
    =-2.
  \end{align*}
  For $\alpha\ne-1$ and $\betr\xi\ge1$,
  \begin{align*}
    (h+\betr\xi)^\alpha=\sum_{k\in\nn_0}{\alpha\choose k}h^k\betr\xi^{\alpha-k}
  \end{align*}
  implies $\alpha-k\in I_0$ if and only if $k=\alpha+1\in\nn_0$. However, since ${\alpha\choose\alpha+1}=0$ for $\alpha\in\nn_0$, we obtain $I_0=\emptyset$. 

  We will skip the computation of the Laurent coefficients at this point since they are rather long without giving much insight. The detailed computation can be found in \cite{hartung-phd}*{chapter 5} or \cite{hartung-scott}*{chapter 4}.
\end{example*}

\end{appendix}

\begin{bibdiv}
  \begin{biblist}
    \bib{boutet-de-monvel}{article}{
      author={BOUTET DE MONVEL, L.},
      title={Vanishing of the Logarithmic Trace of Generalized Szegö Projectors},
      journal={arXiv:math/0604166v1, Proceedings of the Conference ``Algebraic Analysis of Differential Equations: From Microlocal Analysis to Exponential Asymptotics'' Festschrift in honor of Prof. Takahiro Kawai},
      pages={67-78},
      date={2008}
    }
    \bib{boutet-de-monvel-guillemin}{article}{
      author={BOUTET DE MONVEL, L.},
      author={GUILLEMIN, V.},
      title={The Spactral Theory of Toeplitz Operators},
      journal={Annals of Mathematics Studies no. 99, Princeton University Press},
      date={1981}
    }
    \bib{chazarain}{article}{
      author={CHAZARAIN, J.},
      title={Formule de Poisson pour les vari\'{e}t\'{e}s riemanniennes},
      journal={Inventiones mathematicae},
      volume={24},
      pages={65-82},
      date={1974}
    }
    \bib{duistermaat}{book}{
      author={DUISTERMAAT, J. J.},
      title={Fourier Integral Operators},
      publisher={Birkhäuser},
      address={Boston, MA/Basel/Berlin},
      date={1996}
    }
    \bib{duistermaat-guillemin}{article}{
      author={DUISTERMAAT, J. J.},
      author={GUILLEMIN, V.},
      title={The Spectrum of Positive Elliptic Operators and Periodic Bicharacteristics},
      journal={Inventiones mathematicae},
      volume={29},
      pages={39-79},
      date={1975}
    }
    \bib{guillemin-lagrangian}{article}{
      author={GUILLEMIN, V.},
      title={Gauged Lagrangian Distributions},
      journal={Advances in Mathematics},
      volume={102},
      pages={184-201},
      date={1993}
    }
    \bib{guillemin-residue-traces}{article}{
      author={GUILLEMIN, V.},
      title={Residue Traces for Certain Algebras of Fourier Integral Operators},
      journal={Journal of Functional Analysis},
      volume={115},
      pages={391-417},
      date={1993}
    }
    \bib{guillemin-wave}{article}{
      author={GUILLEMIN, V.},
      title={Wave-trace invariants},
      journal={Duke Mathematical Journal},
      volume={83},
      number={2},
      pages={287-352},
      date={1996}
    }
    \bib{guillope-zworski}{article}{
      author={GUILLOP\'{E}, L.},
      author={ZWORSKI, M.},
      title={The wave trace for Riemann surfaces},
      journal={Geometric and Functional Analysis},
      volume={9},
      pages={1156-1168},
      date={1999}
    }
    \bib{hartung-phd}{book}{
      author={HARTUNG, T.},
      title={$\zeta$-functions of Fourier Integral Operators},
      publisher={Ph.D. thesis, King's College London},
      address={London},
      date={2015}
    }
    \bib{hartung-scott}{article}{
      author={HARTUNG, T.},
      author={SCOTT, S.},
      title={A generalized Kontsevich-Vishik trace for Fourier Integral Operators and the Laurent expansion of $\zeta$-functions},
      journal={arXiv:1510.07324 [math.AP]},
      date={2015}
    }
    \bib{hoermander-books}{book}{
      author={H\"{O}RMANDER, L.},
      title={The Analysis of Linear Partial Differential Operators},
      part={I-IV},
      publisher={Springer},
      address={Berlin/Heidelberg},
      date={1990}
    }
    \bib{hoermander-FIO-I}{article}{
      author={H\"{O}RMANDER, L.},
      title={Fourier Integral Operators I},
      journal={Acta Mathematica},
      volume={127},
      pages={79-183},
      date={1971}
    }
    \bib{jentzsch}{article}{
      author={JENTZSCH, R.},
      title={Untersuchungen zur Theorie der Folgen analytischer Funktionen},
      journal={Acta Mathematica},
      volume={41},
      pages={219-251},
      date={1916}
    }
    \bib{kadets}{book}{
      author={KADETS, M. I.},
      author={KADETS, V. M.},
      title={Series in Banach Spaces},
      subtitle={Conditional and Unconditional Convergence},
      series={Operator Theory: Advances and Applications},
      volume={94},
      date={1997}
    }
    \bib{kato}{book}{
      author={KATO, T.},
      title={Perturbation Theory for Linear Operators},
      edition={2nd ed.},
      publisher={Springer},
      address={Berlin/Heidelberg},
      date={1980}
    }
    \bib{kontsevich-vishik}{article}{
      author={KONTSEVICH, M.},
      author={VISHIK, S.},
      title={Determinants of elliptic pseudo-differential operators},
      journal={Max Planck Preprint, arXiv:hep-th/9404046},
      date={1994}
    }
    \bib{kontsevich-vishik-geometry}{article}{
      author={KONTSEVICH, M.},
      author={VISHIK, S.},
      title={Geometry of determinants of elliptic operators},
      journal={Functional Analysis on the Eve of the XXI century, Vol. I, Progress in Mathematics},
      volume={131},
      pages={173-197},
      date={1994}
    }
    \bib{lesch}{article}{
      author={LESCH, M.},
      title={On the Noncommutative Residue for Pseudodifferential Operators with $\log$-Polyhomogeneous Symbols},
      journal={Annals of Global Analysis and Geometry},
      volume={17},
      pages={151-187},
      date={1999}
    }
    \bib{maniccia-schrohe-seiler}{article}{
      author={MANICCIA, L.},
      author={SCHROHE, E.},
      author={SEILER, J.},
      title={Uniqueness of the Kontsevich-Vishik trace},
      journal={Proceedings of the American Mathematical Society},
      volume={136 (2)},
      pages={747-752},
      date={2008}
    }
    \bib{mazzeo-melrose}{article}{
      author={MAZZEO, R.},
      author={MELROSE, R. B.},
      title={Meromorphic extension of the resolvent on complete spaces with asymptotically constant negative curvature},
      journal={Journal of Functional Analysis},
      volume={75},
      pages={260-310},
      date={1987}
    }
    \bib{paycha}{article}{
      author={PAYCHA, S.},
      title={Zeta-regularized traces versus the Wodzicki residue as tools in quantum field theory and infinite dimensional geometry},
      journal={Preceedings of the International Conference on Stochastic Analysis and Applications},
      pages={69-84},
      date={2004}
    }
    \bib{paycha-scott}{article}{
      author={PAYCHA, S.},
      author={SCOTT, S. G.},
      title={A Laurent expansion for regularized integrals of holomorphic symbols},
      journal={Geometric and Functional Analysis},
      volume={17 (2)},
      pages={491-536},
      date={2007}
    }
    \bib{ray}{article}{
      author={RAY, D. B.},
      title={Reidemeister torsion and the Laplacian on lens spaces},
      journal={Advances in Mathematics},
      volume={4},
      pages={109-126},
      date={1970}
    }
    \bib{ray-singer}{article}{
      author={RAY, D. B.},
      author={SINGER, I. M.},
      title={$R$-torsion and the Laplacian on Riemannian manifolds},
      journal={Advances in Mathematics},
      volume={7},
      pages={145-210},
      date={1971}
    }
    \bib{scott}{book}{
      author={SCOTT, S. G.},
      title={Traces and Determinants of Pseudodifferential Operators},
      publisher={Oxford University Press},
      date={2010}
    }
    \bib{seeley}{article}{
      author={SEELEY, R. T.},
      title={Complex Powers of an Elliptic Operator},
      journal={Proceedings of Symposia in Pure Mathematics, American Mathematical Society},
      volume={10},
      pages={288-307},
      date={1967}
    }
    \bib{shubin}{book}{
      author={SHUBIN, M. A.},
      title={Pseudodifferential Operators and Spectral Theory},
      edition={2nd ed.},
      publisher={Springer},
      address={Berlin/Heidelberg},
      date={2001}
    }
    \bib{stein-weiss}{book}{
      author={STEIN, E. M.},
      author={WEISS, G.},
      title={Introduction to Fourier Analysis on Euclidean Spaces},
      publisher={Princeton University Press},
      address={Princeton, NJ},
      date={1971}
    }
    \bib{wodzicki}{book}{
      author={WODZICKI, M.},
      title={Noncommutative residue. I. Fundamentals. K-theory, arithemtic and geometry (Moscow, 1984-1986), 320-399, Lecture Notes in Math., 1289},
      publisher={Springer},
      address={Berlin},
      date={1987}
    }
    \bib{zelditch}{article}{
      author={ZELDITCH, S.},
      title={Wave invariants at elliptic closed geodesics},
      journal={Geometric and Functional Analysis},
      volume={7},
      pages={145-213},
      date={1997}
    }
  \end{biblist}
\end{bibdiv}

\end{document}